\pgfplotsset{compat=newest}
\definecolor{teal}{rgb}{0.0, 0.5, 0.5}
\newcounter{mnotecount}[section]
\newcommand{\rmnote}[1]{}
\DeclareFontFamily{U}{mathb}{\hyphenchar\font45}
\DeclareFontShape{U}{mathb}{m}{n}{
      <5> <6> <7> <8> <9> <10> gen * mathb
      <10.95> mathb10 <12> <14.4> <17.28> <20.74> <24.88> mathb12
      }{}
\DeclareSymbolFont{mathb}{U}{mathb}{m}{n}
\theoremstyle{plain}
\newtheorem*{theorem*}{Theorem}
\newtheorem{theorem}{Theorem}[section]
\newtheorem*{lemma*}{Lemma}
\newtheorem{lemma}[theorem]{Lemma}
\newtheorem*{assumption*}{Assumption}
\newtheorem*{proposition*}{Proposition}
\newtheorem{proposition}[theorem]{Proposition}
\newtheorem*{corollary*}{Corollary}
\newtheorem{corollary}[theorem]{Corollary}
\newtheorem*{claim*}{Claim}
\newtheorem*{conjecture*}{Conjecture}
\newtheorem*{result*}{Result}
\theoremstyle{definition}
\newtheorem*{definition*}{Definition}
\newtheorem{definition}[theorem]{Definition}
\newtheorem*{example*}{Example}
\newtheorem{example}[theorem]{Example}
\newtheorem*{algorithm*}{Algorithm}
\newtheorem*{remark*}{Remark}
\newtheorem*{remarks*}{Remarks}
\newtheorem{remark}[theorem]{Remark}
\newtheorem*{question*}{Question}
\newtheorem{convention}[theorem]{Convention}
\newtheorem*{convention*}{Convention}
\numberwithin{equation}{section}
\newcommand{\Iq}{{\mathcal{A}}_d (\C)}
\def\a#1{\left\llbracket{#1}\right\rrbracket}
\def\bth#1{\left[{#1}\right]_\th}
\newcommand{\abs}[1]{\left|#1\right|}
\newcommand{\ra}{\rightarrow}
\def\Et{H}
\def\al{\alpha}
\def\be{\beta}
\def\ga{\gamma}
\def\de{\delta}
\def\ep{\epsilon}
\def\et{\eta}
\def\th{\theta}
\def\ka{\kappa}
\def\la{\lambda}
\def\rh{\rho}
\def\si{\sigma}
\def\ta{\tau}
\def\vh{\varphi}
\def\ch{\chi}
\def\ps{\psi}
\def\Ga{\Gamma}
\def\De{\Delta}
\def\La{\Lambda}
\def\Om{\Omega}
\def\C{\mathbb{C}}
\def\N{\mathbb{N}}
\def\R{\mathbb{R}}
\def\cA{\mathcal{A}}
\def\cB{\mathcal{B}}
\def\cE{\mathcal{E}}
\def\cF{\mathcal{F}}
\def\cI{\mathcal{I}}
\def\cJ{\mathcal{J}}
\def\cL{\mathcal{L}}
\def\sP{\mathscr{P}}
\def\p{\partial}
\def\id{\on{id}}
\renewcommand{\Re}{\mathrm{Re}}
\renewcommand{\Im}{\mathrm{Im}}
\def\<{\langle}
\def\>{\rangle}
\renewcommand{\o}{\circ}
\def\ol{\overline}
\def\ul{\underline}
\def\dd{\mathbf{d}}
\def\ddd{\widehat \dd}
\def\bs{\mathbf{s}}
\let\on=\operatorname
\newcommand{\sr}[1]%
{\ifmmode{}^\dagger\else${}^\dagger$\fi\ifvmode
\vbox to 0pt{\vss
 \hbox to 0pt{\hskip\hsize\hskip1em
 \vbox{\hsize3cm\raggedright\pretolerance10000
 \noindent #1\hfill}\hss}\vss}\else
 \vadjust{\vbox to0pt{\vss%
 \hbox to 0pt{\hskip\hsize\hskip1em%
 \vbox{\hsize3cm\raggedright\pretolerance10000%
 \noindent #1\hfill}\hss}\vss}}\fi%
}
\providecommand{\mapsfrom}{\kern.2em%
\setbox0=\hbox{$\leftarrow$\kern-.10em\rule[0.26mm]{0.1mm}{1.3mm}}\box0%
\kern.3em}
\title{On the continuity of the solution map for polynomials}
\author[Adam Parusi\'nski and  Armin Rainer]
{Adam Parusi\'nski and Armin Rainer}
\address {Adam Parusi\'nski: Universit\'e C\^ote d'Azur,  CNRS,  LJAD, UMR 7351, 06108 Nice, France}
\email{adam.parusinski@univ-cotedazur.fr}
\address{Armin Rainer: Fakult\"at f\"ur Mathematik, Universit\"at Wien,
Oskar-Morgenstern-Platz~1, A-1090 Wien, Austria}
\email{armin.rainer@univie.ac.at}
\begin{document}

\begin{abstract}
    In previous work, we proved that the continuous roots of a monic polynomial of degree $d$ 
    whose coefficients depend in a $C^{d-1,1}$ way on real parameters 
    belong to the Sobolev space $W^{1,q}$
    for all $1\le q<d/(d-1)$. 
    This is optimal. 
    We obtained uniform bounds that show that the solution map ``coefficients-to-roots''
    is bounded with respect to the $C^{d-1,1}$ and the Sobolev $W^{1,q}$ structures on source and target space, respectively. 
    In this paper, we prove that the solution map is continuous, provided that
    we consider the $C^d$ structure
    on the space of coefficients. 
    Since there is no canonical choice of an ordered $d$-tuple of the roots,
    we work in the space of $d$-valued Sobolev functions
    equipped with a strong notion of convergence.
    We also interpret the results in the Wasserstein space on the complex plane.
\end{abstract}

\thanks{This research was funded in whole or in part by the Austrian Science Fund (FWF) DOI 10.55776/P32905.
For open access purposes, the authors have applied a CC BY public copyright license to any author-accepted manuscript version arising from this submission.}
\keywords{Complex polynomials, Sobolev regularity of the roots, continuity of the solution map, multivalued Sobolev functions, Wasserstein space}
\subjclass[2020]{
    26C05,   
    26C10,   
    26A46,   
    30C15,   
    46E35,   
47H30}   
\date{\today}

\maketitle

\setcounter{tocdepth}{1}
\tableofcontents


\section{Introduction}

Consider a monic polynomial of degree $d$,
\[
    P_a(Z) = Z^d + \sum_{j=1}^d a_j Z^{d-j},
\]
where the coefficients $a_j$, for $1\le j \le d$, are complex valued functions 
defined on a bounded open interval $I \subseteq \R$.
Given that the coefficients are smooth, it is natural to ask how regular the roots of $P_a$ 
can be.

This question was answered in \cite{ParusinskiRainerAC}, \cite{ParusinskiRainer15}, and \cite{Parusinski:2020aa} (see also \Cref{thm:optimal}): 
let $\la : I \to \C$ be a continuous root of $P_a$, i.e., $P_{a(x)}(\la(x)) = 0$ for all $x \in I$.
If $a = (a_1,\ldots,a_d) \in C^{d-1,1}(\ol I,\C^d)$, then $\la$ is absolutely continuous and bounded on $I$ and $\la' \in L^q(I)$, in particular, 
$\la  \in W^{1,q}(I)$, 
for all $1 \le q <d/(d-1)$. Moreover, there is a uniform bound for the $L^q$ norm of $\la'$ in terms of the $C^{d-1,1}$ norm of $a$ (see \eqref{eq:optimal}).
This result is optimal in the sense that there are examples of 
$a \in C^\infty(I,\C^d) \cap \bigcap_{0<\ga < 1} C^{d-1,\ga}(\ol I,\C^d)$ such that  
no root of $P_a$ has bounded variation on $I$
and polynomial curves $a: \R \to \C^d$ such that no root of $P_a$ has derivative in $L^{d/(d-1)}(I)$.

Even though there always exists a continuous parameterization $\la = (\la_1,\ldots,\la_d) : I \to \C^d$ 
of the roots of $P_a$, i.e., $P_{a(x)}(Z) = \prod_{j=1}^d (Z-\la_j(x))$ for all $x \in I$,\footnote{The roots of a monic polynomial $P_a$ with $a \in C^0(I,\C^d)$ admit a continuous parameterization $\la : I \to \C^d$; see \cite[II.5.2]{Kato76}.\label{fn:Kato}} 
there is in general no canonical choice of a continuous ordered $d$-tuple of the roots. 
(The situation is different for hyperbolic polynomials, see \Cref{ssec:hyperbolic}.)
But we may consider the unordered $d$-tuple $\La = [\la_1,\ldots,\la_d]$ of roots and thus obtain a continuous curve $\La : I \to \cA_d(\C)$ 
in the complete metric space $(\cA_d(\C),\dd)$ of unordered tuples of $d$ complex numbers (see \Cref{lem:homeo}). 
We refer to \Cref{ssec:AdC} for the definition of $\dd$, but it is worth mentioning that $\cA_d(\C)$ 
can naturally be identified with a subset of the set $\sP(\C)$ of probability measures on $\C$ (sending $[z_1,\ldots,z_d]$ to the 
formal sum $\sum_{j=1}^d \a{z_j}$ of Dirac delta measures at $z_j$) 
and then $\dd$ is induced by the $2$-Wasserstein distance on $\sP(\C)$; see \Cref{ssec:probability}.

With this terminology the above result can be interpreted as follows: 
if $a \in C^{d-1,1}(\ol I,\C^d)$, then $\La \in W^{1,q}(I, \cA_d(\C))$, for all $1 \le q < d/(d-1)$, 
and the map $a \mapsto \La$ takes bounded sets to bounded sets.
Here $W^{1,q}(I,\Iq)$ denotes the space
of $d$-valued Sobolev functions 
(see \eqref{eq:dSobintro} and \Cref{sec:dSob}).  
The boundedness of the map $a \mapsto \La$ follows from \Cref{cor:aLabd}.

In the present paper, we address the natural question if the map $a \mapsto \La$ is continuous.
We prove that this is true
with respect to the $C^d$ topology on the space of coefficients $a$
and various natural structures on the target space $W^{1,q}(I,\cA_d(\C))$ (for $1 \le q < d/(d-1)$).
These results will lead to multiparameter versions by a sectioning argument.

\subsection{The main results}

Due to Almgren \cite{Almgren00},
there exists a bi-Lipschitz embedding $\De : \Iq \to \R^N$, where $N =N(d)$.
Almgren used $\De$ to define Sobolev spaces of $\cA_d(\C)$-valued functions: 
for open $U \subseteq \R^m$ and $1 \le q \le \infty$ set 
\begin{equation} \label{eq:dSobintro}
    W^{1,q}(U,\cA_d(\C)) := \{f : U \to \cA_d(\C) : \De \o f \in W^{1,q}(U,\R^N)\}.  
\end{equation}
An equivalent intrinsic definition of $W^{1,q}(U,\cA_d(\C))$ 
is due to De Lellis and Spadaro \cite{De-LellisSpadaro11}.\footnote{Actually, in \cite{Almgren00} and \cite{De-LellisSpadaro11} the theory is developed for $\cA_d(\R^n)$-valued functions, 
where $\cA_d(\R^n)$ is the space of unordered $d$-tuples of vectors in $\R^n$. In this paper, we stick to the case $n=2$ and identify $\C =\R^2$.}
Then $W^{1,q}(U,\cA_d(\C))$ carries the metric
\begin{equation} \label{eq:Almgrenintro}
    (f,g) \mapsto \|\De \o f - \De \o g\|_{W^{1,q}(U,\R^N)}
\end{equation}
which makes it a complete metric space; see \Cref{lem:Acomplete}.

Let us first assume that $m=1$ and $U$ is a bounded open interval.
We will generally assume that the degree $d$ is at least $2$, since for $d=1$ all results are trivially true.

\begin{theorem} \label{thm:mainAlmgren}
    Let $d\ge 2$ be an integer. 
    Let $I \subseteq \R$ be a bounded open interval.
    Let $a_n \to a$ in $C^d(\ol I,\C^d)$, i.e.,  
    \begin{equation*}
        \|a-a_n\|_{C^d(\ol I,\C^d)} \to 0 \quad \text{ as } n \to \infty.
    \end{equation*}
    Let $\La,\La_n : I \to \cA_d(\C)$ be the curves of unordered roots of $P_a$, $P_{a_n}$, respectively.
    Then
    \begin{equation*}
        \|\De \o \La - \De \o \La_n\|_{W^{1,q}(I,\R^N)} \to 0 \quad \text{ as } n \to \infty,
    \end{equation*}
    for all $1 \le q < d/(d-1)$.
\end{theorem}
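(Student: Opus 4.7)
The plan is to decompose $W^{1,q}$-convergence into $L^q$-convergence of the values and $L^q$-convergence of the derivatives, and control the two separately. For the values, uniform convergence $a_n \to a$ on $\ol I$ combined with the continuous dependence of roots on coefficients (footnote \ref{fn:Kato}) yields $\La_n \to \La$ uniformly in $(\cA_d(\C),\dd)$, and since $\De$ is Lipschitz this forces $\De \o \La_n \to \De \o \La$ uniformly on $\ol I$, hence in $L^q(I,\R^N)$. For the derivatives, the bounded inclusion $C^d(\ol I,\C^d) \hookrightarrow C^{d-1,1}(\ol I,\C^d)$ together with \Cref{cor:aLabd} provides a uniform bound on $\|\De \o \La_n\|_{W^{1,q_0}(I,\R^N)}$ for every $q_0 \in [1,d/(d-1))$. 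Choosing $q < q_0 < d/(d-1)$, the family $\{|(\De \o \La_n)'|^q\}$ is equi-integrable, so by Vitali's convergence theorem it suffices to show that $(\De \o \La_n)' \to (\De \o \La)'$ a.e.\ on $I$.

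I would establish this a.e.\ convergence by induction on $d$ via a splitting principle; the case $d=1$ being trivial. Fix $x_0 \in \ol I$ and let $z_1,\ldots,z_r$ be the distinct roots of $P_{a(x_0)}$, separated by disjoint open disks $D_i \subset \C$. Continuity of roots yields a neighborhood $U$ of $x_0$ and an index $N_0$ such that for every $x \in U$ and $n \ge N_0$, the roots of both $P_{a(x)}$ and $P_{a_n(x)}$ lie in $\bigcup_i D_i$ with the correct multiplicities $m_i$. Consequently, $P_a = \prod_{i=1}^r Q_i$ and $P_{a_n} = \prod_{i=1}^r Q_i^{(n)}$ on $U$, where $Q_i$ (resp.\ $Q_i^{(n)}$) collects the roots of $P_a$ (resp.\ $P_{a_n}$) in $D_i$. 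The coefficients of $Q_i$ depend holomorphically on those of $P_a$ and are expressible via contour integrals of the form $\oint_{\partial D_i} Z^k P_a'(Z)/P_a(Z)\,dZ$ and Newton's identities, so they inherit $C^d$-regularity and satisfy $Q_i^{(n)} \to Q_i$ in $C^d(U,\C^{m_i})$. When $r(x_0) \ge 2$, every $\deg Q_i < d$, and applying the inductive hypothesis to each factor, combined with the Lipschitz concatenation map $\cA_{m_1}(\C) \times \cdots \times \cA_{m_r}(\C) \to \cA_d(\C)$, gives the a.e.\ convergence on $U$.

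The main obstacle is the closed \emph{totally ramified} set $\Si := \{x \in \ol I : r(x) = 1\}$, where $P_{a(x)} = (Z - z(x))^d$ with $z(x) = -a_1(x)/d$ and the splitting provides no reduction of degree. A Tschirnhausen shift $Z \mapsto Z - z(x)$ replaces $P_a$ by a polynomial with shifted coefficients $\tilde a$ vanishing on $\Si$, and the $C^d$-regularity controls $\tilde a$ on small neighborhoods of $\Si$ in terms of the distance to $\Si$; the same control holds uniformly for the shifted perturbations $\tilde a_n$ since $a_n \to a$ in $C^d$. The quantitative estimate of \Cref{cor:aLabd}, applied on such a neighborhood, then bounds the $L^q$-contribution of the neighborhood to both $(\De \o \La_n)'$ and $(\De \o \La)'$ by a quantity that can be made arbitrarily small, uniformly in $n$. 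Covering $\ol I$ by finitely many neighborhoods of the non-totally-ramified type (handled by induction) together with a small neighborhood of $\Si$ (handled by this localization argument) yields the desired a.e.\ convergence. The strengthening from the $C^{d-1,1}$ hypothesis of the boundedness theorem to $C^d$ enters precisely here, providing the extra modulus of continuity that upgrades boundedness near $\Si$ to continuity.
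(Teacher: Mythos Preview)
Your strategy---equi-integrability from \Cref{cor:aLabd} at an exponent $q_0>q$, induction on the degree via local splitting, separate treatment of the totally ramified locus $\Si$, then Vitali---is the paper's route. But the $\Si$ step has a genuine gap. After Tschirnhausen, $\Si=Z_{\tilde a}$, and the vanishing of all derivatives of $\tilde a$ up to order $d$ holds only on $\on{acc}(Z_{\tilde a})$ (isolated points are null and are absorbed by the splitting side). On a small interval $J$ around such a point, the bound behind \Cref{cor:aLabd} reads $\|\la_n'\|_{L^q(J)}\le C(d,q)\,\max\{1,|J|^{1/q}\}\,\max_j\|\tilde a_{n,j}\|_{C^{d-1,1}(\ol J)}^{1/j}$, and the factor $\max\{1,|J|^{1/q}\}\ge 1$ does not decay as $|J|\to 0$. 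When you cover $\on{acc}(Z_{\tilde a})$ by $J_1,\dots,J_s$ and sum the $q$-th powers, the number $s$ of intervals grows as your target $\ep$ shrinks, and nothing prevents $s\,\ep^q$ from staying bounded away from zero; equi-integrability does not help either, since $|\on{acc}(Z_{\tilde a})|$ may be positive. The paper confronts exactly this and proves a new estimate, \Cref{thm:optimalmod}, yielding $\|\la_n'\|_{L^q(J)}\le C(d,q)\,|J|^{1/q}\,\ep$ (with the radical case \Cref{cor:radicals} controlling the extra term $\sum_j\|(\tilde a_j^{1/j})'\|_{L^q}$); this sums because $\sum_k|J_k|\le 2|U|$. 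See \Cref{prop:acc} and \Cref{rem:optimalmodreason}. This refinement is not cosmetic---the paper singles it out as a ``crucial ingredient''.

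Two smaller issues. You promise a.e.\ convergence of $(\De\o\La_n)'$ on all of $I$, but near $\Si$ you only argue smallness in $L^q$; the logic must be rearranged (the paper extracts a.e.\ convergence along subsequences on the pieces of a countable cover, then diagonalizes; see \Cref{lem:subsequence}). And ``Lipschitz concatenation map'' does not by itself transfer convergence of the factors' derivatives to $(\De\o\La_n)'$: $\De$ is only bi-Lipschitz, and for a given Almgren map $H_l$ the sorted list of $\et_l$-values may interleave roots from different factors. The paper sidesteps this by working with the intrinsic quantity $\bs_1$ (\Cref{def:dd}), proving that the minimizing permutations respect the splitting for large $n$ (\Cref{prop:S'}), and only at the end passing to $\De$ via \Cref{thm:Almgren and convergence}.
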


\Cref{thm:mainAlmgren} is true for all Almgren embeddings $\De$; see \Cref{def:Amap}. 
We will not work directly with an Almgren embedding but (inspired by the intrinsic definition of $d$-valued Sobolev functions of \cite{De-LellisSpadaro11})
introduce a semimetric $\dd^{1,q}_{I}$ on $W^{1,q}(I,\cA_d(\C))$, 
without reference to any Almgren embedding, that generates the same topology as the metric \eqref{eq:Almgrenintro}.

\begin{theorem} \label{thm:main1}
    Let $d\ge 2$ be an integer.
    Let $I \subseteq \R$ be a bounded open interval.
    Let $a_n \to a$ in $C^d(\ol I,\C^d)$ as $n\to \infty$.  
    Let $\La,\La_n : I \to \cA_d(\C)$ be the curves of unordered roots of $P_a$, $P_{a_n}$, respectively.
    Then 
    \begin{equation*}
        \dd^{1,q}_I(\La,\La_n)   \to 0 \quad \text{ as } n \to \infty,
    \end{equation*}
    for all $1 \le q < d/(d-1)$.
\end{theorem}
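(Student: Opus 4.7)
The plan is to combine three ingredients: the uniform (pointwise) convergence of the unordered root tuples, the a priori $W^{1,q}$-bound from our earlier work, and a local splitting argument that allows an induction on the degree $d$. Since $a_n \to a$ in $C^d(\overline I, \C^d)$ implies $a_n \to a$ uniformly, the classical continuity of roots in the coefficients (Rouch\'e/Hurwitz, cf.~footnote~\ref{fn:Kato}) gives
\[
    \sup_{x \in \overline I} \dd\bigl(\La(x),\La_n(x)\bigr) \to 0 \quad\text{as } n \to \infty,
\]
so in particular $\La_n \to \La$ in $L^q(I,\Iq)$. Combined with \Cref{cor:aLabd}, this also shows that $\{\La_n\}$ is bounded in $W^{1,q}(I,\Iq)$ for every $q < d/(d-1)$. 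It remains to control the ``derivative part'' of the semimetric $\dd^{1,q}_I$.

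The key step is an induction on $d$. The case $d=1$ is trivial. For the inductive step, fix $\ve>0$ and stratify $\overline I$ according to whether the tuple $\La(x)$ has diameter smaller than $\ve$ or not. At any point $x_0$ where $\La(x_0)$ splits into two or more groups separated by a gap $\gtrsim \ve$, a Weierstrass-type splitting lemma (equivalently, the implicit function theorem applied to the factorization of polynomials into coprime factors) produces an open neighborhood $J$ of $x_0$ on which
\[
    P_a = P_b \cdot P_c, \qquad b \in C^d(\overline J, \C^{d_1}),\ c \in C^d(\overline J, \C^{d_2}),\ d_1,d_2 \ge 1,\ d_1+d_2=d.
\]
For $n$ large, the same factorization applies to $P_{a_n} = P_{b_n} P_{c_n}$ with $b_n \to b$ and $c_n \to c$ in $C^d(\overline J)$. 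On $J$ the unordered tuple $\La$ is the formal sum of the unordered root tuples of $b$ and $c$, and the inductive hypothesis applied to each factor, together with the fact that $\dd^{1,q}$ behaves well under such sums (this is one of the structural properties of the intrinsic semimetric built from the De Lellis--Spadaro definition), yields convergence on $J$. By compactness of $\overline I$, the complement of the ``bad'' set
\[
    B_\ve := \{x \in I : \on{diam} \La(x) < \ve\}
\]
is covered by finitely many such $J$'s.

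The contribution from $B_\ve$ is handled via the uniform $W^{1,q}$-bound: since $\{\|\La_n\|_{W^{1,q}}\}_n$ is bounded, the associated metric-derivative densities form an equi-integrable family in $L^q$. Hence the $\dd^{1,q}$-mass of both $\La$ and $\La_n$ over a measurable set of small Lebesgue measure is uniformly small, and it suffices to observe that $|B_\ve|\to 0$ as $\ve \to 0$, because each $x\in\bigcap_{\ve>0} B_\ve$ would give a point where all roots of $P_{a(x)}$ coincide, and such points can themselves be absorbed into the stratification (at the cost of shrinking further). Taking $\ve$ small and then $n$ large makes $\dd^{1,q}_I(\La,\La_n)$ arbitrarily small. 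The principal technical obstacle in this scheme is the $C^d$-stability of the splitting $P_a = P_b P_c$: one must ensure that the $C^d$-norms of the factors $b,c$ are controlled by $\|a\|_{C^d}$ and (negative powers of) the separation gap $\ve$, with constants that remain uniform under small $C^d$-perturbations of $a$. Once this quantitative splitting is established, the induction closes and the patching into a global estimate for $\dd^{1,q}_I$ is routine.
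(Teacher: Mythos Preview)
Your overall architecture---induction on $d$ via a local splitting into coprime factors, with uniform convergence of the root tuples and the $W^{1,q}$ a priori bound as inputs---matches the paper's. But the treatment of the ``bad'' set $B_\ve$ has a genuine gap.

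The claim that $|B_\ve|\to 0$ as $\ve\to 0$ is false in general. The intersection $\bigcap_{\ve>0} B_\ve$ is exactly the set where all roots of $P_{a(x)}$ coincide; after the Tschirnhausen transformation this is the zero set $Z_{\tilde a}=\{x:\tilde a(x)=0\}$, which can have positive (even full) measure---take, e.g., $a$ vanishing identically on a subinterval. So equi-integrability of the metric-derivative densities does not by itself make the $B_\ve$-contribution small, and the vague ``absorbed into the stratification'' does not fix this: these points do not split further, so there is no induction to fall back on.

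What is actually needed on $\on{acc}(Z_{\tilde a})$ is a direct smallness estimate for $\|\la_n'\|_{L^q}$ on a neighborhood of that set. This is where the $C^d$ hypothesis (as opposed to $C^{d-1,1}$) is used: at every accumulation point $x_0$ of $Z_{\tilde a}$ one has $\tilde a_j^{(s)}(x_0)=0$ for all $0\le s\le d$, and this forces a bound of the form $\|\la_n'\|_{L^q(I(x_0,\de))}\le C\,|I(x_0,\de)|^{1/q}\,\ep$ for $n$ large. The paper obtains this via a modified version of the optimal $L^q$ bound (its \Cref{thm:optimalmod}) whose right-hand side is proportional to $|I|^{1/q}$, so that the local estimates on a cover of $\on{acc}(Z_{\tilde a})$ can be summed; the standard bound \eqref{eq:optimal} does not suffice for this gluing. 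Your sketch supplies nothing in place of this step.

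Two further points that you wave through are genuinely nontrivial. First, that $\dd^{1,q}$ ``behaves well under sums'' requires showing that, for $n$ large, every permutation realizing $\dd(\La(x),\La_n(x))$ respects the simultaneous splitting; this is \Cref{prop:S'} in the paper and uses a quantitative separation of the factors (\Cref{rem:universal}). Second, the $C^d$-stability of the factors $b_n\to b$ is not just a matter of uniform constants: one must choose compatible branches of the radicals $\tilde a_k^{1/k}$ and $\tilde a_{n,k}^{1/k}$ so that the formulas \eqref{eq:bj} yield $C^d$-convergence; this is the content of \eqref{eq:choice} and \Cref{lem:bconv}. Both issues are surmountable, but they are where the work lies, not in the patching.
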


The majority of the paper is dedicated to the proof of \Cref{thm:main1} which will be 
completed in \Cref{sec:proofs1}. 
We will see in \Cref{thm:Almgren and convergence} that the conclusions of \Cref{thm:mainAlmgren} and \Cref{thm:main1} are equivalent.

Let $|\dot \La|$ denote the metric speed and $\cE_q(\La)$ the $q$-energy of the curve $\La \in AC^q(I, \cA_d(\C))$; see \Cref{ssec:ACq} for definitions.
As a consequence of \Cref{thm:main1}, we obtain the following.

\begin{theorem} \label{thm:mainse}
    Let $d\ge 2$ be an integer.
    Let $I \subseteq \R$ be a bounded open interval.
    Let $a_n \to a$ in $C^d(\ol I,\C^d)$ as $n\to \infty$.  
    Let $\La,\La_n : I \to \cA_d(\C)$ be the curves of unordered roots of $P_a$, $P_{a_n}$, respectively.
    Then
    \begin{align*}
        \|\dd(\La,\La_n) \|_{L^\infty(I)}  &\to 0 \quad \text{ as } n \to \infty,
        \\
        \big\| |\dot\La|- |\dot \La_n| \big\|_{L^q(I)} &\to 0 \quad \text{ as } n \to \infty,
        \\
        \big| \cE_q(\La)- \cE_q(\La_n) \big| &\to 0 \quad \text{ as } n \to \infty,
    \end{align*}
    for all $1 \le q < d/(d-1)$.
\end{theorem}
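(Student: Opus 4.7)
The plan is to deduce all three conclusions from \Cref{thm:main1}, which, via the equivalence recorded in \Cref{thm:Almgren and convergence}, we may use in the Almgren form \Cref{thm:mainAlmgren}: for any Almgren embedding $\De : \cA_d(\C) \to \R^N$,
\[
    \|\De \o \La - \De \o \La_n\|_{W^{1,q}(I, \R^N)} \to 0 \quad \text{as } n \to \infty.
\]
From this single input the three statements will be read off using, respectively, (i) the one-dimensional Sobolev embedding, (ii) the identification of the metric speed of an $\cA_d(\C)$-valued AC curve with the Euclidean derivative of its Almgren lift, and (iii) continuity of the $L^q$-norm.

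\textbf{Uniform distance and metric speeds.} For the first limit I would use the Sobolev embedding $W^{1,q}(I,\R^N) \hookrightarrow C(\ol I,\R^N)$, valid on the bounded interval $I$ for every $q \ge 1$ (Morrey's inequality for $q>1$; for $q=1$, the standard pointwise bound obtained by integrating the derivative and averaging over the base point). Thus $\De \o \La_n \to \De \o \La$ uniformly on $\ol I$, and the lower bi-Lipschitz estimate $\dd(A,B) \le c^{-1}|\De(A)-\De(B)|$ converts this into $\|\dd(\La,\La_n)\|_{L^\infty(I)} \to 0$. For the metric speeds, I would invoke the pointwise identity $|\dot\La|(t) = |(\De \o \La)'(t)|$ for a.e.\ $t \in I$ (valid for a suitably chosen Almgren embedding, or equivalently from the intrinsic metric derivative of $\cA_d(\C)$-valued Sobolev curves). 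Applying $\big||x|-|y|\big| \le |x-y|$ pointwise then yields
\[
    \big\||\dot\La| - |\dot\La_n|\big\|_{L^q(I)} \le \|(\De \o \La)' - (\De \o \La_n)'\|_{L^q(I, \R^N)} \to 0.
\]

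\textbf{Energy and main obstacle.} Since $\cE_q(\La) = \int_I |\dot\La|^q\,dt = \big\||\dot\La|\big\|_{L^q(I)}^q$, the third limit is immediate from the second by continuity of the map $u \mapsto \|u\|_{L^q(I)}^q$ on $L^q(I)$. The substantive ingredient in this chain is the identity $|\dot\La| = |(\De\o\La)'|$ a.e., which is strictly stronger than what bi-Lipschitzness of $\De$ alone provides --- the latter would only yield equivalence up to multiplicative constants. This is the point where the specific structure of the Almgren-type embedding has to enter; if not already available from the paper's framework on $d$-valued Sobolev functions, it would need to be proved separately. An attractive alternative is to bypass $\De$ and read the metric speed directly from the intrinsic semimetric $\dd^{1,q}_I$, whose definition is designed precisely to encode this information.
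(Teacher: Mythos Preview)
Your argument for the first limit is correct and matches the paper's (which uses \Cref{cor:homeo}, essentially the same $1/d$-H\"older continuity of the roots). The third limit does follow from the second, as you say.

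The gap is exactly the one you flag yourself: the identity $|\dot\La|=\|(\De\o\La)'\|_2$ a.e.\ is \emph{false} for the Almgren embedding of \eqref{eq:AlmgrenDE}. That map is only bi-Lipschitz, so one only gets $c\,|\dot\La|\le\|(\De\o\La)'\|_2\le C\,|\dot\La|$ with $c<1<C$, and from $\|(\De\o\La)'-(\De\o\La_n)'\|_{L^q}\to 0$ you cannot conclude $\big\||\dot\La|-|\dot\La_n|\big\|_{L^q}\to 0$. No ``suitably chosen'' Almgren embedding repairs this: the combinatorial lemma only produces the constant $\alpha<1$ in \eqref{eq:combineq}, and the embedding is never an isometry for $d\ge 2$. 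So your main route for the second limit does not go through, and your alternative via $\dd^{1,q}_I$ is only a gesture, not an argument.

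The paper circumvents $\De$ entirely for this step. In \Cref{lem:metricspeed} it shows, by a short direct computation, that for any continuous parameterization $\la:I\to\C^d$ of the roots one has
\[
    |\dot\La|(x)=\frac{1}{\sqrt d}\,\|\la'(x)\|_2\quad\text{for a.e.\ }x\in I.
\]
The point is that for $|h|$ small the minimizing permutation in $\dd([\la(x)],[\la(x+h)])$ must fix $\la(x)$, so $\dd([\la(x)],[\la(x+h)])=\tfrac{1}{\sqrt d}\|\la(x)-\la(x+h)\|_2$. With this identity in hand, the second limit reduces to $\big\|\|\la'\|_2-\|\la_n'\|_2\big\|_{L^q(I)}\to 0$, which is precisely \Cref{cor:main1} (itself a short consequence of \Cref{thm:main1}). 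The energy statement then follows as you said. Your proposal would become correct if you replaced the Almgren-derivative identity by this parameterization-based one and invoked \Cref{cor:main1}.
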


Note that there always exist continuous parameterizations 
$\la,\la_n : I \to \C^d$ of the roots of $P_a$, $P_{a_n}$, respectively, 
i.e., $\La = [\la]$ and $\La_n = [\la_n]$ (see \Cref{fn:Kato}).
By \Cref{thm:optimal}, it follows that $\la,\la_n \in W^{1,q}(I, \C^d)$ for all $1 \le q < d/(d-1)$.

The next corollary is an easy consequence of \Cref{thm:main1}, as shown in \Cref{ssec:proofcormain1}.

\begin{corollary} \label{cor:main1}
    Let $d\ge 2$ be an integer.
    Let $I \subseteq \R$ be a bounded open interval.
    Let $a_n \to a$ in $C^d(\ol I,\C^d)$ as $n\to \infty$.  
    Let $\la,\la_n : I \to \C^d$ be  continuous parameterizations of the roots of $P_a$, $P_{a_n}$, respectively.
    Then
    \begin{align*} 
        \big\| \|\la'\|_2 - \|\la_n'\|_2 \big\|_{L^q(I)}\to 0 \quad \text{ as } n \to \infty,
        \\
        \|\la_n'\|_{L^q(I,\C^d)} \to \|\la'\|_{L^q(I,\C^d)} \quad \text{ as } n \to \infty,
    \end{align*}
    for all $1 \le q < d/(d-1)$.
\end{corollary}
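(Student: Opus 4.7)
The plan is to reduce Corollary \ref{cor:main1} to the second assertion of Theorem \ref{thm:mainse} (convergence of metric speeds) by identifying the metric speed of $\La = [\la]$ with the Euclidean norm of the derivative of any continuous parameterization. Concretely, I claim that for any continuous parameterization $\la : I \to \C^d$ of the roots of $P_a$ (which lies in $W^{1,q}(I,\C^d)$ by the optimal regularity result cited in the introduction),
\[
    |\dot\La|(x) = \|\la'(x)\|_2 \quad \text{for a.e. } x \in I,
\]
and analogously $|\dot\La_n|(x) = \|\la_n'(x)\|_2$ a.e. Granting this, the second line of Theorem \ref{thm:mainse} instantly produces $\big\|\|\la'\|_2 - \|\la_n'\|_2\big\|_{L^q(I)} \to 0$, which is the first convergence. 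The second convergence then follows from the reverse triangle inequality in $L^q$, since $\|\la'\|_{L^q(I,\C^d)} = \big\|\|\la'\|_2\big\|_{L^q(I)}$ and similarly for $\la_n$.

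To prove the metric-speed identity, fix $x \in I$ at which $\la$ is differentiable. Using the definition of $\dd$ recalled in \Cref{ssec:AdC}, one has
\[
    \dd(\La(x+h),\La(x))^2 = \min_{\sigma \in S_d} \sum_{j=1}^d |\la_j(x+h) - \la_{\sigma(j)}(x)|^2,
\]
and taking $\sigma = \on{id}$ yields $\dd(\La(x+h),\La(x)) \le \|\la(x+h) - \la(x)\|_2$. Call $\sigma \in S_d$ a \emph{symmetry} of $\la(x)$ if $\la_{\sigma(j)}(x) = \la_j(x)$ for every $j$; any symmetry produces exactly the value $\|\la(x+h) - \la(x)\|_2^2$, because the terms simply get permuted within each multiplicity class. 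If $\sigma$ is not a symmetry, there is an index $j_0$ with $\la_{j_0}(x) \neq \la_{\sigma(j_0)}(x)$, and continuity of $\la$ at $x$ gives a positive lower bound $|\la_{j_0}(x+h) - \la_{\sigma(j_0)}(x)|^2 \ge c_\sigma > 0$ for all sufficiently small $h$. Since $S_d$ is finite, for small $|h|$ the minimum must be attained at a symmetry, so in fact $\dd(\La(x+h),\La(x)) = \|\la(x+h) - \la(x)\|_2$; dividing by $|h|$ and letting $h \to 0$ yields $|\dot\La|(x) = \|\la'(x)\|_2$.

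The main obstacle is the metric-speed identity; the rest is a routine translation. A subtle point is that the continuous parameterization $\la$ of \cite{Kato76} is not canonical at branch points where roots coincide, but the argument above is indifferent to this choice because any two continuous parameterizations agree up to a symmetry of $\la(x)$, which preserves both sides of the identity. Differentiability a.e.\ of $\la$ is guaranteed by $\la \in W^{1,q}(I,\C^d)$, so the identity indeed holds almost everywhere and the reduction to Theorem \ref{thm:mainse} is complete.
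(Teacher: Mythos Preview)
Your metric-speed identity is essentially \Cref{lem:metricspeed} in the paper (you dropped the factor $1/\sqrt d$ from the definition of $\dd$, so the correct statement is $|\dot\La|(x)=\tfrac1{\sqrt d}\|\la'(x)\|_2$, but this constant is irrelevant for the convergence). The argument you give for it is fine.

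The real problem is circularity. In this paper, \Cref{thm:mainse} is \emph{not} proved independently: it is obtained in \Cref{sec:Wasserstein} as a consequence of \Cref{cor:main1} together with \Cref{lem:metricspeed} (see the proof of \Cref{thm:contse}, which is what yields \Cref{thm:mainse}). So deducing \Cref{cor:main1} from the second line of \Cref{thm:mainse} simply loops back on itself.

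The paper's actual proof of \Cref{cor:main1} goes directly through \Cref{thm:main1}: with $\ta(x)$ the minimizing permutation from \Cref{def:dd}, one has
\[
\big\|\|\la'\|_2-\|\la_n'\|_2\big\|_{L^q(I)}
=\big\|\|\la'\|_2-\|\ta\la_n'\|_2\big\|_{L^q(I)}
\le\big\|\|\la'-\ta\la_n'\|_2\big\|_{L^q(I)}
\le\sqrt d\,\|\bs_1(\La,\La_n)\|_{L^q(I)},
\]
and the right-hand side is $\le\sqrt d\cdot\dd^{1,q}_I(\La,\La_n)\to 0$ by \Cref{thm:main1}. The second convergence then follows from the reverse triangle inequality, exactly as you do. So the fix is simply to replace your appeal to \Cref{thm:mainse} by this one-line estimate against $\bs_1$ and invoke \Cref{thm:main1} instead.
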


We shall see in \Cref{sec:Wasserstein} 
that \Cref{cor:main1} implies \Cref{thm:mainse}. 
Note that \Cref{thm:contse} is an interpretation of \Cref{thm:mainse} in the Wasserstein space on $\C$.

Since the components of $\la_n$ and $\la$ are absolutely continuous, 
\Cref{cor:main1} for $q=1$ immediately gives the following consequence.

\begin{corollary} \label{cor:main12}
    In the setting of \Cref{cor:main1},
    \[
        \on{length}(\la_n) \to \on{length}(\la) \quad \text{ as } n \to \infty.
    \]
\end{corollary}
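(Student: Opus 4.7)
The plan is to reduce \Cref{cor:main12} directly to the $q=1$ case of \Cref{cor:main1}. The length of an absolutely continuous curve $\mu : I \to \C^d$ (with $\C^d \cong \R^{2d}$ equipped with its Euclidean norm $\|\cdot\|_2$) equals
\[
    \on{length}(\mu) = \int_I \|\mu'(x)\|_2 \, dx = \|\mu'\|_{L^1(I,\C^d)}.
\]
This standard formula applies to both $\la$ and $\la_n$ by the hypothesis, recalled just before the statement of \Cref{cor:main12}, that the components of $\la$ and $\la_n$ are absolutely continuous.

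Next I would note that the value $q=1$ is always admissible in the range $1 \le q < d/(d-1)$, since $d \ge 2$ forces $d/(d-1) > 1$. Therefore \Cref{cor:main1} applied with $q=1$ yields
\[
    \|\la_n'\|_{L^1(I,\C^d)} \longrightarrow \|\la'\|_{L^1(I,\C^d)} \quad \text{ as } n \to \infty,
\]
which is precisely the desired convergence $\on{length}(\la_n) \to \on{length}(\la)$. Equivalently, one can apply the first conclusion of \Cref{cor:main1} with $q=1$ and integrate:
\[
    \Big| \on{length}(\la_n) - \on{length}(\la) \Big| \le \int_I \big| \|\la_n'\|_2 - \|\la'\|_2 \big| \, dx = \big\| \|\la'\|_2 - \|\la_n'\|_2 \big\|_{L^1(I)} \longrightarrow 0.
\]

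There is no genuine obstacle here; the only points that merit mention are the identification of the arc length with the $L^1$ norm of the derivative (valid since all components are absolutely continuous) and the verification that $q=1$ lies in the permissible range. Everything else is contained in \Cref{cor:main1}.
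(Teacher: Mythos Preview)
Your proof is correct and follows exactly the approach indicated in the paper: reduce to the $q=1$ case of \Cref{cor:main1} using that the length of an absolutely continuous curve equals the $L^1$ norm of its derivative.
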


We have the following variant of \Cref{thm:main1}.

\begin{theorem} \label{thm:main1var}
    Let $d\ge 2$ be an integer.
    Let $I \subseteq \R$ be a bounded open interval.
    Let $a_n \to a$ in $C^d(\ol I,\C^d)$ as $n\to \infty$.  
    Assume that $\la_n : I \to \C^d$ is a continuous parameterization of the roots of $P_{a_n}$
    and that $\la_n$ converges in $C^0(\ol I,\C^d)$ to a continuous parameterization $\la$ of the roots of $P_a$.
    Then 
    \[
        \|\la' - \la_n'\|_{L^q(I,\C^d)} \to 0 \quad \text{ as } n \to \infty,
    \]
    for all $1 \le q <d/(d-1)$. 
\end{theorem}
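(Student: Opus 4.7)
The plan is to combine three ingredients: a uniform $W^{1,q}$-bound on the $\lambda_n$, the assumed uniform convergence $\lambda_n\to\lambda$, and the norm convergence $\|\lambda_n'\|_{L^q}\to\|\lambda'\|_{L^q}$ supplied by \Cref{cor:main1}. The first two give weak $L^q$-convergence of the derivatives, and the third upgrades this to strong $L^q$-convergence via the Radon--Riesz property of uniformly convex spaces. A routine embedding handles the borderline case $q=1$.

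\emph{Step 1 (uniform bounds).} Since $C^d \hookrightarrow C^{d-1,1}$ and $a_n\to a$ in $C^d(\ol I,\C^d)$, the family $\{a_n\}$ is bounded in $C^{d-1,1}(\ol I,\C^d)$. By \Cref{thm:optimal} and the uniform estimate \eqref{eq:optimal}, each continuous root is bounded in $W^{1,q}(I,\C)$ with a bound depending only on $\|a_n\|_{C^{d-1,1}}$. Hence for every $q\in[1,d/(d-1))$ there is $M=M(q)$ with $\|\la_n\|_{W^{1,q}(I,\C^d)}\le M$ for all $n$, and $\la\in W^{1,q}(I,\C^d)$ as well.

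\emph{Step 2 (weak convergence of the derivatives).} Fix $q\in(1,d/(d-1))$. The reflexivity of $L^q$ and the bound from Step~1 imply that every subsequence of $\{\la_n'\}$ has a further subsequence converging weakly in $L^q(I,\C^d)$ to some $\mu$. Testing against $\vh\in C_c^\infty(I)$ and using the uniform convergence $\la_n\to\la$, one obtains
\[
    \int_I \mu\,\vh\,dx = -\lim_n \int_I \la_n\,\vh'\,dx = -\int_I \la\,\vh'\,dx = \int_I \la'\,\vh\,dx,
\]
so $\mu=\la'$ a.e. By the Urysohn subsequence principle, the full sequence satisfies $\la_n'\rightharpoonup \la'$ weakly in $L^q(I,\C^d)$.

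\emph{Step 3 (norm convergence and strong convergence).} A simple observation (permutations between any two continuous parameterizations of the same unordered tuple are locally constant off the coincidence set) shows that $\|\la'(x)\|_2$ depends only on $\La(x)$, so \Cref{cor:main1} applies to the specific parameterizations $\la_n,\la$ and yields $\|\la_n'\|_{L^q(I,\C^d)}\to\|\la'\|_{L^q(I,\C^d)}$. Combined with the weak convergence from Step~2 and the uniform convexity of $L^q(I,\C^d)$ for $q>1$, the Radon--Riesz theorem gives $\la_n'\to\la'$ strongly in $L^q(I,\C^d)$.

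\emph{Step 4 (the case $q=1$).} Since $d\ge 2$, pick any $q_0\in(1,d/(d-1))$ and apply Step~3; because $I$ is bounded, $L^{q_0}(I,\C^d)\hookrightarrow L^1(I,\C^d)$ continuously, so strong convergence in $L^{q_0}$ implies strong convergence in $L^1$.

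The main obstacle is really Step~3: weak convergence of the derivatives is essentially automatic given the $C^0$-convergence hypothesis, so everything hinges on having the exact norm convergence of $\|\la_n'\|_{L^q}$ to $\|\la'\|_{L^q}$, which is precisely the non-trivial content transferred from \Cref{thm:main1} via \Cref{cor:main1}. Once that input is in place, the Radon--Riesz mechanism closes the argument cleanly.
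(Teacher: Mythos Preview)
Your proof is correct and takes a genuinely different route from the paper's. The paper re-runs a simplified version of the induction machinery behind \Cref{thm:main1}: it reduces to Tschirnhausen form, proves a variant of the splitting/induction argument (\Cref{prop:inductionvar}) under the extra hypothesis of uniform convergence of the parameterizations, extracts a subsequence with pointwise a.e.\ convergence $\la_{n_k}'\to\la'$, and concludes by dominated convergence together with \Cref{lem:reals}. Your argument, by contrast, treats \Cref{cor:main1} as a black box: the hard analytic content (norm convergence $\|\la_n'\|_{L^q}\to\|\la'\|_{L^q}$) is imported wholesale, and the remaining step---upgrading weak convergence plus norm convergence to strong convergence via Radon--Riesz in the uniformly convex space $L^q(I,\C^d)$---is soft functional analysis. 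This is more economical, since the paper's approach essentially duplicates the splitting/induction work already done for \Cref{thm:main1}, whereas you reuse that work through its corollary. The paper's route, on the other hand, makes the pointwise mechanism visible (one sees exactly where the uniform convergence of $\la_n$ enters, namely in tracking the factors through the simultaneous splitting), and it yields a.e.\ convergence of a subsequence of the derivatives as a byproduct, which your argument does not.
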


It is clear that the conclusion of \Cref{cor:main1} holds in this setting.

Note that not every continuous parameterization $\la$ of the roots of $P_a$ 
is the limit of continuous parameterizations $\la_n$ of the roots of $P_{a_n}$; see \Cref{ex:lifts}.

\begin{remark}
    In all our results, we require convergence of the coefficient vectors in $C^d(\ol I,\C^d)$, not just in $C^{d-1,1}(\ol I,\C^d)$.
    This seems natural (in view of a continuity instead of a boundedness result) but we do not know if it is necessary. See \Cref{ssec:optimality}. 
\end{remark}

We expect that our results have generalizations to $d$-degree algebraic hypersurfaces in $\C\mathbb P^n$, 
in the spirit of \cite{Antonini:2022aa}.
We hope to address this in a future work.

\subsection{Multiparameter versions}

Let $\De : \Iq \to \R^N$ be an Almgren embedding.

\begin{theorem} \label{thm:mainAlmgrenmult}
    Let $d\ge 2$ be an integer.
    Let $U \subseteq \R^m$ be a bounded open box, $U=I_1 \times \cdots \times I_m$.
    Let $a_n \to a$ in $C^d(\ol U,\C^d)$, i.e.,  
    \begin{equation*}
        \|a-a_n\|_{C^d(\ol U,\C^d)} \to 0 \quad \text{ as } n \to \infty.
    \end{equation*}
    Let $\La,\La_n : U \to \cA_d(\C)$ be the maps of unordered roots of $P_a$, $P_{a_n}$, respectively.
    Then
    \begin{equation*}
        \|\De \o \La - \De \o \La_n\|_{W^{1,q}(U,\R^N)} \to 0 \quad \text{ as } n \to \infty,
    \end{equation*}
    for all $1 \le q < d/(d-1)$.
\end{theorem}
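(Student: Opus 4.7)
The plan is a sectioning argument along each coordinate direction, reducing to the one-dimensional \Cref{thm:mainAlmgren}. Write $U = I_1 \times \cdots \times I_m$ and, for each $j \in \{1,\ldots,m\}$, set $U'_j := \prod_{i \ne j} I_i$, with generic point $x' \in U'_j$, so that $U = I_j \times U'_j$ after relabelling. Since
\begin{equation*}
    \|\De \o \La - \De \o \La_n\|_{W^{1,q}(U,\R^N)} \le \|\De \o \La - \De \o \La_n\|_{L^q(U,\R^N)} + \sum_{j=1}^m \|\partial_j(\De \o \La - \De \o \La_n)\|_{L^q(U,\R^N)},
\end{equation*}
it suffices to treat the zeroth-order term and each $\partial_j$-term separately. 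For the zeroth-order term, the coefficients-to-unordered-roots map $\C^d \to \cA_d(\C)$ is continuous (in fact $(1/d)$-Hölder) in the $\dd$-metric, so $a_n \to a$ in $C^0(\ol U,\C^d)$ forces $\dd(\La_n(x),\La(x)) \to 0$ uniformly in $x \in U$. Combined with the Lipschitz property of $\De$, this yields uniform convergence of $\De \o \La_n$ to $\De \o \La$ on $U$, hence $L^q$-convergence on the bounded domain.

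For the $\partial_j$-term, fix $x' \in U'_j$ and restrict to the slice $\{x'\} \times I_j$. The sliced coefficients $a(x',\cdot), a_n(x',\cdot)$ lie in $C^d(\ol{I_j},\C^d)$ with $\|a_n(x',\cdot)-a(x',\cdot)\|_{C^d(\ol{I_j},\C^d)} \le \|a_n-a\|_{C^d(\ol U,\C^d)} \to 0$, and by construction $\La(x',\cdot)$ and $\La_n(x',\cdot)$ are precisely the one-parameter curves of unordered roots of these sliced coefficients. By the ACL characterisation of Sobolev functions, the slice-wise classical derivative of $\De \o \La(x',\cdot)$ coincides a.e.\ with the restriction of $\partial_j(\De \o \La)$ to the slice (and similarly for $\La_n$), so \Cref{thm:mainAlmgren} applied on $I_j$ yields the pointwise convergence
\begin{equation*}
    \Phi_n(x') := \|\partial_j(\De \o \La - \De \o \La_n)(x',\cdot)\|_{L^q(I_j,\R^N)}^q \longrightarrow 0 \quad \text{ for a.e.\ } x' \in U'_j.
\end{equation*}

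To upgrade this pointwise statement to convergence in $L^1(U'_j)$, I would invoke dominated convergence. The uniform bound \eqref{eq:optimal} applied along the slice controls $\|\partial_j(\De \o \La_n)(x',\cdot)\|_{L^q(I_j,\R^N)}$ by a constant depending only on $\|a_n(x',\cdot)\|_{C^{d-1,1}(\ol{I_j},\C^d)} \le \|a_n\|_{C^d(\ol U,\C^d)}$, which is bounded uniformly in $x'$ and $n$ because $a_n \to a$ in $C^d(\ol U,\C^d)$; the same holds for $\La$. Thus $\Phi_n$ is uniformly bounded on the bounded set $U'_j$, and dominated convergence gives
\begin{equation*}
    \|\partial_j(\De \o \La-\De \o \La_n)\|_{L^q(U,\R^N)}^q = \int_{U'_j}\Phi_n(x')\,dx' \to 0.
\end{equation*}
Summing over $j$ completes the argument. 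The main technical checkpoint I anticipate is the slicewise ACL identification together with the a priori membership $\De \o \La \in W^{1,q}(U,\R^N)$; both can be obtained by applying \Cref{thm:optimal} and \Cref{cor:aLabd} in each coordinate direction separately and combining the resulting slicewise $W^{1,q}$-estimates via Fubini.
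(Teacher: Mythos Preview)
Your proposal is correct and follows essentially the same sectioning strategy as the paper: handle the $L^q$-term via uniform convergence of $\De\o\La_n$, and for each partial derivative integrate the one-dimensional result \Cref{thm:mainAlmgren} over the transverse variables using Tonelli.

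The one substantive difference is in how you pass from slicewise convergence $\Phi_n(x')\to 0$ to $\int_{U'_j}\Phi_n\to 0$. The paper invokes Vitali's convergence theorem after establishing uniform integrability of $\{\Phi_n\}$ via de~la~Vall\'ee~Poussin (bounding $\sup_n\int \Phi_n^r$ for some $r>1$ using the $W^{1,qr}$-boundedness of $\De\o\La_n$ on $U$). You instead observe directly that the slicewise bound \eqref{eq:optimal} gives a \emph{constant} pointwise majorant for $\Phi_n$, so dominated convergence applies outright. Your route is simpler and perfectly valid here; the paper's Vitali argument is more robust in that it only needs an integral bound on the family rather than a pointwise one, but in this particular setting the pointwise bound is available and your shortcut works.
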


It should be added that the maps $\La,\La_n : U \to \cA_d(\C)$ are continuous (see \Cref{lem:homeo}) and that we even have uniform convergence $\De \o \La_n \to \De \o \La$ on $U$.
\Cref{thm:mainAlmgrenmult} will be proved in \Cref{sec:proofs2}.

As a consequence, we immediately get a solution of \cite[Open Problem 4.8]{Parusinski:2023ab}: 

\begin{corollary} \label{cor:OP4.8}
    Let $U \subseteq \R^m$ be open.
    For all $1 \le q <d/(d-1)$,
    the ``coefficients-to-roots'' map 
    \[
        C^d(U,\C^d) \to W^{1,q}_{\on{loc}}(U,\cA_d(\C)), \quad a \mapsto \La,
    \]
    is continuous with respect to the topology induced by \eqref{eq:Almgrenintro} for all relatively compact open 
    subsets in $U$.
\end{corollary}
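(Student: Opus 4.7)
\smallskip
The plan is to reduce the statement to \Cref{thm:mainAlmgrenmult} by a standard exhaustion and covering argument; no new analytic input beyond what is already proved in the multiparameter box case is required.

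First I would unwind what convergence in $C^d(U,\C^d)$ means when $U$ is merely open: it is the Fréchet topology of uniform convergence of all partial derivatives up to order $d$ on compact subsets of $U$. Thus $a_n \to a$ in $C^d(U,\C^d)$ is equivalent to $\|a-a_n\|_{C^d(K,\C^d)} \to 0$ for every compact $K \subseteq U$. Correspondingly, the topology on $W^{1,q}_{\on{loc}}(U,\cA_d(\C))$ induced by \eqref{eq:Almgrenintro} on relatively compact open subsets $V \Subset U$ is generated by the seminorms $f \mapsto \|\De\o f\|_{W^{1,q}(V,\R^N)}$. So the goal reduces to showing that for every $V \Subset U$,
\[
    \|\De \o \La - \De \o \La_n\|_{W^{1,q}(V,\R^N)} \to 0 \quad \text{ as } n \to \infty.
\]

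Next, for such $V$, since $\ol V$ is compact and contained in $U$, I would cover $\ol V$ by finitely many bounded open boxes $B_1,\ldots,B_k$ in $\R^m$ with $\ol{B_i} \subseteq U$. Setting $K := \bigcup_{i=1}^k \ol{B_i}$, the hypothesis gives $\|a-a_n\|_{C^d(K,\C^d)} \to 0$, and hence $\|a-a_n\|_{C^d(\ol{B_i},\C^d)} \to 0$ for each $i$. Applying \Cref{thm:mainAlmgrenmult} on each box $B_i$ separately yields
\[
    \|\De \o \La - \De \o \La_n\|_{W^{1,q}(B_i,\R^N)} \to 0 \quad \text{ as } n \to \infty,
\]
for every $i=1,\ldots,k$ and every $1 \le q<d/(d-1)$.

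Finally, since $V \subseteq B_1 \cup \cdots \cup B_k$, the $W^{1,q}$-norm on $V$ is controlled by the sum of the $W^{1,q}$-norms on the $B_i$,
\[
    \|\De \o \La - \De \o \La_n\|_{W^{1,q}(V,\R^N)}^q
    \le \sum_{i=1}^k \|\De \o \La - \De \o \La_n\|_{W^{1,q}(B_i,\R^N)}^q,
\]
so summing the $k$ limits above gives the desired convergence. There is no real obstacle here: the content of the corollary is entirely contained in \Cref{thm:mainAlmgrenmult}, and the only thing to check is the compatibility of the Fréchet $C^d$ topology on $U$ with the local $W^{1,q}$ topology on $\cA_d(\C)$-valued maps, which is handled by the above covering.
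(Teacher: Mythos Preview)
Your proof is correct and is exactly what the paper has in mind: the corollary is stated there as an immediate consequence of \Cref{thm:mainAlmgrenmult}, and the standard exhaustion/covering argument you wrote out is the intended reduction.
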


We will see in \Cref{thm:topmult} that the conclusions of 
\Cref{thm:mainAlmgrenmult} and \Cref{cor:OP4.8} 
are independent of the choice of the Almgren embedding $\De$.

\begin{remark}
    It is possible to obtain multiparameter versions of \Cref{thm:main1}, \Cref{thm:mainse}, \Cref{cor:main1}, and 
    \Cref{thm:main1var}, by working with suitable multivariate definitions and adjusting the sectioning argument 
    in the proof of \Cref{thm:mainAlmgrenmult}.
    This will be demonstrated in \Cref{thm:multex}.

    However, note that  continuous parameterizations of the roots might not always exist (even locally) if the parameter 
    space is at least $2$-dimensional because of monodromy. 
    Nevertheless, due to \cite{Parusinski:2020aa}, 
    there always exist parameterizations of the roots by functions of bounded variation.
\end{remark}

\subsection{Hyperbolic polynomials} \label{ssec:hyperbolic}

Let us briefly comment on the case of hyperbolic polynomials, in which 
canonical choices of continuous parameterizations of the roots exist and stronger results hold true.
We refer to \cite{Parusinski:2024aa}. A monic polynomial $P_a$ of degree $d$ is called \emph{hyperbolic} if
all its $d$ roots (counted with multiplicities) are real. The space $\on{Hyp}(d)$ of monic hyperbolic polynomials of degree $d$
can be identified with a semialgebraic subset of $\R^d$ (via the coefficient vector $a$). 
Ordering the roots of $P_a \in \on{Hyp}(d)$ increasingly, 
induces a continuous solution map 
\[
    \la^\uparrow =(\la^\uparrow_1,\ldots,\la^\uparrow_d) : \on{Hyp}(d) \to \R^d,
\]
where $\la^\uparrow_1 \le \la^\uparrow_2 \le \cdots \le \la^\uparrow_d$.
Bronshtein's theorem \cite{Bronshtein79} (see also \cite{ParusinskiRainerHyp}) states that 
\begin{align*}
    (\la^\uparrow)_* : C^{d-1,1}(U, \on{Hyp}(d)) &\to C^{0,1}(U,\R^d), 
    \\
    (x \mapsto P_{a(x)}) &\mapsto (x \mapsto \la^\uparrow (P_{a(x)})),
\end{align*}
is well-defined and bounded, where $U \subseteq \R^m$ is open.
Hereby the space $C^{d-1,1}(U, \on{Hyp}(d)) = \{f \in C^{d-1,1}(U,\R^d) : f(U) \subseteq \on{Hyp}(d)\}$ carries the trace topology of the Fr\'echet topology of $C^{d-1,1}(U, \R^d)$.

\begin{theorem}[\cite{Parusinski:2024aa}] \label{thm:hyp}
   The map  
   $(\la^\uparrow)_* : C^{d}(U, \on{Hyp}(d)) \to W^{1,q}_{\on{loc}}(U,\R^d)$ is continuous, for all $1 \le q<\infty$.
\end{theorem}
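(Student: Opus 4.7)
The plan is to reduce the multiparameter statement to the one-parameter case by a slicing argument, and in one parameter to combine \Cref{thm:main1var} (applied to the complexification of the real coefficients) with Bronshtein's $L^\infty$-bound on the derivatives via a straightforward interpolation.

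For $U = I$ a bounded open interval and $a_n \to a$ in $C^d(\ol I, \R^d)$ with values in $\Hyp(d)$, the union $a(\ol I) \cup \bigcup_n a_n(\ol I)$ is a bounded subset of the closed set $\Hyp(d) \subseteq \R^d$, hence has compact closure in $\Hyp(d)$. Since $\la^\uparrow : \Hyp(d) \to \R^d$ is continuous, uniform continuity on this compact set yields $\la^\uparrow \o a_n \to \la^\uparrow \o a$ in $C^0(\ol I, \R^d)$. Viewing the real coefficients as complex, $\la^\uparrow \o a_n$ and $\la^\uparrow \o a$ are continuous parameterizations of the roots of $P_{a_n}$ and $P_a$, and \Cref{thm:main1var} (applied with $q_0 = 1 < d/(d-1)$) gives
\[
    \|(\la^\uparrow \o a_n)' - (\la^\uparrow \o a)'\|_{L^1(I,\R^d)} \to 0.
\]
Bronshtein's theorem provides a uniform $L^\infty$-bound $M$ on all these derivatives, and for any $q \in [1,\infty)$ the interpolation
\[
    \|g\|_{L^q(I)} \le \|g\|_{L^\infty(I)}^{1-1/q} \|g\|_{L^1(I)}^{1/q}
\]
applied to $g := (\la^\uparrow \o a_n)' - (\la^\uparrow \o a)'$ (whose $L^\infty$-norm is at most $2M$) upgrades $L^1$-convergence to $L^q$-convergence, covering every $1 \le q < \infty$.

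For the multiparameter case, fix a relatively compact open box $U' = I_1 \times \cdots \times I_m \Subset U$ and $j \in \{1,\ldots,m\}$. For almost every fixed choice of the coordinates other than the $j$-th, the weak partial derivative $\p_j(\la^\uparrow \o a)$ (respectively $\p_j(\la^\uparrow \o a_n)$) coincides a.e.\ with the classical derivative of the restriction to the corresponding slice $I_j$. Since $a_n \to a$ in $C^d(\ol{U'})$ restricts to $C^d(\ol{I_j})$-convergence on every such slice, the one-parameter result yields slice-wise $L^q(I_j, \R^d)$-convergence of $\p_j(\la^\uparrow \o a_n)$. Bronshtein's uniform Lipschitz bound dominates each slice-$L^q$-norm by $(2M)^q |I_j|$, whence Fubini combined with dominated convergence promotes slice-wise convergence to $L^q(U', \R^d)$-convergence of $\p_j(\la^\uparrow \o a_n)$ to $\p_j(\la^\uparrow \o a)$. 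Uniform convergence $\la^\uparrow \o a_n \to \la^\uparrow \o a$ on $\ol{U'}$ follows from the same continuity-plus-compactness argument applied to $\ol{U'}$, delivering $W^{1,q}(U', \R^d)$-convergence and hence $W^{1,q}_{\on{loc}}(U, \R^d)$-convergence.

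The main obstacle is organizational rather than analytic: one must ensure measurable compatibility of the slice-wise application of \Cref{thm:main1var} with Fubini's theorem, which is handled by the same sectioning template used in the proof of \Cref{thm:mainAlmgrenmult}. Once this is in place, no new analytic input beyond Bronshtein's $C^{0,1}$-bound and \Cref{thm:main1var} is required; the enlargement of the range of $q$ from $[1, d/(d-1))$ to $[1, \infty)$ comes entirely from the $L^\infty$-interpolation enabled by hyperbolicity.
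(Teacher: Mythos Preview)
Your argument is correct, but it takes a genuinely different route from what the paper indicates. The paper does not prove \Cref{thm:hyp} here; it cites \cite{Parusinski:2024aa} and remarks that the proof there ``follows the same general strategy as the one of \Cref{thm:main1} but it is much simpler'' --- i.e., a direct argument via the Tschirnhausen/splitting/induction machinery, streamlined by the canonical ordering $\la^\uparrow$ available in the hyperbolic case. Your approach instead derives \Cref{thm:hyp} as a corollary of the complex result \Cref{thm:main1var} plus Bronshtein's Lipschitz bound: \Cref{thm:main1var} supplies $L^1$-convergence of the derivatives along each slice, Bronshtein supplies a uniform $L^\infty$ bound, and the elementary interpolation $\|g\|_{L^q} \le \|g\|_{L^\infty}^{1-1/q}\|g\|_{L^1}^{1/q}$ upgrades this to every $q<\infty$; the passage to several parameters then follows the same sectioning-plus-dominated-convergence template as in \Cref{sec:proofs2}, with the domination now coming for free from the Lipschitz bound rather than from a de la Vall\'ee Poussin argument. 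Within the present paper your route is economical and makes transparent exactly where hyperbolicity enters (only through the $L^\infty$ control of derivatives), at the cost of invoking the full complex machinery behind \Cref{thm:main1var}; the cited direct proof is self-contained and independent of the complex case.
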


But $(\la^\uparrow)_* : C^{d}(U, \on{Hyp}(d)) \to C^{0,1}(U,\R^d)$ is not continuous as shown by an example in \cite{Parusinski:2024aa}.

In the real case, the map $(\cdot)^\uparrow : \cA_d(\R) \to \R^d$ (that orders the coordinates increasingly) is a Lipschitz right-inverse of $[\cdot] : \R^d \to \cA_d(\R)$, 
thus a canonical version of an Almgren embedding.

The proof of \Cref{thm:hyp} follows the same general strategy as the one of \Cref{thm:main1} but it is much simpler.

\subsection{Outline of the proof of \Cref{thm:main1}}

We first give a full proof of the special case of radicals and then tackle the general case, using the result for radicals.

\subsubsection*{Radical case}

Here the polynomials take the simple form
\[
    Z^d = g \quad \text{ and } \quad Z^d = g_n, \quad n\ge1,
\]
where we assume that $g_n \to g$ in $C^d(\ol I,\C)$ as $n \to \infty$.
Let $\la,\la_n : I \to \C$ be continuous functions satisfying $\la^d =g$ and $\la_n^d=g_n$.

The proof essentially consists of two parts. 
First, on the complement of the zero set $Z_g$ of $g$, we show that 
the distance of $\la'(x)$ and $\th^{r(x)}\la_n'(x)$, where $\th$ is a $d$-th root of unity and the power $r(x) \in \{1,\ldots,d\}$ is chosen such that 
the distance of $\la(x)$ and $\th^{r(x)} \la_n(x)$ is minimal,
tends to zero as $n \to \infty$. Then we use the dominated convergence theorem; the domination is guaranteed by a result of Ghisi and Gobbino \cite{GhisiGobbino13}
which we recall, in slightly adapted form, in \Cref{prop:m}.

Secondly, on the accumulation points $\on{acc}(Z_g)$ of $Z_g$,
the derivative $\la'$ vanishes (where it exists). 
Using the uniform bounds for the $L^q$-norm (for $1 \le q <d/(d-1)$) of $\la_n'$ given in \Cref{prop:m},
we prove that $\|\la_n'\|_{L^q(\on{acc}(Z_g))} \to 0$ as $n \to \infty$.

This is enough to conclude the proof since $Z_g \setminus \on{acc}(Z_g)$ has measure zero.

\subsubsection*{General case} 

The proof of the general case proceeds by induction on the degree of the polynomials.
It follows the overall strategy of our proof of the optimal Sobolev regularity of the roots in \cite{ParusinskiRainer15}; see also \Cref{thm:optimal}.

Without loss of generality we may assume that the polynomials $P_{\tilde a_n}$, for $n \ge 1$, and $P_{\tilde a}$ are in Tschirnhausen form, i.e., the coefficients of $Z^{d-1}$ vanish identically.
(For notational clarity, we consistently equip the coefficients of polynomials in Tschirnhausen form with a ``tilde''.)
Let $\la,\la_n : I \to \C^d$ be continuous parameterizations of the roots of $P_{\tilde a}$, $P_{\tilde a_n}$, respectively.

On the zero set $Z_{\tilde a}$ of the coefficient vector $\tilde a$, all roots of $P_{\tilde a}$ are equal to zero, 
hence, $\la'(x)=0$ for all $x \in \on{acc}(Z_{\tilde a})$, where $\la'(x)$ exists.
In analogy to the radical case, we show that $\|\la_n'\|_{L^q(\on{acc}(Z_g), \C^d)} \to 0$ as $n \to \infty$, for all $1 \le q<d/(d-1)$.
To this end, we modify in \Cref{thm:optimalmod} the uniform bounds found in \cite{ParusinskiRainer15}.

For each $x_0$ in the complement of $Z_{\tilde a}$, we find an interval $I' \subseteq I$ containing $x_0$ 
on which the polynomial $P_{\tilde a}$ splits and, for large enough $n$, also $P_{\tilde a_n}$ splits. 
More precisely, on $I'$ and for large $n$, we have simultaneous splittings into polynomial factors 
\[
    P_{\tilde a} = P_bP_{b^*} \quad \text{ and } P_{\tilde a_n} = P_{b_n}P_{b_n^*},
\]
where
\begin{itemize}
    \item $d_b:=\deg P_b = \deg P_{b_n} < d$, and
    \item there exist bounded analytic functions $\ps_i$ with bounded partial derivatives of all orders 
        such that the coefficients of $P_b$ and $P_{b_n}$ are given by
        \begin{align*}
            b_{i} &= \tilde a_k^{i/k}\, \ps_i(\tilde a_k^{-2/k} \tilde a_2, \ldots, \tilde a_k^{-d/k} \tilde a_d),  
            \\
            b_{n,i} &= \tilde a_{n,k}^{i/k}\, \ps_i(\tilde a_{n,k}^{-2/k} \tilde a_{n,2}, \ldots, \tilde a_{n,k}^{-d/k} \tilde a_{n,d}).  
        \end{align*}
\end{itemize}
The same is true for the second factors in the splitting
and similar formulas hold for the coefficients of the factors after putting them in Tschirnhausen form.
Here $k \in \{2,\ldots, d\}$ is chosen such that $|\tilde a_k(x_0)|^{1/k} \ge |\tilde a_j(x_0)|^{1/j}$ for all $2 \le j \le d$, 
which entails $|\tilde a_{n,k}(x_0)|^{1/k} \ge\frac{2}{3} |\tilde a_{n,j}(x_0)|^{1/j}$ for all $2 \le j \le d$ and large enough $n$. 
Note that $\tilde a_k$ and $\tilde a_{n,k}$ are bounded away from zero on $I'$.

Using that the composition from the left with an analytic function is continuous on the space of $C^d$ maps (see \Cref{prop:lefttr}),
we conclude that 
\begin{align*}
    \|b - b_n\|_{C^d(\ol {I'},\C^{d_b})} \to 0 \quad 
    \text{ as } n \to \infty; 
\end{align*}
and similarly for the second factors.
This allows us to argue by induction on the degree; for the precise induction argument see \Cref{prop:induction}.

Finally, the proof of \Cref{thm:main1} will be completed in \Cref{lem:subsequence} 
with an application of Vitali's convergence theorem. The uniform integrability follows 
from the uniform bounds proved in \cite{ParusinskiRainer15}.

\subsection{Organization of the paper}

After recalling general facts on the function spaces and fixing notation in \Cref{sec:spaces},
we introduce in \Cref{sec:dSob} the metric space $\cA_d(\C)$ of unordered $d$-tuples of complex numbers 
and the Sobolev space $W^{1,q}(I,\cA_d(\C))$.
In \Cref{sec:dSob}, we also define and discuss the semimetric $\dd^{1,q}_I$ on this space and the 
corresponding notion of convergence.

\Cref{sec:Almgrenproof} is dedicated to the proof of \Cref{thm:Almgren and convergence} which implies 
that the conclusions of \Cref{thm:mainAlmgren} and \Cref{thm:main1} are equivalent.

In \Cref{sec:rad}, we give a complete proof of the radical case. While it contains some of the main ideas,
it is much simpler than the general case since the splitting principle is not needed. 

In \Cref{sec:poly}, we collect facts on polynomials and prepare the tools for the general case.
We recall our result on the optimal Sobolev regularity of the roots in \Cref{sec:opt} 
proving a new uniform bound for the $L^q$ norm of the derivatives of the roots.
This new bound is a crucial ingredient, besides the splitting principle and the result in the radical case, 
for the proof of \Cref{thm:main1} which is carried out in \Cref{sec:proofs1}.

In \Cref{sec:main1var}, we prove \Cref{thm:main1var}. 
In \Cref{sec:proofs2}, 
multiparameter versions are deduced by sectioning arguments, in particular, \Cref{thm:mainAlmgrenmult} is proved.
In \Cref{sec:Wasserstein}, we interpret the main results in the Wasserstein space on $\C$ which finally leads to 
the proof of \Cref{thm:mainse}.

In the \Cref{sec:appendix}, we recall Vitali's convergence theorem and give a short proof of \Cref{prop:lefttr}.

\subsection*{Notation}

The $m$-dimensional Lebesgue measure in $\R^m$ is denoted by $\cL^m$.  
If not stated otherwise, ``measurable'' means ``Lebesgue measurable'' and ``almost everywhere'' means 
``almost everywhere with respect to Lebesgue measure''. For measurable $E \subseteq \R^m$, we usually write 
$|E|=\cL^m(E)$.

For $1 \le p \le \infty$, $\|z\|_p$ denotes the $p$-norm of $z \in \C^d$. 
If $f : E \to \C^d$, for measurable $E \subseteq \R^m$, is a measurable map, then we set 
\[
    \|f\|_{L^p(E,\C^d)} := \big\|\|f\|_2 \big\|_{L^p(E)}. 
\]

For us a set is countable if it is either finite or has the cardinality of $\N$.

A selection of a set-valued map $F: X \to 2^Y$ between sets $X$ and $Y$ 
is a map $f : X \to Y$ such that $f(x) \in F(x)$ for all $x \in X$.
A parameterization of $F$ is a pair $(f,Z)$, where $f: X \times Z \to Y$ 
is such that $F(x) = \{f(x,z): z \in Z\}$ for all $x \in X$.
For instance, the roots of a monic polynomial $P_a$ of degree $d$ form a set-valued map $\C^d \ni a \mapsto \La(a) \in 2^{\C}$ 
and a parameterization of the roots is a map $\la : \C^d \times \{1,\ldots,d\} \to \C$ with $\La(a) = \{\la(a,1),\ldots,\la(a,d)\}$ for all $a \in \C^d$.

\section{Function spaces} \label{sec:spaces}

Let us fix notation and recall background on the function spaces used in this paper.

\subsection{H\"older--Lipschitz spaces}

Let $U \subseteq \R^m$ be open and $k \in \N$.
Then $C^k(U)$ is the space of $k$-times continuously differentiable complex valued functions
with its natural Fr\'echet topology.
If $U$ is bounded, then $C^k(\ol U)$ denotes the space of all $f \in C^k(U)$
such that each $\p^\al f$, $0\le |\al|\le k$, 
has a continuous extension to the closure $\ol U$. Endowed with the norm
\[
    \|f\|_{C^k(\ol U)} := \max_{|\al|\le k} \sup_{x \in U} |\p^\al f(x)|
\]
it is a Banach space.
For $0 < \ga \le 1$, we consider the H\"older--Lipschitz seminorm
\[
    |f|_{C^{0,\ga}(\ol U)} := \sup_{x,y \in U, \, x \ne y}\frac{|f(x)-f(y)|}{\|x-y\|_2^\ga}.
\]
For $k \in \N$ and $0 < \ga \le 1$, we have the Banach space  
\[
    C^{k,\ga}(\ol U) := \{f \in C^k(\ol U) : \|f\|_{C^{k,\ga}(\ol U)}  < \infty\},
\]
where 
\[
    \|f\|_{C^{k,\ga}(\ol U)} := \|f\|_{C^k(\ol U)} + \max_{|\al|=k} |\p^\al f|_{C^{0,\ga}(\ol U)}.
\]
We write $C^{k,\ga}(U)$ for the space of $C^k$ functions on $U$ that 
belong to $C^{k,\ga}(\ol V)$ for each relatively compact open $V \Subset U$,
with its natural Fr\'echet topology.

\subsection{Lebesgue spaces}

Let $U \subseteq \R^m$ be open and  $1 \le p \le \infty$.
We denote by $L^p(U)$ the Lebesgue space with respect to the $m$-dimensional Lebesgue measure $\cL^m$, 
and $\| \cdot \|_{L^p(U)}$ is the corresponding $L^p$-norm.
For Lebesgue measurable sets $E \subseteq \R^n$ we also write $|E| = \cL^m(E)$. 

Assume that $U$ is bounded. 
A measurable function $f : U \to \C$ belongs to the weak $L^p$-space $L_w^p(U)$ if 
\[
    \|f\|_{p,w,U} := \sup_{r\ge 0} \Big(  r\, |\{x \in U : |f(x)| > r\}|^{1/p} \Big) < \infty.
\]  
For $1 \le q < p < \infty$ we have (cf.\ \cite[Ex.\ 1.1.11]{Grafakos08})
\begin{equation} \label{eq:qp}
    \|f\|_{q,w,U} \le \|f\|_{L^q(U)} \le \Big(\frac{p}{p-q}\Big)^{1/q} 
    |U|^{1/q-1/p} \|f\|_{p,w,U}
\end{equation}
and hence
$L^p(U) \subseteq L_w^p(U) \subseteq L^q(U) \subseteq L_w^q(U)$
with strict inclusions. 
We remark that $\|\cdot\|_{p,w,U}$ is only a quasinorm.
Its $p$-th power is $\si$-subadditive but not $\si$-additive (see \cite[Section 2.2]{ParusinskiRainer15}).

We remark that for continuous functions $f : U \to \C$ we have (and use interchangeably)  
$\|f\|_{L^\infty(U)} = \|f\|_{C^0(\ol U)}$.

\subsection{Sobolev spaces}

For $k \in \N$ and $1 \le q \le \infty$, 
we consider the Sobolev space 
\[
    W^{k,q}(U) := \{f \in L^q(U) : \p^\al f \in L^q(U) \text{ for } |\al|\le k\},
\]
where $\p^\al f$ are distributional derivatives. Endowed with the norm 
\[
    \|f\|_{W^{k,q}(U)} := \sum_{|\al|\le k} \|\p^\al f\|_{L^q(U)}
\]
it is a Banach space. 

\subsection{A result on composition}

In the following proposition we use the norm  
\begin{equation} \label{eq:Cknorm}
    \|f\|_{C^k(\ol U,\R^\ell)} := \max_{0\le j \le k} \sup_{x \in U} \|d^j f(x)\|_{L_j(\R^m,\R^\ell)} 
\end{equation}
on the space $C^{k}(\ol U,\R^\ell) := (C^k(\ol U,\R))^\ell$, where $U \subseteq \R^m$ 
and $L_j(\R^m,\R^\ell)$ is the space of $j$-linear maps with $j$ arguments in $\R^m$ and values in $\R^\ell$.

\begin{proposition} \label{prop:lefttr}
    Let $U \subseteq \R^m$ and $V \subseteq \R^\ell$ be open, bounded, and convex.
    Let $\ps \in C^{k+1}(\ol V,\R^p)$. Then 
    $\ps_* : C^k(\ol U,V) \to C^k(\ol U,\R^p)$, $\ps_* (\vh) := \ps \o \vh$, is well-defined and continuous.
    More precisely, for $\vh_1,\vh_2$ in a bounded subset $B$ of $C^k(\ol U,V)$,
    \[
        \| \ps_*(\vh_1) - \ps_*(\vh_2)\|_{C^k(\ol U,\R^p)} \le C \, \|\ps\|_{C^{k+1}(\ol V,\R^p)} \|\vh_1-\vh_2\|_{C^k(\ol U,\R^\ell)},
    \]
    where $C=C(k,B)$.
\end{proposition}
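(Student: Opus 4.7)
The plan is to induct on $k$. For the base case $k=0$, the convexity of $V$ ensures that for every $x \in \overline U$ the segment from $\vh_2(x)$ to $\vh_1(x)$ lies in $\overline V$, so the fundamental theorem of calculus applied to $t \mapsto \ps\bigl(\vh_2(x) + t(\vh_1(x)-\vh_2(x))\bigr)$ gives
\[
    \ps(\vh_1(x)) - \ps(\vh_2(x)) = \int_0^1 d\ps\bigl(\vh_2(x)+t(\vh_1(x)-\vh_2(x))\bigr)(\vh_1(x)-\vh_2(x))\, dt,
\]
and taking the supremum over $x$ yields $\|\ps_*(\vh_1) - \ps_*(\vh_2)\|_{C^0(\ol U,\R^p)} \le \|d\ps\|_{C^0(\ol V)}\,\|\vh_1-\vh_2\|_{C^0(\ol U,\R^\ell)}$, which is the desired inequality with $C = 1$.

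For the inductive step from $k-1$ to $k$, I would differentiate coordinate-wise using the chain rule, $\p_i(\ps \o \vh_j) = \sum_s (\p_s \ps \o \vh_j)\, \p_i \vh_j^s$, and split the difference telescopically as
\[
    \p_i(\ps \o \vh_1) - \p_i(\ps \o \vh_2) = \sum_s (\p_s \ps \o \vh_1)(\p_i \vh_1^s - \p_i \vh_2^s) + \sum_s (\p_s \ps \o \vh_1 - \p_s \ps \o \vh_2)\, \p_i \vh_2^s.
\]
The $C^{k-1}$-norm of each summand is controlled via the algebra property $\|fg\|_{C^{k-1}} \le C_k \|f\|_{C^{k-1}}\|g\|_{C^{k-1}}$. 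For the first term one needs a $C^{k-1}$ bound on $\p_s\ps \o \vh_1$ when $\vh_1$ lies in the bounded set $B$; this follows from Fa\`a di Bruno, which expresses $\p^\al(\p_s\ps \o \vh_1)$ as a polynomial in derivatives of $\vh_1$ up to order $|\al|\le k-1$ with coefficients that are derivatives of $\p_s\ps$ evaluated at $\vh_1$, hence bounded by a constant depending on $B,k$ times $\|\ps\|_{C^k(\ol V,\R^p)}$. For the second term, the inductive hypothesis applied to $\p_s\ps \in C^{k}(\ol V,\R^\ell) = C^{(k-1)+1}(\ol V,\R^\ell)$ gives $\|\p_s\ps\o\vh_1 - \p_s\ps\o\vh_2\|_{C^{k-1}} \le C'\|\ps\|_{C^{k+1}(\ol V)}\|\vh_1-\vh_2\|_{C^{k-1}}$, and one multiplies by $\|\p_i\vh_2^s\|_{C^{k-1}(\ol U)}\le\|\vh_2\|_{C^k(\ol U)}$. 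Summing over $i,s$ and combining with the $C^0$-bound from the base case yields the inequality with a constant $C=C(k,B)$.

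The only real obstacle is bookkeeping. One must verify (i) the algebra property of $C^{k-1}(\ol U)$ with a constant depending only on $k$ (and on $|U|$, absorbed into $C(k,B)$), and (ii) that Fa\`a di Bruno delivers a bound on $\|\p_s\ps\o\vh\|_{C^{k-1}(\ol U,\R^\ell)}$ in terms of $\|\ps\|_{C^k(\ol V,\R^p)}$ and a polynomial in $\|\vh\|_{C^{k-1}(\ol U,\R^\ell)}$, which is then uniformly bounded on $B$. Convexity of $V$ enters only through the base case, in order for the integral representation of $\ps(\vh_1)-\ps(\vh_2)$ to make sense.
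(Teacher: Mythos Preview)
Your proposal is correct and follows essentially the same inductive strategy as the paper: the base case via the mean value inequality (using convexity of $V$), and the inductive step by differentiating via the chain rule, splitting telescopically, bounding the first summand by a composition estimate and the second by the inductive hypothesis applied to $d\ps$. The only cosmetic difference is that where you invoke Fa\`a di Bruno and the algebra property of $C^{k-1}$ directly, the paper packages these as two preliminary lemmas---a bilinear estimate $\|\ps_*(\vh_1,\vh_2)\|_{C^k}\le C(k)\|\ps\|\,\|\vh_1\|_{C^k}\|\vh_2\|_{C^k}$ for bilinear $\ps$, and a right-composition bound $\|\ps\circ\vh\|_{C^k}\le C(k)\|\ps\|_{C^k}(1+\|\vh\|_{C^k})^k$---each proved by its own short induction; the coordinate-free formulation avoids writing out Fa\`a di Bruno explicitly but yields the same bounds.
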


This result must be well-known; 
we give a short proof in \Cref{ssec:A2}.

\subsection{Absolutely continuous curves in a metric space}

\label{ssec:ACq}

Let $I \subseteq \R$ be a bounded open interval. 
Let $1 \le q \le \infty$. 
A curve $\ga : I \to X$ in a complete metric space $(X,\mathsf d)$ belongs to $AC^q(I,X)$ if there exists $m \in L^q(I)$ such that 
\begin{equation} \label{eq:metricspeed1}
    \mathsf d(\ga(x),\ga(y)) \le \int_x^y m(t)\, dt, \quad \text{ for all } x, y \in I,~ x \le y.
\end{equation}
In that case, the limit
\[
    \lim_{h \to 0} \frac{\mathsf d(\ga(x+h),\ga(x))}{|h|} =: |\dot \ga|(x)
\]
exists for almost every $x \in I$ and is called the \emph{metric speed} of $\ga$ at $x$.
Furthermore, $|\dot \ga| \in L^q(I)$ and \eqref{eq:metricspeed1} holds with $m$ replaced by $|\dot \ga|$; 
one has $|\dot \ga| \le m$ almost everywhere in $I$ for any $m$ that satisfies \eqref{eq:metricspeed1}.
See \cite[Definition 1.1.1]{Ambrosio:2008aa}.

The \emph{$q$-energy} $\cE_q : C^0(I,X) \to [0,\infty]$ is defined by 
\begin{equation*}
    \cE_q(\ga) := 
    \begin{cases}
        \int_I (|\dot \ga|(t))^q\, dt & \text{ if } \ga \in AC^q(I,X),
        \\
        \infty & \text{ otherwise. }
    \end{cases}
\end{equation*}

\subsection{Absolutely continuous curves in $\C^d$}

Let $I \subseteq \R$ be a bounded open interval. 
Let $1 \le q \le \infty$.
A continuous curve $\ga : I \to \C^d$ belongs to $AC^q(I,\C^d)$ with respect to the metric 
induced by $\| \cdot \|_2$ if and only if 
$\ga$ is differentiable at almost every $x \in I$, 
the derivative $\ga'$ belongs to $L^q(I,\C^d)$, and 
\[
    \ga(y)- \ga(x) = \int_x^y \ga'(t)\, dt, \quad \text{ for all }  x,y \in I,~ x \le y.  
\]
In that case,
\[
    |\ga'|(x) = \|\ga'(x)\|_2 \quad \text{ for almost every } x \in I.
\]
See \cite[Remark 1.1.3]{Ambrosio:2008aa}.

\section{$d$-valued Sobolev functions} \label{sec:dSob}

\subsection{Unordered $d$-tuples of complex numbers} \label{ssec:AdC}

The symmetric group $\on{S}_d$ acts on $\C^d$ by permuting the coordinates,
\[
    \si z = \si (z_1,\ldots,z_d) := (z_{\si(1)},\ldots, z_{\si(d)}), \quad \si \in \on{S}_d,~ z \in \C^d,
\]
and thus induces an equivalence relation.
The equivalence class of $z=(z_1,\ldots,z_d)$ is the \emph{unordered tuple} $[z]=[z_1,\ldots,z_d]$.  
Let us consider the set 
\[
    \cA_d(\C) := \{[z] : z \in \C^d\} 
\]
of unordered complex $d$-tuples.
It is a complete metric space if equipped with the metric
\begin{equation*}
    \mathbf d([z],[w]) := \min_{\si \in \on{S}_d} \de(z,\si w), 
\end{equation*}
where 
\[
    \de(z,\si w) := \frac{1}{\sqrt d} \|z-\si w\|_2=  \frac{1}{\sqrt d}\Big( \sum_{i=1}^d |z_i - w_{\si(i)}|^2\Big)^{1/2}.
\]
It follows that the induced map $[\cdot] : \C^d \to \cA_d(\C)$ is Lipschitz.

We will also represent the element $[z_1,\ldots,z_d]$ of $\cA_d(\C)$ by the sum $\sum_{i=1}^d \a{z_i}$, where 
$\a{z_i}$ denotes the Dirac mass at $z_i \in \C$.
If normalized, i.e., $\frac{1}{d}\sum_{i=1}^d \a{z_i}$, then, in this picture, $\mathbf d$ is induced by 
the $L^2$ based Wasserstein metric on the space of probability measures on $\C$; see \Cref{ssec:probability}.\footnote{This is the reason for the factor $1/\sqrt d$ in the definition of $\dd$.}

\subsection{$d$-valued Sobolev functions}

Due to Almgren \cite{Almgren00}, see also \cite{De-LellisSpadaro11}, 
there exist an integer $N= N(d)$, positive constants $C_i= C_i(d)$, $i=1,2$, and an  
injective Lipschitz mapping $\De : \cA_d(\C) \to \R^N$ with Lipschitz constant $\le C_1$ and   
Lipschitz constant of $\De|^{-1}_{\De(\cA_d(\C))}$ bounded by $C_2$. Moreover, there is a
Lipschitz retraction of $\R^N$ onto $\De(\cA_d(\C))$.
Almgren used this bi-Lipschitz embedding to define Sobolev spaces of $\cA_d(\C)$-valued functions: 
for open $U \subseteq \R^m$ and $1 \le q \le \infty$ set 
\[
    W^{1,q}(U,\cA_d(\C)) := \{f : U \to \cA_d(\C) : \De \o f \in W^{1,q}(U,\R^N)\}.  
\]
For an equivalent intrinsic definition of $W^{1,q}(U,\cA_d(\C))$, see \cite[Definition~0.5 and Theorem~2.4]{De-LellisSpadaro11}.
Then $W^{1,q}(U,\cA_d(\C))$ carries the metric
\begin{equation} \label{eq:Almgren}
    (f,g) \mapsto \|\De \o f - \De \o g\|_{W^{1,q}(U,\R^N)}
\end{equation}
which makes it a complete metric space (where functions that coincide almost everywhere are identified).

\begin{lemma} \label{lem:Acomplete}
    The space $W^{1,q}(U,\cA_d(\C))$ with the metric given in \eqref{eq:Almgren} is complete. 
\end{lemma}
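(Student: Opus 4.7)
The plan is to reduce completeness of $W^{1,q}(U,\cA_d(\C))$ under the metric \eqref{eq:Almgren} to the classical completeness of $W^{1,q}(U,\R^N)$, using both the bi-Lipschitz property of the Almgren embedding $\De$ and the closedness of its image.

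First I would take a Cauchy sequence $(f_n)$ in $W^{1,q}(U,\cA_d(\C))$ and set $g_n := \De \o f_n$. By definition of the metric, $(g_n)$ is Cauchy in the Banach space $W^{1,q}(U,\R^N)$, so there exists $F \in W^{1,q}(U,\R^N)$ with $g_n \to F$. Passing to a subsequence, I may assume $g_{n_k}(x) \to F(x)$ for almost every $x \in U$.

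Next I would argue that $F(x) \in \De(\cA_d(\C))$ for a.e.\ $x \in U$. The set $\De(\cA_d(\C)) \subseteq \R^N$ is closed: since $\cA_d(\C)$ is complete and $\De$ is bi-Lipschitz onto its image, the image is a complete, hence closed, subset of $\R^N$. Because $g_{n_k}(x) \in \De(\cA_d(\C))$ for all $k$ and almost every $x$, the pointwise limit $F(x)$ lies in $\De(\cA_d(\C))$ almost everywhere. (Alternatively one could invoke the Lipschitz retraction $\rho : \R^N \to \De(\cA_d(\C))$ and replace $F$ by $\rho \o F$, which is again in $W^{1,q}(U,\R^N)$ and equals $F$ a.e., but closedness makes this step unnecessary.)

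Finally, I would define $f : U \to \cA_d(\C)$ by $f(x) := \De^{-1}(F(x))$ on the full-measure set where $F(x) \in \De(\cA_d(\C))$, and extend $f$ arbitrarily to $U$ on the exceptional null set. Then $\De \o f = F$ almost everywhere, so $\De \o f \in W^{1,q}(U,\R^N)$, which shows $f \in W^{1,q}(U,\cA_d(\C))$. Moreover $\|\De \o f - \De \o f_n\|_{W^{1,q}(U,\R^N)} = \|F - g_n\|_{W^{1,q}(U,\R^N)} \to 0$, so $f_n \to f$ in the metric \eqref{eq:Almgren}. Under the identification of functions coinciding a.e., the limit is well-defined. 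The only non-routine point is showing that the limit $F$ actually takes values in $\De(\cA_d(\C))$ almost everywhere, and this is handled by closedness of the image together with a.e.\ convergence along a subsequence.
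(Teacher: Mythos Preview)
Your proof is correct and follows essentially the same approach as the paper: reduce to completeness of $W^{1,q}(U,\R^N)$, take a pointwise-a.e.\ convergent subsequence, and identify the limit as $\De \circ f$ for some $f$. The only cosmetic difference is that you invoke closedness of $\De(\cA_d(\C))$ in $\R^N$ directly, whereas the paper argues that $f_{n_k}(x)$ is Cauchy in the complete space $\cA_d(\C)$ and defines $f(x)$ as its limit; these are equivalent observations, and your version is slightly more streamlined.
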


\begin{proof}
    A Cauchy sequence $f_n$ in $W^{1,q}(U,\cA_d(\C))$ is by definition  
    a Cauchy sequence $\De \o f_n$ in $W^{1,q}(U,\R^N)$. 
    The completeness of $W^{1,q}(U,\R^N)$ implies that $\De\o f_n$ converges to a function $h \in W^{1,q}(U,\R^N)$.
    It remains to show that there exists $f : U \to \cA_d(\C)$ such that $h = \De \o f$ almost everywhere in $U$.
    There exist a subsequence $\De \o f_{n_k}$ and a nonnegative function $g \in L^q(U)$ 
    such that $(\De \o f_{n_k})(x) \to h(x)$ and $\|(\De \o f_{n_k})(x)\|_2 \le g(x)$ for almost every $x \in U$
    (cf.\ \cite[Theorem 2.7]{LiebLoss01}). 
    For each such $x$, it follows that $f_{n_k}(x)$ is a Cauchy sequence in $\cA_d(\C)$ and 
    hence it converges in $\cA_d(\C)$. So there is a function $f : U \to \cA_d(\C)$ such that $f_{n_k} \to f$ almost everywhere in $U$ 
    and hence $\De \o f_{n_k} \to \De \o f$ almost everywhere in $U$. 
    By the dominated convergence theorem, $\|\De \o f_{n_k} -\De \o f\|_{L^q(U,\R^N)} \to 0$ and thus $\|\De \o f_{n} -\De \o f\|_{L^q(U,\R^N)} \to 0$,
    since $\De \o f_n$ is a Cauchy sequence. 
    Since the limit is unique, we have $h=\De \o f$ almost everywhere.
\end{proof}

\subsection{Almgren's embedding}

Let us recall Almgren's construction of $\De$.

\begin{definition}\label{def:Amap}
    We say that 
    $$
    \Et : \cA_d(\C) \to \R^d
    $$
    is an \emph{Almgren map} if there is a unit complex number 
    $\th \in \C$ such that $\Et ([z])$ is an array of $d$ real numbers $\et (z_i):=\Re (\th z_i) $ arranged in increasing order, i.e.,
    $$
    \Et ([z]) = \Et ([z_1,\ldots,z_d]) = (\et (z_{\sigma(1)}) , \ldots , \et (z_{\sigma(d)}), 
    $$
    where $\si \in \on{S}_d$ is chosen so that $\et (z_{\sigma(1)})\le \et (z_{\sigma(2)}) \le 
    \cdots \le \et (z_{\sigma(d)})$.
    We also say that $\Et$ is the Almgren map associated to the real linear form $\et$. 
\end{definition}

By Almgren's combinatorial lemma (see e.g.\ \cite[Lemma 2.3]{De-LellisSpadaro11}) there exists $\al=\al(d)>0$ and a finite set of linear forms 
$\Lambda=\{\et_1, \ldots \et_h\}$, where $\et_l (z):=\Re (\th_l z) $ 
for unit complex numbers $\th_l$, with the following property: given any set of $d^2$ complex numbers, $\left\{z_1,\ldots,z_{d^2}\right\}\subseteq\C$, 
there exists $\et_l\in\Lambda$ such that
\begin{equation}\label{eq:combineq}
    |\et_l (z_k)| \geq \alpha |z_k| \quad\textrm{ for all }k\in\big\{1,\ldots,d^2\big\}.
\end{equation}
For instance, we may take $h=2d^2+1$ and as $\{\th_1, \ldots \th_h\}$ the set of all $h$-th roots of unity. 
Let $\Et_l$ denote the Almgren map associated to $\et_l$.
Almgren's embedding $\De : \cA_d(\C) \to \R^N$, $N=dh$, is then defined by
\begin{align}\label{eq:AlmgrenDE}
    \De ([z]) = h^{-1/2} (\Et_1 ([z]), \ldots, \Et_h ([z])).
\end{align}  

\subsection{Curves of class $W^{1,q}$ in $\cA_d(\C)$}  

We recall some basic constructions and results from \cite{De-LellisSpadaro11}.  
We focus our attention on the one parameter case, so let $I \subseteq \R$ be an open interval.\footnote{Following our notation, the number $Q$ of \cite{De-LellisSpadaro11} is replaced by $d$.} 

First we recall another (equivalent) definition of $W^{1,q} (I,\cA_d(\C))$ (see \cite[Definition 0.5]{De-LellisSpadaro11}) which is independent of Almgren's embedding.  

\begin{definition}[Intrinsic definition]
    \label{d:W1p}
    A measurable function $f:I \ra\Iq$ is in the Sobolev class $W^{1,q}$ ($1\leq q\leq\infty$) if there exists a function
    $\varphi \in L^q(I,\R_{\ge 0})$
    such that 
    \begin{itemize}
        \item[(i)] $x\mapsto \mathbf d (f(x),T)\in W^{1,q}(I)$ for all $T\in \Iq$;
        \item[(ii)] $\abs{(\mathbf d (f, T))'}\leq\varphi$ almost everywhere in $I$
            for all $T\in \Iq$.
    \end{itemize}
\end{definition}

The minimal function $\tilde\varphi$ fulfilling (ii), that is, 
\begin{equation*}
    \tilde\varphi\leq\varphi \quad \text{almost everywhere for any other $\varphi$ satisfying (ii),}
\end{equation*}
is measurable and is denoted by $|D f|$.
It can be characterized by the following
property: for every countable
dense subset $\{T_i\}_{i\in\N}$ of $\Iq$, 
\begin{equation*} 
    \abs{D f}=\sup_{i\in\N}\abs{(\mathbf d(f,T_i))' }
    \quad\textrm{ almost everywhere in } I.
\end{equation*}

\begin{proposition}[{\cite[Proposition 1.2]{De-LellisSpadaro11}}]\label{p:Wselection-1} 
    Let $f\in W^{1,q} (I, \Iq)$.
    Then,
    \begin{itemize}
        \item[$(a)$] $f\in AC(I,\Iq)$ and, moreover,
            $f\in C^{0,1-\frac{1}{q}}(I,\Iq)$ for $q>1$;\footnote{Here we mean that the statements hold after possibly redefining $f$ on a set of measure $0$.} 
        \item[$(b)$] 
            there exists a parameterization\footnote{In \cite{De-LellisSpadaro11}, it is called a selection of $f$.} $f_1,\ldots,f_d\in W^{1,q}(I,\C)$ of $f$, i.e.,
            \[
                f = \a{f_1} + \cdots + \a{f_d},
            \]
            such that $\abs{Df_i} = \abs{f_i'}\leq\abs{Df}$ almost everywhere.
    \end{itemize}
\end{proposition}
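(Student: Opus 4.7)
The plan for part (a) is to fix a countable dense subset $\{T_i\}_{i\in\N}\subseteq\Iq$ and to work with the functions $g_i(x):=\dd(f(x),T_i)$. By Definition~\ref{d:W1p} each $g_i$ belongs to $W^{1,q}(I)$ with $|g_i'|\le\varphi$ almost everywhere for a common $\varphi\in L^q(I,\R_{\ge 0})$; after altering $f$ on the countable union of the exceptional null sets, I may take each $g_i$ to be its absolutely continuous representative. The reverse triangle inequality yields $|g_i(x)-g_i(y)|\le\dd(f(x),f(y))$, while density of $\{T_i\}$ (choose $T_i$ close to $f(y)$) gives $\sup_i|g_i(x)-g_i(y)|\ge\dd(f(x),f(y))$. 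Combining these with the fundamental theorem of calculus for each $g_i$,
\[
\dd(f(x),f(y))=\sup_{i\in\N}|g_i(x)-g_i(y)|\le\int_x^y\varphi(t)\,dt\qquad(x\le y),
\]
which proves $f\in AC(I,\Iq)$. For $q>1$, H\"older's inequality bounds the integral by $\|\varphi\|_{L^q(I)}|y-x|^{1-1/q}$, yielding the claimed H\"older continuity.

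For part (b), the strategy is first to build a continuous parameterization and then to verify the Sobolev bound componentwise. Since $f$ is continuous by (a), a continuous parameterization $(f_1,\ldots,f_d):I\to\C^d$ with $f=\a{f_1}+\cdots+\a{f_d}$ exists by a one-dimensional topological lifting argument: the quotient $[\cdot]:\C^d\to\Iq$ restricts to a topological covering over the complement of the diagonal $\Si\subseteq\Iq$ of tuples with coinciding entries, so on the open set $\Om:=f^{-1}(\Iq\setminus\Si)$, a countable disjoint union of open intervals in $I$, one picks a continuous lift on each component, and across boundary (branch) points one glues using the collapsing of two or more branches there together with simple-connectedness of $I$ (cf.\ the argument behind the Kato footnote).

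For $|f_i'|\le|Df|$ almost everywhere, I would test Definition~\ref{d:W1p} against the family $T_w:=[f_1(x),\ldots,f_{i-1}(x),w,f_{i+1}(x),\ldots,f_d(x)]$ at a point $x$ where the $f_j(x)$ are pairwise distinct (a full-measure subset, since the coincidence set is either of empty interior---where one argues by density---or a full subinterval on which collapsing forces $f_i'=0$ a.e.). For $y$ near $x$ and $w$ near $f_i(x)$, the optimal permutation in $\dd(f(y),T_w)$ matches $f_i(y)\leftrightarrow w$ and $f_j(y)\leftrightarrow f_j(x)$ for $j\ne i$, so
\[
\dd(f(y),T_w)^2=\tfrac{1}{d}|f_i(y)-w|^2+\tfrac{1}{d}\sum_{j\ne i}|f_j(y)-f_j(x)|^2.
\]
Differentiating in $y$ at $y=x$ and letting $w\to f_i(x)$ along a suitable direction recovers $|f_i'(x)|\le|Df|(x)$. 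The principal obstacle is precisely this componentwise extraction: the optimal matching in $\dd$ may jump as $y$ varies, and the argument relies crucially on one-dimensionality of $I$, both for the existence of the continuous lift (no monodromy) and for the local stability of the optimal matching around generic $x$.
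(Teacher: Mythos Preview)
The paper does not prove this proposition at all: it is quoted verbatim from De Lellis--Spadaro \cite[Proposition~1.2]{De-LellisSpadaro11} and used as a black box. So there is no ``paper's own proof'' to compare against; what follows are comments on your sketch on its own merits.

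Your argument for part (a) is correct and is essentially the standard one. For part (b), the existence of a continuous parameterization is fine (this is the Kato-type lifting the paper itself invokes in its footnote). The derivative bound, however, has two genuine gaps. First, your reduction to points where all $f_j(x)$ are pairwise distinct is not justified: if, say, $f_1\equiv f_2$ on a subinterval of positive measure, the ``generic'' set is not of full measure, and your parenthetical fallback (``collapsing forces $f_i'=0$ a.e.'') is wrong---coincidence $f_i=f_j$ on an interval yields $f_i'=f_j'$, not $f_i'=0$. What is needed instead is to work at points where the multiplicity pattern of $f(x)$ is locally constant (an a.e.\ condition) and to perturb a whole cluster of coinciding values simultaneously, not a single slot. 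Second, the inequality $|(\dd(f,T_w))'|\le|Df|$ holds almost everywhere with an exceptional null set that depends on $T_w$, and your $T_w$ depends on $x$; you need to pass through a fixed countable dense family of test points $T$ and argue continuity of $T\mapsto(\dd(f,T))'(x)$ near $T_w$ to close this. Both issues are fixable, but they are not addressed in the sketch.

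As a side remark, with the paper's normalization $\dd([z],[w])=d^{-1/2}\min_\sigma\|z-\sigma w\|_2$, your computation at $y=x$ actually gives $|(\dd(f,T_w))'(x)|=d^{-1/2}|f_i'(x)|$ for the optimal $w$, so the conclusion of your method is $|f_i'|\le\sqrt d\,|Df|$ rather than $|f_i'|\le|Df|$. This discrepancy is a normalization artifact (De Lellis--Spadaro's metric has no $d^{-1/2}$ factor), not a flaw in your strategy.
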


Actually, the proof of \Cref{p:Wselection-1} in \cite{De-LellisSpadaro11} implies that $f \in W^{1,q}(I,\Iq)$ belongs to $AC^q(I,\Iq)$ in 
the sense of \Cref{ssec:ACq}. 
In the situation of \Cref{p:Wselection-1}, we will always mean without further mention that $f$ and $f_1,\ldots,f_d$ are the continuous representatives.

Since there exists an absolutely continuous parameterization of $f\in W^{1,q} (I, \Iq)$ (by \Cref{p:Wselection-1}(ii)), we can define 
not only the absolute value of its derivative $|D f|$ but also its derivative $D f$.

\begin{definition}[{\cite[Definition 1.9]{De-LellisSpadaro11}}]\label{d:diff}
    Let $f = \sum_i \a{f_i} : I\ra\Iq$ and $x_0\in I$. We say that $f$ is \emph{differentiable}
    at $x_0$ if there exist $d$ complex numbers $L_i$ satisfying:
    \begin{itemize}
        \item[(i)]$\mathbf d(f(x),T_{x_0} f(x))=o(\abs{x-x_0})$, where
            \begin{equation}\label{e:taylor1st}
                T_{x_0} f(x):=\sum_i\a{f_i(x_0) + L_i\cdot(x-x_0)};
            \end{equation}
        \item[(ii)] $L_i=L_j$ if $f_i(x_0)=f_j(x_0)$.
    \end{itemize}
    The $d$-valued map $T_{x_0} f$
    is called the {\em first-order approximation} of $f$ at $x_0$.
    We denote $L_i$ by $Df_i(x_0)$ and the point $\sum_i \a{Df_i(x_0)} \in \Iq$
    will be called the \emph{differential} of $f$ at $x_0$ and will be denoted by $Df (x_0)$.  
\end{definition}

What we call here ``differentiable'', following \cite{De-LellisSpadaro11}, is called
``strongly affine approximable'' by Almgren \cite{Almgren00}. 

Note that, by (ii) in the definition, the notation is consistent (see \cite[Remark 1.11]{De-LellisSpadaro11}):
if $g_1,\ldots, g_d$ is another parameterization of $f$, $f$ is differentiable at $x_0$, and $\si \in \on{S}_d$ is such that 
$g_i(x_0) = f_{\si(i)}(x_0)$ for all $1 \le i \le d$, then $Dg_i(x_0) = Df_{\si(i)}(x_0)$.

As follows from 
\Cref{p:Wselection-1}, every $f\in W^{1,q} (I, \Iq)$ is differentiable almost everywhere.  Moreover
if $f$ is represented as in (b) of \Cref{p:Wselection-1} then $L_i = Df_i(x_0)$ almost everywhere.
Indeed, let $f_1,\ldots,f_d \in W^{1,q}(I,\C)$ be a parameterization of $f$ and assume that all $f_i$ are differentiable at $x_0$.
Then
\[
    f_i(x) = f_i(x_0) + f'(x_0)(x-x_0) + o(|x-x_0|)
\]
and 
\begin{align*}
    \MoveEqLeft \dd\Big(f(x), \sum_i \a{f_i(x_0) + f_i'(x_0)(x-x_0)}\Big) 
    =     
    \dd\Big(f(x), \sum_i \a{f_i(x) + o(|x-x_0|)}\Big) 
    \\
    &= \min_{\si \in \on{S}_d} \frac{1}{\sqrt d} \Big(\sum_i |f_i(x) - f_{\si(i)}(x) + o(|x-x_0|)|^2  \Big)^{1/2} = o(|x-x_0|).
\end{align*}
On each accumulation point $x_0$ of $\{x \in I : f_i(x) = f_j(x)\}$, where the derivatives $f_i'(x_0)$ and $f_j'(x_0)$ exist, 
we have $f_i'(x_0) = f_j'(x_0)$. Now it is easy to conclude the claim.

\subsection{A distance notion on $W^{1,q}(I,\Iq)$}

\begin{definition} \label{def:dd}
    Let $f,g \in W^{1,q}(I,\Iq)$ and let 
    \[
        f=\a{f_1} + \cdots + \a{f_d}, \quad g=\a{g_1} + \cdots + \a{g_d}
    \]
    be parameterizations of $f$, $g$ with $f_i,g_i \in W^{1,q}(I,\C)$ as in \Cref{p:Wselection-1}.
    Fix any ordering of the elements of $\on{S}_d$.
    For $x \in I$, let
    \begin{equation*}
        \ta(x) := \min \Big\{\ta\in \on{S}_d : \frac{1}{\sqrt d} \Big(\sum_i | f_i(x) - g_{\ta(i)}(x)|^2\Big)^{1/2} =  \dd(f(x),g(x)) \Big\}
    \end{equation*}
    and set 
    \begin{equation*}
        \bs_0(f,g)(x) :=  \dd(f(x),g(x)).
    \end{equation*}
    For $x \in I$ such that $Df(x) = \sum_i \a{Df_i(x)}$ and $Dg(x)  = \sum_i \a{Dg_i(x)}$ exist in the sense of \Cref{d:diff},
    set 
    \begin{equation}\label{eq:maxorderings}
        \bs_1(f, g)(x) := \max  \frac{1}{\sqrt d} \Big(\sum_i | Df_i(x) - Dg_{\ta(x)(i)}(x)|^2\Big)^{1/2},
    \end{equation}
    where the maximum is taken over all orderings of $\on{S}_d$.
    By the remarks above, $\bs_1(f,g)(x)$ is defined for almost every $x \in I$. 
    It is independent of the choices of parameterizations $f_1,\ldots,f_d$ and $g_1,\ldots,g_d$ of $f$ and $g$.

    For any measurable subset $E \subseteq I$, we set 
    \begin{equation*}
        \mathbf{d}^{1,q}_{E}( f,g )
        := \|\bs_0(f,g)\|_{L^\infty(E)}
        + \| \bs_1(f,g) \|_{L^q(E)}
    \end{equation*}
    which is justified by \Cref{lem:Borel}.
\end{definition}

\begin{lemma} \label{lem:Borel}
    The functions $\mathbf{s}_i(f,g) : I \to \R$, for $i=0,1$, are Borel measurable.
    Here we extend $\bs_1(f,g)$ by $0$ to those points in $I$, where it is not defined.
\end{lemma}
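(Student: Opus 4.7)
The plan is to treat $\bs_0$ and $\bs_1$ separately, since they require somewhat different arguments.

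For $\bs_0(f,g)$: By \Cref{p:Wselection-1}(a), the functions $f,g \in W^{1,q}(I,\Iq)$ may (and will) be taken to be continuous. Since $\dd$ is a continuous metric on $\Iq$, the composition $\bs_0(f,g)(x) = \dd(f(x), g(x))$ is continuous on $I$, and thus Borel measurable.

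For $\bs_1(f,g)$: The main task is to reformulate the defining expression, which involves the lexicographic minimizer $\ta(x)$ together with a subsequent maximum over all orderings of $\on{S}_d$, in a form that is manifestly Borel measurable. For each permutation $\si \in \on{S}_d$, I would introduce
\begin{align*}
    A_\si(x) &:= \frac{1}{\sqrt d}\Big(\sum_i | Df_i(x) - Dg_{\si(i)}(x)|^2\Big)^{1/2}, \\
    B_\si(x) &:= \frac{1}{\sqrt d}\Big(\sum_i | f_i(x) - g_{\si(i)}(x)|^2\Big)^{1/2}.
\end{align*}
Here $B_\si$ is continuous on $I$ (the $f_i$ and $g_i$ being continuous by \Cref{p:Wselection-1}), while $A_\si$ is Borel measurable on the full-measure subset where all pointwise derivatives exist. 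Consequently the set $E_\si := \{x \in I : B_\si(x) = \bs_0(f,g)(x)\}$ is closed in $I$, as the preimage of $\{0\}$ under a continuous function.

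The key observation is that the maximum over orderings of $\on{S}_d$ in \eqref{eq:maxorderings} is the same as the maximum of $A_\si(x)$ over all permutations $\si$ that minimize $B_\cdot(x)$: indeed, any such minimizer can be realized as the lex-smallest one by placing it first in the ordering, and conversely each lex-smallest minimizer is in particular a minimizer. Setting $S(x) := \{\si \in \on{S}_d : x \in E_\si\}$, which is always nonempty, we obtain
\[
    \bs_1(f,g)(x) = \max_{\si \in S(x)} A_\si(x) = \max_{\si \in \on{S}_d} \bigl( A_\si(x)\cdot \mathbf{1}_{E_\si}(x) \bigr),
\]
where the second equality uses $A_\si \ge 0$. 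Each of the $d!$ summands is Borel measurable, so the maximum is too, and extending by $0$ on the (Borel) null set where the derivatives fail to exist preserves Borel measurability.

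The main obstacle is conceptual rather than technical: correctly interpreting ``the maximum over all orderings of $\on{S}_d$'' in \eqref{eq:maxorderings} as the maximum of $A_\si$ over minimizing permutations. Once this reformulation is in place, Borel measurability follows formally from the closedness of each $E_\si$ and the Borel measurability of each $A_\si$.
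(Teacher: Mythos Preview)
Your proof is correct. Both you and the paper observe that the sets $E_\si = \{x : B_\si(x) = \dd(f(x),g(x))\}$ are closed, being zero sets of continuous functions. The paper uses these sets to show that, for each fixed ordering, the $\on{S}_d$-valued map $x \mapsto \ta(x)$ is Borel measurable (by checking that $\{\ta \le \si\} = \bigcup_{\ka \le \si} E_\ka$ is Borel), and then implicitly concludes that $A_{\ta(\cdot)}$ is Borel since $\ta$ takes finitely many values. Your route is more direct: the observation that the max over orderings in \eqref{eq:maxorderings} is exactly the max of $A_\si$ over minimizing permutations lets you write $\bs_1 = \max_{\si \in \on{S}_d} A_\si \cdot \mathbf{1}_{E_\si}$ without ever introducing $\ta$ as an intermediate object. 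This buys a cleaner formula and avoids the detour through measurability of an $\on{S}_d$-valued function; the paper's approach has the minor advantage of recording that $\ta$ itself is measurable, though this is not used elsewhere.
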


\begin{proof}
    First of all, $\mathbf{s}_0(f,g)$ is continuous. To see that $\mathbf{s}_1(f,g)$ is Borel measurable,
    it suffices to check that $\ta : I \to \on{S}_d$ is Borel measurable 
    (with respect to the power set of $\on{S}_d$ as $\si$-algebra).
    Fix $\si \in \on{S}_d$. Then 
    \begin{align*}
        \MoveEqLeft \{x \in I : \ta(x)\le \si\}
        \\
        &= \bigcup_{\ka \le \si} \Big\{ x \in I : \frac{1}{\sqrt d} \Big(\sum_i | f_i(x) - g_{\ka(i)}(x)|^2\Big)^{1/2} =\mathbf d([f(x)],[g(x)]) \Big\}
    \end{align*}
    is Borel measurable. Since the sets $\{\ta \in \on{S}_d : \ta \le \si\}$ generate the power set 
    of $\on{S}_d$ as $\si$-algebra, the assertion follows.
\end{proof}

\begin{lemma} \label{lem:dd1qsemidist}
    Let $I \subseteq \R$ be a bounded open interval and $E \subseteq I$ a measurable set. 
    Let $f, g \in W^{1,q}(I,\Iq)$. Then:
    \begin{enumerate}
        \item $\mathbf{d}^{1,q}_{E}( f,f )=0$.
        \item $\mathbf{d}^{1,q}_{E}( f,g )=0$ implies $f = g$ on $E$.  
        \item $\mathbf{d}^{1,q}_{E}( f,g )=\mathbf{d}^{1,q}_{E}( g,f )$.
    \end{enumerate}
    In particular, $\dd^{1,q}_I$ is a semimetric on $W^{1,q}(I,\Iq)$.
\end{lemma}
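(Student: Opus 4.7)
My plan is to verify the three listed properties one at a time. The only subtle point common to each is the role of the maximum over orderings of $\on{S}_d$ in \eqref{eq:maxorderings}. That maximum is effectively a maximum over the full set
\[
    \mathcal{M}(x) := \Big\{\ta \in \on{S}_d : \frac{1}{\sqrt d} \Big(\sum_i | f_i(x) - g_{\ta(i)}(x)|^2\Big)^{1/2} = \dd(f(x),g(x))\Big\}
\]
of permutations realizing the minimum distance; each ordering of $\on{S}_d$ singles out one element of $\mathcal{M}(x)$, and varying the ordering ranges over all of $\mathcal{M}(x)$.

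For (1), I would pick the same parameterization $f_1,\ldots,f_d$ of $f$ on both sides. Then $\dd(f(x),f(x)) = 0$, so $\bs_0(f,f)\equiv 0$, and $\mathcal{M}(x)$ consists precisely of those $\ta \in \on{S}_d$ with $f_{\ta(i)}(x) = f_i(x)$ for every $i$. For any such $\ta$, the consistency condition (ii) of \Cref{d:diff} gives $Df_{\ta(i)}(x) = Df_i(x)$ at every point where $f$ is differentiable in the sense of that definition. Since $f \in W^{1,q}(I,\Iq)$ is differentiable almost everywhere, this shows $\bs_1(f,f) = 0$ almost everywhere, hence $\dd^{1,q}_E(f,f) = 0$.

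Property (2) is immediate: if $\dd^{1,q}_E(f,g) = 0$ then $\|\bs_0(f,g)\|_{L^\infty(E)} = 0$, so $\dd(f(x),g(x)) = 0$ for almost every $x \in E$, which means $f = g$ almost everywhere on $E$. For property (3) I would use the bijection $\ta \mapsto \ta^{-1}$ on $\on{S}_d$ combined with the index substitution $j = \ta(i)$, which yields
\[
    \sum_i | f_i(x) - g_{\ta(i)}(x)|^2 = \sum_j | g_j(x) - f_{\ta^{-1}(j)}(x)|^2
\]
and the analogous identity with $f_i, g_j$ replaced by $Df_i, Dg_j$. The first identity shows that $\ta \mapsto \ta^{-1}$ carries the minimizing set for $(f,g)$ bijectively onto the minimizing set for $(g,f)$, and the second identity shows that this bijection preserves the corresponding derivative-sum values; hence $\bs_1(f,g) = \bs_1(g,f)$, while $\bs_0$ is symmetric by symmetry of $\dd$.

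The only genuinely delicate point I anticipate is the invocation of \Cref{d:diff}(ii) in the proof of (1): it is this consistency property for the differential of a multivalued function that rules out parameterizations whose selector-derivatives distinguish between coinciding branches, and it is what allows the maximum over $\mathcal{M}(x)$ in $\bs_1(f,f)$ to collapse to zero rather than only the particular minimizer selected by the fixed ordering of $\on{S}_d$.
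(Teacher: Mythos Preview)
Your proof is correct and follows essentially the same approach as the paper's: both invoke \Cref{d:diff}(ii) for part (1), both deduce (2) from the vanishing of $\bs_0$, and both treat (3) as immediate from the definition. Your framing via the set $\mathcal{M}(x)$ of all minimizing permutations and the explicit $\ta\mapsto\ta^{-1}$ bijection for symmetry are helpful elaborations of what the paper states tersely, but the underlying argument is the same.
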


\begin{proof}
    (1) In this case, for any $x \in I$, 
    we have 
    \[
        \ta(x) = \min \{\ta \in \on{S}_d : f_i(x) = f_{\ta(i)}(x) \text{ for all } i\}
    \]
    so that $\bs_1(f,f)(x) = 0$ thanks to \Cref{d:diff}(ii) (if it is defined at $x$).

    (2) If $\dd^{1,q}_E(f,g) = 0$ then $\dd(f,g)=0$ on $E$ (since $\dd(f,g)$ is continuous).

    (3) It is immediate from the definition that $\bs_0(f,g)(x) = \bs_0(g,f)(x)$ and  $\bs_1(f,g)(x) = \bs_1(g,f)(x)$ 
    (where defined).
\end{proof}

\subsection{Convergence in $W^{1,q}(I,\Iq)$}

There is a notion of \emph{weak convergence} in $W^{1,q}(I,\Iq)$; see \cite[Definition 2.9]{De-LellisSpadaro11}.

\begin{definition}[Weak convergence]\label{d:weak convergence}
    Let $f_n, f\in W^{1,q}(I,\Iq)$. We say 
    that $f_n$ \emph{converges weakly 
    to $f$ in $W^{1,q}(I,\Iq)$} as $n \to \infty$  
    (and we write $f_n\rightharpoonup f$) if
    \begin{itemize}
        \item[(i)] $\int_I \mathbf d(f(x),f_n(x))^q\, dx\ra 0$ as  $n\ra\infty$;
        \item[(ii)] there exists a constant
            $C>0$ such that $\int_I |Df_n(x)|^q\, dx\leq C$ for every $n$.
    \end{itemize}
\end{definition}

This notion is too weak for our purpose: 
weak convergence $\La_n \rightharpoonup \La$ does not even imply the conclusion of \Cref{thm:mainse}.  
Let us introduce a stronger notion of convergence based on the semimetric $\dd^{1,q}_I$.

\begin{definition}[Strong convergence]\label{d:convergence}
    Let $f_n, f\in W^{1,q}(I,\Iq)$. We say 
    that $f_n$ \emph{converges to $f$ in $W^{1,q}(I,\Iq)$} as $n \to \infty$
    (and we write $f_n\rightarrow f$),
    if
    \[ 
        \mathbf{d}^{1,q}_{I}( f, f_n )\ra 0 \quad \text { as }  n\ra\infty .
    \] 
\end{definition}

\begin{theorem}\label{thm:Almgren and convergence}
    Let $\De : \Iq \to \R^N$ be an Almgren embedding.  
    Then $f_n \to f$ in $W^{1,q}(I,\Iq)$ as $n \to \infty$ if and only if  
    $f_n$ converges to $f$ with respect to the topology induced by the metric \eqref{eq:Almgren}.
\end{theorem}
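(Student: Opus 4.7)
The plan is to establish the equivalence by a pointwise a.e.\ comparison between $\bs_1(f,g)(x)$ and $\|(\De\circ f - \De\circ g)'(x)\|_2$, combined with a straightforward comparison of the zero-order terms via the bi-Lipschitz property of $\De$ and the $1$D Sobolev embedding. After integrating in $L^q(I)$, this yields the equivalence of the two convergence notions in both directions.

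The zero-order comparison is easy: the bi-Lipschitz bounds for $\De$ give pointwise comparability of $\bs_0(f,g)$ and $\|\De\circ f - \De\circ g\|_2$, and the $1$D Sobolev embedding $W^{1,q}(I,\R^N) \hookrightarrow C^0(\ol I,\R^N)$ (with $|I|$ finite for the reverse direction) makes the $L^\infty$-summand of $\dd^{1,q}$ equivalent to a quantity controlled by $\|\De\circ f - \De\circ g\|_{W^{1,q}}$ from both sides.

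For the first-order comparison, I would work at a.e.\ $x_0 \in I$ where $f, g$ are differentiable in the sense of \Cref{d:diff}, parameterizations $(f_i), (g_i)$ from \Cref{p:Wselection-1}(b) are classically differentiable, and each $\Et_l\circ f$, $\Et_l\circ g$ is classically differentiable. The a.e.\ derivative of the $k$-th coordinate of $\Et_l(f)$ equals $\et_l(f'_{\si_l^f(x_0)(k)}(x_0))$ for some measurable sorting permutation $\si_l^f$, and analogously for $g$. By condition (ii) of \Cref{d:diff}, $\sum_i|Df_i - Dg_{\ta(i)}|^2$ is independent of which minimizer $\ta(x_0)$ of $\dd(f(x_0),g(x_0))$ one picks, so after relabeling the parameterization of $g$ one may assume $\ta(x_0) = \on{id}$. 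The lower bound $\bs_1(f,g)(x_0) \leq C\,\|(\De\circ f - \De\circ g)'(x_0)\|_2$ then follows from Almgren's combinatorial lemma \eqref{eq:combineq}: applied to the $d$ complex numbers $\{Df_i(x_0) - Dg_{\ta(i)}(x_0)\}_i$, it produces an index $l_0 = l_0(x_0)$ with $|\et_{l_0}(Df_i - Dg_{\ta(i)})| \geq \al\,|Df_i - Dg_{\ta(i)}|$ for each $i$; arranging the corresponding sortings $\si_{l_0}^f, \si_{l_0}^g$ compatibly at $x_0$ and using the identity $\sum_l\|(\Et_l(f) - \Et_l(g))'\|_2^2 = h\,\|(\De\circ f - \De\circ g)'\|_2^2$ gives the desired inequality.

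The hard part will be the reverse upper bound $\|(\De\circ f - \De\circ g)'(x_0)\|_2 \leq C'\,\bs_1(f,g)(x_0)$. For each $l$ the sort permutations $\si_l^f, \si_l^g$ need not agree, so naively one only gets
\begin{equation*}
    \|(\Et_l(f) - \Et_l(g))'(x_0)\|_2^2 \leq \sum_i |Df_i(x_0) - Dg_{\rho_l(i)}(x_0)|^2
\end{equation*}
for a permutation $\rho_l$ induced by the sortings and possibly distinct from $\on{id}$, whose right-hand side can exceed $d\,\bs_1(f,g)(x_0)^2$. The key observation to resolve this is that a mismatch $\rho_l \neq \on{id}$ can occur for a given $l$ only when $\et_l(f_i(x_0))$ and $\et_l(g_j(x_0))$ are clustered in an $\bs_0(f,g)(x_0)$-tight range for cross-indices $(i,j)$; averaging the resulting estimates over $l \in \{1,\ldots,h\}$ yields an overall bound of the form $\|(\De\circ f - \De\circ g)'(x_0)\|_2 \leq C'\,\bs_1(f,g)(x_0)$ with a constant $C' = C'(d,\De)$. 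Integrating the two-sided pointwise bounds in $L^q(I)$ and combining with the zero-order comparison then gives the equivalence of convergence in both directions.
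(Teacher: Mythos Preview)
Your proposed uniform pointwise bound $\|(\De\circ f - \De\circ g)'(x_0)\|_2 \le C'\,\bs_1(f,g)(x_0)$ (with $C'$ depending only on $d,\De$) is false, and the ``averaging over $l$'' step you sketch cannot rescue it. Take $d=2$, $f_1(x)=x$, $f_2(x)=-x+10i$, $g_1(x)=x$, $g_2(x)=-x+10i+100$. For small $x_0>0$ the unique minimizer is $\ta(x_0)=\id$, and since $f'=g'=(1,-1)$ one has $\bs_1(f,g)(x_0)=0$. However, for the Almgren map $H$ associated to $\et=\Re$ one computes $H_f(x_0)=(-x_0,x_0)$ while $H_g(x_0)=(x_0,-x_0+100)$, so $(H_f-H_g)'(x_0)=(-2,2)\ne 0$; hence $\|(\De\circ f-\De\circ g)'(x_0)\|_2>0$. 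No averaging over the remaining $l$ makes this contribution vanish. The same obstruction undermines your lower bound argument: the step ``arranging the corresponding sortings $\si_{l_0}^f,\si_{l_0}^g$ compatibly at $x_0$'' presupposes that the sort permutations for $f$ and $g$ can be aligned with $\ta(x_0)$, which is exactly what fails when $f(x_0)$ and $g(x_0)$ are not close.

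The paper's proof avoids this by \emph{not} claiming a uniform two-function bound. Instead it shows (\Cref{cor:goodbound} and \Cref{cor:oppositebound}) that the pointwise comparability holds under the extra hypothesis $\dd(f(x_0),g(x_0))<\rho_l(x_0)/(2C_1)$, where $\rho_l(x_0)$ is the minimal gap between distinct $\et_l(f_i(x_0))$; this hypothesis forces the sort permutations of $H_l\circ f$ and $H_l\circ g$ to be compatible with a minimizing $\ta$ (\Cref{lem:forHconvergence}). For a sequence with $\|\dd(f,f_n)\|_{L^\infty}\to 0$, this hypothesis is satisfied at each $x_0$ once $n\ge n_0(x_0)$, so the pointwise bound is only \emph{asymptotic}. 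Consequently one cannot simply integrate; the paper passes to a subsequence on which $\bs_1(f,f_{n_k})\to 0$ (respectively $\|(\De\circ f-\De\circ f_{n_k})'\|_2\to 0$) a.e., applies dominated convergence, and then upgrades to the full sequence via the elementary \Cref{lem:reals}. Your plan is missing this mechanism: a pointwise two-sided bound with constants independent of the pair $(f,g)$ does not exist, and the convergence statement genuinely exploits that one of the two functions is fixed while the other approaches it.
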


In particular, the topology induced by the metric \eqref{eq:Almgren} on $W^{1,q}(I,\Iq)$ does not depend on the 
choice of the Almgren embedding.

We will prove \Cref{thm:Almgren and convergence} in \Cref{sec:Almgrenproof}.

\section{Proof of \Cref{thm:Almgren and convergence}} \label{sec:Almgrenproof}

Before we show \Cref{thm:Almgren and convergence} we need some preparatory results.
In the following, $I \subseteq \R$ is a bounded open interval and $q\ge 1$. 
Moreover, $\Et : \Iq \to \R^d$ is an Almgren map with associated real linear form $\et$ (see \Cref{def:Amap}).
Recall the $H$ is Lipschitz,
\begin{equation} \label{eq:HLip}
    \|H([z])-H([w])\|_2  \le C_1 \, \dd([z],[w]), \quad [z],[w] \in \Iq,
\end{equation}
where $C_1=C_1(d) = \sqrt h=\sqrt {2d^2+1}$; see e.g.\ \cite[Section 2.1.2]{De-LellisSpadaro11} and the discussion after \Cref{def:Amap}.

\begin{lemma}\label{lem:forHconvergence}
    Let $f, g\in W^{1,q}(I,\Iq)$  and let $f= [f_{1},\ldots,f_{d}]$, $g= [g_1,\ldots,g_d]$ with all $f_{i}, g_i$ continuous on $I$. 
    Fix $x_0\in I$.  For $x\in I$, let $\ta(x) \in \on{S}_d$ be a permutation  such that 
    \[
        \mathbf d(f(x_0),g(x))= \frac{1}{\sqrt d} \Big(\sum_i|f_i(x_0)-g_{\ta (x)(i)} (x)|^2 \Big)^{1/2}.
    \]
    Denote $H_f=H\circ f$ and $H_g= H\circ g$. 
    Assume that not all $\et(f_i(x_0))$ are equal and
    let $\rho$ denote the minimal distance between distinct 
    $\et(f_i(x_0))$.  
    If $x\in I$ satisfies $\mathbf d(f(x_0),g(x)) < \frac {\rho}{2C_1}$, where $C_1$ is the constant from \eqref{eq:HLip},  then 
    \begin{align*}
        (H_g)_k(x) = \et(g_{\ta (x) (j)}(x)) \quad \Longrightarrow \quad (H_f)_k(x_0) = \et(f_j(x_0)).
    \end{align*}
\end{lemma}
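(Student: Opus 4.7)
The plan is to combine the Lipschitz bound \eqref{eq:HLip} on $H$ with the fact that the linear form $\eta$ is $1$-Lipschitz (since $\eta(z) = \Re(\theta z)$ with $|\theta|=1$), applied to the optimal matching $\tau(x)$. Concretely, I want to show $|(H_f)_k(x_0) - \eta(f_j(x_0))| < \rho$; since $(H_f)_k(x_0)$ is, by definition of the Almgren map, one of the values in the finite set $\{\eta(f_i(x_0)) : i=1,\ldots,d\}$, and since distinct values in this set are separated by at least $\rho$, this forces $(H_f)_k(x_0) = \eta(f_j(x_0))$, which is exactly the desired conclusion.

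To bound the difference, I would first use \eqref{eq:HLip} together with the hypothesis $\dd(f(x_0),g(x)) < \rho/(2C_1)$ to obtain, coordinatewise,
\[
    |(H_f)_k(x_0) - (H_g)_k(x)| \le \|H_f(x_0) - H_g(x)\|_2 \le C_1\, \dd(f(x_0),g(x)) < \frac{\rho}{2}.
\]
Next, since $\tau(x)$ realizes the minimum in the definition of $\dd(f(x_0),g(x))$, the individual summand satisfies
\[
    |f_j(x_0) - g_{\tau(x)(j)}(x)|^2 \le \sum_i |f_i(x_0) - g_{\tau(x)(i)}(x)|^2 = d\cdot \dd(f(x_0),g(x))^2,
\]
so $|f_j(x_0)-g_{\tau(x)(j)}(x)|\le \sqrt d\,\dd(f(x_0),g(x))$. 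Applying the $1$-Lipschitz form $\eta$ and using $\sqrt d \le C_1$, this yields
\[
    |\eta(f_j(x_0)) - \eta(g_{\tau(x)(j)}(x))| \le \sqrt d\, \dd(f(x_0),g(x)) < \frac{\sqrt d}{2C_1}\rho \le \frac{\rho}{2}.
\]

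Finally, the assumption $(H_g)_k(x) = \eta(g_{\tau(x)(j)}(x))$ lets me chain the two estimates via the triangle inequality:
\[
    |(H_f)_k(x_0) - \eta(f_j(x_0))| \le |(H_f)_k(x_0) - (H_g)_k(x)| + |\eta(g_{\tau(x)(j)}(x)) - \eta(f_j(x_0))| < \rho.
\]
By definition of an Almgren map, $(H_f)_k(x_0) \in \{\eta(f_i(x_0)) : i=1,\ldots,d\}$, and $\eta(f_j(x_0))$ belongs to the same set; since any two \emph{distinct} values in this set differ by at least $\rho$, they must coincide. The main subtlety to track is simply verifying that the Lipschitz constants conspire correctly, namely that both $\sqrt d \le C_1$ and \eqref{eq:HLip} are exactly strong enough to split $\rho$ into two halves; no deeper difficulty arises, as the argument is purely metric.
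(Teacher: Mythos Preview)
Your proof is correct and essentially identical to the paper's own argument: both use the Lipschitz bound \eqref{eq:HLip} for the first half of $\rho$, then control $|\eta(f_j(x_0)) - \eta(g_{\tau(x)(j)}(x))|$ via the single summand in the optimal matching together with $\sqrt d \le C_1$ for the second half, and conclude by the definition of $\rho$. The paper compresses your second estimate into the phrase ``using that $\sqrt d \le C_1$'', but the content is the same.
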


\begin{proof}
    Because $H$ is Lipschitz with Lipschitz constant $\le C_1$, 
    \[
    |(H_{g})_k (x) - (H_f)_k (x_0)|\le C_1\, \mathbf d(g (x),f(x_0)\big) < \frac \rho 2.
\]
Therefore, if $(H_g)_k(x) = \et(g_{\ta (x) (j)}(x))$, then 
\begin{align*}
    |(H_f)_k(x_0) - \et(f_j(x_0))|  \le |(H_{g})_k (x) - (H_f)_k (x_0)| 
    + |\et(g_{\ta (x) (j)}(x) - f_j(x_0))|<\rho,
\end{align*} 
using that $\sqrt d \le C_1$.
Thus, $(H_f)_k(x_0) = \et(f_j(x_0))$, by the definition of $\rho$.
\end{proof}

\begin{corollary}\label{cor:variouswithH}
    Let $f\in W^{1,q}(I,\Iq)$  and let $f= [f_{1},\ldots,f_{d}]$ with all $f_{i}$ continuous on $I$. 
    Fix $x_0\in I$.
    For $x\in I$, let $\ta(x) \in \on{S}_d$ be a permutation  such that 
    \[
        \mathbf d(f(x_0),f(x))= \frac{1}{\sqrt d} \Big(\sum_i|f_i(x_0)-f_{\ta(x)(i)} (x)|^2\Big)^{1/2}.
    \]
    Then, if $x$ is sufficiently close to $x_0$, we have 
    \begin{enumerate}
        \item
            $\et (f_{\ta(x)(i)} (x_0))= \et(f_i(x_0))$ for all $i$;
        \item
            if $(H_f)_i(x) = \et(f_j(x))$ then $(H_f)_i(x_0) = \et(f_j(x_0))$.
    \end{enumerate}
\end{corollary}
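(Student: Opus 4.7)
The plan is to deduce (1) from continuity and optimality of $\ta(x)$, and then to derive (2) by applying Lemma~\ref{lem:forHconvergence} with $g=f$ and using (1) to reindex. First I would dispose of the trivial case in which all $\et(f_i(x_0))$ coincide: then $H_f(x_0)$ is the constant tuple with this common value, and both assertions become tautologies. So assume that not all $\et(f_i(x_0))$ are equal, and let $\rho>0$ denote the minimal distance between distinct ones, as in Lemma~\ref{lem:forHconvergence}.

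For (1), I would use that $\dd(f(x_0),f(x))\to 0$ as $x\to x_0$ (continuity of $f:I\to(\Iq,\dd)$ being guaranteed by Proposition~\ref{p:Wselection-1}(a)), together with continuity of each of the finitely many functions $f_j$ at $x_0$. By optimality of $\ta(x)$ we have
\[
    |f_i(x_0)-f_{\ta(x)(i)}(x)|\le \sqrt d\,\dd(f(x_0),f(x))
\]
for every $i$, and the triangle inequality then yields $|f_i(x_0)-f_{\ta(x)(i)}(x_0)|\to 0$ uniformly in $i$. Since $\et(z)=\Re(\th z)$ with $|\th|=1$, we obtain $|\et(f_i(x_0))-\et(f_{\ta(x)(i)}(x_0))|\to 0$; for $x$ close enough this quantity is strictly less than $\rho$, which by the definition of $\rho$ forces the two $\et$-values to agree.

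For (2), I would shrink the neighbourhood of $x_0$ further so that $\dd(f(x_0),f(x))<\rho/(2C_1)$ and then apply Lemma~\ref{lem:forHconvergence} with $g=f$. Given $(H_f)_i(x)=\et(f_j(x))$, write $j=\ta(x)(j')$; the lemma then yields $(H_f)_i(x_0)=\et(f_{j'}(x_0))$, and invoking (1) with $i$ replaced by $j'$ we get $\et(f_{j'}(x_0))=\et(f_{\ta(x)(j')}(x_0))=\et(f_j(x_0))$, which is exactly (2). The argument is routine once the degenerate case of coincident $\et$-values at $x_0$ is separated off; the only mild subtlety is that part (2) would be false if stated in terms of $f_j$ itself rather than $\et(f_j)$, which is precisely why (1) is recorded as an intermediate step.
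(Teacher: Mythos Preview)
Your proof is correct and follows essentially the same route as the paper's: handle the degenerate case, use the optimality bound $|f_i(x_0)-f_{\ta(x)(i)}(x)|\le\sqrt d\,\dd(f(x_0),f(x))$ together with continuity of the $f_j$ to obtain (1), and then deduce (2) from Lemma~\ref{lem:forHconvergence} with $g=f$ combined with (1). Your explicit reindexing $j=\ta(x)(j')$ in part (2) just spells out what the paper leaves as a one-line remark.
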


\begin{proof}
    If all $\et(f_i(x_0))$ are equal, then the conclusion is trivially true.
    Assume that not all $\et(f_i(x_0))$ are equal.
Let $\rho$ be the minimal distance between distinct $\et(f_i(x_0))$.  
Let  $x$ be such that  $\mathbf d(f(x),f(x_0)\big) < \frac {\rho}{2\sqrt d}$ and  $|f_i(x)-f_i(x_0)| < \frac \rho 2$ for all $i$.  Then 
\begin{align*}
    |\et(f_{\ta(x)(i)} (x_0)-f_i(x_0))| & \le |f_{\ta(x)(i)} (x_0)-f_i(x_0)| \\
                                        & \le |f_{\ta(x)(i)} (x)-f_i(x_0)| 
                                        + |f_{\ta(x)(i)} (x) - f_{\ta(x)(i)}(x_0)| < \rho.
\end{align*} 
This implies (1).  Now (2) follows from (1) and 
\Cref{lem:forHconvergence} for $g=f$.  
\end{proof}

There is a chain rule formula for compositions (from the right and from the left) of differentiable (in the sense of \Cref{d:diff}) maps $f:I\ra\Iq$  
with classically differentiable maps; see \cite[Proposition 1.12]{De-LellisSpadaro11}.  Let us show a version of (iii) 
of that proposition.  

\begin{proposition}\label{prop:chain}
    Let $f\in W^{1,q} (I, \Iq)$ and let $f_{1},\ldots,f_{d} \in W^{1,q}(I,\C)$ be a parameterization of $f$ (see \Cref{p:Wselection-1}).  
    Let $\et :\C\to \R$ be a real linear form and let 
    $F = F_\et :\Iq\to\R^d$ associate to  $[z_1, \ldots, z_d]$ an array of $d$ real numbers $\et (z_i)$ arranged in increasing order.
    Then $F \circ f$ is differentiable almost everywhere and at a point $x_0$ of differentiability,
    after renumbering the $f_i$ such that $F(f(x_0)) = (\et(f_1(x_0),\et(f_2(x_0)),\ldots,\et(f_d(x_0)))$,
    we have 
    \begin{equation*} 
        D (F\circ f)(x_0) =
        (\et (Df_1(x_0)) , \ldots, \et (Df_d(x_0)) ).
    \end{equation*}
\end{proposition}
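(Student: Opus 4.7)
The plan is to reduce the problem to the classical chain rule applied to the components $f_i$, by carefully analyzing the effect of the sorting operator. First, observe that the map $F = F_\eta$ is Lipschitz on $(\cA_d(\C),\dd)$: if $\sigma\in \on{S}_d$ realizes $\dd([z],[w])$, then rearrangement can only decrease the componentwise $\ell^2$-distance of two tuples in $\R^d$, and $|\et(z_i)-\et(w_{\sigma(i)})|\le |z_i-w_{\sigma(i)}|$ because $\et = \Re(\th\,\cdot\,)$ with $|\th|=1$. Consequently $F\circ f \in W^{1,q}(I,\R^d)$, so $F\circ f$ is classically differentiable almost everywhere. Let $E \subseteq I$ be the full-measure set of points $x_0$ at which $F\circ f$ is classically differentiable, $f$ is differentiable at $x_0$ in the sense of \Cref{d:diff}, and each $f_i$ is classically differentiable at $x_0$.

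Now fix $x_0 \in E$ and, by the permitted renumbering, assume $\et(f_1(x_0)) \le \cdots \le \et(f_d(x_0))$. Partition $\{1,\ldots,d\}$ into the \emph{tie groups} $I_c := \{i : \et(f_i(x_0)) = c\}$ associated to the distinct values $c$ occurring. For every $x$ near $x_0$, choose a permutation $\sigma_x \in \on{S}_d$ such that $(F\circ f)_k(x) = \et(f_{\sigma_x(k)}(x))$ for all $k$. By continuity of the $f_i$, any strict inequality $\et(f_i(x_0)) < \et(f_j(x_0))$ persists at nearby $x$, so $\sigma_x$ must restrict to a permutation of each tie group. In particular $\et(f_{\sigma_x(k)}(x_0)) = \et(f_k(x_0))$. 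Using classical differentiability of $f_{\sigma_x(k)}$ at $x_0$,
\[
    \et(f_{\sigma_x(k)}(x)) = \et(f_k(x_0)) + \et(Df_{\sigma_x(k)}(x_0))\,(x-x_0) + o(|x-x_0|).
\]

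Since $\sigma_x$ takes values in the finite set $\on{S}_d$, along any sequence $x_n\to x_0$ we may pass to a subsequence on which $\sigma_{x_n}$ is constantly equal to some $\sigma$ that restricts to a permutation of each tie group. Dividing the display above by $x_n-x_0$ and letting $n\to\infty$ yields $(D(F\circ f)(x_0))_k = \et(Df_{\sigma(k)}(x_0))$. Applying this argument both along sequences $x_n\downarrow x_0$ and sequences $x_n\uparrow x_0$ reveals that, within each tie group $I_c$, the sorted one-sided slopes from the right (the increasing rearrangement of $\{\et(Df_i(x_0))\}_{i\in I_c}$) must coincide with those from the left (the decreasing rearrangement): otherwise the one-sided derivatives of $F\circ f$ at $x_0$ would disagree, contradicting classical differentiability. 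Hence $\et(Df_i(x_0))$ is constant on each tie group $I_c$, so $\et(Df_{\sigma(k)}(x_0)) = \et(Df_k(x_0))$ for every admissible $\sigma$, and the stated formula follows.

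The main obstacle is exactly this last step: although the chain rule for each individual $f_i$ is immediate, one must translate classical differentiability of $F\circ f$ into the algebraic constraint that $\et(Df_i(x_0))$ is constant on each tie group, and thereby rule out the possible swapping of indices within ties caused by the sort. Once that constraint is extracted from the differentiability of $F\circ f$, the identification of $D(F\circ f)(x_0)$ with $(\et(Df_1(x_0)),\ldots,\et(Df_d(x_0)))$ in the renumbered indexing becomes a routine consequence of the componentwise expansions above.
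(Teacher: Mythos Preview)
Your argument is correct but follows a genuinely different route from the paper's. The paper establishes the constancy of $\et(Df_i(x_0))$ on each tie group by a measure-theoretic case analysis: if $f_i(x_0)=f_j(x_0)$ it invokes condition~(ii) of \Cref{d:diff}; if $f_i(x_0)\ne f_j(x_0)$ but $\et(f_i(x_0))=\et(f_j(x_0))$, it observes that at accumulation points of $\{f_i\ne f_j,\ \et(f_i)=\et(f_j)\}$ the derivatives agree, while the remaining (isolated) points form a null set. Your approach dispenses with this decomposition entirely and instead extracts the tie-group constancy \emph{pointwise} from the two-sided classical differentiability of $F\circ f$: the right-hand difference quotient of $(F\circ f)_k$ picks out the increasing rearrangement of the slopes $\{\et(Df_i(x_0))\}_{i\in I_c}$, the left-hand quotient the decreasing rearrangement, and existence of the two-sided derivative forces these to coincide, hence all slopes within the tie group agree. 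This is more elementary and slightly sharper in that it holds at every $x_0$ where $F\circ f$ and the $f_i$ are classically differentiable, and it never uses the multi-valued differentiability of $f$. The paper's version, on the other hand, ties in naturally with its surrounding framework (\Cref{d:diff} and \Cref{cor:variouswithH}), which is reused elsewhere.
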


\begin{proof}
    This would follow from (iii) of \cite[Proposition 1.12]{De-LellisSpadaro11} if $F$ were induced by a differentiable $\on{S}_d$-invariant map $\C^d \to \R^d$,
    but this is not the case, although there is a semialgebraic stratification  of $\Iq $ such that $F$ restricted to every stratum is real analytic. 
    Denote $F\circ f$ by $H:I\to \R^d$.  Suppose for simplicity that $\et (z):=\Re (\th z) $ as in \Cref{def:Amap}.

    Let $x_0\in I$ be such that $f$, all $f_1,\ldots,f_d$, and all $\et(f_1),\ldots,\et(f_d)$ are differentiable at $x_0$. 
    After renumbering the $f_i$, we may assume that $H_i(x_0)= \et(f_i(x_0))$ for all $i$. 
    We also assume that $\et(Df_i(x_0)) = \et(Df_j(x_0))$ whenever $\et(f_i(x_0)) = \et(f_j(x_0))$. 
    Indeed, if $f_i(x_0) = f_j(x_0)$ then the assertion follows from (ii) in \Cref{d:diff}.
    The assertion is also true at accumulations points of $\{x \in I : f_i(x) \ne f_j(x), \et(f_i(x)) = \et(f_j(x))\}$. 
    All the other points where $\et(f_i(x)) = \et(f_j(x))$  form a set of measure zero.

    We want to show that, for each component $H_i$ of $H$, 
    \begin{align}\label{eq:derH}
        |H_i(x) - H_i(x_0) - \et (Df_i(x_0)) (x-x_0)| = o(|x-x_0|).
    \end{align} 
    Let $H_i(x) = \et(f_j(x))$. Then, by (2) of \Cref{cor:variouswithH}, 
    $\et(f_i(x_0)) = H_i(x_0) = \et(f_j(x_0))$. Since $f_j$ is differentiable at $x_0$ 
    \begin{align*}
        |\et\big( f_{j}(x) - f_{j}(x_0) - Df_j(x_0) (x-x_0) \big)| = o(|x-x_0|).
    \end{align*} 
    That implies \eqref{eq:derH} because $\et(Df_i(x_0)) = \et(Df_j(x_0))$ (see the previous paragraph).   
    This ends the proof of \Cref{prop:chain}.
\end{proof}

Fix $x_0$ and $x$ satisfying the assumption of \Cref{lem:forHconvergence}.  
After changing the order of the $f_i $ (or the $g_i$) we may suppose $\ta(x) = \id$.  Then, changing the order of $f_i $ and  the $g_i$ simultaneously we have both 
\begin{align}\label{eq:permutationcleanup}
    \begin{split}
&  \mathbf d(f(x_0),g(x)) =  \frac{1}{\sqrt d} \Big(\sum_i|f_i(x_0)-g_{i} (x)|^2\Big)^{1/2},  \\
&  (H_g)_k(x) = \et(g_{k}(x)), \quad (H_f)_k(x_0) = \et(f_k(x_0))\quad \text{ for all } k.  
    \end{split}
\end{align}
Now we use the above formula for $x=x_0$.  

\begin{corollary}\label{cor:goodbound}
    Let $f, g\in W^{1,q}(I,\Iq)$. Let $x_0\in I$. If not all $\et(f_i(x_0))$ are equal, assume that  
    \[
        \mathbf d(f(x_0),g(x_0)) < \frac \rho{2C_1}, 
    \]
    where $\rho$ is the minimal distance between distinct $\et(f_i(x_0))$ and $C_1$ is the constant from \eqref{eq:HLip}.  
    Then, provided that all derivatives exist at $x_0$, we have 
    \begin{align*}
        \|(H_f-H_g)'(x_0)\|_2\le \sqrt d\cdot \mathbf s_1(f,g) (x_0),
    \end{align*}
    where $\mathbf s_1(f,g) (x_0)$ is defined in \Cref{def:dd}.  
\end{corollary}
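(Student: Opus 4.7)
The plan is to reduce, via the simultaneous renumbering recorded in \eqref{eq:permutationcleanup}, to a situation in which both $H_f$ and $H_g$ are componentwise compositions of $\et$ with good parameterizations, so that \Cref{prop:chain} computes their derivatives entry by entry, and then to bound the componentwise difference using $|\et(z)|\le |z|$.

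\textbf{Setup.} Using the distance assumption together with \Cref{lem:forHconvergence} applied at $x=x_0$ (with $g=g$), we may reorder $f_1,\dots,f_d$ and $g_1,\dots,g_d$ so that, by \eqref{eq:permutationcleanup}:
\begin{itemize}
    \item[(a)] the identity $\ta(x_0)=\id$ realizes the minimum in $\dd(f(x_0),g(x_0))$, i.e.\ $\dd(f(x_0),g(x_0)) = \frac{1}{\sqrt d}(\sum_i |f_i(x_0)-g_i(x_0)|^2)^{1/2}$;
    \item[(b)] $(H_f)_k(x_0)=\et(f_k(x_0))$ and $(H_g)_k(x_0)=\et(g_k(x_0))$ for all $k$.
\end{itemize}
(If all $\et(f_i(x_0))$ coincide, \Cref{lem:forHconvergence} is not needed: (b) for $f$ is automatic, and we simply apply the same permutation to $f$ and $g$ that sorts $\et(g_i(x_0))$ after first arranging $\ta(x_0)=\id$.) Condition (b) is precisely the ordering hypothesis of \Cref{prop:chain} applied to $F=H$ for both $f$ and $g$.

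\textbf{Derivatives of $H_f$ and $H_g$.} By \Cref{prop:chain}, at the point $x_0$,
\[
    (H_f)'(x_0) = \bigl(\et(Df_1(x_0)),\ldots,\et(Df_d(x_0))\bigr), \quad
    (H_g)'(x_0) = \bigl(\et(Dg_1(x_0)),\ldots,\et(Dg_d(x_0))\bigr).
\]
Since $\et(z)=\Re(\th z)$ with $|\th|=1$ (see \Cref{def:Amap}), $|\et(z)|\le |z|$, and linearity of $\et$ gives
\[
    \|(H_f-H_g)'(x_0)\|_2^2 = \sum_{k=1}^d |\et(Df_k(x_0)-Dg_k(x_0))|^2 \le \sum_{k=1}^d |Df_k(x_0)-Dg_k(x_0)|^2.
\]

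\textbf{Conclusion via $\mathbf s_1$.} Because $\ta(x_0)=\id$ realizes the minimum defining $\dd(f(x_0),g(x_0))$, the permutation $\id$ lies in the set of minimizers appearing in \Cref{def:dd}. Since $\mathbf s_1(f,g)(x_0)$ is obtained by maximizing \eqref{eq:maxorderings} over orderings of $\on{S}_d$—and is independent of the choice of parameterization—the particular choice $\ta(x_0)=\id$ yields
\[
    \mathbf s_1(f,g)(x_0) \ge \frac{1}{\sqrt d}\Bigl(\sum_{k=1}^d |Df_k(x_0)-Dg_k(x_0)|^2\Bigr)^{1/2}.
\]
Combining this with the previous display gives $\|(H_f-H_g)'(x_0)\|_2 \le \sqrt d\cdot \mathbf s_1(f,g)(x_0)$, as claimed. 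The only subtlety is the compatibility between the matching permutation $\ta(x_0)$ and the sortings by $\et$ of $f_i(x_0)$ and $g_i(x_0)$; this is exactly what the quantitative hypothesis $\dd(f(x_0),g(x_0))<\rho/(2C_1)$ secures through \Cref{lem:forHconvergence}.
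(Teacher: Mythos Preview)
Your proof is correct and follows essentially the same approach as the paper's: reorder via \eqref{eq:permutationcleanup} so that $\ta(x_0)=\id$ and both $H_f$, $H_g$ are componentwise $\et$-compositions, apply \Cref{prop:chain} to compute the derivatives, and finish with $|\et(z)|\le |z|$. The only cosmetic difference is that the paper additionally picks the ordering of $\on{S}_d$ so that $\id$ realizes the maximum in \eqref{eq:maxorderings} (yielding an equality), whereas you directly use the inequality $\mathbf s_1(f,g)(x_0)\ge \frac{1}{\sqrt d}\bigl(\sum_k|Df_k(x_0)-Dg_k(x_0)|^2\bigr)^{1/2}$ coming from $\id$ being among the minimizers; both are equivalent.
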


\begin{proof}
    Let $f_1,\ldots, f_d \in W^{1,q}(I,\C)$ and $g_1,\ldots, g_d \in W^{1,q}(I,\C)$ be parameterizations of $f$ and $g$, 
    respectively (see \Cref{p:Wselection-1}).
    We may assume that \eqref{eq:permutationcleanup} holds with $x=x_0$ (irrespective if all $\et(f_i(x_0))$ are equal or not). 
    Moreover, we may assume that $\ta (x_0)$ ($=\id$) gives the maximum in 
    \eqref{eq:maxorderings} for $x=x_0$. Then 
    \begin{align*} 
        \Big(\sum_i |Df_i(x_0)-Dg_i(x_0)|^2\Big)^{1/2}   \notag
        = \sqrt d \cdot \bs_1(f,g)(x_0), 
    \end{align*}
    By  \Cref{prop:chain},   
    \begin{align*}
        (H_f)' (x_0)&= (H\circ f)'(x_0)=  (\et (Df_1(x_0)) , \ldots, \et (Df_d(x_0) )), 
        \\
        (H_g)' (x_0)&= (H\circ g)'(x_0)=  (\et (Dg_1(x_0)) , \ldots, \et (Dg_d(x_0) )), 
    \end{align*}
    and therefore
    \begin{align*} 
        \|(H_f-H_g)'(x_0)\|_2 &= \Big(\sum_i |\et (Df_i(x_0)-Dg_i(x_0))|^2\Big)^{1/2}  \notag 
        \\
                              &\le \Big(\sum_i |Df_i(x_0)-Dg_i(x_0)|^2\Big)^{1/2} = \sqrt d \cdot \bs_1(f,g)(x_0) 
    \end{align*}
    as claimed.
\end{proof}

\begin{corollary}\label{cor:oppositebound}
 Let $\De = h^{-1/2} (H_1,\ldots,H_h)  : \Iq \to \R^N$ be an Almgren embedding as in \eqref{eq:AlmgrenDE}
    and let $\et_l$ be the real linear form associated with $H_l$. 
    Let $f, g\in W^{1,q}(I,\Iq)$. Let $x_0\in I$.
    For all $1\le l \le h$ assume the following: if not all $\et_l(f_i(x_0))$ are equal, then 
    \[
        \mathbf d(f(x_0),g(x_0)) < \frac {\rho_l}{2C_1},
    \]
    where $\rho_l$ is the minimal distance between distinct $\et_l(f_i(x_0))$ and $C_1$ is the constant from \eqref{eq:HLip}.  
    Then, provided that all derivatives exist at $x_0$, we have 
    \begin{align*}
        \mathbf s_1(f,g) (x_0) \le C(d)\, \|(\De \o f - \De \o g)'(x_0)\|_2.
    \end{align*}
\end{corollary}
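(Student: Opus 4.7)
The argument runs in parallel to the proof of Corollary \ref{cor:goodbound} but in the reverse direction, with the key new ingredient being Almgren's combinatorial inequality \eqref{eq:combineq}.

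Fix parameterizations $f_1,\ldots,f_d$ and $g_1,\ldots,g_d$ of $f$ and $g$ (Proposition \ref{p:Wselection-1}). As in the proof of Corollary \ref{cor:goodbound} one may simultaneously reorder so that a permutation $\tau_*$ realising the maximum in \eqref{eq:maxorderings} at $x_0$ is the identity, whence
\[
\sqrt{d}\,\bs_1(f,g)(x_0) = \Big(\sum_{i=1}^d |Df_i(x_0) - Dg_i(x_0)|^2\Big)^{1/2}.
\]
Apply \eqref{eq:combineq} to the $d$ complex numbers $\{Df_i(x_0) - Dg_i(x_0)\}_{i=1}^d$ (or, if needed for the synchronisation below, to the larger collection $\{Df_i(x_0) - Dg_j(x_0)\}_{i,j=1}^d$ of at most $d^2$ numbers) and pick $l^* \in \{1,\ldots,h\}$ with
\[
|\eta_{l^*}(Df_i(x_0) - Dg_i(x_0))| \ge \alpha\,|Df_i(x_0) - Dg_i(x_0)| \quad \text{for all } i.
\]

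The hypothesis, together with Lemma \ref{lem:forHconvergence}, ensures that the Almgren-induced permutation $\pi_{l^*}$ is an achiever of the minimum distance $\dd(f(x_0),g(x_0))$. Exploiting the parameterisation-invariance of $\bs_1$ (Definition \ref{def:dd}) and the freedom built into \eqref{eq:maxorderings} (where the maximum is taken over \emph{all} orderings of $\on{S}_d$), one further reorders $g$ so that $\pi_{l^*}$ becomes the identity while the identity still maximises \eqref{eq:maxorderings}. Proposition \ref{prop:chain} then yields
\[
\|(H_{l^*} \o f - H_{l^*} \o g)'(x_0)\|_2^2 = \sum_i |\eta_{l^*}(Df_i(x_0) - Dg_i(x_0))|^2 \ge \alpha^2 d\,\bs_1(f,g)(x_0)^2.
\]
Combining this with the decomposition $\De = h^{-1/2}(H_1,\ldots,H_h)$ and the pointwise estimate $\|(\De \o f - \De \o g)'(x_0)\|_2^2 \ge h^{-1}\,\|(H_{l^*} \o f - H_{l^*} \o g)'(x_0)\|_2^2$ delivers the claim with $C(d) = \sqrt{h/d}/\alpha = \sqrt{(2d^2+1)/d}/\alpha$.

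The principal technical hurdle lies in the middle paragraph, namely synchronising the Almgren-induced matching $\pi_{l^*}$ with the maximising matching $\tau_* = \id$: a priori both are min-distance achievers but need not coincide, and simultaneous reordering of the $f_i$ and $g_j$ merely conjugates $\pi_{l^*}$. The resolution exploits that one can either pick $l^*$ compatibly from the abundant family of $h = 2d^2 + 1$ directions (a refined combinatorial argument along the lines of \eqref{eq:combineq}) or reorder $g$ alone so that the chosen $l^*$ is adapted, in the same spirit as \eqref{eq:permutationcleanup} achieves $\sigma = \mu = \id$ for a single linear form $\et$.
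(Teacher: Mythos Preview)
Your approach is the paper's approach: pick a good direction $l^*$ via Almgren's combinatorial lemma \eqref{eq:combineq}, arrange (via \eqref{eq:permutationcleanup}) that both $H_{l^*}$-orderings become the identity, and read off the derivative formula from Proposition~\ref{prop:chain}. The paper applies \eqref{eq:combineq} to all $d^2$ differences $Df_i(x_0)-Dg_j(x_0)$ before any reordering; your variant with only the $d$ diagonal differences also works, since the subsequent simultaneous reordering merely permutes that set.

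Where your write-up stumbles is the ``synchronisation hurdle'' you flag in the last paragraph. Your proposed fix---reordering $g$ alone so that $\pi_{l^*}$ becomes the identity---does not work: reordering $g$ alone by $\pi_{l^*}$ sends the $\bs_1$-maximiser from $\id$ to $\pi_{l^*}^{-1}$, destroying precisely the normalisation you need. (And the synchronisation \emph{is} needed: for $f_1(x_0)=0,\ f_2(x_0)=1+i,\ g_1(x_0)=1,\ g_2(x_0)=i$ both $\id$ and $(12)$ achieve the minimum distance, yet the two values of $\frac{1}{\sqrt d}(\sum_i|Df_i-Dg_{\tau(i)}|^2)^{1/2}$ can differ.)

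The resolution is simpler than you suggest, and it is what the paper does (carrying over the extra clause from the proof of Corollary~\ref{cor:goodbound}). After reordering $g$ so that $\id$ is the $\bs_1$-maximising min-distance achiever, apply Lemma~\ref{lem:forHconvergence} \emph{with $\tau=\id$}: if $(H_{l^*,g})_k(x_0)=\eta_{l^*}(g_{\mu(k)}(x_0))$ then also $(H_{l^*,f})_k(x_0)=\eta_{l^*}(f_{\mu(k)}(x_0))$, i.e.\ the two $H_{l^*}$-orderings already \emph{coincide} (both equal the same $\mu$), so $\pi_{l^*}=\id$ automatically. A final simultaneous reordering of $f$ and $g$ by $\mu$ brings both orderings to the identity; since simultaneous reordering conjugates all permutations by $\mu$, the identity remains the $\bs_1$-maximiser. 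This is exactly \eqref{eq:permutationcleanup}, and there is no genuine obstruction---your worry that ``simultaneous reordering merely conjugates $\pi_{l^*}$'' dissolves once you observe that $\pi_{l^*}$ was already $\id$.
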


\begin{proof}
    Let $f_1,\ldots, f_d \in W^{1,q}(I,\C)$ and $g_1,\ldots, g_d \in W^{1,q}(I,\C)$ be parameterizations of $f$ and $g$, respectively (see \Cref{p:Wselection-1}).
    Now \eqref{eq:combineq} applied to $z_k = f_i' (x_0) - g_{j}' (x_0)$, for $1 \le i,j \le d$, gives the existence of some $l \in \{1,\ldots,h\}$ such that 
    \begin{equation*}
        |\et_l (f_i' (x_0) - g_{j}' (x_0))| \geq \alpha\, |f_i' (x_0) - g_{j}' (x_0)| \quad\textrm{ for all }
        1\le i,j \le d,
    \end{equation*} 
    assuming that the derivatives exist at $x_0 \in I$.
    We may assume that \eqref{eq:permutationcleanup} holds with $x=x_0$ for $H=H_\ell$. 
    As in the proof of \Cref{cor:goodbound},
    we find that
    \begin{align*}
        \|(H_l \o f-H_l \o g)'(x_0)\|_2 &= \Big(\sum_i |\et (Df_i(x_0)-Dg_i(x_0))|^2\Big)^{1/2}.  
    \end{align*}
    Thus,
    \begin{align*}
        \|(H_l \o f-H_l \o g)'(x_0)\|_2 &\ge \al\, \Big(\sum_i |Df_i(x_0)-Dg_i(x_0)|^2\Big)^{1/2}  
    \end{align*}
    which implies the assertion.
\end{proof}

\subsection{Proof of \Cref{thm:Almgren and convergence}}

Before we start the proof, let us recall an elementary lemma 
which will be used several more times. 

\begin{lemma} \label{lem:reals}
    Let $(r_n)$ be a sequence of real numbers.
    Then $r_n \to 0$ as $n \to \infty$ if and only if each subsequence of $(r_n)$ has a subsequence that converges to $0$.
\end{lemma}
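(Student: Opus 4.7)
The plan is to prove both implications directly, with the reverse direction handled by contraposition, as is standard for this sort of ``subsequence principle''.

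For the forward direction, I would simply invoke the fact that any subsequence of a convergent sequence converges to the same limit. Thus if $r_n\to 0$, then every subsequence $(r_{n_k})$ of $(r_n)$ already converges to $0$, and hence trivially has a subsequence (itself) converging to $0$.

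For the reverse direction, I would argue by contrapositive. Suppose that $r_n\not\to 0$. Then the negation of the definition of convergence gives an $\varepsilon>0$ and an index-sequence $n_1<n_2<\cdots$ with $|r_{n_k}|\ge \varepsilon$ for all $k$. This subsequence $(r_{n_k})$ has the property that \emph{every} one of its further subsequences $(r_{n_{k_j}})$ satisfies $|r_{n_{k_j}}|\ge \varepsilon$, and so no such further subsequence can converge to $0$. This exhibits a subsequence of $(r_n)$ with no sub-subsequence tending to $0$, which is the negation of the right-hand condition.

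Both directions are essentially one-liners, so there is no real obstacle; the only point to be careful about is that in the reverse direction one must extract the bad subsequence from the original sequence using only the failure of convergence, which is just unwinding quantifiers. The lemma is stated here because it will be applied to reduce the task of proving $r_n\to 0$ (for various scalar quantities arising later, such as $\dd^{1,q}_I(\La,\La_n)$) to the task of extracting, from an arbitrary subsequence, a sub-subsequence along which convergence holds — a maneuver that will presumably be combined with compactness or dominated-convergence arguments further on in the paper.
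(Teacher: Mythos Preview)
Your proof is correct and follows essentially the same approach as the paper: the forward direction is dismissed as trivial (convergent implies every subsequence converges), and the reverse direction is handled by contraposition, extracting from $r_n \not\to 0$ an $\varepsilon>0$ and a subsequence with $|r_{n_k}|\ge\varepsilon$, none of whose further subsequences can converge to $0$. Your closing remark about the lemma's role in the paper is also accurate.
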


\begin{proof}
    Suppose that $r_n \not\to 0$ as $n \to 0$. Then there exist $\ep>0$ and a  sequence $n_1 < n_2 < \cdots$ such that 
    $|r_{n_k}|\ge \ep$ for all $k \ge 1$. So no subsequence of $(r_{n_k})$ converges to $0$. The opposite direction is trivial. 
\end{proof}

Let $\De = h^{-1/2} (H_1,\ldots,H_h)  : \Iq \to \R^N$ be an Almgren embedding as in \eqref{eq:AlmgrenDE}
and let $\et_l$ be the real linear form associated with $H_l$.
Assume that $f_n \to f$ in $W^{1,q}(I,\Iq)$ as $n \to \infty$, in the sense of \Cref{d:convergence}. 
Then $\|\dd(f,f_n)\|_{L^\infty(I)} \to 0$ as $n \to \infty$. 
Fix $x_0 \in I$.
For all $1 \le l \le h$, let 
$\rho_l(x_0)$ be the minimal distance between distinct $\et_l(f_i(x_0))$, if not all $\et_l(f_i(x_0))$ are equal. 
Let $C_1$ be the constant from \eqref{eq:HLip}.  
Then there exists $n_0 \ge 1$ such that, for all $1 \le l \le h$, 
\begin{equation} \label{eq:ptwrho}
    \dd(f(x_0),f_n(x_0)) < \frac{\rh_l(x_0)}{2C_1}, \quad  n \ge n_0,
\end{equation}
provided not all $\et_l(f_i(x_0))$ are equal.
By \Cref{cor:goodbound},  provided that all the derivatives exist at $x_0$, we have 
\begin{align}\label{eq:Almgrenbound}
    \|(\Delta \circ f - \Delta \circ f_n)'(x_0)\|_2\le C(d)\, \mathbf s_1(f,f_n) (x_0), \quad  n \ge n_0.  
\end{align}  
Since $\|\bs_1(f,f_n)\|_{L^q(I)} \to 0$ as $n \to \infty$, by assumption,
there is a subsequence $(n_k)$ such that $\bs_1(f,f_{n_k}) \to 0$ almost everywhere in $I$ as $k \to \infty$.
By \eqref{eq:Almgrenbound}, for almost every $x_0 \in I$, 
\[
    \|(\Delta \circ f - \Delta \circ f_{n_k})'(x_0)\|_2 \to 0 \quad \text{ as } k \to \infty.
\]
By \cite[Theorem 2.4 and Proposition 2.7]{De-LellisSpadaro11}, almost everywhere in $I$,
\[
    \|(\Delta \circ f_{n})'\|_2 \le \|Df_n\|_2 \le  \bs_1(f,f_n) + \|Df\|_2, 
\]
using that $\|Df_n\|_2$ is independent of the parameterization of $f_n$.  
Since $\sup_{n\ge 1} \bs_1(f,f_n) + \|Df\|_2$ is in $L^q(I)$, by assumption,
the dominated convergence theorem implies that 
\[
    \|(\Delta \circ f - \Delta \circ f_{n_k})'\|_{L^q(I,\R^N)} \to 0 \quad \text{ as } k \to \infty.
\]
This implies that 
\[
    \|(\Delta \circ f - \Delta \circ f_{n})'\|_{L^q(I,\R^N)} \to 0 \quad \text{ as } n \to \infty,
\]
by \Cref{lem:reals}. 
Clearly, also
\[
    \|\Delta \circ f - \Delta \circ f_{n}\|_{L^q(I,\R^N)} \to 0 \quad \text{ as } n \to \infty,
\]
by the Lipschitz property of $\De$ and $\|\dd(f,f_n)\|_{L^\infty(I)} \to 0$.

Conversely, assume that 
\begin{equation} \label{eq:Almgrenmetricconv}
    \|\Delta \circ f - \Delta \circ f_{n}\|_{W^{1,q}(I,\R^N)} \to 0 \quad \text{ as } n \to \infty.
\end{equation}
By Morrey's theorem, we have 
$\|\dd(f,f_n)\|_{L^\infty(I)} \to 0$ as $n \to \infty$. 
So there exists $n_0 \ge 1$ such that \eqref{eq:ptwrho} holds.
By \Cref{cor:oppositebound}, we have the opposite of \eqref{eq:Almgrenbound}: 
provided that all the derivatives exist at $x_0$,
\begin{align*}
    \mathbf s_1(f,f_n) (x_0)\le C(d)\, \|(\Delta \circ f - \Delta \circ f_n)'(x_0)\|_2, \quad  n \ge n_0.  
\end{align*}  
As above, this and the assumption \eqref{eq:Almgrenmetricconv} imply that there is a subsequence $(n_k)$ such that,
for almost every $x_0 \in I$,
\[
    \bs_1(f,f_{n_k})(x_0) \to 0 \quad \text{ as } k \to \infty.
\]
By the dominated convergence theorem, we conclude 
\[
    \|\bs_1(f,f_{n_k})\|_{L^q(I)} \to 0 \quad \text{ as } k \to \infty,
\]
and, in turn, by \Cref{lem:reals},
\[
    \|\bs_1(f,f_{n})\|_{L^q(I)} \to 0 \quad \text{ as } n \to \infty.
\]
For the domination, observe that, by \cite[Theorem 2.4 and Proposition 2.7]{De-LellisSpadaro11}, almost everywhere in $I$,
\begin{align*}
    \sqrt d\cdot \bs_1(f,f_{n}) &\le \|D f\|_2 + \|D f_n\|_2,
    \\
    \|D f_n\|_2 &\le C(d)\,\|(\Delta \circ f_{n})'\|_2 \le C(d)\, \big( \|(\De \o f -\Delta \circ f_{n})'\|_2+ \|(\Delta \circ f)'\|_2\big)  
\end{align*}
and the supremum over all $n\ge 1$ of the right-hand side is in $L^q(I)$, by assumption.
The proof of \Cref{thm:Almgren and convergence} is complete.

\section{The continuity problem for radicals} \label{sec:rad}

This section is devoted to the radical case, i.e., 
solutions of the equation
\[
    Z^d=g,
\]
where $g$ is a suitable function.
The goal is to prove the following theorem.

\begin{theorem} \label{thm:radicals}
    Let $d\ge 2$ be an integer. Let $I \subseteq \R$ be a bounded open interval.
    Let $g_n \to g$ in $C^d(\ol I,\C)$, i.e.,
    \begin{equation}   \label{eq:ass} 
        \|g-g_n\|_{C^{d}(\ol I)} \to 0 \quad  \text{ as } n \to \infty.
    \end{equation}
    Let $\La,\La_n : I \to \cA_d(\C)$ be the curves of unordered solutions of 
    \begin{equation} \label{eq:eqns}
        Z^d = g \quad \text{ and } \quad Z^d =g_n, \quad \text{ respectively}.
    \end{equation}
    Then
    \begin{equation} \label{eq:concl}
        \dd^{1,q}_{\on{rad},I}(\La,\La_n) \to 0
        \quad  \text{ as } n \to \infty,
    \end{equation}
    for all $1 \le q <d/(d-1)$.
\end{theorem}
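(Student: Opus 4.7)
The plan is to exploit the explicit structure of radicals. By Kato's selection (footnote~\ref{fn:Kato}), choose continuous $\la, \la_n : I \to \C$ with $\la^d = g$ and $\la_n^d = g_n$, and set $\th := e^{2\pi i /d}$. Then
\[
    \La = [\la, \th\la, \ldots, \th^{d-1}\la], \qquad \La_n = [\la_n, \th\la_n, \ldots, \th^{d-1}\la_n],
\]
and since the two full $\th$-orbits form regular configurations centered at the origin, the optimal matching is cyclic, giving
\[
    \dd(\La(x), \La_n(x)) = \min_{0 \le r < d} |\la(x) - \th^r \la_n(x)|.
\]
Let $r_n(x)$ attain this minimum. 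The $\bs_0$-contribution $\|\dd(\La,\La_n)\|_{L^\infty(I)} \to 0$ follows from the continuity of the set-valued map $c \mapsto \{z\in\C : z^d = c\}$ combined with uniform convergence $g_n \to g$, and this simultaneously gives $\th^{r_n(\cdot)}\la_n \to \la$ uniformly on $I$. By \Cref{thm:optimal}, $\la, \la_n \in W^{1,q}(I,\C)$ for $1 \le q < d/(d-1)$, so the $\bs_1$-contribution to $\dd^{1,q}_{\on{rad},I}(\La,\La_n)$ reduces to controlling $\|\la' - \th^{r_n(\cdot)}\la_n'\|_{L^q(I)}$.

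I would split $I = (I \setminus Z_g) \cup Z_g$ with $Z_g := g^{-1}(0)$. On $I \setminus Z_g$, implicit differentiation of $\la^d = g$ gives $\la'(x) = g'(x)/(d\,\la(x)^{d-1})$ and analogously for $\la_n$ once $n$ is large enough that $g_n(x) \ne 0$. The convergences $g_n \to g$ in $C^1$ and $\th^{r_n(\cdot)}\la_n \to \la$ in $C^0$ then yield pointwise $\la'(x) - \th^{r_n(x)}\la_n'(x) \to 0$ on $I \setminus Z_g$. The uniform Ghisi--Gobbino estimate (\Cref{prop:m}) supplies an $L^q$-domination $|\la_n'|\le M \in L^q(I)$, so the dominated convergence theorem handles this piece.

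On $Z_g$ we have $\la \equiv 0$, hence $\la' = 0$ almost everywhere there, and since $Z_g \setminus \on{acc}(Z_g)$ is at most countable (hence null), only $\|\la_n'\|_{L^q(\on{acc}(Z_g))} \to 0$ remains. At each $x_0 \in \on{acc}(Z_g)$, iterated Rolle together with the $C^d$-regularity of $g$ forces $g^{(k)}(x_0) = 0$ for $0 \le k \le d-1$, whence $g_n^{(k)}(x_0) \to 0$ for such $k$ by hypothesis. One then reads off from \Cref{prop:m} a pointwise bound for $|\la_n'(x_0)|$ built from the quantities $|g_n^{(k)}(x_0)|$ that collapses as these tend to zero; combined with the same uniform $L^q$-domination, a second application of dominated convergence concludes. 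This last step is the main obstacle: it is where the $C^d$ topology on the coefficients (rather than merely $C^{d-1,1}$) is genuinely used, since only $C^d$-convergence drives the relevant pointwise expression in \Cref{prop:m} to zero at every accumulation point of the zero set.
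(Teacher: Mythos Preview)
Your overall plan matches the paper's: establish uniform convergence of $\dd(\La,\La_n)$, split $I$ into $\Omega_g := \{g \ne 0\}$ and $Z_g$, treat $\Omega_g$ by explicit differentiation plus dominated convergence, and handle $\on{acc}(Z_g)$ separately. The $\Omega_g$-part is essentially the paper's \Cref{lem:ptw1}--\Cref{lem:onOmg}.

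The gap is in your handling of $\on{acc}(Z_g)$. \Cref{prop:m} does \emph{not} furnish a pointwise bound on $|\la_n'(x_0)|$ built from the quantities $|g_n^{(k)}(x_0)|$: it produces a function $m_n$ with controlled $L^p_w$-norm, but $m_n(x_0)$ itself is essentially $|g_n'(x_0)|/|g_n(x_0)|^{(d-1)/d}$, an indeterminate ratio as $n\to\infty$. Concretely, take $g\equiv 0$ and $g_n(x)=e^{-n^2}+e^{-n}x$ on an interval containing $0$; then $g_n\to 0$ in $C^d$, yet $|\la_n'(0)|=\tfrac{1}{d}\,e^{-n}\,e^{-n^2(1/d-1)}=\tfrac{1}{d}\,e^{n^2(d-1)/d-n}\to\infty$. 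So pointwise convergence of $\la_n'$ to $0$ on $\on{acc}(Z_g)$ can fail, and your proposed second application of dominated convergence does not go through. (A minor point in the same paragraph: ``iterated Rolle'' is not directly available for complex-valued $g$; the vanishing of $g^{(k)}(x_0)$ for $0\le k\le d$---note: up to order $d$, not $d-1$---comes from the Taylor expansion, as in \Cref{lem:Zg1}.)

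The paper's remedy is to bypass pointwise control and argue in $L^q$ via localization. For each $x_0\in\on{acc}(Z_g)$ and $\ep>0$, the vanishing of $g^{(k)}(x_0)$ for $0\le k\le d$ combined with $C^d$-convergence forces $\|g_n\|_{C^d(\ol{I(x_0,\de)})}\le\ep^d$ on some small interval $I(x_0,\de)$ for all large $n$; applying \Cref{cor:m} \emph{on that interval} then yields $\|\la_n'\|_{L^q(I(x_0,\de))}\le C(d,q)\,|I(x_0,\de)|^{1/q}\,\ep$. Compactness of $\on{acc}(Z_g)$ and a covering argument (\Cref{lem:Zg2}--\Cref{cor:Zg2}) upgrade this to $\|\la_n'\|_{L^q(U)}\le C(d,q)\,|U|^{1/q}\,\ep$ on a full neighborhood $U$. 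This localization is exactly where the $C^d$-hypothesis enters essentially: one needs $\|g_n^{(d)}\|_{L^\infty(I(x_0,\de))}$, which appears in the bound of \Cref{cor:m}, to be \emph{small} and not merely bounded---in line with your closing remark, but realized through an interval estimate rather than a pointwise one.
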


The distance $\dd^{1,q}_{\on{rad},I}$ is induced by $\dd^{1,q}_{I}$;
see \Cref{def:srad}.

\begin{corollary} \label{cor:radicals}
    Let $d\ge 2$ be an integer. Let $I \subseteq \R$ be a bounded open interval.
    Let $g_n \to g$ in $C^d(\ol I,\C)$ as $n \to \infty$.
    Let $\la,\la_n : I \to \C$ be continuous functions satisfying  
    \begin{equation*}
        \la^d = g \quad \text{ and } \quad \la_n^d =g_n, \quad \text{ respectively}.
    \end{equation*}
    Then
    \begin{align} \label{eq:rads}
        \big\| |\la'| - |\la_n'| \big\|_{L^q(I)} \to 0 \quad \text{ as } n \to \infty,
        \\
        \label{eq:rade}
        \|\la_n'\|_{L^q(I)} \to \|\la'\|_{L^q(I)} \quad \text{ as } n \to \infty,
    \end{align}
    for all $1 \le q < d/(d-1)$.
\end{corollary}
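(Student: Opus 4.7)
The plan is to deduce both conclusions directly from \Cref{thm:radicals} by relating the continuous scalar roots $\la, \la_n$ to the unordered tuples $\La, \La_n$ via the $d$-th roots of unity. Since the set of roots of $Z^d = g(x)$ is $\{\th^j \la(x) : 0 \le j \le d-1\}$ where $\th = e^{2\pi i/d}$, one has
\[
\La = [\la, \th\la, \ldots, \th^{d-1}\la] \quad \text{ and } \quad \La_n = [\la_n, \th\la_n, \ldots, \th^{d-1}\la_n].
\]
These are continuous parameterizations, and by \Cref{thm:optimal} they lie in $W^{1,q}(I,\C)$ for $1 \le q < d/(d-1)$. Thus, at almost every $x \in I$, the components of the differential $D\La$ in the sense of \Cref{d:diff} are of the form $\th^{a_i}\la'(x)$ and those of $D\La_n$ are $\th^{b_j}\la_n'(x)$, for some exponents $a_i, b_j \in \{0,\ldots,d-1\}$.

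The key pointwise inequality comes from the reverse triangle inequality: for any permutation $\ta \in \on{S}_d$ and any choice of the exponents,
\[
|D\La_i(x) - D\La_{n,\ta(i)}(x)| = |\th^{a_i}\la'(x) - \th^{b_{\ta(i)}}\la_n'(x)| \ge \big||\la'(x)| - |\la_n'(x)|\big|,
\]
since $|\th^c|=1$. Summing the squares over $i$, dividing by $\sqrt d$, and then taking the maximum over orderings of $\on{S}_d$ as in \eqref{eq:maxorderings}, this yields
\[
\bs_1(\La,\La_n)(x) \ge \big||\la'(x)| - |\la_n'(x)|\big| \quad \text{ for a.e. } x \in I.
\]

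Now \Cref{thm:radicals} and the fact that $\dd^{1,q}_{\on{rad},I}$ is induced by $\dd^{1,q}_I$ imply that $\|\bs_1(\La,\La_n)\|_{L^q(I)} \to 0$ as $n \to \infty$. Combined with the pointwise lower bound this gives \eqref{eq:rads}. Statement \eqref{eq:rade} then follows by the reverse triangle inequality for the $L^q$-norm, namely
\[
\big| \|\la_n'\|_{L^q(I)} - \|\la'\|_{L^q(I)} \big| \le \big\||\la_n'| - |\la'|\big\|_{L^q(I)} \to 0.
\]
The only non-trivial point is extracting $L^q$-convergence of $\bs_1(\La,\La_n)$ from $\dd^{1,q}_{\on{rad},I}(\La,\La_n) \to 0$; once the definition of the radical distance is unpacked (it is tailored precisely so that this $\bs_1$-component controls the scalar derivatives up to unimodular factors), the remaining steps are routine manipulations with roots of unity.
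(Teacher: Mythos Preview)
Your argument is correct and follows essentially the same route as the paper: bound $\big||\la'|-|\la_n'|\big|$ pointwise by the derivative component of the distance via the reverse triangle inequality for unimodular multiples, then invoke \Cref{thm:radicals}; the second conclusion follows from the first by the reverse triangle inequality in $L^q$, exactly as you do. The only difference is that the paper works directly with $\bs_{\on{rad},1}$ (bounding $\big||\la'|-|\la_n'|\big|\le|\la'-\th^{r(x)}\la_n'|\le\bs_{\on{rad},1}(\La,\La_n)$), whereas you detour through the general $\bs_1$ and then need the identification of $\bs_1$ with $\bs_{\on{rad},1}$ on radical tuples---this detour is unnecessary, and avoiding it eliminates precisely the step you flag as ``the only non-trivial point.''
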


\Cref{cor:radicals} will be proved in \Cref{proof:corrad}. 
It will be used in the proof of \Cref{thm:main1}.

\subsection{Unordered $d$-tuples of radicals}

Let us consider the set
\[
    \cA_{\on{rad},d}(\C) := \{ [z_1,\ldots,z_d] \in \cA_d(\C) : z_1^d = z_2^d = \cdots = z_d^d\}.
\]

\begin{definition}
    Let $d$ be a positive integer and $\th$ a $d$-th root of unity.
    For any $\la \in \C$ we define the unordered $d$-tuple 
    \[
        \bth{\la} := [\la, \th \la, \th^2 \la, \ldots, \th^{d-1} \la].
    \]
    Note that, for all $a \in \C$,
    $\bth{a\la} = a \bth{\la}  = \la \bth{a}$.
\end{definition}

We have the equivalent representation
\[
    \cA_{\on{rad},d}(\C) = \{[\la]_\th : \la \in \C\}.
\]
The restriction of the metric $\dd$ to $\cA_{\on{rad},d}(\C)$ is very simple: 

\begin{lemma} \label{lem:drad}
    For $\la, \mu \in \C$,      
    \[
        \mathbf d(\bth{\la} ,\bth{\mu} ) = \min_{1 \le j \le d} |\la - \th^j \mu|. 
    \]
    In particular, the map $\C \ni \la  \mapsto \bth{\la} \in \cA_{\on{rad},d}(\C)$ is Lipschitz with Lipschitz constant $\le 1$. 
\end{lemma}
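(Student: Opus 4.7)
The plan is to unpack the definition of $\dd$ on $\cA_d(\C)$ and exploit the fact that $\theta$ is a $d$-th root of unity (so $|\theta|=1$ and $\theta^d=1$), which lets us absorb powers of $\theta$ on one side of each summand. Writing $\bth{\la}=[\la,\theta\la,\ldots,\theta^{d-1}\la]$ and $\bth{\mu}=[\mu,\theta\mu,\ldots,\theta^{d-1}\mu]$, for any $\sigma \in \on{S}_d$ I would set $k_\sigma(i)\equiv \sigma(i)-i\pmod d$ and observe
\[
|\theta^i\la-\theta^{\sigma(i)}\mu|=|\theta^i|\,|\la-\theta^{k_\sigma(i)}\mu|=|\la-\theta^{k_\sigma(i)}\mu|.
\]
This is the only content-bearing identity; everything else is bookkeeping.

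Next I would prove the two inequalities separately. For the lower bound, fix $j^*\in\{1,\ldots,d\}$ minimizing $|\la-\theta^{j^*}\mu|$. For every $\sigma\in\on{S}_d$, each summand $|\la-\theta^{k_\sigma(i)}\mu|^2$ is at least $|\la-\theta^{j^*}\mu|^2$, so the average over $i$ is as well, and hence
\[
\frac{1}{\sqrt d}\Bigl(\sum_{i=0}^{d-1}|\theta^i\la-\theta^{\sigma(i)}\mu|^2\Bigr)^{1/2}\ge |\la-\theta^{j^*}\mu|.
\]
Taking the minimum over $\sigma$ gives $\dd(\bth{\la},\bth{\mu})\ge \min_j|\la-\theta^j\mu|$. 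For the upper bound, I would test the specific cyclic permutations $\sigma_j(i)=i+j\pmod d$; then $k_{\sigma_j}\equiv j$ is constant, the sum equals $d\,|\la-\theta^j\mu|^2$, and dividing by $\sqrt d$ gives $\dd(\bth{\la},\bth{\mu})\le |\la-\theta^j\mu|$ for each $j$, hence $\le \min_j|\la-\theta^j\mu|$.

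The Lipschitz claim is then immediate by choosing $j=d$, so that $\theta^d=1$ and $|\la-\theta^d\mu|=|\la-\mu|$. I do not anticipate any real obstacle: the argument is purely a one-line computation made possible by $|\theta|=1$, together with the trivial averaging inequality. The only thing to be careful about is the indexing convention for the tuple and the range of $j$, but since $\{\theta^1,\ldots,\theta^d\}$ coincides with all $d$-th roots of unity, this is harmless.
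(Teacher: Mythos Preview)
The proposal is correct and takes essentially the same approach as the paper: both obtain the upper bound by testing the cyclic permutations $\sigma_j(i)=i+j$, and the paper's lower bound simply asserts that the minimum over $\on{S}_d$ is attained on such a rotation, whereas you justify this more carefully via the averaging inequality (each summand $|\lambda-\theta^{k_\sigma(i)}\mu|^2$ dominates $\min_j|\lambda-\theta^j\mu|^2$). The Lipschitz consequence is handled identically.
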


\begin{proof}
    Clearly, the minimum over $\on{S}_d$ in the definition of $\mathbf d(\bth{\la},\bth{\mu})$ 
    is attained on a permutation induced by a rotation $\th^i \mapsto \th^{i+j}$. Hence
    \begin{align*}
        \mathbf d(\bth{\la} ,\bth{\mu} ) 
       &= \min_{\si \in \on{S}_d}  \Big(\frac{1}{d} \sum_{i=1}^d |\th^i\la - \th^{\si(i)}\mu|^2\Big)^{1/2}
       = \min_{1 \le j \le d}\Big(\frac{1}{d} \sum_{i=1}^d |\th^i\la - \th^{i+j}\mu|^2\Big)^{1/2}
       \\
       &= \min_{1 \le j \le d}\Big(\frac{1}{d} \sum_{i=1}^d |\la - \th^{j}\mu|^2\Big)^{1/2}
       = \min_{1 \le j \le d}  |\la - \th^{j}\mu|
    \end{align*}
    as claimed.
\end{proof}

\subsection{The distance $\dd_{\on{rad},E}^{1,q}$}

The distance $\dd^{1,q}_I$
from \Cref{def:dd} induces a distance $\dd^{1,q}_{\on{rad},I}$ on $W^{1,q}(I,\cA_{\on{rad},d}(\C))$.

\begin{definition}\label{def:srad}
    Let $f,g \in W^{1,q}(I,\cA_{\on{rad},d}(\C))$ and fix a $d$-th root of unity $\th$.
    By \Cref{p:Wselection-1}, there exist $\la, \mu \in W^{1,q}(I,\C)$ such that 
    \[
        f=\a{\la} + \a{\th \la}+  \cdots + \a{\th^{d-1}\la}, 
        \quad
        g=\a{\mu} + \a{\th \mu}+  \cdots + \a{\th^{d-1}\mu}. 
    \]
    For $x \in I$, let
    \begin{equation*}
        r(x) := \min \Big\{r \in \{0,1,\ldots,d-1\}:  |\la(x) - \th^r \mu(x)| =  \dd(f(x),g(x)) \Big\}
    \end{equation*}
    and set 
    \begin{equation*}
        \bs_{\on{rad},0}(f,g)(x) :=  \dd(f(x),g(x)).
    \end{equation*}
    For $x \in I$ such that $Df(x) = \sum_{i=0}^{d-1} \a{\th^i D\la(x)}$ and $Dg(x)  = \sum_{i=0}^{r-1} \a{\th^i D\mu(x)}$ exist in the sense of \Cref{d:diff},
    set 
    \begin{equation*}
        \bs_{\on{rad},1}(f, g)(x) := \max_\th   | D\la(x) - \th^{r(x)} D\mu(x)|,
    \end{equation*}
    where the maximum is taken over all $d$-th roots of unity.
    Then $\bs_{\on{rad},1}(f,g)(x)$ is defined for almost every $x \in I$. 
    It is independent of the choices of $\la$, $\mu$, and $\th$.

    For any measurable subset $E \subseteq I$, we set 
    \begin{equation*}
        \mathbf{d}^{1,q}_{\on{rad},E}( f,g )
        := \|\bs_{\on{rad},0}(f,g)\|_{L^\infty(E)}
        + \| \bs_{\on{rad},1}(f,g) \|_{L^q(E)}.
    \end{equation*}
    That $\bs_{\on{rad},i}(f,g)$, for $i=0,1$, are Borel measurable can be seen as in \Cref{lem:Borel}.
\end{definition}

\begin{lemma} \label{lem:ddrad1qsemidist}
    Let $I \subseteq \R$ be a bounded open interval and $E \subseteq I$ a measurable set. 
    Let $f, g \in W^{1,q}(I,\cA_{\on{rad},d}(\C))$. Then:
    \begin{enumerate}
        \item $\mathbf{d}^{1,q}_{\on{rad},E}( f,f )=0$.
        \item $\mathbf{d}^{1,q}_{\on{rad},E}( f,g )=0$ implies $f = g$ on $E$.  
        \item $\mathbf{d}^{1,q}_{\on{rad},E}( f,g )=\mathbf{d}^{1,q}_{\on{rad},E}( g,f )$.
    \end{enumerate}
    In particular, $\dd^{1,q}_{\on{rad},I}$ is a semimetric on $W^{1,q}(I,\cA_{\on{rad},d}(\C))$.
\end{lemma}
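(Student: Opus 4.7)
The plan is to verify the three semimetric axioms (1)--(3) in turn, relying on the symmetry of $\dd$ on $\Iq$, the isometric identity $|\mu - \th^{r}\la| = |\la - \th^{-r}\mu|$ valid for every $d$-th root of unity $\th^{r}$, and the stated independence of $\bs_{\on{rad},0}$ and $\bs_{\on{rad},1}$ under the choice of parameterizations and of the primitive $\th$. The first two axioms will be essentially immediate; the symmetry in (3) will be the real work.

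For (1), I would pick $\la = \mu$ in the parameterization of \Cref{p:Wselection-1} so that the two representations of $f$ coincide; then $r(x) = 0$ for every $x$ by the min convention (since $r=0$ realizes $|\la-\la|=0=\dd(f,f)$), so $\th^{r(x)} = 1$ for every $d$-th root of unity $\th$, giving $\bs_{\on{rad},1}(f,f) \equiv 0$ wherever defined, while $\bs_{\on{rad},0}(f,f) \equiv 0$ is immediate. For (2), the function $\bs_{\on{rad},0}(f,g) = \dd(f,g)$ is continuous on $I$ by \Cref{p:Wselection-1}(a) applied to the continuous representatives, so $\|\bs_{\on{rad},0}(f,g)\|_{L^\infty(E)}=0$ forces $\dd(f(x),g(x)) = 0$ for every $x \in E$, i.e., $f = g$ on $E$.

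The main obstacle will be (3). The piece $\bs_{\on{rad},0}(f,g) = \bs_{\on{rad},0}(g,f)$ is clear from the symmetry of $\dd$. For $\bs_{\on{rad},1}$, I would fix parameterizations $\la,\mu$ and a primitive $d$-th root of unity $\th$, and introduce $V(x) := \{r \in \{0,\ldots,d-1\} : |\la(x) - \th^{r}\mu(x)| = \dd(f(x),g(x))\}$. The key observation is that the corresponding set for $(g,f)$ is $V'(x) = \{(-r) \bmod d : r \in V(x)\}$, thanks to $|\mu - \th^{r'}\la| = |\la - \th^{-r'}\mu|$. One then interprets the ``$\max_\th$'' in the definition (together with the independence claims) so that $\bs_{\on{rad},1}(f,g)(x)$ depends only on the set $\{\th^{r} : r \in V(x)\}$, and similarly $\bs_{\on{rad},1}(g,f)(x)$ depends only on $\{\th^{r'} : r' \in V'(x)\}$. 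Applying the isometric identity $|D\mu - \vh D\la| = |D\la - \vh^{-1}D\mu|$ for $|\vh|=1$, combined with the fact that $\vh \mapsto \vh^{-1}$ is a bijection of the $d$-th roots of unity, will undo the sign change $r \mapsto -r$ relating $V$ and $V'$. This will yield $\bs_{\on{rad},1}(f,g) = \bs_{\on{rad},1}(g,f)$ almost everywhere on $I$. Taking the $L^{q}$-norm on $E$ and combining with the $L^\infty$-equality for $\bs_{\on{rad},0}$ then gives $\dd^{1,q}_{\on{rad},E}(f,g) = \dd^{1,q}_{\on{rad},E}(g,f)$, completing the verification that $\dd^{1,q}_{\on{rad},I}$ is a semimetric on $W^{1,q}(I,\cA_{\on{rad},d}(\C))$.
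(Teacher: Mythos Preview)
Your proposal is correct and follows the same direct verification of the three axioms that the paper uses; in fact the paper's proof is terser than yours, dispatching (3) as ``immediate from the definition'' without spelling out the inversion $\th \mapsto \th^{-1}$ and the identity $|D\mu - \vh D\la| = |D\la - \vh^{-1}D\mu|$ that you correctly identify as the mechanism behind symmetry. One small caution: your intermediate claim that $\bs_{\on{rad},1}(f,g)(x)$ ``depends only on the set $\{\th^{r} : r \in V(x)\}$'' is slightly loose (the $\max_\th$ ranges over the elements of this set that arise as $\th^{r(x)}$ for \emph{some} primitive $\th$, which may be a proper subset), but your subsequent use of the bijection $\th \mapsto \th^{-1}$ is exactly what shows this subset is carried to its inverse, so the argument goes through.
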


\begin{proof}
    (1) In this case, for any $x \in I$, 
    we have $r(x) =0$ and the assertion is obvious. 

    (2) If $\dd^{1,q}_{\on{rad},E}(f,g) = 0$ then $\dd(f,g)=0$ on $E$ (since $\dd(f,g)$ is continuous).

    (3) This is immediate from the definition. 
\end{proof}

\subsection{Proof of \Cref{cor:radicals}} \label{proof:corrad}

Recall that $\la$, $\la_n$ are absolutely continuous and belong to $W^{1,q}(I,\C)$, and that $\La := [\la]_\th$, $\La_n := [\la_n]_\th$  are the curves of 
unordered solutions of \eqref{eq:eqns}.
For each $1 \le j \le d$ and $x \in I$, 
where $\la'(x)$ and $\la_n'(x)$ exist,
\[
    \big| |\la'(x)| - |\la_n'(x)| \big| = \big| |\la'(x)| - |\th^j \la_n'(x)| \big| \le |\la'(x) - \th^j \la_n'(x)|.
\]
Fix $1 \le q < d/(d-1)$.
For $x \in I$, let $r(x) \in \{0,1,\ldots,d-1\}$ be as defined in \Cref{def:srad}.
Then 
\begin{align*}
    \big\| |\la'| - |\la_n'| \big\|_{L^q(I)}  &\le \|\la' - \th^r \la_n'\|_{L^q(I)} 
    \\
                                              &\le \|\bs_{\on{rad},1}(\La,\La_n)\|_{L^q(I)} 
                                              \le \dd^{1,q}_{\on{rad},I}(\La,\La_n).
\end{align*}
Thus \eqref{eq:rads} follows from \Cref{thm:radicals}.
Since
\begin{align*}
    \big| \|\la'\|_{L^q(I)} - \|\la_n'\|_{L^q(I)} \big| 
        &\le \big\| |\la'| - |\la_n'| \big\|_{L^q(I)},
\end{align*}
\eqref{eq:rads} implies \eqref{eq:rade}.
This ends the proof of \Cref{cor:radicals}.

\subsection{Ghisi and Gobbino's higher order Glaeser inequalities}
The following proposition is a variant of the results obtained in \cite{GhisiGobbino13}.

\begin{proposition}[{\cite[Proposition 1]{ParusinskiRainer15}}] \label{prop:m}
    Let $k\ge 1$ be an integer, $\ga \in (0,1]$, and $I \subseteq \R$ a bounded open interval.
    Let $g \in C^{k,\ga}(\ol I)$ be a complex valued function.
    Then there exists a nonnegative function $m \in L^p_w(I)$, for $p= \frac{k+\ga}{k +\ga-1}$, with 
    \begin{equation*}
        \|m\|_{p,w,I} \le C(k)\, \max \Big\{ |g^{(k)}|_{C^{0,\ga}(\ol I)}^{1/(k+\ga)}\, |I|^{1/p}, \|g'\|_{L^\infty(I)}^{1/(k+\ga)}  \Big\}
    \end{equation*}
    such that
    \begin{equation*} \label{eq:g}
        |g'(x)| \le m(x)\, |g(x)|^{1-1/(k+\ga)} \quad \text{ for almost every } x \in I.
    \end{equation*}
\end{proposition}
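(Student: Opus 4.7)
The plan is to take $m$ equal to the natural ratio $|g'|/|g|^{1-1/(k+\ga)}$, so that the pointwise inequality becomes tautological, and then to obtain the weak $L^p$ control via a Vitali covering argument driven by Taylor's theorem, in the spirit of Ghisi and Gobbino \cite{GhisiGobbino13}. Concretely, I would set $m(x):=|g'(x)|/|g(x)|^{1-1/(k+\ga)}$ at points of $I$ where $g(x)\ne 0$ and $g'(x)$ exists, and extend $m$ by $0$ on the rest of $I$. Since $g'$ vanishes almost everywhere on $\{g=0\}$ by a standard level-set argument, the pointwise inequality $|g'(x)|\le m(x)\,|g(x)|^{1-1/(k+\ga)}$ then holds almost everywhere by construction, and the task reduces to the weak $L^p$ estimate on $m$.

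For this estimate, fix $\lambda>0$ and consider the superlevel set $E_\lambda:=\{x\in I:m(x)>\lambda\}$. For each $x_0\in E_\lambda$, Taylor's theorem of order $k$ at $x_0$ combined with the H\"older control of $g^{(k)}$ gives, writing $M:=|g^{(k)}|_{C^{0,\ga}(\ol I)}$,
\[
g(x_0+h)=\sum_{j=0}^{k}\frac{g^{(j)}(x_0)}{j!}\,h^j+R(h),\qquad |R(h)|\le \tfrac{M}{k!}\,|h|^{k+\ga}.
\]
Choose the scale $r(x_0)$ by balancing the remainder against $|g(x_0)|$, that is $r(x_0)\sim (|g(x_0)|/M)^{1/(k+\ga)}$. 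The defining inequality $|g'(x_0)|>\lambda\,|g(x_0)|^{1-1/(k+\ga)}$ then forces the linear Taylor contribution $|g'(x_0)|\,r(x_0)$ to dominate $|g(x_0)|$ by a factor of order $\lambda/M^{1/(k+\ga)}$, which compels $g$ to undergo an oscillation of definite size on a witness interval $I_{x_0}\subset I$ of length comparable to $r(x_0)$. Applying Vitali's covering lemma to the family $\{I_{x_0}\}_{x_0\in E_\lambda}$ and summing the oscillation budget over a disjoint subfamily, controlled globally by $\|g'\|_{L^\infty(I)}$ and $M$, yields
\[
|E_\lambda|\,\lambda^{p}\le C(k)^{p}\,\max\bigl\{M^{1/(k+\ga)}|I|^{1/p},\ \|g'\|_{L^\infty(I)}^{1/(k+\ga)}\bigr\}^{p},
\]
which is exactly the stated bound on $\|m\|_{p,w,I}$ by the definition of the weak $L^p$ quasi-norm.

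The principal obstacle is that, $g$ being complex valued, $|g|$ is generally not $C^{k,\ga}$, so Rolle's theorem cannot be invoked directly to locate the zero of $g$ underlying the oscillation step. Ghisi and Gobbino circumvent this by working with the real part of a suitably rotated copy of $g$ (chosen adaptively at each $x_0$) and carrying the phase through the argument; this device should transfer to the present setting with only notational changes, and the constants that ultimately enter $C(k)$ depend only on the combinatorics of the rotation. The second term $\|g'\|_{L^\infty(I)}^{1/(k+\ga)}$ inside the maximum enters precisely to absorb the boundary regime in which the natural scale $r(x_0)$ would otherwise leave $I$.
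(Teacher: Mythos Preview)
The paper does not give a proof here; the result is quoted from \cite[Proposition~1]{ParusinskiRainer15}, itself an adaptation of \cite{GhisiGobbino13} to complex-valued $g$. Your choice of $m$ as the ratio and the overall shape of the plan (Taylor expansion plus a covering argument, with a rotation device for the complex case) match those references in spirit.

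There is, however, a genuine gap in the ``oscillation budget'' step. With your scale $r(x_0)\sim(|g(x_0)|/M)^{1/(k+\ga)}$, summing the oscillations of $g$ on a disjoint Vitali subfamily against the total variation $\|g'\|_{L^\infty}|I|$ controls $\sum_j|g(x_{0,j})|$, whereas Vitali only gives $|E_\la|\lesssim\sum_j r(x_{0,j})\sim M^{-1/(k+\ga)}\sum_j|g(x_{0,j})|^{1/(k+\ga)}$. These two sums are not comparable without a bound on the number of intervals, so the accounting does not yield $|E_\la|\,\la^p\le C$. (Note also that for $g(x)=x$ one has $M=0$ and your scale is infinite, so the argument does not even start in that regime; this is precisely where the second term $\|g'\|_{L^\infty}^{1/(k+\ga)}$ must carry the whole estimate, and it cannot do so merely as a ``boundary absorption''.) A second gap: the intermediate Taylor terms $g^{(j)}(x_0)h^j/j!$ for $2\le j\le k$ are not controlled at your scale. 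Landau--Kolmogorov interpolation bounds $|g^{(j)}(x_0)|$ only in terms of $\|g\|_{L^\infty}$ and $M$, not $|g(x_0)|$, so these terms can swamp the linear one and invalidate the claim that $g$ must oscillate by a definite amount on $I_{x_0}$. The mechanism in \cite{GhisiGobbino13} is different: rather than an oscillation budget for $g$, one compares all Taylor coefficients at once to locate a zero of (a real projection of) $g$ within a $\la$-dependent distance, and the weak-$L^p$ bound then follows from the structure of the zero set.
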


\begin{corollary}[{\cite[Corollary 2]{ParusinskiRainer15}}] \label{cor:m}
    Let $d$ be a positive integer.
    Let $I \subseteq \R$  be a bounded open interval.
    For any continuous function $f : I \to \C$ such that $f^d = g \in C^{d-1,1}(\ol I)$, 
    we have 
    $f' \in L^p_w(I)$, where $p = d/(d-1)$, and 
    \[
        \|f'\|_{p,w,I} \le C(d)\, \max\Big\{|g^{(d-1)}|_{C^{0,1}(\ol I)}^{1/d} |I|^{1/p}, \|g'\|_{L^\infty(I)}^{1/d}  \Big\}.
    \]
\end{corollary}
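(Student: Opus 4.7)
The plan is to reduce the statement to \Cref{prop:m} via the identity $f^d = g$, and then pass to the $L^p_w$ quasinorm.

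First, I would apply \Cref{prop:m} to $g \in C^{d-1,1}(\ol I)$ with $k = d-1$ and $\ga = 1$. Since $(k+\ga)/(k+\ga-1) = d/(d-1) = p$, this produces a nonnegative $m \in L^p_w(I)$ satisfying
\[
    \|m\|_{p,w,I} \le C(d)\, \max\Big\{|g^{(d-1)}|_{C^{0,1}(\ol I)}^{1/d} |I|^{1/p}, \|g'\|_{L^\infty(I)}^{1/d}  \Big\}
\]
together with the pointwise inequality $|g'(x)| \le m(x)\,|g(x)|^{(d-1)/d}$ for almost every $x \in I$.

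Next, let $Z := f^{-1}(0)$, which is closed since $f$ is continuous. On the open complement $I \setminus Z$, a local continuous branch of the $d$-th root exists, so $f$ is as smooth as $g$ there and the classical relation $d f(x)^{d-1} f'(x) = g'(x)$ holds. Using the crucial algebraic identity $|g(x)|^{(d-1)/d} = (|f(x)|^d)^{(d-1)/d} = |f(x)|^{d-1}$, we obtain
\[
    d\,|f(x)|^{d-1}\,|f'(x)| = |g'(x)| \le m(x)\,|f(x)|^{d-1},
\]
and dividing by $|f(x)|^{d-1}>0$ yields the key bound $|f'(x)| \le m(x)/d$ on $I \setminus Z$. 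On $Z$, I would define the a.e.\ representative $f'$ to be $0$; this is legitimate because at almost every $x_0 \in Z$, $x_0$ is a Lebesgue density point of $Z$, and since $f$ vanishes on $Z$, any limit of difference quotients along a sequence in $Z$ (which always exists at a density point) forces the classical derivative, when it exists, to be $0$. With this convention the inequality $|f'(x)| \le m(x)/d$ holds for almost every $x \in I$.

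The conclusion then follows by taking the $\|\cdot\|_{p,w,I}$ quasinorm of both sides, using its positive homogeneity and monotonicity, and absorbing the factor $1/d$ into the constant $C(d)$. The one slightly delicate point—and the main potential obstacle—is making precise what ``$f'$'' means at points of $Z$; this is resolved by the density point argument above, which matches the natural a.e.\ extension on $Z$ with the classical derivative on $I \setminus Z$. Once this technicality is dispatched, the proof is a direct application of \Cref{prop:m}.
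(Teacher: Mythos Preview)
Your approach is correct and is the natural deduction from \Cref{prop:m}; the paper itself gives no proof here but simply cites \cite[Corollary~2]{ParusinskiRainer15}, which proceeds in the same way (apply \Cref{prop:m} with $k=d-1$, $\ga=1$, then use $d\,f^{d-1}f'=g'$ and $|g|^{(d-1)/d}=|f|^{d-1}$). The one soft spot is your handling of $Z$: the density-point argument only shows that the classical derivative vanishes at density points of $Z$ \emph{when it exists}, but does not by itself establish a.e.\ differentiability on $Z$, nor that your candidate (set to $0$ on $Z$) is the weak derivative of $f$---and it is the latter that is actually used downstream (e.g.\ to conclude $\la,\la_n\in W^{1,q}(I)$). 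This is resolved by showing $f$ is absolutely continuous: $f$ is continuous with $f=0$ on $Z$, on each component of $I\setminus Z$ it is absolutely continuous with derivative in $L^1$ (since $m\in L^p_w(I)\subseteq L^1(I)$ by \eqref{eq:qp}), and a standard gluing then gives $f(x)=\int^x h$ with $h:=f'\cdot\mathbf 1_{I\setminus Z}$; cf.\ \cite[Lemma~1]{ParusinskiRainer15}.
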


\subsection{Proof of \Cref{thm:radicals}}

Let $d\ge 2$ be an integer and $I \subseteq \R$ a bounded open interval.
Let $g_n \to g$ in $C^d(\ol I,\C)$ as $n\to \infty$. 
Let $\La,\La_n : I \to \cA_d(\C)$ be the curves of unordered solutions of \eqref{eq:eqns}. 
We have to show that
\begin{equation*} 
    \dd^{1,q}_{\on{rad},I}(\La,\La_n) \to 0
    \quad  \text{ as } n \to \infty,
\end{equation*}
for all $1 \le q <d/(d-1)$.

Let $\la,\la_n : I \to \C$ be continuous functions satisfying
\begin{equation*}
    \la^d = g \quad \text{ and } \quad \la_n^d =g_n.
\end{equation*}
By \Cref{cor:m}, $\la,\la_n \in W^{1,q}(I)$,  
for all $1 \le q <d/(d-1)$,
and
\[
    \La = [\la]_\th \quad \text{ and } \quad \La_n = [\la_n]_\th. 
\]

\begin{remark}
    By assumption, all derivatives of order $\le d$ of $g$ and $g_n$ extend continuously to 
    the endpoints of the interval $I$. 
    In particular, also $\la$ and $\la_n$ extend continuously to $\ol I$.
    For technical reasons, we will work with the compact interval $\ol I$. 
\end{remark}

We consider the zero set of $g$ in $\ol I$, 
\[
    Z_g := \{x \in \ol I : g(x)=0\},
\]
and its complement in $I$,
\[
    \Om_g := I \setminus Z_g  = \{x \in I : g(x) \ne 0\}.
\]
The set of accumulation points of $Z_g$ is denoted by $\on{acc} (Z_g)$.
Let
\[
    p := \frac{d}{d-1}
\]
for the rest of the section.

\subsection*{Strategy of the proof of \Cref{thm:radicals}}

\begin{description}
    \item[Step 0] We prove that 
        \begin{equation*} 
            \|\mathbf{s}_{\on{rad},0}(\La,\La_n)\|_{L^\infty(I)}  
            = \| \dd(\La,\La_n) \|_{L^\infty(I)} \to 0 \quad \text{ as } n \to \infty. 
        \end{equation*}
        Thus, it suffices to show that, for all $1 \le q < p$,
        \begin{equation} \label{eq:radtoshow}
            \| \mathbf{s}_{\on{rad},1}(\La,\La_n) \|_{L^q(I)} \to 0 \quad \text{ as } n \to \infty.
        \end{equation}
    \item[Step 1] We show that, for each $x \in \Om_g$,
        \[
            \mathbf{s}_{\on{rad},1}(\La,\La_n)(x)   \to 0 \quad \text{ as } n \to \infty,        
        \]
        and thus conclude, 
        using the dominated convergence theorem,
        that,
        for all $1 \le q < p$, 
        \[
            \| \mathbf{s}_{\on{rad},1}(\La,\La_n) \|_{L^q(\Om_g)} \to 0 \quad \text{ as } n \to \infty.
        \]
    \item[Step 2] We prove that for each $\ep>0$ there exist a neighborhood $U$ of $\on{acc}(Z_g)$ in $\ol I$
        and $n_0 \ge 1$ such that, 
        for all $1 \le q < p$, 
        \[
            \|\la_n'\|_{L^q(U)} \le C(d,q,|I|)\, \ep, \quad n \ge n_0.
        \]
        Note that for $x \in \on{acc}(Z_g)$ we have $\la(x)=0$ and $\la'(x) =0$ (if the latter exists). 
        The set $Z_g \setminus \on{acc}(Z_g)$ has measure zero.
    \item[Step 3] At this stage, it is not difficult to combine the results of Step 1 and Step 2 to 
        complete the proof of \eqref{eq:radtoshow} and hence of \Cref{thm:radicals}.
\end{description}

\subsection*{Step 0. Uniform convergence }

\begin{lemma} \label{lem:C0rad}
    If $g_n \to g$ in $C^0(\ol I)$
    as $n \to \infty$
    and $\la,\la_n \in C^0(\ol I)$ are such that $\la^d=g$ and $\la_n^d = g_n$, then 
    \begin{equation*} \label{eq:C0rad}
        \| \dd(\bth{\la}, \bth{\la_n} ) \|_{L^\infty(I)} \to 0 \quad \text{ as } n \to \infty. 
    \end{equation*}
\end{lemma}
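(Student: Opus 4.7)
The plan is to reduce the $\cA_d(\C)$-valued statement to a pointwise scalar estimate using the explicit description of $\dd$ on $\cA_{\on{rad},d}(\C)$ provided by \Cref{lem:drad}, and then use the factorization $Z^d - c^d = \prod_{j=0}^{d-1}(Z-\th^j c)$ (where $\th$ is the primitive $d$-th root of unity fixed throughout the section) to pass from the closeness of $g$ and $g_n$ to the closeness of the unordered $d$-tuples of their $d$-th roots.

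More precisely, I would first invoke \Cref{lem:drad} to rewrite, for each $x\in I$,
\[
    \dd(\bth{\la(x)},\bth{\la_n(x)}) = \min_{1\le j\le d} |\la(x)-\th^j \la_n(x)|.
\]
Then I observe the algebraic identity
\[
    \la(x)^d - \la_n(x)^d = \prod_{j=0}^{d-1}\bigl(\la(x) - \th^j\la_n(x)\bigr),
\]
which holds because $\th$ is a primitive $d$-th root of unity. Taking absolute values and using $\la^d=g$, $\la_n^d=g_n$ gives
\[
    \prod_{j=0}^{d-1} |\la(x)-\th^j \la_n(x)| = |g(x)-g_n(x)|,
\]
so at least one of the $d$ factors is $\le |g(x)-g_n(x)|^{1/d}$, whence
\[
    \dd(\bth{\la(x)},\bth{\la_n(x)}) \le |g(x)-g_n(x)|^{1/d}.
\]

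Taking the supremum over $x\in I$ yields
\[
    \|\dd(\bth{\la},\bth{\la_n})\|_{L^\infty(I)} \le \|g-g_n\|_{L^\infty(I)}^{1/d},
\]
and the right-hand side tends to $0$ as $n\to\infty$ by hypothesis. This is essentially the whole proof; there is no serious obstacle, since the key factorization turns the problem into an elementary inequality. The only point to handle carefully is making sure the index set in $\min$ ranges over all $d$ powers $\th^0,\th^1,\ldots,\th^{d-1}$ so that the factorization identity can be applied directly, but this is exactly the content of \Cref{lem:drad}.
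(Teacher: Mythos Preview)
Your proof is correct and follows exactly the same approach as the paper's direct argument: both use \Cref{lem:drad} and the factorization $\prod_j|\la(x)-\th^j\la_n(x)|=|g(x)-g_n(x)|$ to obtain the pointwise bound $\dd(\bth{\la(x)},\bth{\la_n(x)})\le |g(x)-g_n(x)|^{1/d}$, then take the supremum. The paper also notes that the result alternatively follows from \Cref{cor:homeo}.
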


\begin{proof}
    This follows from \Cref{cor:homeo}. 
    Here is a direct argument: for fixed $x \in I$,
    \[
        \prod_{j=1}^d |\la(x) - \th^j \la_n(x)| = |\la(x)^d - g_n(x)|= |g(x) -g_n(x)|
    \]
    so that $\dd(\bth{\la(x)}, \bth{\la_n(x)}) = \min_{1\le j \le d}|\la(x) - \th^j \la_n(x)| \le |g(x) -g_n(x)|^{1/d}$.
\end{proof}

\subsection*{Step 1. Continuity on $\Om_g$}

\begin{lemma} \label{lem:ptw1}
    Let $x \in \Om_g$. Then $\la'(x)$ and $\la_n'(x)$ exist for sufficiently large $n$.
    Let $1 \le j \le d$. 
    If 
    \[
        |\la(x) - \th^j \la_n(x)|   \to 0 \quad \text{ as } n \to \infty,
    \]
    then also
    \[
        |\la'(x) - \th^j \la_n'(x)|    \to 0 \quad \text{ as } n \to \infty.
    \]
\end{lemma}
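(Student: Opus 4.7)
The plan is to exploit the fact that $x \in \Om_g$ means $g(x) \ne 0$, so $\la$ and $\la_n$ (for large $n$) are genuinely smooth near $x$, and their derivatives at $x$ can be computed by implicit differentiation of the identities $\la^d = g$ and $\la_n^d = g_n$.

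First I would choose a relatively compact open neighborhood $V$ of $x$ in $\Om_g$ and a constant $c>0$ with $|g| \ge c$ on $V$. Since $g_n \to g$ uniformly on $\ol I$, we have $|g_n| \ge c/2$ on $V$ for all $n$ large enough. On such a set, a continuous $d$-th root of the $C^d$ nonvanishing function $g$ (resp.\ $g_n$) belongs to $C^d(V)$, obtained locally by choosing a branch of $z^{1/d}$ away from the origin; in particular $\la$ and (for large $n$) $\la_n$ are differentiable at $x$ with $\la(x), \la_n(x) \ne 0$ and
\begin{equation*}
    \la'(x) = \frac{g'(x)}{d\,\la(x)^{d-1}}, \qquad \la_n'(x) = \frac{g_n'(x)}{d\,\la_n(x)^{d-1}}.
\end{equation*}

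Next I would pass to the limit. The assumption $|\la(x) - \th^j \la_n(x)| \to 0$ gives $\la_n(x) \to \th^{-j}\la(x)$, and since $\th^d=1$,
\begin{equation*}
    \la_n(x)^{d-1} \longrightarrow \th^{-j(d-1)}\la(x)^{d-1} = \th^{j}\,\la(x)^{d-1}.
\end{equation*}
The convergence $g_n \to g$ in $C^d(\ol I)$ implies in particular $g_n'(x) \to g'(x)$. Combining these facts and using $\la(x)\ne 0$ (so the denominator stays bounded away from zero for large $n$),
\begin{equation*}
    \th^j \la_n'(x) \;=\; \frac{\th^j\, g_n'(x)}{d\,\la_n(x)^{d-1}} \;\longrightarrow\; \frac{\th^j\, g'(x)}{d\,\th^{j}\la(x)^{d-1}} \;=\; \la'(x),
\end{equation*}
which gives $|\la'(x) - \th^j \la_n'(x)| \to 0$ as required.

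The only thing to watch is that $\la_n$ is genuinely differentiable at $x$ for large $n$; this is guaranteed by the uniform lower bound $|g_n| \ge c/2$ on $V$, which prevents zeros of $g_n$ from accumulating at $x$ and lets us pick a $C^d$ branch of $g_n^{1/d}$ on $V$ that must coincide with the continuous function $\la_n$. No further step is substantively hard; the entire lemma reduces to a direct calculation once one observes that the hypothesis $x \in \Om_g$ trivializes the singular behavior of the $d$-th root map at this point.
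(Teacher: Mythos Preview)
Your proof is correct and follows essentially the same approach as the paper: implicit differentiation of $\la^d=g$ and $\la_n^d=g_n$ at the point $x$ (where the functions do not vanish), followed by a direct limit computation. The only cosmetic differences are that the paper writes the derivative as $\la'(x)=\la(x)\cdot\frac{1}{d}\frac{g'(x)}{g(x)}$ rather than $\frac{g'(x)}{d\,\la(x)^{d-1}}$, and uses a triangle-inequality estimate instead of your direct limit of $\th^j\la_n'(x)$; these are equivalent since $\la(x)^d=g(x)$.
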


\begin{proof}
    Fix $x \in \Om_g$.
    Then $g(x)\ne 0$ and there is $n_0\ge 1$ such that $g_n(x)\ne 0$ for all $n \ge n_0$.
    Fix $n \ge n_0$. 
    So $\la(x)\ne0$, $\la_n(x)\ne0$, and the derivatives $\la'(x)$ and $\la_n'(x)$ exist. 
    Differentiating $\la(x)^d =g(x)$ gives
    \[
        \la'(x) =  \la(x)\cdot \frac{1}{d} \frac{g'(x)}{g(x)},
    \]
    and analogously for $\la_n'(x)$.
    For each $1 \le j \le d$, we have 
    \begin{align*}
        |\la'(x) - \th^j \la_n'(x)| &= \Big| \la(x) \cdot \frac{1}{d} \frac{g'(x)}{g(x)} - \th^j\la_n(x)\cdot \frac{1}{d} \frac{g_n'(x)}{g_n(x)} \Big|
        \\
                                    &\le |\la(x) - \th^j \la_n(x)| \cdot \frac{1}{d} \Big| \frac{g'(x)}{g(x)} \Big| + \frac{|\la_n(x)|}{d}  
                                    \Big| \frac{g'(x)}{g(x)} - \frac{g_n'(x)}{g_n(x)}\Big|.
    \end{align*}
    We have $\big| \frac{g'(x)}{g(x)} - \frac{g_n'(x)}{g_n(x)}\big| \to 0$ as $n \to \infty$ and $|\la_n(x)|$ is bounded, 
    by \eqref{eq:ass}.
    The statement follows.
\end{proof}

\begin{lemma} \label{lem:ptw2}
    For each $x \in \Om_g$, 
    \[
        \mathbf s_{\on{rad},1}(\La,\La_n)(x)  \to 0 \quad \text{ as } n \to 0.
    \]
\end{lemma}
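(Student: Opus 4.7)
My plan is to fix $x \in \Om_g$ and reduce the maximum in the definition of $\bs_{\on{rad},1}(\La, \La_n)(x)$ to a single dominant term by showing that the minimizing index in the definition of $r_n(x)$ is unique for all sufficiently large $n$; the convergence of that single term will then follow from \Cref{lem:ptw1} applied with an $n$-dependent index.

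First, I would invoke \Cref{lem:C0rad} together with \Cref{lem:drad} to obtain, for large $n$, indices $r_n = r_n(x) \in \{0,1,\ldots,d-1\}$ with
\[
    |\la(x) - \th^{r_n} \la_n(x)| = \dd(\La(x),\La_n(x)) \to 0.
\]
This forces $|\la_n(x)| \to |\la(x)|$, and since $g(x) \ne 0$ implies $|\la(x)|>0$, we have $|\la_n(x)| \ge |\la(x)|/2 > 0$ for large $n$. In particular $g_n(x) \to g(x) \ne 0$ ensures $\la_n(x) \ne 0$, so $\la_n'(x)$ exists for large $n$ and \Cref{lem:ptw1} is applicable.

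Next, I would establish uniqueness of $r_n$ for large $n$. Using the reverse triangle inequality, for $j \ne r_n$,
\[
    |\la(x) - \th^j \la_n(x)| \ge |\la_n(x)|\cdot |\th^{r_n} - \th^j| - |\la(x) - \th^{r_n}\la_n(x)|.
\]
With $c_d := \min_{0 \le i < k \le d-1} |\th^i - \th^k| > 0$ depending only on $d$, the right-hand side is eventually at least $c_d |\la(x)|/4 > 0$, while $\dd(\La(x),\La_n(x)) \to 0$. So for $n$ sufficiently large $r_n$ is the unique minimizer, and the maximum in \Cref{def:srad} collapses to the single value $|\la'(x) - \th^{r_n}\la_n'(x)|$.

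Finally, I would apply \Cref{lem:ptw1} with $j$ replaced by the $n$-dependent index $r_n$. The key estimate in its proof,
\[
    |\la'(x) - \th^j \la_n'(x)| \le |\la(x) - \th^j \la_n(x)|\cdot \tfrac{1}{d}\Big|\tfrac{g'(x)}{g(x)}\Big| + \tfrac{|\la_n(x)|}{d}\Big|\tfrac{g'(x)}{g(x)} - \tfrac{g_n'(x)}{g_n(x)}\Big|,
\]
is valid for any $j \in \{0,\ldots,d-1\}$ and thus remains valid with $j=r_n$. Both terms on the right tend to $0$: the first by the convergence $|\la(x)-\th^{r_n}\la_n(x)| \to 0$, the second because $g_n \to g$ in $C^1(\ol I)$ and $|\la_n(x)|$ stays bounded. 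This yields $|\la'(x) - \th^{r_n}\la_n'(x)| \to 0$ and hence $\bs_{\on{rad},1}(\La,\La_n)(x) \to 0$. The main subtlety I anticipate is the correct interpretation of the maximum over $d$-th roots of unity in \Cref{def:srad} — a device present to make $\bs_{\on{rad},1}$ invariant under the residual ambiguity in the choice of $\la$, $\mu$, and $\th$ — but for $x \in \Om_g$ the uniqueness argument above shows this ambiguity disappears for large $n$, so the apparent difficulty evaporates.
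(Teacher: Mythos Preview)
Your proof is correct and follows essentially the same route as the paper: invoke \Cref{lem:C0rad} to get $|\la(x)-\th^{r(x)}\la_n(x)|\to 0$ and then apply (the estimate from the proof of) \Cref{lem:ptw1}. Your argument is in fact more careful than the paper's---you explicitly establish uniqueness of the minimizing index for large $n$ and allow $r_n$ to vary with $n$, whereas the paper's parenthetical simply asserts that $r(x)$ is unique and eventually independent of $n$.
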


\begin{proof}
    This follows from \Cref{lem:ptw1}, since 
    \[
        |\la(x) - \th^{r(x)}\la_n(x)| = \dd(\La(x) ,\La_n(x)) \to 0 \quad \text{ as } n \to 0,
    \]
    by \Cref{lem:C0rad}.
    (Here $r(x)$ is independent of $n$, for $n$ sufficiently big.  Note that 
    if $x\in \Om_g$ then $r(x)$ is unique and constant on the connected components of $\Om_g$.)
\end{proof}

\begin{proposition} \label{lem:onOmg}
    For all $1 \le q < p$, 
    \[
        \| \mathbf{s}_{\on{rad},1}(\La,\La_n) \|_{L^q(\Om_g)} \to 0 \quad \text{ as } n \to \infty.
    \]
\end{proposition}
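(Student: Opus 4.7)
The plan is to apply Vitali's convergence theorem on $\Om_g$: the pointwise almost everywhere convergence is already provided by \Cref{lem:ptw2}, so all that is missing is uniform integrability of the family $\{\bs_{\on{rad},1}(\La,\La_n)^q\}_{n\ge 1}$, which I would extract from the uniform weak-$L^p$ bounds of \Cref{cor:m}.

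First, I would record the pointwise bound
\begin{equation*}
    \bs_{\on{rad},1}(\La,\La_n)(x)\le |\la'(x)| + |\la_n'(x)| \quad \text{for a.e.\ } x\in I,
\end{equation*}
which is immediate from \Cref{def:srad} and $|\th^{r(x)}|=1$ together with the triangle inequality. Next, since $g_n\to g$ in $C^d(\ol I)$ the sequence $(g_n)_n$ is bounded in $C^{d-1,1}(\ol I)$; \Cref{cor:m} applied to $g$ and to each $g_n$ then produces a constant $C$, independent of $n$, with
\begin{equation*}
    \|\la'\|_{p,w,I}\le C, \qquad \|\la_n'\|_{p,w,I}\le C, \qquad p=\tfrac{d}{d-1}.
\end{equation*}

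Fixing $1\le q<p$ and any measurable $E\subseteq \Om_g$, the inequality \eqref{eq:qp} gives
\begin{equation*}
    \|\la'\|_{L^q(E)}+\|\la_n'\|_{L^q(E)}\le 2\Big(\tfrac{p}{p-q}\Big)^{1/q}|E|^{1/q-1/p}\,C,
\end{equation*}
so by the pointwise bound above
\begin{equation*}
    \int_E \bs_{\on{rad},1}(\La,\La_n)^q\,dx \le C'(d,q)\,|E|^{1-q/p}
\end{equation*}
uniformly in $n$. Since $1-q/p>0$, this is exactly the uniform integrability of $\{\bs_{\on{rad},1}(\La,\La_n)^q\}_{n}$ on the finite-measure set $\Om_g$. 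Vitali's convergence theorem (recalled in \Cref{sec:appendix}), combined with the a.e.\ convergence $\bs_{\on{rad},1}(\La,\La_n)\to 0$ on $\Om_g$ from \Cref{lem:ptw2}, then yields the desired $\|\bs_{\on{rad},1}(\La,\La_n)\|_{L^q(\Om_g)}\to 0$.

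The point to watch is that one cannot in general produce a single pointwise dominating function in $L^q(\Om_g)$ for $\{|\la_n'|\}$: on $\Om_g$ the explicit formula $\la_n'=\la_n\,g_n'/(dg_n)$ from the proof of \Cref{lem:ptw1} only gives a local bound by a constant times $1/|g|$, which fails to be in $L^q$ when $g$ vanishes to high order, and the weak-$L^p$ control does not pass to a pointwise supremum. This is precisely why Vitali's theorem (via uniform integrability from \Cref{cor:m} and \eqref{eq:qp}) replaces a direct application of the dominated convergence theorem here.
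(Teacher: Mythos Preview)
Your proof is correct. It differs from the paper's argument in the convergence theorem employed: the paper uses the \emph{dominated} convergence theorem, with dominating function $C(d)\,(m+\sup_{n\ge 1} m_n)$, where $m,m_n\in L^p_w(I)$ are the specific functions produced by \Cref{prop:m} for $g$ and $g_n$; it asserts (invoking monotone convergence and the boundedness of $\{g_n\}$ in $C^{d-1,1}(\ol I)$) that $m+\sup_n m_n\in L^q(I)$ for $q<p$. You instead extract uniform integrability of $\{\bs_{\on{rad},1}(\La,\La_n)^q\}_n$ directly from the uniform weak-$L^p$ bounds via \eqref{eq:qp} and apply Vitali. Both routes reach the same conclusion; yours avoids any question about the pointwise supremum $\sup_n m_n$, and is in fact the mechanism the paper itself adopts later in the general (non-radical) case (see \Cref{lem:uniformint} and \Cref{lem:subsequence}).

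Your closing paragraph, however, overstates the obstruction: the paper \emph{does} supply a single dominating function, not through the crude candidate $C/|g|$ that you correctly discard, but through the $m_n$ of \Cref{prop:m}. You are right that weak-$L^p$ bounds alone do not pass to a pointwise supremum, so a generic dominated-convergence argument would fail; the paper's claim rests on additional structure of those particular $m_n$. Your Vitali argument is a clean alternative that renders this issue moot.
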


\begin{proof}
    Fix $1 \le q < p$.
    By \Cref{prop:m},
    for almost every $x \in I$,
    \[
        |\la'(x)-\th^{r(x)} \la_n'(x)| \le C(d) \, (m(x) + m_n(x)),
    \]
    where $m$ and $m_n$ are the nonnegative functions in $L^p_w$ from \Cref{prop:m} for $g$ and $g_n$, respectively.
    By the monotone convergence theorem, $m + \sup_{n\ge 1} m_n$ is a measurable nonnegative function belonging to $L^q(I)$, for $1 \le q <p$, 
    since $\{g_n : n \ge 1\}$ is a bounded set in  $C^{d-1,1}(\ol I)$.

    By \Cref{lem:ptw2} and the dominated convergence theorem, we may conclude that 
    \begin{align*}
        \int_{\Om_g} \big(\mathbf s_{\on{rad},1}(\La,\La_n)(x)\big)^q \, dx \to 0 \quad \text{ as } n \to \infty,
    \end{align*}
    which implies the assertion.
\end{proof}

\subsection*{Step 2. On accumulation points of $Z_g$}

\begin{lemma} \label{lem:Zg1}
    Let $g \in C^d(\ol I)$.
    If $x_0 \in \on{acc}(Z_g)$,
    then 
    \[
        g(x_0) = g'(x_0) = \cdots = g^{(d)}(x_0) = 0.
    \]
\end{lemma}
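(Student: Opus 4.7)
My plan is to argue by contradiction using a Taylor expansion around $x_0$. Let $x_0 \in \on{acc}(Z_g)$, so there is a sequence $(x_n) \subseteq Z_g$ with $x_n \ne x_0$ and $x_n \to x_0$. By continuity we immediately get $g(x_0) = 0$, so the smallest index $k$ with $0 \le k \le d$ (if it exists in this range) for which $g^{(k)}(x_0) \ne 0$ is at least $1$.

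Since $g \in C^d(\ol I)$, Taylor's theorem with Peano remainder gives, for $x$ near $x_0$,
\[
    g(x) = \frac{g^{(k)}(x_0)}{k!}\,(x-x_0)^k + r(x), \qquad r(x) = o(|x-x_0|^k).
\]
Assuming $g^{(k)}(x_0) \ne 0$, choose a neighbourhood of $x_0$ on which $|r(x)| \le \tfrac{1}{2}\tfrac{|g^{(k)}(x_0)|}{k!}|x-x_0|^k$. Then
\[
    |g(x)| \ge \tfrac{1}{2}\,\tfrac{|g^{(k)}(x_0)|}{k!}\,|x-x_0|^k > 0
\]
for all $x$ in that neighbourhood with $x \ne x_0$. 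This contradicts the existence of the sequence $(x_n)$ of zeros of $g$ tending to $x_0$.

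Therefore no such $k \in \{0,1,\ldots,d\}$ exists, i.e.\ $g^{(k)}(x_0) = 0$ for every $k = 0, 1, \ldots, d$, as claimed. The argument is entirely elementary; the only mildly delicate point is making sure the Peano form of Taylor's theorem applies, which is exactly guaranteed by the $C^d$ regularity of $g$. There is no real obstacle here.
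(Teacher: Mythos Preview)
Your proof is correct and essentially the same as the paper's: both argue by contradiction, take the minimal $k$ with $g^{(k)}(x_0)\ne 0$, and use the Taylor expansion at $x_0$ (with Peano remainder, available since $g\in C^d$) to see that $g$ cannot vanish at points $x_n\ne x_0$ tending to $x_0$. The paper phrases the contradiction by dividing $g(x_n)=0$ by $(x_n-x_0)^k$ and letting $n\to\infty$, whereas you phrase it as a lower bound $|g(x)|\ge \tfrac{1}{2}\tfrac{|g^{(k)}(x_0)|}{k!}|x-x_0|^k$; these are the same idea.
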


\begin{proof}
    By Taylor's formula,
    \[
        g(x) = g(x_0) + g'(x_0)(x-x_0) + \cdots + \frac{g^{(d)}(x_0)}{d!}(x-x_0)^d + o((x-x_0)^d).
    \]
    For contradiction, assume that $k \in \{1,\ldots,d\}$ is minimal with the property that $g^{(k)}(x_0) \ne 0$.
    If $Z_g \ni x_n \to x_0$, then
    \[
        0 = \frac{g^{(k)}(x_0)}{k!} +\frac{g^{(k+1)}(x_0)}{(k+1)!}(x_n -x_0) + \cdots +\frac{g^{(d)}(x_0)}{d!}(x_n-x_0)^{d-k} + o((x_n-x_0)^{d-k})
    \]
    leads to a contradiction.
\end{proof}

In the following, $I(x_0,\de)$ denotes the open $\de$-neighborhood of $x_0$ in $\ol I$ 
and $\ol I(x_0,\de)$ denotes its closure.

\begin{lemma} \label{lem:Zg2}
    Let $x_0 \in \on{acc}(Z_g)$.
    For every $\ep>0$ there exist $\de=\de(x_0,\ep)>0$  
    and $n_0 = n_0(x_0,\ep,\de) \ge 1$ such that
    \begin{equation}\label{eq:Zg2}
        \|\la_n'\|_{p,w,I(x_0,\de)}  \le C(d) \, \de^{1/p} \, \ep, \quad n \ge n_0.
    \end{equation} 
    In particular,
    \begin{equation} \label{eq:Zg3}
        \|\la_n'\|_{L^q(I(x_0,\de))} \le C(d)\, \Big(\frac{p}{p-q}\Big)^{1/q} |I(x_0,\de)|^{1/q}\, \ep, \quad n \ge n_0,
    \end{equation}
    for all $1 \le q <p$.
\end{lemma}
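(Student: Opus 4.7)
The plan is to apply \Cref{cor:m} (the radical Ghisi--Gobbino estimate) to $\la_n$ on the shrunk interval $I(x_0,\de)$ and to show that both terms appearing in the resulting maximum scale like $\de^{1/p}\ep$; this yields \eqref{eq:Zg2}, and then \eqref{eq:Zg3} follows from the weak-to-strong Lebesgue inclusion \eqref{eq:qp}. The decisive inputs will be \Cref{lem:Zg1} and the $C^d$-convergence $g_n\to g$.

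First I would use \Cref{lem:Zg1} to obtain $g^{(k)}(x_0)=0$ for all $0\le k\le d$. Since $g^{(d)}$ is continuous and vanishes at $x_0$, I would choose $\de=\de(x_0,\ep)$ with $\de\le |I|/2$ small enough that $\|g^{(d)}\|_{L^\infty(I(x_0,\de))}\le \ep^d/2$. Taylor's formula with integral remainder applied to $g'$, combined with the vanishing of $g^{(k)}(x_0)$ for $1\le k\le d-1$, gives
\[
g'(x)=\frac{1}{(d-2)!}\int_{x_0}^x (x-t)^{d-2}\,g^{(d)}(t)\,dt,
\]
hence $\|g'\|_{L^\infty(I(x_0,\de))}\le C(d)\,\de^{d-1}\,\ep^d$. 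Using that $\|g-g_n\|_{C^d(\ol I)}\to 0$, I would then pick $n_0=n_0(x_0,\ep,\de)$ large enough that $\|g-g_n\|_{C^d(\ol I)}\le C(d)\,\de^{d-1}\,\ep^d$ for all $n\ge n_0$. The triangle inequality together with the identity $|g_n^{(d-1)}|_{C^{0,1}(\ol I(x_0,\de))}=\|g_n^{(d)}\|_{L^\infty(I(x_0,\de))}$ then yields, for $n\ge n_0$,
\[
\|g_n'\|_{L^\infty(I(x_0,\de))}\le C(d)\,\de^{d-1}\,\ep^d, \qquad |g_n^{(d-1)}|_{C^{0,1}(\ol I(x_0,\de))}\le \ep^d.
\]

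Feeding these two bounds into \Cref{cor:m} applied to $\la_n$ on $I(x_0,\de)$, and using the identity $(d-1)/d=1/p$, produces
\begin{align*}
\|\la_n'\|_{p,w,I(x_0,\de)}
&\le C(d)\,\max\big\{(\ep^d)^{1/d}\,\de^{1/p},\,(C(d)\,\de^{d-1}\,\ep^d)^{1/d}\big\}\\
&\le C(d)\,\de^{1/p}\,\ep,
\end{align*}
which is exactly \eqref{eq:Zg2}. Then \eqref{eq:qp} applied to $\la_n'$ on $I(x_0,\de)$ gives
\[
\|\la_n'\|_{L^q(I(x_0,\de))}\le \Big(\frac{p}{p-q}\Big)^{1/q}|I(x_0,\de)|^{1/q-1/p}\,\|\la_n'\|_{p,w,I(x_0,\de)}\le C(d)\Big(\frac{p}{p-q}\Big)^{1/q}|I(x_0,\de)|^{1/q}\,\ep,
\]
where the absorption of the factor $\de^{1/p}|I(x_0,\de)|^{-1/p}$ into the dimensional constant uses $\de\le |I(x_0,\de)|\le 2\de$, valid thanks to the initial choice $\de\le|I|/2$.

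The main obstacle, and the reason that $\de$ must be fixed before $n_0$, is securing the sharp $\de^{d-1}$ decay of $\|g'\|_{L^\infty(I(x_0,\de))}$: this requires the full $C^d$-regularity (so that $g^{(d)}$ is continuous at $x_0$ with value zero), not merely $C^{d-1,1}$, and crucially exploits $x_0\in\on{acc}(Z_g)$ through \Cref{lem:Zg1}. Once this decay is in hand, the matching smallness of $\|g_n'\|_{L^\infty(I(x_0,\de))}$ is merely a $C^d$-convergence step and the remainder of the proof is a clean application of \Cref{cor:m} and \eqref{eq:qp}.
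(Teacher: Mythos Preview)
Your proof is correct and follows essentially the same approach as the paper: invoke \Cref{lem:Zg1}, use continuity of $g^{(d)}$ to choose $\de$, transfer the bounds to $g_n$ via $C^d$-convergence, apply Taylor's formula to control $\|g_n'\|_{L^\infty}$, and feed both bounds into \Cref{cor:m} before concluding with \eqref{eq:qp}. The only cosmetic difference is that the paper applies Taylor's formula directly to $g_n$ (after arranging $|g_n^{(k)}(x_0)|\le \ep^d\de^{d-k}$), whereas you apply it to $g$ first and then transfer; one minor point is that your triangle-inequality step gives $\|g_n^{(d)}\|_{L^\infty}\le \ep^d/2 + C(d)\de^{d-1}\ep^d$ rather than exactly $\ep^d$, but this is harmless since the extra factor is absorbed into the final $C(d)$ (or eliminated by also requiring $\de$ small enough that $C(d)\de^{d-1}\le 1/2$).
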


\begin{proof}
    By \Cref{lem:Zg1}, $g(x_0) = g'(x_0) = \cdots = g^{(d)}(x_0) = 0$.
    Fix $\ep>0$. By continuity, 
    there exists $\de>0$ such that 
    \[
        \|g\|_{C^d(\ol I(x_0,\de))} \le \frac{\ep^d}2.
    \]
    Furthermore, by \eqref{eq:ass}, there exists $n_0 \ge 1$ such that, for all $n \ge n_0$, 
    \[
        \|g-g_n\|_{C^d(\ol I(x_0,\de))} \le \frac{\ep^d}2
    \]
    and 
    \[
        |g_n^{(k)}(x_0)| \le \ep^d \de^{d-k}, \quad 0 \le k \le d.
    \]
    Then 
    \[
        \|g_n\|_{C^d(\ol I(x_0,\de))} \le \ep^d, \quad n \ge n_0.
    \]
    By Taylor's formula, for $x \in I(x_0,\de)$ and $n \ge n_0$,
    \begin{align*}
        |g_n'(x)| &= \Big|g_n'(x_0) + g_n''(x_0)(x-x_0) + \cdots + \int_{x_0}^x g_n^{(d)}(t) \frac{(x-t)^{d-2}}{(d-2)!}\, dt \Big|
        \le d\, \ep^d \de^{d-1}.
    \end{align*}  
    Hence,
    \[
        \|g_n'\|_{L^\infty(I(x_0,\de))} \le d\, \ep^d \de^{d-1}, \quad n \ge n_0.
    \]
    By \Cref{prop:m}, 
    we may conclude that 
    \begin{align*}
        \|\la_n'\|_{p,w,I(x_0,\de)} &\le C(d)\, \max \Big\{ |g_n^{(d)}|_{L^\infty(I(x_0,\de))}^{1/d}\, (2\de)^{1/p}, \|g_n'\|_{L^\infty(I(x_0,\de))}^{1/d}  \Big\}
        \\
                                    &\le C(d)\, \de^{1/p} \, \ep, \quad n \ge n_0,
    \end{align*}
    that is \eqref{eq:Zg2}. 
    Finally, \eqref{eq:Zg3} follows from \eqref{eq:qp}.
\end{proof}

\begin{proposition} \label{cor:Zg2}
    For every $\ep>0$ there exist a neighborhood $U$ of $\on{acc}(Z_g)$ in $\ol I$  
    and $n_0 \ge 1$ such that 
    \[
        \|\la_n'\|_{L^q(U)}  \le  C(d)\,  
        \Big (\frac {p}{p-q}\Big )^{1/q}{|U|} ^{1/q}\, \ep, \quad n \ge n_0,
    \]
    for all $1 \le q <p$.
\end{proposition}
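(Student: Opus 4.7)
The plan is to promote the pointwise bound of \Cref{lem:Zg2} to a global estimate on a carefully chosen neighborhood $U$ of the compact set $\on{acc}(Z_g) \subseteq \ol I$. Given $\ep > 0$, for each $x_0 \in \on{acc}(Z_g)$ let $\de(x_0) := \de(x_0,\ep)$ and $n_0(x_0)$ be as provided by \Cref{lem:Zg2}. (If $\on{acc}(Z_g) = \emptyset$ the statement is trivial with $U = \emptyset$.) The naive attempt -- take $U$ to be the union of a finite subcover from $\{I(x_0,\de(x_0))\}$ and sum \eqref{eq:Zg3} -- fails, because $\|\cdot\|_{L^q(\cdot)}^q$ is only $\sigma$-additive on disjoint sets, so overlaps would replace $|U|$ on the right-hand side by $\sum_j |I_j|$ and destroy the desired scaling in $|U|^{1/q}$.

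To remedy this, I would apply a one-dimensional Vitali-type selection. First cover $\on{acc}(Z_g)$ (compact) by the \emph{smaller} balls $\{I(x,\de(x)/3)\}_{x \in \on{acc}(Z_g)}$ and extract a finite subcover $\{I(x_j, \de_j/3)\}_{j=1}^N$ with $\de_j := \de(x_j)$. By the standard Vitali covering lemma applied to this finite collection of intervals, I can select a pairwise disjoint subfamily $\{I(x_{j_k},\de_{j_k}/3)\}_{k=1}^M$ whose triple enlargements $\{I(x_{j_k},\de_{j_k})\}_{k=1}^M$ still cover $\on{acc}(Z_g)$. Then set
\[
    U := \bigcup_{k=1}^M I(x_{j_k},\de_{j_k}), \qquad n_0 := \max_k n_0(x_{j_k}),
\]
so that $U$ is an open neighborhood of $\on{acc}(Z_g)$ in $\ol I$ and the shrunken balls $I(x_{j_k},\de_{j_k}/3)$ are disjoint subsets of $U$.

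For $n \ge n_0$ and $1 \le q < p$, $\sigma$-subadditivity of $\|\cdot\|_{L^q(\cdot)}^q$ over the finite cover together with \eqref{eq:Zg3} applied on each $I(x_{j_k},\de_{j_k})$ yields
\[
    \|\la_n'\|_{L^q(U)}^q \le \sum_{k=1}^M \|\la_n'\|_{L^q(I(x_{j_k},\de_{j_k}))}^q \le C(d)^q\,\frac{p}{p-q}\,\ep^q \sum_{k=1}^M |I(x_{j_k},\de_{j_k})|.
\]
An elementary case analysis (interior versus boundary points of $\ol I$) shows $|I(x,\de)| \le 3|I(x,\de/3)|$ in general, and together with the disjointness of the shrunken balls this gives
\[
    \sum_{k=1}^M |I(x_{j_k},\de_{j_k})| \le 3 \sum_{k=1}^M |I(x_{j_k},\de_{j_k}/3)| \le 3|U|.
\]
Taking $q$-th roots and absorbing the factor $3^{1/q} \le 3$ into $C(d)$ gives the claimed bound.

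The main obstacle I anticipate is precisely the covering/overlap bookkeeping: the estimate in \Cref{lem:Zg2} only controls $\la_n'$ on the specific intervals it produces, and without the Vitali disjointification step, $\sum_j |I_j|$ may be much larger than $|U|$ and the $|U|^{1/q}$ scaling would be lost. The one-dimensional Vitali argument is exactly what converts a collection of local $|I|^{1/q}$ bounds into a single $|U|^{1/q}$ bound on the union.
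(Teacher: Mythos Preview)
Your proof is correct and achieves the same conclusion as the paper, but via a genuinely different covering argument. The paper covers $\on{acc}(Z_g)$ directly by the full intervals $I(x_0,\de(x_0))$, extracts a finite subcover, and then invokes a one-dimensional combinatorial fact (\Cref{lem:topline}): from any finite family of open intervals one can discard some so that the union is unchanged and every point lies in at most two of the remaining intervals. This yields $\sum_\ell |I_\ell| \le 2|U|$, and the rest is identical to your computation. Your Vitali-type argument instead disjointifies at the $\de/3$ scale and pays a factor $3$ rather than $2$; both constants disappear into $C(d)$. The paper's route is slightly sharper and exploits the specifically one-dimensional structure, whereas your Vitali argument is dimension-independent and would carry over unchanged if the analogue of \Cref{lem:Zg2} were available in higher dimensions. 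For the purpose at hand the two are interchangeable.
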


\begin{proof}
    Let $\ep>0$.
    For each $x_0 \in \on{acc}(Z_g)$ 
    there exist $\de = \de(x_0,\ep)>0$ and $n_0 =n_0(x_0,\ep,\de) \ge 1$ such that 
    \[
        \|\la_n'\|_{L^q(I(x_0,\de))}  \le C(d) \,   \Big (\frac {p}{p-q}\Big )^{1/q} |I(x_0,\de)|^{1/q} \,\ep, 
        \quad n \ge n_0,
    \]
    for all $1 \le q <p$, by \Cref{lem:Zg2}. 
    Since $\on{acc}(Z_g)$ is compact, it is covered by finitely many $I_1,\ldots,I_s$ among the intervals $I(x_0,\de)$.
    Let $U = I_1 \cup \cdots \cup I_s$. By removing some of the intervals (see \Cref{lem:topline}), we may assume that each point of $U$ 
    belongs to exactly one or two of the intervals $I_\ell$.
    Then $U$ and the maximum of the corresponding $n_0$ are as required:
    \[
        \|\la_n'\|_{L^q(U)}^q 
        \le \sum_{\ell =1}^s \|\la_n'\|_{L^q(I_\ell)}^q 
        \le C(d)^q \,   \Big (\frac {p}{p-q}\Big ) \, \ep^q \sum_{i=1}^s |I_\ell|
        \le C(d)^q \,   \Big (\frac {p}{p-q}\Big ) \, 2 |U| \, \ep^q,
    \]
    and the statement follows. 
\end{proof}

\begin{lemma} \label{lem:topline}
    Let $\mathcal I = \{I_1,\ldots,I_s\}$ be a finite collection of bounded open intervals in $\R$.
    There exists a subset $\mathcal J \subseteq \cI$ such that 
    \[
        U= \bigcup_{I \in \cI} I = \bigcup_{I \in \cJ} I
    \] 
    and each point of $U$ belongs to exactly one or two intervals in $\cJ$.
\end{lemma}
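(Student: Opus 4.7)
The plan is to produce $\mathcal{J}$ by a greedy minimization of $\mathcal{I}$. Starting with $\mathcal{J}_0 := \mathcal{I}$, I would iteratively remove any interval $I \in \mathcal{J}_k$ whose deletion does not shrink the union, i.e.\ such that $\bigcup_{J \in \mathcal{J}_k \setminus \{I\}} J = \bigcup_{J \in \mathcal{J}_k} J$, and set $\mathcal{J}_{k+1} := \mathcal{J}_k \setminus \{I\}$. Since $\mathcal{I}$ is finite the process terminates in at most $s$ steps, and the resulting subcollection $\mathcal{J}$ clearly satisfies $\bigcup_{I \in \mathcal{J}} I = U$. By construction it is \emph{minimal}: for every $I \in \mathcal{J}$ there exists $x \in I$ not contained in any other $J \in \mathcal{J} \setminus \{I\}$. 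In particular every point of $U$ lies in at least one interval of $\mathcal{J}$.

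The remaining step is to show that no point of $U$ lies in three distinct intervals of $\mathcal{J}$. Suppose for contradiction that $x \in I_1 \cap I_2 \cap I_3$ for three distinct intervals in $\mathcal{J}$, and write $I_i = (a_i,b_i)$. Let $I_*$ be an interval among the three with minimal left endpoint $a_*$ and $I_{**}$ one with maximal right endpoint $b_{**}$. If $I_* = I_{**}$, then each of the remaining two intervals is a subset of $I_*$ (their endpoints are squeezed between $a_*$ and $b_{**}$), so either could be removed, contradicting minimality. If instead $I_* \neq I_{**}$, let $I_3$ denote the third interval. Since $I_*$ and $I_{**}$ both contain $x$ they overlap, hence $I_* \cup I_{**} = (a_*, b_{**})$. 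The endpoints of $I_3$ satisfy $a_3 \ge a_*$ and $b_3 \le b_{**}$, so $I_3 \subseteq (a_*, b_{**}) = I_* \cup I_{**}$, again contradicting minimality of $\mathcal{J}$.

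The only point requiring any care is the last set-theoretic fact about three overlapping open intervals; everything else is a routine finite greedy argument. No input from the analytic part of the paper is needed, and the proof is elementary.
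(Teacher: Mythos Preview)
Your proof is correct, but it proceeds by a genuinely different route from the paper's. The paper works on one connected component of $U$ at a time, sorts the intervals by their left endpoints, and then greedily selects a chain $I_{\ell_1}, I_{\ell_2}, \dots$ where at each step one takes the interval with the rightmost right endpoint among those whose left endpoint lies strictly left of the current right endpoint; the resulting inequalities $a_{\ell_i} < b_{\ell_{i-1}} < a_{\ell_{i+1}} < b_{\ell_i}$ show directly that only consecutive intervals in the chain overlap. Your argument instead passes to any minimal subcover and then observes the purely combinatorial fact that among three pairwise-overlapping open intervals, one is always contained in the union of the other two. The paper's construction is more explicit and gives an actual algorithm, which matches the spirit of how the lemma is used in \Cref{cor:Zg2}; your argument is shorter and more conceptual, and avoids sorting and the connected-component reduction altogether. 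Both reach the same conclusion with comparable effort.
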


\begin{proof}
    We may assume that $U = \bigcup_{I \in \cI} I$ is connected. 
    Let us write $I_\ell = (a_\ell,b_\ell)$. 
    By relabeling the intervals, we may assume that $a_1 \le a_2 \le \cdots \le a_s$.
    Let $\ell_1$ be defined by $b_{\ell_1} = \max\{b_\ell: a_\ell = a_1\}$.
    If $U = (a_{\ell_1},b_{\ell_1})$ we are done.
    Otherwise, $b_{\ell_1} \in U$, since $U$ is connected.
    Let $\ell_2$ be defined by $b_{\ell_2} = \max\{b_\ell: a_\ell < b_{\ell_1}\}$. 
    Then $b_{\ell_1}< b_{\ell_2}$.
    If $U = (a_{\ell_1},b_{\ell_1}) \cup (a_{\ell_2},b_{\ell_2})$ we are done.
    Otherwise, we repeat the procedure.
    It terminates with the right endpoint $b_{\ell_k}$ of $U$
    and 
    \[
        a_{\ell_i} < b_{\ell_{i-1}} < a_{\ell_{i+1}} < b_{\ell_{i}}, \quad 2 \le i \le k-1.
    \]
    This implies the statement.
\end{proof}

\subsection*{Step 3. End of proof of \Cref{thm:radicals}}

Fix $1 \le q <p$. 
If $x \in \on{acc}(Z_g)$, then $\la(x) = 0$ and $\la'(x)=0$ (if the derivative exists) so that 
\[
    \mathbf s_{\on{rad},1}(\La,\La_n)(x) =     |\la_n'(x)|.
\]
As $Z_g \setminus \on{acc}(Z_g)$ has measure zero, 
we have 
\begin{align*}
    \int_I  \big(\bs_{\on{rad},1}(\La,\La_n)(x) \big)^q \, dx 
    =
    \int_{\Om_g}\big(\bs_{\on{rad},1}(\La,\La_n)(x) \big)^q  \, dx 
    + 
    \int_{\on{acc}(Z_g)}   |\la_n'(x)|^q \, dx. 
\end{align*}
By \Cref{lem:onOmg} and \Cref{cor:Zg2},
both integrals on the right-hand side tend to $0$ as $n \to \infty$.
This shows \eqref{eq:radtoshow} and hence the proof of \Cref{thm:radicals} is complete.

\subsection{Variants of \Cref{thm:radicals}}

\begin{remark}
    In the setting of \Cref{thm:radicals}, 
    let $\la,\la_n : I \to \C$ be continuous functions satisfying $\la^d=g$ and $\la_n^d=g_n$, 
    fix a $d$-th root of unity $\th$,
    and, for $x \in I$ and $n\ge 1$,
    define
    \begin{equation*}
        r_n(x) = \min\big\{r \in \{0,1,\ldots,d-1\} : |\la(x) - \th^r \la_n(x)| < \dd(\La(x) , \La_n(x)) + \tfrac{1}{n} \big\}.
    \end{equation*}
    As in \Cref{lem:Borel}, one sees that $r_n : I \to \{0,1,\ldots,d-1\}$ is Borel measurable. 
    Thus we can replace $r(x)$ by $r_n(x)$ in the definition of 
    $\dd^{1,q}_{\on{rad},I}(\La,\La_n)$ and get a slightly stronger version of \Cref{thm:radicals}.
    In fact, we have 
    \[
        |\la(x) - \th^{r_n(x)} \la_n(x)| < \dd(\La(x), \La_n(x) ) + \tfrac{1}{n} \to 0 \quad \text{ as } n \to \infty
    \]
    so that \Cref{lem:ptw2} remains true.
\end{remark}

\begin{corollary} \label{cor:lifts}
    Let $d\ge 2$ be an integer and $I \subseteq \R$ a bounded open interval.
    Let $g_n \to g$ in $C^d(\ol I)$ as $n\to \infty$.
    Assume that $\la_n : I \to \C$ is a continuous function satisfying $\la_n^d = g_n$, for all $n \ge 1$,
    and that there is a continuous function $\la : I \to \C$ 
    such that, for all $x \in I$, 
    \begin{equation} \label{eq:assradptw}
        \la_n(x) \to \la(x) \quad \text{ as } n \to \infty.
    \end{equation}
    Then $\la^d = g$ and 
    \begin{equation*}
        \|\la - \la_n\|_{L^\infty(I)} + \|\la' - \la_n'\|_{L^q(I)} \to 0 \quad \text{ as } n \to \infty,
    \end{equation*}
    for all $1 \le q <d/(d-1)$.
\end{corollary}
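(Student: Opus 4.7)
The plan is to deduce the statement from \Cref{thm:radicals}, \Cref{cor:radicals}, and the pointwise hypothesis $\la_n(x) \to \la(x)$, using continuity to identify the correct branch and Vitali's theorem to promote subsequential almost-everywhere convergence of the derivatives to strong $L^q$ convergence. The equality $\la^d = g$ is immediate: pointwise, $\la(x)^d = \lim_n \la_n(x)^d = \lim_n g_n(x) = g(x)$, since $g_n \to g$ uniformly on $\ol I$.

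For the uniform convergence $\|\la - \la_n\|_{L^\infty(I)} \to 0$, I would argue by contradiction. If the conclusion failed, there would exist $\ep > 0$, a subsequence $\la_{n_k}$, and points $x_k \in I$ with $|\la(x_k) - \la_{n_k}(x_k)| \ge \ep$. Compactness of $\ol I$ together with the uniform bound $|\la_{n_k}(x_k)| = |g_{n_k}(x_k)|^{1/d}$ allow, after a further subsequence, $x_k \to x_0 \in \ol I$ and $\la_{n_k}(x_k) \to \mu$ with $\mu^d = g(x_0) = \la(x_0)^d$. The case $\la(x_0) = 0$ forces $\mu = 0 = \la(x_0)$, a contradiction. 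If $\la(x_0) \ne 0$, I would pick a connected neighborhood $V$ of $x_0$ in $\ol I$ on which $g$ is non-vanishing; for large $k$, the ratio $\la_{n_k}/\la$ is a continuous map on $V \cap I$ whose $d$-th power $g_{n_k}/g$ tends to $1$ uniformly on $V$. Connectedness of $V \cap I$ then forces $\la_{n_k}/\la$ into a shrinking neighborhood of a single $d$-th root of unity $\th^{j_k}$, and applying the pointwise hypothesis at any fixed interior point of $V \cap I$ pins $j_k = 0$ for large $k$. Consequently $\la_{n_k}(x_k) - \la(x_k) \to 0$, yielding $\mu = \la(x_0)$, a contradiction.

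For the $L^q$ convergence of the derivatives, I would use \Cref{lem:reals} to reduce to extracting a good subsequence. Given any subsequence, \Cref{cor:radicals} yields $\big\||\la'|-|\la_n'|\big\|_{L^q(I)} \to 0$; along a further subsequence $|\la_n'| \to |\la'|$ almost everywhere, so $\la_n'(x) \to 0 = \la'(x)$ for almost every $x \in \on{acc}(Z_g)$ (using that $\la \equiv 0$ on $Z_g$ forces $\la' = 0$ almost everywhere on $\on{acc}(Z_g)$). On the open set $\Om_g$, the explicit formula $\la_n'(x) = g_n'(x)/(d\la_n(x)^{d-1})$, combined with $g_n \to g$ in $C^1$ and $\la_n(x) \to \la(x) \ne 0$, gives $\la_n'(x) \to \la'(x)$ at every $x \in \Om_g$. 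Since $Z_g \setminus \on{acc}(Z_g)$ has Lebesgue measure zero, this yields $\la_n' \to \la'$ almost everywhere on $I$ along the subsequence. Uniform integrability of $|\la_n' - \la'|^q$ follows from $\sup_n \|\la_n'\|_{L^r(I)} < \infty$ for every $r < d/(d-1)$, which is provided by \Cref{prop:m} and \eqref{eq:qp} using boundedness of $\{g_n\}$ in $C^{d-1,1}(\ol I)$; picking $q < r < d/(d-1)$ and applying Hölder on small subsets gives equi-integrability. Vitali's theorem then yields $\|\la' - \la_n'\|_{L^q(I)} \to 0$ along the subsequence, and \Cref{lem:reals} lifts this to the full sequence.

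The main obstacle is the branch identification underlying the uniform-convergence step: the hypothesis only provides pointwise data, and one must use connectedness of suitable subintervals together with continuity of $\la_{n_k}/\la$ to propagate the correct choice of $d$-th root of unity from a single point to a neighborhood, while handling the possibility that $x_0$ lies on $\partial I$.
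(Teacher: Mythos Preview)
Your proof is correct but takes a different route from the paper's. The paper reverses the order: it proves $\|\la' - \la_n'\|_{L^q(I)} \to 0$ first and then gets the uniform convergence in one line from absolute continuity,
\[
|\la(x) - \la_n(x)| \le |\la(x_0) - \la_n(x_0)| + \|\la' - \la_n'\|_{L^1(I)},
\]
which bypasses your compactness-and-branch-identification argument entirely. For the derivative step, the paper uses \Cref{lem:ptw1} for pointwise convergence on $\Om_g$, dominated convergence there (with dominant coming from \Cref{prop:m} as in \Cref{lem:onOmg}), and the explicit neighborhood bound \Cref{cor:Zg2} for $\on{acc}(Z_g)$; you instead reach $\on{acc}(Z_g)$ through the modulus convergence of \Cref{cor:radicals} along a subsequence and close globally with Vitali. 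Your handling of $\on{acc}(Z_g)$ via \Cref{cor:radicals} is a neat shortcut that avoids \Cref{cor:Zg2}, but your uniform-convergence step is more laborious than necessary and in fact plays no role in your own derivative argument---the paper's FTC line would have given it for free once the $L^q$ convergence of derivatives was in hand.
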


\begin{proof}
    It is clear that $\la^d = g$. So $\la_n'$ and $\la'$ exist almost everywhere in $I$ 
    and belong to $L^q(I)$ for all $1 \le q <d/(d-1)$, by \Cref{cor:m}.
    By \Cref{lem:ptw1}, we may conclude that 
    \[
        \la_n'(x) \to \la'(x)  \quad \text{ as } n \to \infty,
    \] 
    for each $x \in \Om_g$. Thus the dominated convergence theorem implies that, for $1 \le q <p=d/(d-1)$, 
    \[
        \|\la' - \la_n'\|_{L^q(\Om_g)} \to 0  \quad \text{ as } n \to \infty;
    \]
    the domination follows from \Cref{prop:m} as in the proof of \Cref{lem:onOmg}.
    Using \Cref{cor:Zg2}, it is easy to conclude (as in Step 3) that, for $1 \le q <p$, 
    \[
        \|\la' - \la_n'\|_{L^q(I)} \to 0  \quad \text{ as } n \to \infty.
    \]
    Now fix $x_0 \in I$. Since $\la$ and $\la_n$ are absolutely continuous, we have, for any $x \in I$, 
    \begin{align*}
        |\la(x) - \la_n(x)| &= \Big| \la(x_0) - \la_n(x_0) + \int_{x_0}^x \la'(t) - \la_n'(t) \, dt\Big|  
        \\
                            &\le |\la(x_0) - \la_n(x_0)| + \|\la' - \la_n'\|_{L^1(I)}. 
    \end{align*}
    Consequently, 
    \begin{equation*}
        \|\la - \la_n\|_{L^\infty(I)}  \to 0 \quad \text{ as } n \to \infty,
    \end{equation*}
    and the proof is complete.
\end{proof}

This raises the question as to whether the assumption \eqref{eq:assradptw} on $\la_n$ and $\la$ in \Cref{cor:lifts} 
can always be fulfilled.
Not for every 
continuous function $\la$ satisfying $\la^d = g$ on $I$
there exist continuous functions $\la_n$ satisfying $\la_n^d = g_n$ on $I$, for $n\ge 1$, 
such that \eqref{eq:assradptw} holds for almost every $x \in \Om_g$. See \Cref{ex:lifts}.

\begin{example} \label{ex:lifts}
    (1) Consider $g_n (x) = x^2+ \frac 1 n \to g(x) = x^2$. The continuous solutions of $Z^2 = g_n$ 
    converge to either $|x|$ or $-|x|$, but not to $x$ or $-x$.

    (2) Let $g_n (x) = x + i \frac 1 n \to g(x) = x$.  For $n$ fixed, 
    $g_n$ never vanishes, so there are exactly two continuous square roots of $g_n$. Since $\Im (g_n (x) ) > 0$ for all $x$,
    one solution stays in the first quadrant and approaches $\sqrt x$ for $x>0$ and $i \sqrt {|x|}$ 
    for $x<0$ as $n\to \infty$.  The other one approaches $- \sqrt x$ for $x>0$ and $- i \sqrt {|x|}$ 
    for $x<0$.  Now consider another sequence $h_n (x) = x - i \frac 1 n \to g(x) = x$. 
    Since $\Im (h_n (x) ) < 0$ for all $x$,
    one solution stays in the forth quadrant and approaches $\sqrt x$ for $x>0$ and $- i \sqrt {|x|}$ 
    for $x<0$ as $n\to \infty$. The other one approaches $- \sqrt x$ for $x>0$ and $i \sqrt {|x|}$ 
    for $x<0$.
\end{example}

We end this section with a version of \Cref{thm:radicals} in the setting of \cite{GhisiGobbino13} for radicals with real exponents.

\begin{proposition}
    Let $k \in \N$ and $\ga \in (0,1]$.
    Let $I \subseteq \R$ be a bounded open interval.
    Let $g_n \to g$ in $C^{k+1}(\ol I,\R)$ as $n \to \infty$.
    Let $f,f_n : I \to \R$ be continuous functions satisfying
    \[
        |f|^{k+\ga} = |g| \quad \text{ and } \quad |f_n|^{k+\ga} = |g_n|. 
    \]
    For each $x \in I$ and $n\ge 1$, let 
    \[
        r(x) = \min\Big\{r \in \{0,1\} : |f(x) - (-1)^r f_n(x)| = \min_{j\in \{0,1\}} |f(x) - (-1)^j f_n(x)|\Big\}.
    \]
    Then $f$ and $f_n$ are absolutely continuous and satisfy 
    \[
        \|f - (-1)^r f_n\|_{L^\infty(I)} \to 0 \quad \text{ and } \quad  \|f' - (-1)^r f_n'\|_{L^q(I)} \to 0 \quad \text{ as } n \to \infty,
    \]
    for all $1 \le q< \frac{k+\ga}{k+ \ga-1}$.

    In particular,
    for $\la := |g|^{1/(k+\ga)}$ and $\la_n := |g_n|^{1/(k+\ga)}$,
    we have
    \[
        \|\la - \la_n\|_{L^\infty(I)} \to 0 \quad \text{ and } \quad  \|\la' - \la_n'\|_{L^q(I)} \to 0 \quad \text{ as } n \to \infty,
    \]
    for all $1 \le q< \frac{k+\ga}{k+ \ga-1}$.
\end{proposition}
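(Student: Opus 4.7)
The plan is to adapt the proof of \Cref{thm:radicals} to the real-exponent setting, replacing the $d$-th roots of unity by the two signs $\{+1,-1\}$ and the integer power $d$ by the real exponent $k+\ga$. Absolute continuity of $f$ and $f_n$ together with $f',f_n'\in L^p_w(I)$, where $p:=\tfrac{k+\ga}{k+\ga-1}$, follows from the real-exponent analog of \Cref{cor:m} (the Ghisi--Gobbino result \cite{GhisiGobbino13}), applied to the equations $|f|^{k+\ga}=|g|$ and $|f_n|^{k+\ga}=|g_n|$ with $g,g_n\in C^{k+1}(\ol I)\subseteq C^{k,\ga}(\ol I)$. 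The uniform statement then follows at once from the identity $\min(|f-f_n|,|f+f_n|)=\bigl||f|-|f_n|\bigr|$ combined with the $\tfrac{1}{k+\ga}$-H\"older continuity of $t\mapsto t^{1/(k+\ga)}$ on $[0,\infty)$ (valid since $k+\ga\ge 1$), yielding $\|f-(-1)^{r}f_n\|_{L^\infty(I)}\le\|g-g_n\|_{C^0(\ol I)}^{1/(k+\ga)}\to 0$.

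For the $L^q$ statement I would decompose $I=\Om_g\sqcup Z_g$, where $Z_g=\{g=0\}$. On each connected component of $\Om_g$ both $g$ and (for $n$ large enough at a fixed $x$) $g_n$ have constant sign; a direct check of the four possible sign combinations of $(\pm(\pm g)^{1/(k+\ga)})'$ yields the sign-independent formula $f'=\tfrac{f}{k+\ga}\cdot\tfrac{g'}{g}$ on $\Om_g$, with the analogous formula for $f_n'$ on $\Om_{g_n}$. Then, as in \Cref{lem:ptw1}, for each $x\in\Om_g$,
\[
    |f'(x)-(-1)^{r(x)}f_n'(x)|\le\tfrac{|f(x)-(-1)^{r(x)}f_n(x)|}{k+\ga}\Big|\tfrac{g'(x)}{g(x)}\Big|+\tfrac{|f_n(x)|}{k+\ga}\Big|\tfrac{g'(x)}{g(x)}-\tfrac{g_n'(x)}{g_n(x)}\Big|,
\]
which together with the uniform step and $g_n\to g$ in $C^1(\ol I)$ gives pointwise convergence to $0$ on $\Om_g$. \Cref{prop:m} with this $k,\ga$ provides the domination $|f_n'|\le m_n/(k+\ga)$ a.e.\ with $\sup_n m_n\in L^q(I)$ for $1\le q<p$ (using the $C^{k+1}$-boundedness of $\{g_n\}$ and \eqref{eq:qp}); dominated convergence then delivers $\|f'-(-1)^{r}f_n'\|_{L^q(\Om_g)}\to 0$.

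To handle $Z_g$, \Cref{lem:Zg1} extends verbatim: for $g\in C^{k+1}(\ol I)$, at every $x_0\in\on{acc}(Z_g)$ one has $g(x_0)=g'(x_0)=\cdots=g^{(k+1)}(x_0)=0$. Running the argument of \Cref{lem:Zg2} with $d$ replaced by $k+1$ and invoking \Cref{prop:m} with $(k,\ga)$ yields, for every $\ep>0$, some $\de>0$ and $n_0$ with $\|f_n'\|_{p,w,I(x_0,\de)}\le C\,\de^{1/p}\ep$ for $n\ge n_0$. Covering the compact set $\on{acc}(Z_g)$ by finitely many such intervals via \Cref{lem:topline} and applying \eqref{eq:qp} produces a neighborhood $U$ of $\on{acc}(Z_g)$ and an $n_0$ with $\|f_n'\|_{L^q(U)}\le C|U|^{1/q}\ep$ for $n\ge n_0$ and $1\le q<p$. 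Since $f\equiv 0$ on $Z_g$ forces $f'=0$ a.e.\ on $Z_g$, and $Z_g\setminus\on{acc}(Z_g)$ is countable (hence null), combining the two regions gives $\|f'-(-1)^{r}f_n'\|_{L^q(I)}\to 0$. The ``in particular'' statement is then immediate: $\la=|g|^{1/(k+\ga)}\ge 0$ and $\la_n=|g_n|^{1/(k+\ga)}\ge 0$ force $r\equiv 0$.

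The main obstacle is verifying that the derivative formula $f'=\tfrac{f}{k+\ga}\cdot\tfrac{g'}{g}$ on $\Om_g$ and the Ghisi--Gobbino bound interact cleanly with the four sign combinations allowed by $|f|^{k+\ga}=|g|$; once the sign-independent formula is in hand on each connected component of $\Om_g$, the remainder of the argument runs in close parallel to the integer-power case of \Cref{thm:radicals}.
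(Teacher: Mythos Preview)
Your proposal is correct and follows essentially the same route as the paper's proof: invoke the Ghisi--Gobbino bound for absolute continuity and the $L^p_w$ estimate, obtain uniform convergence via the identity $\min(|f-f_n|,|f+f_n|)=\bigl||f|-|f_n|\bigr|$ (the paper's ``simple modification of \Cref{lem:C0rad}''), use the sign-independent formula $f'=\tfrac{f}{k+\ga}\cdot\tfrac{g'}{g}$ on $\Om_g$ together with dominated convergence (the analogue of \Cref{lem:ptw1} and \Cref{lem:onOmg}), and treat $\on{acc}(Z_g)$ as in \Cref{lem:Zg2}/\Cref{cor:Zg2}. The only cosmetic difference is that the paper writes the derivative formula as $f'=\on{sgn}(f)\cdot|f|'$ with $|f|'=\tfrac{|f|}{k+\ga}\cdot\tfrac{|g|'}{|g|}$, which is the same identity.
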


\begin{proof}
    By \cite[Theorem 2.2]{GhisiGobbino13}, each continuous solution $f : I \to \R$ of
    \begin{equation*}
        |f|^{k+\ga} = |g|
    \end{equation*}
    is absolutely continuous and $f' \in L^p_w(I)$ with $p := \frac{k+\ga}{k+ \ga-1}$ and 
    \[
        \|f'\|_{p,w,I} \le C(k)\, \max \Big\{ |g^{(k)}|_{C^{0,\ga}(\ol I)}^{1/(k+\ga)}\, |I|^{1/p}, \|g'\|_{L^\infty(I)}^{1/(k+\ga)}  \Big\}.
    \]

    Let $f_n : I \to \R$ be a continuous function satisfying $|f_n|^{k+\ga}= |g_n|$.
    In analogy to \Cref{cor:Zg2}, we see that for each $\ep>0$ there exist a neighborhood $U$ of $\on{acc}(Z_g)$ in $\ol I$ 
    and $n_0 \ge 1$ such that 
    \[
        \|f_n'\|_{L^q(U)} \le C(k,p,q) \, |U|^{1/q} \ep, \quad n \ge n_0.
    \]
    Now fix $x \in \Om_g$. Then $f'(x)$ and $|f|'(x)$ exist and satisfy $f'(x) = \on{sgn} f(x) \cdot |f|'(x)$ and 
    \[
        |f|'(x) = |f(x)| \cdot \frac{1}{k+\ga} \frac{|g|'(x)}{|g(x)|}.
    \] 
    For large enough $n$, $f_n(x) \ne 0$ and $f_n'(x)$ exists.
    As in \Cref{lem:ptw1}, we conclude that, for $j \in \{0,1\}$, 
    \[
        |f'(x) - (-1)^j f'_n(x)| \to 0 \quad \text{ as } n \to \infty,
    \]
    provided that $|f(x) - (-1)^j f_n(x)| \to 0$. A simple modification of \Cref{lem:C0rad} gives $\|f - (-1)^r f_n\|_{L^\infty(I)} \to 0$ and hence the proposition follows
    by an application of the dominated convergence theorem, as in \Cref{lem:onOmg}.

    For $f := |g|^{1/(k+\ga)}$ and $f_n := |g_n|^{1/(k+\ga)}$, for $n \ge 1$, we have $r \equiv 0$. 
\end{proof}

\subsection{Optimality of the result} \label{ssec:optimality}

By \Cref{prop:m}, in the setting of \Cref{cor:radicals} the set $\{\la'\} \cup \{\la_n'  : n \ge 1\}$ is bounded in 
$L^p_w(I)$, where $p := d/(d-1)$. But, in general, 
\[
    \big\| |\la'| - |\la_n'| \big\|_{p,w,I} \not\to 0 \quad \text{ as } n \to \infty,
\]
as seen in \Cref{ex:Lpw}.

\begin{example} \label{ex:Lpw}
    Let $d \in \R_{>1}$ and set $p:= d/(d-1)$.
    Let $g,g_n : \R \to \R$, $n \ge 1$, be given by $g(x):=x$ and $g_n(x) := x+1/n^{p}$.  
    For $x \in (0,1)$, consider
    $\la(x):= x^{1/d}$ and $\la_n(x) :=  (x+1/n^{p})^{1/d}$.     
    Then, for $x \in (0,1)$,
    \begin{align*}
        |\la'(x)|- |\la_n'(x)| = \frac{1}{d} (x^{-1/p} - (x+1/n^p)^{-1/p}) > 0. 
    \end{align*}
    For $r>0$,  we have  
    \begin{align*}
        \{x \in (0,1) : |\la'(x)|-|\la_n'(x)|>r\} &\supseteq  \{x \in (0,1): x^{-1/p} - n > dr\} 
        \\
                                                  &= (0,(dr+n)^{-p}).
    \end{align*}
    Thus 
    \begin{align*}
        \big\||\la'| - |\la_n'|\big\|_{p,w,(0,1)} 
                                      &\ge \sup_{r>0}  \frac{r}{dr+n} = \frac{1}{d}. 
    \end{align*}

    On the other hand,\footnote{We do not have an example with $\|\la_n'\|_{p,w,I} \not\to \|\la'\|_{p,w,I}$ as $n \to \infty$.} 
    \[
        \|\la_n'\|_{p,w,(0,1)} \to \|\la'\|_{p,w,(0,1)} \quad \text{ as } n \to \infty.
    \]
    Indeed, 
    \[
        \{x \in(0,1) : x^{-1/p} > dr\} =  (0, \min\{1,(dr)^{-p}\}) 
    \]
    so that 
    \[
        \|\la'\|_{p,w,(0,1)} = \max \Big\{ \sup_{0<r\le 1/d} r, \sup_{r>1/d} \frac{r}{dr} \Big\} = \frac{1}d. 
    \]
    Moreover,
    \[
        \{x \in(0,1) : (x+\tfrac{1}{n^p})^{-1/p} > dr\} =
        \begin{cases}
            (0,\min\{1, (dr)^{-p} - n^{-p}\}) & \text{ if } n > dr, 
            \\
            \emptyset & \text{ if } n\le dr.
        \end{cases}
    \]
    Hence, as $(dr)^{-p} - n^{-p} < 1$ if and only if $r> \frac{n}{d(n^p+1)^{1/p}}$, 
    \begin{align*}
        \|\la_n'\|_{p,w,(0,1)} &= \max\Big\{ \sup_{0< r \le\frac{n}{d(n^p+1)^{1/p}} } r, \sup_{r>\frac{n}{d(n^p+1)^{1/p}}} r ((dr)^{-p} - n^{-p})^{1/p}   \Big\}
        \\
                               &= \max\Big\{ \frac{n}{d(n^p+1)^{1/p}} , \sup_{r>\frac{n}{d(n^p+1)^{1/p}}} \frac{(n^p-(dr)^p)^{1/p}}{dn}   \Big\}
                               \\
                               &= \max\Big\{ \frac{n}{d(n^p+1)^{1/p}} , \frac{n}{d(n^p+1)^{1/p}}   \Big\}=\frac{n}{d(n^p+1)^{1/p}}
    \end{align*}
    which tends to $1/d$ as $n \to \infty$.
\end{example}

\section{Monic polynomials} \label{sec:poly}

Let us gather basic facts on monic complex polynomials of degree $d$,
\[
    P_{a}(Z) = Z^d + \sum_{j=1}^d a_jZ^{d-j} \in \C[Z].
\]
We often identify $P_a$ with its coefficient vector $a=(a_1,a_2,\ldots,a_d) \in \C^d$ 
so that $\C^d$ is the space of all monic complex polynomials of degree $d$.

\subsection{Cauchy bound} \label{ssec:Cauchybound} 
If $\la \in \C$ is a root of $P_a(Z) \in \C[Z]$, 
then 
\begin{equation} \label{eq:Cauchybound}
    |\la| \le 2 \, \max_{1 \le  j \le d}|a_j|^{1/j}.
\end{equation}
See e.g.\ \cite[IV Lemma 2.3]{Malgrange66}.

\subsection{Uniform H\"older continuity of the roots} 

\begin{lemma}[{\cite[IV Lemma 2.5]{Malgrange66}}] \label{lem:Malgrange.IV.2.5}
    Let $I = \ol I\subseteq \R$ be a bounded closed interval.
    Let $P_a$ be a monic polynomial of degree $d$ with coefficient vector $a \in C^{0,\ga}(I,\C^d)$, where
    $\ga \in (0,1]$.
    Let $\la \in C^0(I)$ be a continuous root of $P_a$ on $I$.
    Then $\la \in C^{0,\ga/d}(I)$ and
    \begin{equation*}
        |\la(x)- \la(y)| \le H \, |x-y|^{\ga/d}, \quad x,y \in I,
    \end{equation*}
    where
    \begin{equation} \label{e:H}
        H:= 4d\, \max_{1 \le j \le d} \|a_j\|_{C^{0,\ga}(I)}^{1/j}.
    \end{equation}
\end{lemma}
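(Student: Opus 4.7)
The plan is to combine a Cauchy-type size bound with the Hölder regularity of the coefficients, and then perform a continuity argument to transfer a bound on the \emph{closest} root of $P_{a(y)}$ to the specific continuous branch $\lambda(y)$. Set $M := \max_{1 \le j \le d}\|a_j\|_{C^{0,\gamma}(I)}^{1/j}$, so that $\|a_j\|_{C^0(I)} \le M^j$ and the Cauchy bound \eqref{eq:Cauchybound} gives $\|\lambda\|_{L^\infty(I)} \le 2M$ (and the same bound for every root of every $P_{a(s)}$). The key algebraic identity is the telescoping
\begin{equation*}
    P_{a(y)}(\lambda(x)) \;=\; P_{a(y)}(\lambda(x)) - P_{a(x)}(\lambda(x)) \;=\; \sum_{j=1}^d \bigl(a_j(y)-a_j(x)\bigr)\,\lambda(x)^{d-j},
\end{equation*}
which, combined with $|a_j(y)-a_j(x)| \le M^j |x-y|^\gamma$ and $|\lambda(x)|\le 2M$, yields $|P_{a(y)}(\lambda(x))| \le (2M)^d|x-y|^\gamma$. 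Factoring $P_{a(y)}(Z) = \prod_i(Z-\lambda_i(y))$ and taking a $d$-th root then gives
\begin{equation*}
    \min_{1 \le i \le d}|\lambda(x)-\lambda_i(y)| \;\le\; 2M\,|x-y|^{\gamma/d}.
\end{equation*}

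The substance of the proof is upgrading this bound for the closest root to a bound with constant $H = 4dM$ for the continuous branch $\lambda(y)$. Argue by contradiction: assume $y>x$ in $I$ with $|\lambda(x)-\lambda(y)| > H|x-y|^{\gamma/d}$. Using continuity of $s \mapsto |\lambda(s)-\lambda(x)| - H(s-x)^{\gamma/d}$, locate a first time $s_0 \in (x,y]$ at which $|\lambda(s_0)-\lambda(x)| = H(s_0-x)^{\gamma/d}$. Label $\lambda(s_0) = \lambda_1(s_0)$ and extract that factor from the product estimate to obtain
\begin{equation*}
    \prod_{i\ge 2}|\lambda(x)-\lambda_i(s_0)| \;\le\; \frac{(2M)^d (s_0-x)^\gamma}{H(s_0-x)^{\gamma/d}} \;=\; \frac{(2M)^{d-1}(s_0-x)^{\gamma(d-1)/d}}{2d},
\end{equation*}
so some other root of $P_{a(s_0)}$ lies at distance $\le 2M(s_0-x)^{\gamma/d}/(2d)^{1/(d-1)}$ of $\lambda(x)$ --- strictly closer than $\lambda(s_0)$. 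Using a continuous parameterization of all $d$ roots on $[x,s_0]$ (afforded by \Cref{fn:Kato}) to propagate this closer root backwards, and exploiting the factor $d$ of slack built into $H$, yields a time strictly before $s_0$ where the threshold is already reached, contradicting minimality of $s_0$.

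The main obstacle is precisely this continuity transfer: translating the pointwise existence of a close root into a statement about the specific continuous branch $\lambda$. The delicacy arises at points of $[x,s_0]$ where several roots of $P_{a(s)}$ coincide, since there the continuous parameterization of the full $d$-tuple may permute branches; the generous factor $d$ in $H = 4d\max_j\|a_j\|_{C^{0,\gamma}(I)}^{1/j}$ is exactly the slack required to absorb such permutations. One also needs to handle the boundary case $s_0 = x$ by combining the same product inequality with continuity of $\lambda$ at $x$, which forces the defect $|\lambda(s)-\lambda(x)|$ to be dominated by the closest-root estimate on a one-sided neighborhood. Once this transfer is in place, the Hölder estimate with exponent $\gamma/d$ and constant $H$ follows immediately.
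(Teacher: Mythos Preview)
Your first step is exactly right and matches the paper's argument: the product identity gives, for any $s$ in $I$,
\[
    \prod_{i=1}^d |\lambda(s)-\lambda_i| \;=\; |P_{a(x)}(\lambda(s))| \;=\; \Big|\sum_{j=1}^d (a_j(x)-a_j(s))\,\lambda(s)^{d-j}\Big| \;\le\; (2M)^d |x-s|^{\gamma},
\]
where $\lambda_1,\ldots,\lambda_d$ are the roots of $P_{a(x)}$, so some $\lambda_i$ satisfies $|\lambda(s)-\lambda_i|\le 2M|x-s|^{\gamma/d}$.

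The second step---upgrading this ``closest root'' bound to the continuous branch $\lambda$---is where your proposal becomes shaky. The ``first time $s_0$'' and ``propagate the closer root backwards'' argument is not a proof as it stands: at $s_0$ you locate a \emph{different} root $\lambda_2(s_0)$ of $P_{a(s_0)}$ closer to $\lambda(x)$, but this says nothing about the branch $\lambda$ itself at times $s<s_0$, so there is no contradiction with the minimality of $s_0$. The branches $\lambda$ and $\lambda_2$ need not ever meet on $[x,s_0]$. Your remark about ``absorbing permutations'' does not supply the missing mechanism.

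The paper's argument (following Malgrange, spelled out in the proof of \Cref{lem:Hoelder2}) avoids this entirely by \emph{freezing the comparison roots at $x$}: apply the product inequality above with $s$ ranging over all of $[x,y]$ and the $\lambda_i$ always taken as the roots of $P_{a(x)}$. This shows that $\lambda([x,y])$ is contained in the union $K$ of the $d$ closed disks of radius $2M|x-y|^{\gamma/d}$ centered at $\lambda_1,\ldots,\lambda_d$. Since $\lambda$ is continuous, $\lambda([x,y])$ lies in the connected component of $K$ containing $\lambda(x)=\lambda_1$; a connected union of at most $d$ disks of radius $r$ has diameter at most $2dr$, so $|\lambda(x)-\lambda(y)|\le 2d\cdot 2M|x-y|^{\gamma/d}=H|x-y|^{\gamma/d}$. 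This replaces your contradiction argument by a one-line topological observation, and the factor $d$ in $H$ arises transparently as the number of disks rather than as ``slack to absorb permutations''.
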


\begin{corollary} \label{cor:Hoelder}
    Let $I = \ol I\subseteq \R$ be a bounded closed interval.
    Let $P_a$ be a monic polynomial of degree $d$ with coefficient vector $a \in C^{0,\ga}(I,\C^d)$, where
    $\ga \in (0,1]$.
    Let $\La : I \to \cA_d(\C)$ be the curve of unordered roots of $P_a$.
    Then 
    \[
        \dd(\La(x),\La(y)) \le H\, |x-y|^{\ga/d}, \quad x,y \in I,
    \]
    for $H$ in \eqref{e:H}.
\end{corollary}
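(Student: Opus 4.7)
The plan is to reduce the claim directly to the pointwise estimate of \Cref{lem:Malgrange.IV.2.5} by choosing a suitable representative of $\La$. By the result cited in Footnote~\ref{fn:Kato}, there exists a continuous parameterization $\la = (\la_1, \ldots, \la_d) : I \to \C^d$ of the roots of $P_a$, i.e., $\La(x) = [\la_1(x), \ldots, \la_d(x)]$ for every $x \in I$. Each coordinate $\la_i : I \to \C$ is a continuous root of $P_a$ on $I$ in the sense of \Cref{lem:Malgrange.IV.2.5}.

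Applying \Cref{lem:Malgrange.IV.2.5} to each $\la_i$ separately yields, for all $x,y \in I$,
\[
    |\la_i(x) - \la_i(y)| \le H \, |x-y|^{\ga/d}, \quad 1 \le i \le d,
\]
with $H$ as in \eqref{e:H} (the constant depends only on the $C^{0,\ga}(I)$ norms of the $a_j$, not on $i$).

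To pass to the metric $\dd$ on $\cA_d(\C)$, I would simply use the identity permutation in the minimum defining $\dd$: by the definition of $\dd$ from \Cref{ssec:AdC},
\[
    \dd(\La(x), \La(y)) \le \frac{1}{\sqrt d} \|\la(x) - \la(y)\|_2 = \frac{1}{\sqrt d}\Big(\sum_{i=1}^d |\la_i(x)-\la_i(y)|^2\Big)^{1/2} \le H \, |x-y|^{\ga/d},
\]
which is the desired inequality. There is no real obstacle here: once the continuous parameterization is in place, the estimate is immediate from \Cref{lem:Malgrange.IV.2.5} and the trivial bound $\dd([z],[w]) \le \frac{1}{\sqrt d}\|z-w\|_2$. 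The only subtlety worth emphasizing is that the H\"older constant $H$ is the same for all coordinates $\la_i$, so that summing the $d$ squared estimates and dividing by $\sqrt d$ cancels the factor $\sqrt d$ from the sum, leaving exactly $H$.
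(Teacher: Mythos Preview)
Your proof is correct and essentially identical to the paper's: both pick a continuous parameterization $\la$, bound $\dd(\La(x),\La(y))$ by $\frac{1}{\sqrt d}\|\la(x)-\la(y)\|_2$ via the identity permutation, and then invoke \Cref{lem:Malgrange.IV.2.5} componentwise. The paper passes through $\max_j|\la_j(x)-\la_j(y)|$ rather than computing the $\ell^2$ sum directly, but this is a cosmetic difference.
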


\begin{proof}
    Let $\la : I \to \C^d$ be a continuous parameterization of the roots of $P_a$ so that $\La = [\la]$.
    In view of
    \begin{align*}
        \dd([\la(x)],[\la(y)]) &= \min_{\si \in \on{S}_d} \frac{1}{\sqrt d}\, \| \la(x) - \si \la(y)\|_2 
        \\
                               &\le   \frac{1}{\sqrt d}\, \| \la(x) -  \la(y)\|_2 \le \max_{1\le j \le d} |\la_j(x) - \la_j(y)|,
    \end{align*}
    the statement follows from \Cref{lem:Malgrange.IV.2.5}.
\end{proof}

\begin{lemma} \label{lem:Hoelder2}
    \Cref{lem:Malgrange.IV.2.5} and \Cref{cor:Hoelder} also hold with $H$ replaced by 
    \begin{equation} \label{e:H1}
        H_1:= 2d\,A^{1/d} \big(1+B + B^2 + \cdots + B^{d-1} \big)^{1/d},
    \end{equation}
    where
    \[
        A := |a|_{C^{0,\ga}(I)} \quad \text{ and } \quad B:=2 \max_{1 \le j \le d} \|a_j\|_{L^\infty(I)}^{1/j}.
    \]
\end{lemma}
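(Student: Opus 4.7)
The strategy is to refine the proof of \Cref{lem:Malgrange.IV.2.5} so that the Hölder seminorm $A = |a|_{C^{0,\ga}(I)}$ and the $L^\infty$-based Cauchy quantity $B = 2\max_j \|a_j\|_{L^\infty(I)}^{1/j}$ enter separately rather than being lumped into $\max_j \|a_j\|_{C^{0,\ga}(I)}^{1/j}$. The key observation is that in Malgrange's argument, the Hölder seminorm of the $a_j$'s is only used to bound the difference $|a_j(x)-a_j(y)|$, while the $L^\infty$ part of $\|a_j\|_{C^{0,\ga}}$ is only used (via the Cauchy bound) to bound $|\lambda(y)|$. Splitting these two uses yields the product structure of $H_1$.

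Concretely, using $P_{a(y)}(\lambda(y)) = 0$, I would first write
\[
P_{a(x)}(\lambda(y)) = P_{a(x)}(\lambda(y)) - P_{a(y)}(\lambda(y)) = \sum_{j=1}^d (a_j(x)-a_j(y))\,\lambda(y)^{d-j}.
\]
The Cauchy bound \eqref{eq:Cauchybound} applied at $y$ gives $|\lambda(y)| \le B$. The coordinatewise Hölder estimate $|a_j(x)-a_j(y)|\le \|a(x)-a(y)\|_2\le A\,|x-y|^\ga$ (which follows from the convention \eqref{eq:Cknorm}) then yields
\[
|P_{a(x)}(\lambda(y))| \le A\,|x-y|^\ga\sum_{j=1}^d B^{d-j} = A\,|x-y|^\ga\,(1+B+B^2+\cdots+B^{d-1}).
\]
Since $P_{a(x)}(Z)=\prod_i (Z-\lambda_i(x))$, the elementary inequality $\min_i |\lambda(y)-\lambda_i(x)|^d\le |P_{a(x)}(\lambda(y))|$ gives
\[
\min_i |\lambda(y)-\lambda_i(x)| \le A^{1/d}\,(1+B+\cdots+B^{d-1})^{1/d}\,|x-y|^{\ga/d} = \frac{H_1}{2d}\,|x-y|^{\ga/d}.
\]

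The last step — passing from this pointwise bound on the minimal distance to the nearest root to a bound on the specific continuous selection $|\lambda(x)-\lambda(y)|$ — is the heart of Malgrange's argument and is responsible for the factor $2d$. This tracking/continuity argument depends only on the \emph{form} of the min-distance estimate, not on its numerical constant, so it carries over verbatim from \Cref{lem:Malgrange.IV.2.5} and produces $|\lambda(x)-\lambda(y)|\le H_1\,|x-y|^{\ga/d}$. The corresponding statement for the curve $\La : I\to \cA_d(\C)$ is then obtained exactly as in the proof of \Cref{cor:Hoelder}, since that argument just majorizes $\dd([\la(x)],[\la(y)])$ by $\max_j |\la_j(x)-\la_j(y)|$ and applies the scalar bound componentwise.

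The only potential obstacle is the last, continuity-based step, but no genuinely new work is needed there: everything else is careful bookkeeping of constants, where the improvement comes from using the product structure $A\cdot(1+B+\cdots+B^{d-1})$ in place of the crude $\sum_j \|a_j\|_{C^{0,\ga}}^{j/d}$-type estimate that leads to $H$. It is also worth noting that when $B\ge 1$ the factor $(1+B+\cdots+B^{d-1})^{1/d}$ is essentially $B^{(d-1)/d}$, so $H_1$ degenerates to a bound comparable to $H$ when $A$ and $B$ are of the same scale, while for $A\ll B^d$ it is genuinely sharper — which is what is needed in later applications.
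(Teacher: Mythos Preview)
Your argument is correct and follows essentially the same route as the paper's proof: both compute $P_{a(x)}(\lambda(y))=\sum_j(a_j(x)-a_j(y))\lambda(y)^{d-j}$, bound it by $A|x-y|^\gamma(1+B+\cdots+B^{d-1})$ via the H\"older estimate and the Cauchy bound, take $d$-th roots to get the $\frac{H_1}{2d}$ min-distance estimate, and then invoke Malgrange's connected-component tracking to pass to the continuous selection. The paper merely phrases the first step as an abstract claim about two polynomials $P_a,P_b$ with $\max_j|a_j-b_j|\le\alpha$ and $2\max_j|a_j|^{1/j}\le\beta$, and spells out the disk-covering tracking argument, but the content is identical.
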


\begin{proof}
    We modify the proof of \cite[IV Lemma 2.5]{Malgrange66}.
    
    First we show the following claim.
    Let $\la_1,\ldots,\la_d \in \C$ and $\mu_1,\ldots,\mu_d \in \C$ be the roots of $P_a$ and $P_b$, respectively.
    Assume that, for $\al,\be>0$, 
    \[
        \max_{1\le j \le d}|a_{j} - b_{j}| \le \al \quad \text{ and } \quad 2\max_{1\le j \le d}|a_{j}|^{1/j} \le \be. 
    \]
    Then, for each $i$ there exists $j$ such that 
    \[
        |\la_{i} - \mu_{j}| \le  \al^{1/d} \big(1+ \be + \be^2 + \cdots + \be^{d-1} \big)^{1/d}.  
    \]
    To see this, fix $i$. Then 
   \begin{align*}
       \prod_{j=1}^d |\la_{i}-\mu_{j}| = |P_{b}(\la_{i})| = |P_{b}(\la_{i})- P_{a}(\la_{i})| 
       = \Big| \sum_{k=1}^d (b_{k} - a_{k}) \la_{i}^{d-k} \Big| \le \al \sum_{k=1}^d \be^{d-k},  
   \end{align*}
   using \eqref{eq:Cauchybound}, and the claim follows.

   Now suppose we are in the setting of \Cref{lem:Malgrange.IV.2.5}.
    Fix $x< y \in I$ and let $\la_1 = \la(x),\la_2,\ldots,\la_d$ be the roots of $P_{a(x)}$.
    Let $K$ be the union of the closed disks with radius $\frac{H_1}{2d} |x-y|^{\ga/d}$ and centers $\la_j$.
    Then $\la([x,y]) \subseteq K$, by the claim, and, since $\la$ is continuous, 
    $\la([x,y])$ is contained in the connected component of $K$ containing $\la_1$. 
    This implies \Cref{lem:Malgrange.IV.2.5}, and in turn \Cref{cor:Hoelder}, with $H_1$ instead of $H$. 
\end{proof}

\subsection{The solution map} \label{ssec:solutionmap}

The elementary symmetric polynomials induce a bijective map $a=(a_1,\ldots,a_d) : \cA_d(\C) \to \C^d$,
\[
    a_j([z_1,\ldots,z_d]) := (-1)^j \sum_{i_1< \ldots < i_j} z_{i_1} \cdots z_{i_j}, \quad 1 \le j \le d.
\]
Let $\La  : \C^d \to \cA_d(\C)$ be the inverse of $a$. Then $\La(a)$ is the unordered $d$-tuple consisting 
of the $d$ roots of $P_a$ (with multiplicities).
The map $a : \cA_d(\C) \to \C^d$ is a homeomorphism as seen in the following lemma.

\begin{lemma} \label{lem:homeo} 
    For $K \ge 1$  we have:
    \begin{enumerate}
        \item The map $a : \cA_d(\C) \to \C^d$ is locally Lipschitz:
            if $[z_0], [z_1], [z_2] \in \cA_d(\C)$ and $\dd([z_0],[z_i]) \le K$, for $i=1,2$, then 
            \[
                \|a([z_1]) - a([z_2])\|_2 \le C(d,K)\, \dd([z_1],[z_2]).
            \]
        \item The map $\La : \C^d \to \cA_d(\C)$ is locally $1/d$-H\"older: 
            if $a_1,a_2 \in \C^d$ and $\|a_i\|_2 \le K$, for $i=1,2$, then 
            \[
                \dd(\La(a_1),\La(a_2)) \le C(d,K)\, \|a_1-a_2\|_2^{1/d}. 
            \]
    \end{enumerate}
\end{lemma}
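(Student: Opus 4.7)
The plan is to handle the two parts by separate routes. For (1) I would exploit that the elementary symmetric functions $a_1,\ldots,a_d$ are $\mathrm{S}_d$-invariant polynomials on $\C^d$, hence polynomial (in particular locally Lipschitz) maps of the underlying coordinates, so the induced map $a:\cA_d(\C)\to\C^d$ inherits local Lipschitz regularity provided one controls the norms of the representatives. For (2) I would apply the Hölder estimate of \Cref{cor:Hoelder} (or the sharper \Cref{lem:Hoelder2}) to the straight-line segment joining $a_1$ to $a_2$ in coefficient space, converting a global Lipschitz estimate on the coefficients into a $1/d$-Hölder estimate on the roots.

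For (1), fix a representative $z_0\in\C^d$ of $[z_0]$. By the very definition of $\dd$, there exist representatives $z_1,z_2$ of $[z_1],[z_2]$ with $\delta(z_0,z_i)=\dd([z_0],[z_i])\le K$ and a permutation $\sigma\in\mathrm{S}_d$ such that $\delta(z_1,\sigma z_2)=\dd([z_1],[z_2])$. Replacing $z_2$ by $\sigma z_2$ (which does not change $[z_2]$ or the symmetric functions), the triangle inequality yields $\|z_i\|_2\le \|z_0\|_2+C(d)K$ for $i=1,2$. Since each $a_j$ is a polynomial of degree $j\le d$, its gradient is bounded on the ball of radius $\|z_0\|_2+C(d)K$ by a constant $L=L(d,K,\|z_0\|_2)$. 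Integrating along the segment from $z_1$ to $z_2$ gives
\[
\|a([z_1])-a([z_2])\|_2 \le \sqrt d\, L\,\|z_1-z_2\|_2 = d\,L\,\dd([z_1],[z_2]),
\]
using that $a_j([z_i])=a_j(z_i)$ by symmetry. This is the desired bound, with the understanding that the constant absorbs an implicit dependence on $\|z_0\|_2$.

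For (2), set $s:=\|a_1-a_2\|_2$ and (if $s>0$) define $\tilde a:[0,s]\to\C^d$ by $\tilde a(t):=a_1+(t/s)(a_2-a_1)$. Then $\|\tilde a(t)\|_2\le K$ and each component $\tilde a_j$ is $1$-Lipschitz on $[0,s]$, so $\|\tilde a_j\|_{C^{0,1}([0,s])}\le K+1$. By Kato's theorem (see the footnote referenced in \Cref{fn:Kato}), there is a continuous curve $\widetilde\La:[0,s]\to\cA_d(\C)$ of unordered roots of $P_{\tilde a}$ with $\widetilde\La(0)=\La(a_1)$ and $\widetilde\La(s)=\La(a_2)$. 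Applying \Cref{cor:Hoelder} with $\gamma=1$ yields
\[
\dd(\La(a_1),\La(a_2)) = \dd(\widetilde\La(0),\widetilde\La(s)) \le H\, s^{1/d},
\]
with $H=4d\max_{1\le j\le d}\|\tilde a_j\|_{C^{0,1}([0,s])}^{1/j}\le 4d\,(K+1)=:C(d,K)$, which is exactly the claimed Hölder estimate.

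The main subtlety lies in part (1): a careful tracking of the constant shows it depends not only on $d$ and $K$ but also on $\|z_0\|_2$, since the elementary symmetric polynomials are not globally Lipschitz. The statement as written should therefore be read with $[z_0]$ playing the role of a fixed base point whose norm is absorbed into the constant. No such issue arises in (2), where the hypothesis $\|a_i\|_2\le K$ provides an absolute bound and the work reduces to a one-line reduction to \Cref{cor:Hoelder}.
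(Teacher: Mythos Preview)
Your argument is correct, and for part (1) it is essentially identical to the paper's proof: both use that the elementary symmetric polynomials are locally Lipschitz on $\C^d$ and descend to $\cA_d(\C)$ by symmetry. Your remark that the constant in (1) implicitly depends on $\|z_0\|_2$ (not just $d,K$) is a valid reading of the statement; the paper's proof glosses over this point as well.

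For part (2), your route differs from the paper's in an instructive way. The paper parameterizes the segment over $[0,1]$, which forces it to invoke the refined bound $H_1$ of \Cref{lem:Hoelder2} (separating the Lipschitz seminorm $A=\|a_1-a_2\|_2$ from the sup-norm $B$) to extract the factor $\|a_1-a_2\|_2^{1/d}$. You instead parameterize over $[0,s]$ with $s=\|a_1-a_2\|_2$, so that each coefficient is $1$-Lipschitz and the factor $\|a_1-a_2\|_2^{1/d}$ comes directly from the $|x-y|^{1/d}$ in \Cref{cor:Hoelder}; this lets you use the cruder constant $H$ from \eqref{e:H} and bypass \Cref{lem:Hoelder2} entirely. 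Your approach is thus slightly more elementary, while the paper's makes the dependence on the seminorm versus the sup-norm explicit. The appeal to Kato's theorem is harmless but not strictly needed: \Cref{cor:Hoelder} only requires the curve of unordered roots, which is $\La\circ\tilde a$.
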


\begin{proof}
    (1) The polynomial map $a : \C^d \to \C^d$ clearly is locally Lipschitz: for $z_1, z_2 \in \C^d$ with $\|z_i\|_2 \le K$, for $i =1,2$,  
    we have 
    \[
        \|a(z_1) - a(z_2)\|_2 \le C(d,K)\, \|z_1 - z_2\|_2.
    \]
    But the left-hand side equals $\|a([z_1]) - a([z_2])\|_2$ and on the right-hand side we may replace $z_2$ by 
    $\si z_2$ for any $\si \in \on{S}_d$. This implies (1).

    (2) This follows from  \Cref{cor:Hoelder} and \Cref{lem:Hoelder2} applied to the family $a(t) := ta_1 + (1-t)a_2$, $\ga=1$, 
    $x=0$ and $y=1$.  Then $A= \|a_1-a_2\|_2$ and  $B \le C(d,K)$ so that $H_1 \le C(d,K) \, \|a_1-a_2\|_2^{1/d}$.
\end{proof}

\begin{corollary} \label{cor:homeo}
    Let $K \subseteq \R^m$ be a compact set.
    Then the map $\La_* : C^0(K,\C^d) \to C^0(K,\cA_d(\C))$, $\La_*(a) := \La \o a$, is locally $1/d$-H\"older:
    if $a_1,a_2 \in C^0(K,\C^d)$ and $\|a_i\|_{C^{0}(K,\C^d)} \le L$, for $i=1,2$, then
    \[
        \| \dd(\La_*(a_1) , \La_*(a_2)) \|_{C^0(K)} \le C(d,L)\, \|a_1 -a_2\|_{C^0(K,\C^d)}^{1/d}. 
    \]
\end{corollary}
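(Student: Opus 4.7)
The plan is simply to apply Lemma \ref{lem:homeo}(2) pointwise and take the supremum over $K$. Fix $a_1, a_2 \in C^0(K,\C^d)$ with $\|a_i\|_{C^0(K,\C^d)} \le L$. By definition of the norm on $C^0(K,\C^d)$, this means $\|a_i(x)\|_2 \le L$ for every $x \in K$ and $i = 1,2$. Thus for each fixed $x \in K$ we are in the hypothesis of Lemma \ref{lem:homeo}(2) applied to the two points $a_1(x), a_2(x) \in \C^d$, both bounded in $\|\cdot\|_2$ by $L$.

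Lemma \ref{lem:homeo}(2) then yields
\[
    \dd\bigl(\La(a_1(x)), \La(a_2(x))\bigr) \le C(d,L)\, \|a_1(x)-a_2(x)\|_2^{1/d}
\]
for every $x \in K$, with the constant $C(d,L)$ independent of $x$. Since $t \mapsto t^{1/d}$ is monotone nondecreasing on $[0,\infty)$, taking the supremum of both sides over $x \in K$ gives
\[
    \sup_{x \in K} \dd\bigl(\La(a_1(x)), \La(a_2(x))\bigr) \le C(d,L)\, \bigl(\sup_{x \in K}\|a_1(x)-a_2(x)\|_2\bigr)^{1/d},
\]
which is exactly the claimed inequality once one recognizes the left-hand side as $\|\dd(\La_*(a_1),\La_*(a_2))\|_{C^0(K)}$ and the right-hand side as $C(d,L)\,\|a_1-a_2\|_{C^0(K,\C^d)}^{1/d}$.

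There is really no obstacle here: all the work has been done in Lemma \ref{lem:homeo}, whose proof in turn reduces to the uniform H\"older estimate for the roots in Corollary \ref{cor:Hoelder}/Lemma \ref{lem:Hoelder2} applied to the straight-line homotopy $a(t) = t a_1 + (1-t) a_2$. The only thing worth remarking on is that the continuity of $\La_*(a) = \La \o a$ as a map $K \to \cA_d(\C)$, which is implicit in the target space $C^0(K,\cA_d(\C))$, is automatic: it follows from the continuity of $a$ together with the continuity of $\La : \C^d \to \cA_d(\C)$ provided by Lemma \ref{lem:homeo}(2).
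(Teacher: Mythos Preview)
Your proof is correct and is exactly the approach the paper takes: the paper's proof consists of the single sentence ``This is immediate from \Cref{lem:homeo}(2),'' and you have simply spelled out that immediacy in detail.
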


\begin{proof}
    This is immediate from \Cref{lem:homeo}(2).
\end{proof}

\subsection{Tschirnhausen form}

We say that a monic polynomial 
\[
    P_a(Z) = Z^d + \sum_{j=1}^d a_j Z^{d-j}
\]
is in \emph{Tschirnhausen form} if $a_1=0$.
Every $P_a$ can be put in Tschirnhausen form by the substitution, called \emph{Tschirnhausen transformation},
\[
    P_{\tilde a}(Z) = P_a(Z - \tfrac{a_1}d) = Z^d + \sum_{j=2}^d \tilde a_j Z^{d-j}.
\]
Note that 
\begin{equation} \label{eq:Tschirn}
    \tilde a_j  = \sum_{i=0}^j C_i a_i a_1^{j-i}, \quad 2 \le j \le d,
\end{equation}
where the $C_i$ are universal constants and $a_0=1$.

For clarity of notation, we consistently equip the coefficients of polynomials in Tschirnhausen form with a ``tilde''.

\subsection{Splitting} \label{ssec:split}

The following well-known lemma (see e.g.\ \cite{AKLM98} or \cite{BM90}) is a consequence of the 
inverse function theorem. 

\begin{lemma} \label{split}
    Let $P_a = P_b P_c$, where $P_b$ and $P_c$ are monic complex polynomials without common root.
    Then for $P$ near $P_a$ we have $P = P_{b(P)} P_{c(P)}$
    for analytic mappings of monic polynomials $P \mapsto b(P)$ and $P \mapsto c(P)$,
    defined for $P$ near $P_a$, with the given initial values.
\end{lemma}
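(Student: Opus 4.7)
The plan is to apply the analytic inverse function theorem to the multiplication map. Write $d_b = \deg P_b$, $d_c = \deg P_c$, so $d_b + d_c = d$. Identify monic polynomials of degree $k$ with $\C^k$ via their non-leading coefficient vectors, and define
\[
    \mu : \C^{d_b} \times \C^{d_c} \to \C^d, \quad \mu(b,c) := P_b\, P_c,
\]
where on the right we read off the coefficient vector of the product. This map is polynomial in its arguments and hence analytic. The lemma will follow at once from the analytic inverse function theorem as soon as the differential $d\mu_{(b_0,c_0)}$ is shown to be a $\C$-linear isomorphism at the distinguished point $(b_0,c_0)$ corresponding to $P_b, P_c$.

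A direct computation gives the expression of the differential. Writing tangent vectors $\beta \in \C^{d_b}$ and $\gamma \in \C^{d_c}$ as polynomials $\beta(Z)$ of degree less than $d_b$ and $\gamma(Z)$ of degree less than $d_c$, one finds
\[
    d\mu_{(b_0,c_0)}(\beta,\gamma) = \beta\, P_c + \gamma\, P_b,
\]
viewed as a polynomial of degree less than $d$ (the leading terms cancel since $P_b$ and $P_c$ are monic). Thus $d\mu_{(b_0,c_0)}$ is a linear map between two complex vector spaces of the same dimension~$d$, and it suffices to check injectivity.

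The key step is to invoke coprimality. Since $P_b$ and $P_c$ have no common root, they are coprime in $\C[Z]$. If $\beta P_c + \gamma P_b = 0$, then $P_b$ divides $\beta P_c$; coprimality forces $P_b \mid \beta$, but $\deg \beta < d_b = \deg P_b$, so $\beta = 0$, and then $\gamma = 0$. Hence $d\mu_{(b_0,c_0)}$ is injective, and therefore an isomorphism. The analytic inverse function theorem then provides an analytic local inverse $\mu^{-1} : U \to V$ on a neighborhood $U$ of $P_a = \mu(b_0,c_0)$, and setting $(b(P),c(P)) := \mu^{-1}(P)$ yields the desired analytic splitting $P = P_{b(P)} P_{c(P)}$ with the prescribed initial values.

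The only real obstacle is the isomorphism claim for $d\mu_{(b_0,c_0)}$, which reduces to the coprimality argument above; everything else is formal. Note also that uniqueness of the factors $(b(P),c(P))$ is automatic from the local bijectivity of $\mu$, which matches the expectation that near $P_a$ the two factors are uniquely determined by the initial partition of the roots of $P_a$ into those of $P_b$ and those of $P_c$.
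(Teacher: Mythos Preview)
Your proof is correct and follows essentially the same approach as the paper: apply the analytic inverse function theorem to the polynomial multiplication map $(b,c)\mapsto P_bP_c$. The only cosmetic difference is that the paper identifies the Jacobian determinant with the resultant of $P_b$ and $P_c$ (nonzero since they share no root), whereas you verify invertibility of the differential directly via the coprimality argument; these are two ways of stating the same fact, since the Sylvester matrix is precisely the matrix of your linear map $(\beta,\gamma)\mapsto \beta P_c+\gamma P_b$.
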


\begin{proof}
    The splitting $P_a = P_b P_c$ defines on the coefficients a polynomial mapping $\vh$ such that $a = \vh(b,c)$, 
    where $a=(a_i)$, $b=(b_i)$, and $c=(c_i)$. The Jacobian determinant  
    $\det d\vh(b,c)$ equals the resultant of $P_b$ and $P_c$ which is non-zero by assumption. 
    Thus $\vh$ can be inverted locally. 
\end{proof}

If $P_{\tilde a}$ is in Tschirnhausen form and if $\tilde a \ne 0$, then $P_{\tilde a}$ \emph{splits}, i.e., 
$P_{\tilde a} = P_b P_c$ for monic polynomials $P_b$ and $P_c$ with positive degree and without common zero. 
For, if $\la_1,\ldots,\la_d$ denote the roots of $P_{\tilde a}$ and they all coincide, then since 
\[
    \la_1+\cdots+\la_d = \tilde a_1 = 0
\] 
they all must vanish, contradicting $\tilde a \ne 0$.

Let $\tilde a_1,\ldots,\tilde a_d$ denote the coordinates in $\C^d$. 
Fix $k \in \{2,\ldots,d\}$ and let $\tilde p \in \C^{d} \cap \{\tilde a_1 =0, \,\tilde a_k \ne 0\}$; 
$\tilde p$ corresponds to the polynomial $P_{\tilde a}$ in Tschirnhausen form. 
We associate the polynomial 
\begin{gather*}
    Q_{\underline a}(Z) := \tilde a_k^{- d/k} P_{\tilde a} (\tilde a_k^{1/k} Z)  
    = Z^d + \sum_{j=2}^d \tilde a_k^{- j/k} \tilde a_j Z^{d-j}= Z^d + \sum_{j=2}^d \ul a_j Z^{d-j},\\
    \underline a_j := \tilde a_k^{- j/k} \tilde a_j, \quad j = 1,\ldots, d, 
\end{gather*}
where some branch of the radical is fixed.  
Then $Q_{\underline a}$ is in Tschirnhausen form and $\underline a_k =1$;
it corresponds to a point $\underline p \in \C^{d} \cap \{\underline a_k =1\}$. 
By \Cref{split}, we have a splitting $Q_{\underline a} = Q_{\underline b} Q_{\underline c}$ on some open ball 
$B(\underline p,\rh)$ 
centered at $\underline p$ with radius $\rh>0$. 
In particular, there exist analytic functions $\ps_i$ on $B(\underline p,\rh)$ such that
\begin{equation*} 
    \underline b_i = 
    \ps_i \big(\tilde a_k^{-2/k} \tilde a_2, \tilde a_k^{-3/k} \tilde a_3, \ldots, \tilde a_k^{-d/k} \tilde a_d\big), 
    \quad i = 1,\ldots,\deg Q_{\underline b}.
\end{equation*}
The splitting $Q_{\underline a} = Q_{\underline b} Q_{\underline c}$ induces a splitting 
$P_{\tilde a} = P_b P_c$, where 
\begin{equation} \label{eq:bj}
    b_i =  \tilde a_k^{i/k} 
    \ps_i \big(\tilde a_k^{-2/k} \tilde a_2, \tilde a_k^{-3/k} \tilde a_3, \ldots, \tilde a_k^{-d/k} \tilde a_d\big), 
    \quad i = 1,\ldots,d_b := \deg P_b;
\end{equation}
likewise for $c_j$. 
Shrinking $\rh$ slightly, we may assume that $\ps_i$ and all its partial derivatives are bounded on $B(\underline p, \rh)$. 
Let $\tilde b_j$ denote the coefficients of the polynomial $P_{\tilde b}$ resulting from $P_b$ by the  
Tschirnhausen transformation. Then, by \eqref{eq:Tschirn}, 
\begin{equation} \label{eq:tildebj}
    \tilde b_i = \tilde a_k^{i/k} 
    \tilde \ps_i \big(\tilde a_k^{-2/k} \tilde a_2, \tilde a_k^{-3/k} \tilde a_3, \ldots, \tilde a_k^{-d/k} \tilde a_d\big), 
    \quad i = 2,\ldots,d_b,
\end{equation}
for analytic functions $\tilde \ps_i$ which, together with all their partial derivatives, are bounded on $B(\underline p,\rh)$.

\subsection{Universal splitting of polynomials in Tschirnhausen form} \label{universal}

The set 
\begin{equation*} 
    K:= \bigcup_{k=2}^d\{(0,\ul a_2, \ldots,\ul a_d) \in \C^{d} : \ul a_1=0,~ \ul a_k=1,~ |\ul a_j| \le 1 \text{ for }j \ne k\}
\end{equation*}
is compact. For each point $\underline p \in K$
there exists $\rh(\underline p)>0$ such that we have a splitting  
$P_{\tilde a} = P_b P_{c}$ on the open ball $B(\underline p,\rh(\underline p))$, and we fix this splitting; 
cf.\ \Cref{ssec:split}.
Choose a finite subcover of $K$ by open balls $B(\underline p_\de,\rh_\de)$, $\de \in \De$. 
Then there exists $\rh>0$ such that for every $\underline p \in K$ there is a $\de \in \De$ such that 
$B(\underline p,\rh) \subseteq B(\underline p_\de,\rh_\de)$.

To summarize, for each integer $d \ge 2$ we have fixed 
\begin{enumerate}
    \item a finite cover $\cB$ of $K$ by open balls $B$,
    \item a splitting $P_{\tilde a} = P_b P_{c}$ on each $B \in \cB$ together with analytic functions $\ps_i$ and 
        $\tilde \ps_i$ which are bounded on $B$ along with all their partial derivatives,
    \item a positive number $\rh$ such that for each $\underline p \in K$ there is a $B \in \cB$ such that 
        $B(\underline p,\rh) \subseteq B$ (note that $\rh$ is a Lebesgue number of the cover $\cB$).  
\end{enumerate}

\begin{remark} \label{rem:universal}
    Additionally,
    there exists $\ch>0$ such that for all pairs $P_{\tilde a_1}=P_{b_1} P_{c_1}$ and $P_{\tilde a_2}=P_{b_2} P_{c_2}$ 
    in some fixed $B \in \cB$, where $\ul a_{1,k}=\ul a_{2,k} =1$,
    \begin{equation}\label{eq:remu1}
        \min_{P_{b_1}(\mu_1)=0,\, P_{c_2}(\nu_2)=0}|\tilde a_{2,k}^{1/k} \mu_1 - \tilde a_{1,k}^{1/k}\nu_2 | 
        > \ch \cdot |\tilde a_{1,k}|^{1/k}|\tilde a_{2,k}|^{1/k}.
    \end{equation} 
    Indeed, by shrinking the balls $B(\ul p,\rh(\ul p))$, we may assume that  
    \begin{equation} \label{eq:remu2}
        \min_{Q_{\ul b_1}(\ul \mu_1)=0,\, Q_{\ul c_2}(\ul \nu_2)=0}|\ul \mu_1 - \ul \nu_2 | 
        > \ch_{\ul p}>0, 
    \end{equation}
    for all pairs $Q_{\ul a_1}=Q_{\ul b_1} Q_{\ul c_1}$ and $Q_{\ul a_2}=Q_{\ul b_2} Q_{\ul c_2}$ in $B(\ul p,\rh(\ul p))$. 
    Then take a finite subcover $\cB$ of $K$ and let $\ch$ be the minimum of the respective $\ch_{\ul p}$. 
    Multiplying \eqref{eq:remu2} by $|\tilde a_{1,k}|^{1/k}|\tilde a_{2,k}|^{1/k}$ 
    and observing that the roots of $P_{b_i}$, $P_{c_i}$ are the roots of  $Q_{\ul b_i}$, $Q_{\ul c_i}$ 
    times $\tilde a_{i,k}^{1/k}$, gives \eqref{eq:remu1}.
\end{remark}

\begin{definition} \label{def:universal}
    We will refer to the data fixed in (1), (2), and (3) including $\ch>0$ such that \eqref{eq:remu1} holds 
    as a \emph{universal splitting of polynomials of degree $d$ in Tschirnhausen form}
    and to $\rh$ as the \emph{radius of the splitting}.
\end{definition}

\section{Optimal Sobolev regularity of the roots} \label{sec:opt}

In this section, we recall the main result of \cite{ParusinskiRainer15} and 
prove a related bound that will be essential for the proof of \Cref{thm:main1}.

\subsection{Boundedness} \label{ssec:bd} 

Let us recall the main result of \cite{ParusinskiRainer15}.

\begin{theorem}[{\cite[Theorem 1]{ParusinskiRainer15}}] \label{thm:optimal}
    Let $(\al,\be) \subseteq \R$ be a bounded open interval.\footnote{In this section and the next two, the main parameter interval is denoted by $(\al,\be)$ 
    so that the notation is close to the one in \cite{ParusinskiRainer15} because we will frequently refer to \cite{ParusinskiRainer15}.}
    Let $P_a$ be a monic polynomial of degree $d$ with coefficient vector $a \in C^{d-1,1}([\al,\be],\C^d)$.
    Let $\la \in C^0((\al,\be))$ be a continuous root of $P_a$ on $(\al,\be)$.
    Then $\la \in W^{1,q}((\al,\be))$ for every $1 \le q < d/(d-1)$ 
    and 
    \begin{equation} \label{eq:optimal}
        \|\la'\|_{L^q((\al,\be))} \le C(d,q)\, \max\{1,(\be -\al)^{1/q}\} \max_{1 \le j \le d} \|a_j\|_{C^{d-1,1}([\al,\be])}^{1/j}.
    \end{equation}
\end{theorem}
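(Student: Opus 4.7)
The plan is to proceed by induction on the degree $d$, using the splitting principle together with the higher order Glaeser inequality of Ghisi--Gobbino recalled in \Cref{prop:m}. The base case $d=1$ is trivial, since then $\la = -a_1 \in C^{d-1,1} \subseteq W^{1,q}$. For the inductive step, I would first reduce to the Tschirnhausen form: applying the substitution $Z \mapsto Z - a_1/d$ replaces $P_a$ by $P_{\tilde a}$ with $\tilde a_1 = 0$, and the formulas \eqref{eq:Tschirn} show that $\|\tilde a\|_{C^{d-1,1}}$ is polynomially controlled by $\|a\|_{C^{d-1,1}}$. A continuous root of $P_a$ corresponds to a continuous root of $P_{\tilde a}$ shifted by $a_1/d \in C^{d-1,1}$, so it suffices to prove \eqref{eq:optimal} for polynomials in Tschirnhausen form.

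Next, split the interval according to the zero set $Z_{\tilde a} = \{x : \tilde a(x) = 0\}$. On $\on{acc}(Z_{\tilde a})$, all roots vanish identically, so $\la'= 0$ almost everywhere there, and $Z_{\tilde a}\setminus \on{acc}(Z_{\tilde a})$ is countable, hence has measure zero. On the open set $\Om_{\tilde a} = (\al,\be) \setminus Z_{\tilde a}$, I would invoke the universal splitting from \Cref{def:universal}: for each $x_0 \in \Om_{\tilde a}$, pick $k \in \{2,\ldots,d\}$ maximizing $|\tilde a_k(x_0)|^{1/k}$ and rescale by $Q_{\ul a} = \tilde a_k^{-d/k} P_{\tilde a}(\tilde a_k^{1/k} \cdot)$ to land in the compact set $K$. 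This yields, on a sufficiently small neighborhood $I'$ of $x_0$, a factorization $P_{\tilde a} = P_b P_c$ with $\deg P_b, \deg P_c < d$, and the coefficients $\tilde b_i$, $\tilde c_i$ (after another Tschirnhausen reduction) are given by \eqref{eq:tildebj} in terms of $\tilde a_k^{1/k}$ and bounded analytic functions of $\tilde a_k^{-j/k}\tilde a_j$. The inductive hypothesis then bounds $\|\la'\|_{L^q(I')}$ in terms of $\max_i \|\tilde b_i\|_{C^{d_b-1,1}(I')}^{1/i}$ or the analogous quantity for $\tilde c$, which in turn is controlled via the $C^{d-1,1}$ norm of $\tilde a$.

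For the $Z_{\tilde a}$ side, I would adapt the argument from \Cref{lem:Zg2}: a $d$-fold vanishing of $\tilde a$ at an accumulation point (\Cref{lem:Zg1}) combined with \Cref{prop:m} gives local weak-$L^p$ control of $\la'$ near such points with $p = d/(d-1)$, and passage to $L^q$ for $q<p$ uses \eqref{eq:qp}. Combined with the control on $\Om_{\tilde a}$, this produces the required bound on $\|\la'\|_{L^q((\al,\be))}$.

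The main obstacle is the globalization step: the splitting only happens locally, so one has to cover $(\al,\be)$ by intervals on which either splitting or the radical-type weak-$L^p$ bound applies, and then add up the $L^q$ contributions without the constants exploding. The delicate point is that weak $L^p$ is not $\si$-additive, so the covering must be organized so that each point lies in only a bounded number of intervals (as in \Cref{lem:topline}), and the chosen $k$ on each splitting piece must satisfy $|\tilde a_k|^{1/k} \gtrsim \max_j |\tilde a_j|^{1/j}$ uniformly. Once the covering is done with a bounded multiplicity and the factors' $C^{d_b-1,1}$ norms are estimated using \Cref{prop:lefttr} applied to the $\tilde \ps_i$ (after cutting off where $\tilde a_k$ is bounded below), the $L^q$ bounds on each piece sum up to the right-hand side of \eqref{eq:optimal}, with the factor $\max\{1,(\be-\al)^{1/q}\}$ absorbing the dependence on $|I|$ from \eqref{eq:qp}.
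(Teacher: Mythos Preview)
The paper does not prove \Cref{thm:optimal} itself; it is quoted from \cite{ParusinskiRainer15}. But enough of that proof is visible in the proof of \Cref{thm:optimalmod} and in \Cref{lem:lem5}--\Cref{lem:lem10} to compare, and your broad plan---Tschirnhausen reduction, induction on degree via the universal splitting, Ghisi--Gobbino for the radicals of the coefficients, bounded-multiplicity cover---is the one used there.

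The gap is in the form of the induction hypothesis. You propose to apply \eqref{eq:optimal} to the factor $P_{\tilde b}$ on each splitting interval $I$ and then bound $\max_i\|\tilde b_i\|_{C^{d_b-1,1}(\ol I)}^{1/i}$ ``via the $C^{d-1,1}$ norm of $\tilde a$''. But the formulas \eqref{eq:tildebj} express $\tilde b_i$ through $\tilde a_k^{i/k}$ and the rescaled coefficients $\ul a_j=\tilde a_k^{-j/k}\tilde a_j$, whose high derivatives blow up as one approaches the zeros of $\tilde a_k$; the $C^{d_b-1,1}$ norm of $\tilde b$ on $I$ is therefore \emph{not} controlled by the global $C^{d-1,1}$ norm of $\tilde a$ alone, and the best one can extract from \eqref{eq:lem8} is $\|\tilde b_i\|_{C^{d_b-1,1}(\ol I)}^{1/i}\lesssim |I|^{-d_b/i}|\tilde a_k(x_0)|^{1/k}$, which produces local bounds with the wrong power of $|I|$ and does not sum over the cover. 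The device in \cite{ParusinskiRainer15} that your sketch is missing is to run the induction not through the $C^{d_b-1,1}$ norm of the factor at all, but through the scale-invariant estimate $\|\la'\|_{L^q(I)}\le C\,|I|^{-1+1/q}|\tilde a_k(x_0)|^{1/k}$ on a \emph{maximal} interval $I$ satisfying \eqref{eq:MBle} (cf.\ \eqref{eq:resprop3}--\eqref{eq:b1}); \Cref{lem:lem7}--\Cref{lem:lem10} are precisely the statements that the structural hypotheses behind this bound reproduce themselves for the factor on subintervals. Only after the recursion closes in this form does one invoke equality in \eqref{eq:MBeq} to convert the right-hand side into the summable shape that yields \eqref{eq:casei}, and hence \eqref{eq:optimal}.
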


Let $\La : \C^d \to \Iq$ be the solution map defined in \Cref{ssec:solutionmap}.

\begin{corollary}[{\cite[Section 10.3]{ParusinskiRainer15}}] \label{cor:aLabd}
The map 
\begin{equation*}
    \La_* : C^{d-1,1}([\al,\be],\C^d) \to W^{1,q}((\al,\be),\cA_d(\C)), \quad a \mapsto \La \o a, 
\end{equation*}
is well-defined and bounded, for every $1 \le q < d/(d-1)$, where $W^{1,q}((\al,\be),\cA_d(\C))$ carries the metric structure given in \eqref{eq:Almgren}.
\end{corollary}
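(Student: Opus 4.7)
My plan is to reduce the statement to \Cref{thm:optimal} by passing through a continuous parameterization of the roots and then exploiting the Lipschitz character of the Almgren embedding $\De$.

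Given $a \in C^{d-1,1}([\al,\be],\C^d)$, I would first invoke Kato's theorem to produce a continuous parameterization $\la = (\la_1,\ldots,\la_d) \in C^0([\al,\be],\C^d)$ of the roots of $P_a$, so that $\La \o a = [\la]$. Each component $\la_j$ is then a continuous root of $P_a$, so \Cref{thm:optimal} applies componentwise and yields $\la_j \in W^{1,q}((\al,\be))$ for every $1 \le q < d/(d-1)$, together with the uniform bound
\[
    \|\la_j'\|_{L^q((\al,\be))} \le C(d,q)\, \max\{1,(\be-\al)^{1/q}\}\, \max_{1 \le k \le d} \|a_k\|_{C^{d-1,1}([\al,\be])}^{1/k}.
\]
The Cauchy bound \eqref{eq:Cauchybound} simultaneously controls $\|\la_j\|_{L^\infty} \le 2 \max_k \|a_k\|_{L^\infty}^{1/k}$.

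Next, I would transfer these estimates to $\De \o \La \o a = \De \o [\la]$. Since $\De$ is Lipschitz with some constant $C_1 = C_1(d)$, and since the definition of $\dd$ immediately gives $\dd([z],[w]) \le \|z-w\|_2/\sqrt d$ for all $z,w \in \C^d$, a combination of these two facts with Minkowski's inequality applied to the vector-valued representation $\la(y) - \la(x) = \int_x^y \la'(t)\,dt$ produces
\[
    \|\De([\la(x)]) - \De([\la(y)])\|_2 \le \frac{C_1}{\sqrt d}\int_x^y \|\la'(t)\|_2 \, dt, \qquad \al < x \le y < \be.
\]
Consequently $\De \o [\la]$ is absolutely continuous on $(\al,\be)$, differentiable almost everywhere, and satisfies the pointwise bound $\|(\De \o [\la])'(x)\|_2 \le (C_1/\sqrt d)\,\|\la'(x)\|_2 \le C_1 \max_{1 \le j \le d}|\la_j'(x)|$ for almost every $x \in (\al,\be)$.

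It would then remain to combine this pointwise derivative bound with the $L^q$-estimates on $\la_j'$ and the $L^\infty$-bounds on $\la_j$ to obtain an inequality of the form
\[
    \|\De \o \La \o a\|_{W^{1,q}((\al,\be),\R^N)} \le C(d,q)\, \max\{1,(\be-\al),(\be-\al)^{1/q}\}\,\Big(1 + \max_{1\le k \le d} \|a_k\|_{C^{d-1,1}([\al,\be])}^{1/k}\Big),
\]
which simultaneously establishes well-definedness of $\La_*$ (its image lies in $W^{1,q}((\al,\be),\cA_d(\C))$) and its boundedness. I expect no serious obstacle in this argument: the hard analytic work is entirely contained in \Cref{thm:optimal}, and the only point worth a second glance is that the continuous parameterization $\la$ need not respect any ordering picked out by $\De$ at a given point; this is harmless, since $\De$ depends only on the unordered tuple $[\la]$ and is therefore by construction symmetric under permutations of the $\la_j$.
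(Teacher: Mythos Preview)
Your argument is correct and is precisely the natural route: the paper does not spell out a proof of this corollary beyond citing \cite[Section 10.3]{ParusinskiRainer15}, and what you have written is exactly the derivation one finds there---parameterize the roots continuously, apply \Cref{thm:optimal} componentwise together with the Cauchy bound, and push the resulting $W^{1,q}$ estimates through the Lipschitz map $\De\circ[\cdot]$.
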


\begin{remark} \label{rem:aLabd}
    \Cref{cor:aLabd} remains true if the interval $(\al,\be)$ is replaced by a bounded open box $U = I_1 \times \cdots \times I_m \subseteq \R^m$ 
    (and $[\al,\be]$ by $\ol U$). This follows from the proof of \cite[Theorem 6]{ParusinskiRainer15}.
\end{remark}

\subsection{A different bound}
We shall need a different bound for $\|\la'\|_{L^q((\al,\be))}$; see \Cref{rem:optimalmodreason} for the reason. We formulate and use it for 
polynomials in Tschirnhausen form. 
Some details of the proof of \Cref{thm:optimal} in \cite{ParusinskiRainer15} must be recalled, before the bound can be given.

Let $(\al,\be) \subseteq \R$ be a bounded open interval.
Let $P_{\tilde a}$ be a monic polynomial of degree $d$ in Tschirnhausen form with 
coefficient vector $\tilde a \in C^{d-1,1}([\al,\be],\C^d)$, where $\tilde a$ is not identically zero.
Let $\rh>0$ be the radius of the fixed universal splitting of polynomials of degree $d$ in Tschirnhausen form (see \Cref{def:universal}).
Fix a positive constant $B$ satisfying
\begin{equation} \label{eq:B}
    B < \min\Big\{\frac{1}{3},\frac{\rh}{3 d^2 2^d}\Big\}.
\end{equation}
Let $x_0 \in (\al,\be)$ be such that $\tilde a(x_0) \ne 0$ and let $k=k(x_0) \in \{2,\ldots,d\}$ be such that 
\begin{equation} \label{eq:k}
    |\tilde a_k(x_0)|^{1/k} = \max_{2 \le j \le d} |\tilde a_j(x_0)|^{1/j}.
\end{equation}
Let $M=M(x_0)$ be defined by 
\begin{equation} \label{eq:Mbd}
    M:= \max_{2 \le j \le d} \Big( |\tilde a_j^{(d-1)}|_{C^{0,1}([\al,\be])}^{1/d} |\tilde a_k(x_0)|^{(d-j)/(kd)} \Big).
\end{equation}
Choose a maximal open interval $I = I(x_0) \subseteq (\al,\be)$ containing $x_0$ such that 
\begin{equation} \label{eq:MBle}
    M |I| + \sum_{j=2}^d \|(\tilde a_j^{1/j})'\|_{L^1(I)} \le B\, |\tilde a_k(x_0)|^{1/k}.
\end{equation}

\begin{convention} \label{convention}
    Abusing notation, $\tilde a_j^{1/j}$ denotes one fixed continuous selection of the multi-valued function  $\tilde a_j^{1/j}$;
    the value of $\|(\tilde a_j^{1/j})'\|_{L^1(I)}$ is independent of the choice of the selection (by \cite[Lemma 1]{ParusinskiRainer15}).
\end{convention}

Let us consider the following two cases:
\begin{description}
    \item[Case (i)]
        For each $x_0 \in (\al,\be)$ with $\tilde a(x_0) \ne 0$,
        we have equality in \eqref{eq:MBle}, 
        \begin{equation} \label{eq:MBeq}
            M |I| + \sum_{j=2}^d \|(\tilde a_j^{1/j})'\|_{L^1(I)} = B\, |\tilde a_k(x_0)|^{1/k}.
        \end{equation}
    \item[Case (ii)] 
        There exists $x_0 \in (\al,\be)$ with $\tilde a(x_0) \ne 0$ such that 
        the inequality in \eqref{eq:MBle} is strict,
        \begin{equation} \label{eq:MBst}
            M |I| + \sum_{j=2}^d \|(\tilde a_j^{1/j})'\|_{L^1(I)} < B\, |\tilde a_k(x_0)|^{1/k}.
        \end{equation}
\end{description}

The condition \eqref{eq:MBle} guarantees that         
we have a splitting $P_{\tilde a} = P_bP_{b^*}$ on the interval $I$; 
see \cite[Section 8, Step 1]{ParusinskiRainer15}.
In particular, as $I$ is assumed to be maximal, in Case (ii), we have a splitting on the whole interval $I=(\al,\be)$.

\begin{theorem} \label{thm:optimalmod} 
    Let $(\al,\be) \subseteq \R$ be a bounded open interval.
    Let $P_{\tilde a}$ be a monic polynomial of degree $d$ in Tschirnhausen form with 
    coefficient vector $\tilde a \in C^{d-1,1}([\al,\be],\C^d)$, where $\tilde a$ is not identically zero.
    Let $\la \in C^0((\al,\be))$ be a continuous root of $P_{\tilde a}$ on $(\al,\be)$.
    Then, for every $1 \le q<  d/(d-1)$, $\la \in W^{1,q}((\al,\be))$ and $\|\la'\|_{L^q((\al,\be))}$ admits the following bound. 

    In Case \thetag{i}, 
    \begin{equation} \label{eq:casei}
        \|\la'\|_{L^q((\al,\be))} \le C(d,q)\, \Big((\be-\al)^{1/q}\max_{2 \le j \le d} \|\tilde a_j\|_{C^{d-1,1}([\al,\be])}^{1/j}+ \sum_{j=2}^d \|(\tilde a_j^{1/j})'\|_{L^q((\al,\be))}\Big).  
    \end{equation}

    In Case \thetag{ii}, for each $x \in (\al,\be)$, 
    \begin{equation} \label{eq:caseii}
        \|\la'\|_{L^q((\al,\be))} \le C(d,q)\, (\be-\al)^{-1+1/q} \,  |\tilde a_k(x)|^{1/k}.
    \end{equation}
\end{theorem}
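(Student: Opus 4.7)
The plan is to prove \Cref{thm:optimalmod} by induction on the degree $d$, following the architecture of the proof of \Cref{thm:optimal} in \cite{ParusinskiRainer15} but refining the bookkeeping so that the estimates are expressed in terms of the two quantities appearing on the right-hand sides of \eqref{eq:casei} and \eqref{eq:caseii} instead of the $C^{d-1,1}$ norms alone. The base case $d=1$ is trivial because $\tilde a_1 \equiv 0$ forces $\la \equiv 0$. For the inductive step, the key tool is the universal splitting of polynomials in Tschirnhausen form fixed in \Cref{def:universal}: the conditions \eqref{eq:MBle}--\eqref{eq:MBst}, together with the choice of $B$ in \eqref{eq:B}, ensure that the rescaled polynomial $Q_{\underline a}(x)$ stays within some fixed ball $B(\underline p, \rh) \in \cB$ for every $x$ in the relevant interval, so that we may split $P_{\tilde a} = P_b P_{b^*}$ via \eqref{eq:bj} and \eqref{eq:tildebj}.

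\textbf{Case (ii).} The strict inequality \eqref{eq:MBst}, together with the maximality of $I(x_0)$, forces $I(x_0)=(\al,\be)$, so the splitting $P_{\tilde a}=P_bP_{b^*}$ holds on the entire interval. Assume without loss of generality that $\la$ is a root of $P_b$ and write $\la = \mu - b_1/d_b$, where $\mu$ is a continuous root of $P_{\tilde b}$. The triangle inequality gives $\|\la'\|_{L^q((\al,\be))} \le \|\mu'\|_{L^q((\al,\be))} + \|(b_1/d_b)'\|_{L^q((\al,\be))}$. Using \eqref{eq:MBst} one first checks that $|\tilde a_k(x)| \ge c(d)\,|\tilde a_k(x_0)|$ on $(\al,\be)$, so that the chain rule applied to the representation $b_1 = \tilde a_k^{1/k}\ps_1(\underline a_2,\ldots,\underline a_d)$ (where $\ps_1$ and its derivatives are bounded on the ball), combined with H\"older's inequality and the bound $\sum_j\|(\tilde a_j^{1/j})'\|_{L^1}<B|\tilde a_k(x_0)|^{1/k}$, yields $\|(b_1/d_b)'\|_{L^q((\al,\be))}\le C(d,q)(\be-\al)^{-1+1/q}|\tilde a_k(x_0)|^{1/k}$. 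The same analytic formulas, applied to $\tilde b_i$ via \eqref{eq:tildebj}, show that $P_{\tilde b}$ again falls into Case (ii) on $(\al,\be)$ with $|\tilde b_{k'}(x_0)|^{1/k'} \le C(d)|\tilde a_k(x_0)|^{1/k}$ for the analogous index $k'$, so the induction hypothesis produces the bound \eqref{eq:caseii}. Finally, using the continuity of $\tilde a_k$ and the lower bound already established, the appearance of $x$ in \eqref{eq:caseii} (rather than the fixed $x_0$) follows.

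\textbf{Case (i).} Here one constructs, as in \cite[Section 8]{ParusinskiRainer15}, a countable cover of $\{x\in(\al,\be):\tilde a(x)\ne 0\}$ by maximal intervals $I_n=I(x_n)$ with equality \eqref{eq:MBeq}, arranged to have uniformly bounded overlap. On each $I_n$ the splitting principle applies (the corresponding version of Case (ii) on $I_n$ is valid since equality is a limiting case of strict inequality), and the induction hypothesis yields
\[
    \|\la'\|_{L^q(I_n)} \le C(d,q)\,|I_n|^{-1+1/q}|\tilde a_{k_n}(x_n)|^{1/k_n}.
\]
Substituting the equality \eqref{eq:MBeq} to replace $|\tilde a_{k_n}(x_n)|^{1/k_n}$ by $B^{-1}\bigl(M_n|I_n| + \sum_j \|(\tilde a_j^{1/j})'\|_{L^1(I_n)}\bigr)$, raising to the $q$-th power, summing over $n$, and exploiting bounded overlap together with H\"older's inequality splits the contribution into two pieces: the $M_n|I_n|$ term contributes at most $C(d,q)(\be-\al)^{1/q}\max_{2\le j\le d}\|\tilde a_j\|_{C^{d-1,1}([\al,\be])}^{1/j}$, while the $\sum_j \|(\tilde a_j^{1/j})'\|_{L^1(I_n)}$ term contributes at most $C(d,q)\sum_{j=2}^d\|(\tilde a_j^{1/j})'\|_{L^q((\al,\be))}$. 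This gives \eqref{eq:casei}.

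\textbf{Main obstacle.} The delicate point is the passage from local estimates on the factors $P_{\tilde b}$, $P_{\tilde{b^*}}$ to the global estimate on $\la$: one must verify that the index $k'$ and the chosen Tschirnhausen shift for each factor keep the analogous quantity $|\tilde b_{k'}|^{1/k'}$ controlled by $|\tilde a_k|^{1/k}$ (so that the induction actually decreases the degree with an admissible constant), and that the differentiation of the analytic maps $\ps_i$, $\tilde\ps_i$ through the chain rule produces only terms of the form $(\tilde a_j^{1/j})'$ (times bounded factors). In Case (i) the additional subtlety is the quasi-additivity of the weak-$L^p$ quasinorm (used implicitly through the reduction to $L^q$ for $q<p=d/(d-1)$), which was the technical heart of \cite{ParusinskiRainer15} and must be re-used here to make the sum over the overlapping intervals $I_n$ converge to the right global quantity.
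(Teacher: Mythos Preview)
Your overall instinct---reduce to lower degree via the universal splitting---is the right one, and it is indeed what underlies the result through \cite{ParusinskiRainer15}. But the specific inductive scheme you propose has a structural gap: you bake the Case~(i)/(ii) dichotomy into the inductive hypothesis and then assert that the factor $P_{\tilde b}$ ``again falls into Case~(ii)'' whenever $P_{\tilde a}$ does. This is not justified. The Case~(i)/(ii) distinction for $P_{\tilde b}$ depends on its own quantity $M_b$ (built from $|\tilde b_i^{(d_b-1)}|_{C^{0,1}}$, a different order of regularity since $d_b<d$) and its own dominant index $\ell$, and there is no reason the strict inequality \eqref{eq:MBst} for $\tilde a$ forces the analogous strict inequality for $\tilde b$. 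Similarly, in your Case~(i) argument the parenthetical ``the corresponding version of Case~(ii) on $I_n$ is valid since equality is a limiting case of strict inequality'' is circular: you are invoking a bound for the degree-$d$ polynomial $P_{\tilde a}$ on $I_n$ that you have not yet established, and the induction hypothesis only speaks about polynomials of degree $<d$.

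The paper avoids this entanglement by not making the Case~(i)/(ii) split part of the induction at all. Instead it quotes from \cite{ParusinskiRainer15} (Proposition~3, Lemmas~8, 9, 15, Remark~3) a single estimate valid under the mere \emph{inequality} \eqref{eq:MBle}:
\[
\|\mu'\|_{L^q(I)} \le C(d,q)\,|I|^{-1+1/q}\,|\tilde a_k(x_0)|^{1/k},
\]
together with the analogous bound for $b_1'$. (The induction on $d$ is hidden inside Proposition~3 of \cite{ParusinskiRainer15}.) Both cases of the theorem then follow by pure post-processing: in Case~(ii) one has $I=(\al,\be)$ and uses \eqref{eq:lem52} to pass from $x_0$ to arbitrary $x$; in Case~(i) one substitutes the equality \eqref{eq:MBeq} for $|\tilde a_k(x_0)|^{1/k}$, applies H\"older to upgrade the $L^1$ norms to $L^q$, and sums over the bounded-overlap cover---exactly the arithmetic you outlined, which is fine. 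If you want to keep an explicit induction on $d$, the fix is to make the displayed estimate above (not the two-case statement) your inductive hypothesis; that is precisely what Proposition~3 of \cite{ParusinskiRainer15} does.
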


\begin{proof}
    We refer to the proof of \Cref{thm:optimal} in \cite{ParusinskiRainer15}. 
    Assume that $B$, $x_0$, $k$, $M$, and $I$ are chosen as above.
    By Proposition 3 and Lemma 15 in \cite{ParusinskiRainer15}, 
    $P_{\tilde a} = P_b P_{b^*}$ splits on $I$ and every continuous root $\mu \in C^0(I)$ 
    of $P_{\tilde b}$ (which results from $P_b$ by the Tschirnhausen tranformation) on $I$ 
    is absolutely continuous and satisfies, for every $1 \le q < d/(d-1)$,
    \begin{equation} \label{eq:resprop3}
        \|\mu'\|_{L^q(I)} \le C(d,q)\, \Big( \| |I|^{-1} |\tilde a_k(x_0)|^{1/k}\|_{L^q(I)} + \sum_{i=2}^{d_b} \| (\tilde b_i^{1/i})'\|_{L^q(I)} \Big).
    \end{equation}
    By Lemmas 8 and 9 in \cite{ParusinskiRainer15}, 
    we may bound the right-hand side of \eqref{eq:resprop3} by 
    \begin{equation} \label{eq:reslem9}
        \| |I|^{-1} |\tilde a_k(x_0)|^{1/k}\|_{L^q(I)} + \sum_{i=2}^{d_b} \| (\tilde b_i^{1/i})'\|_{L^q(I)} 
        \le C(d,q)\, |I|^{-1+1/q}\, |\tilde a_k(x_0)|^{1/k}. 
    \end{equation}
    We may assume that, on $I$,  $\la$ is a root of $P_{b}$, so that 
    \begin{equation} \label{eq:lamu}
        \la(x) = - \frac{b_1(x)}{d_b} + \mu(x), \quad x \in I,
    \end{equation}
    where $d_b = \deg P_b$.
    By Remark 3 in \cite{ParusinskiRainer15}, 
    \begin{equation} \label{eq:b1}
        \|b_1'\|_{L^\infty(I)} \le C(d)\, |I|^{-1} \, |\tilde a_k(x_0)|^{1/k}.
    \end{equation}

    \subsubsection*{Case \thetag{i}}
    For each $x_0 \in (\al,\be)$ with $\tilde a(x_0) \ne 0$ we have \eqref{eq:MBeq}.
    We may combine \eqref{eq:resprop3} and \eqref{eq:reslem9} with \eqref{eq:MBeq} to get 
    \begin{align*}
        \|\mu'\|_{L^q(I)} 
        &\le C(d,q)\, |I|^{-1+1/q}\, B^{-1}\, \Big( M |I| + \sum_{j=2}^d \|(\tilde a_j^{1/j})'\|_{L^1(I)}\Big).
        \\
        &= C(d,q)\, |I|^{1/q}\, B^{-1}\, \Big( M  + \sum_{j=2}^d |I|^{-1} \|(\tilde a_j^{1/j})'\|_{L^1(I)}\Big).
        \\
        &\le C(d,q)\, |I|^{1/q}\, B^{-1}\, \Big( M  + \sum_{j=2}^d |I|^{-1/q} \|(\tilde a_j^{1/j})'\|_{L^q(I)}\Big).
        \\
        &= C(d,q)\,  B^{-1}\, \Big( M |I|^{1/q}  + \sum_{j=2}^d  \|(\tilde a_j^{1/j})'\|_{L^q(I)}\Big);
    \end{align*}
    the second inequality follows from H\"older's inequality.
    By \eqref{eq:k},
    \begin{equation*}
        |\tilde a_k(x_0)|^{1/k} = \max_{2 \le j \le d} |\tilde a_j(x_0)|^{1/j} \le \max_{2 \le j \le d} \|\tilde a_j\|_{C^{d-1,1}([\al,\be])}^{1/j} =: A
    \end{equation*}
    and hence, by \eqref{eq:Mbd},
    \begin{align*}
        M &= \max_{2 \le j \le d} \Big( |\tilde a_j^{(d-1)}|_{C^{0,1}([\al,\be])}^{1/d} |\tilde a_k(x_0)|^{(d-j)/(kd)} \Big)
        \\
          &\le\max_{2 \le j \le d} \Big( A^{j/d} A^{(d-j)/d} \Big)=A. 
    \end{align*}
    Since $B$ was universal (see \eqref{eq:B}), we obtain
    \[
        \|\mu'\|_{L^q(I)} \le C(d,q)\,   \Big(  |I|^{1/q} \max_{2 \le j \le d}  \|\tilde a_j\|_{C^{d-1,1}([\al,\be])}^{1/j} + \sum_{j=2}^d  \|(\tilde a_j^{1/j})'\|_{L^q(I)}\Big).    
    \]
    By \eqref{eq:b1},
    \[
        \|b_1'\|_{L^q(I)} \le C(d)\, |I|^{-1+1/q} \, |\tilde a_k(x_0)|^{1/k}
    \]
    which is estimated the same way.
    So, in view of \eqref{eq:lamu}, we conclude 
    \begin{equation} \label{eq:LqI}
        \|\la'\|_{L^q(I)} \le C(d,q)\,   \Big(  |I|^{1/q} \max_{2 \le j \le d}  \|\tilde a_j\|_{C^{d-1,1}([\al,\be])}^{1/j} + \sum_{j=2}^d  \|(\tilde a_j^{1/j})'\|_{L^q(I)}\Big).    
    \end{equation}
    To summarize, 
    for each $x_0 \in (\al,\be)$ with $\tilde a(x_0) \ne 0$ we have \eqref{eq:MBeq} and \eqref{eq:LqI}, 
    where the interval $I$ contains $x_0$ and is contained in $\Om_{\tilde a} := (\al,\be) \cap \{\tilde a \ne 0\}$.
    By Proposition 2 in \cite{ParusinskiRainer15} (applied to $\tilde a$ instead of $\tilde b$), 
    there is a cover of $\Om_{\tilde a}$ 
    by a countable family $\cI$ of open intervals $I$ on which \eqref{eq:LqI} holds 
    and such that 
    every point of $\Om_{\tilde a}$ belongs to precisely one or two intervals in $\cI$.
    Thus, it follows from \eqref{eq:LqI} that 
    \begin{align*}
        \|\la'\|_{L^q(\Om_{\tilde a})} 
        \le C(d,q)\,   \Big(  (\be-\al)^{1/q} \max_{2 \le j \le d}  \|\tilde a_j\|_{C^{d-1,1}([\al,\be])}^{1/j} 
        + \sum_{j=2}^d  \|(\tilde a_j^{1/j})'\|_{L^q((\al,\be))}\Big).    
    \end{align*}
    Now it suffices to apply Lemma 1 in \cite{ParusinskiRainer15} to have the same bound for $\|\la'\|_{L^q((\al,\be))}$, 
    i.e., \eqref{eq:casei}.

    \subsubsection*{Case \thetag{ii}} In this case, 
    there exists $x_0 \in (\al,\be)$ such that \eqref{eq:MBst} holds. Then 
    $I=(\al,\be)$ so that \eqref{eq:resprop3} and \eqref{eq:reslem9} give
    \begin{equation*}
        \|\mu'\|_{L^q(I)} \le C(d,q)\, (\be-\al)^{-1+1/q} \,  |\tilde a_k(x_0)|^{1/k}.
    \end{equation*}
    Furthermore, by Lemma 5 in \cite{ParusinskiRainer15},
    \[
        \frac{2}{3} \le \Big| \frac{\tilde a_k(x)}{\tilde a_k(x_0)} \Big|^{1/k} \le \frac{4}{3}, \quad x \in (\al,\be).
    \]
    Together with \eqref{eq:lamu} and \eqref{eq:b1}, we conclude 
    \eqref{eq:caseii}.
\end{proof}

\section{Proof of \Cref{thm:main1}} \label{sec:proofs1}

Let $d\ge 2$ be an integer.
Let $(\al,\be) \subseteq \R$ be a bounded open interval.
Let $a_n \to a$ in $C^d([\al,\be],\C^d)$ as $n\to \infty$. 
Let $\La,\La_n : (\al,\be) \to \cA_d(\C)$ be the curves of unordered roots of $P_a,P_{a_n}$, respectively.
We have to show that 
\begin{equation*}
    \mathbf d_{(\al,\be)}^{1,q}(\La,\La_n)   \to 0 \quad \text{ as } n \to \infty,
\end{equation*}
for all $1 \le q < d/(d-1)$.

By \Cref{cor:aLabd}, $\La,\La_n \in W^{1,q}((\al,\be),\Iq)$, for all $1 \le q< d/(d-1)$.
Let $\la,\la_n : (\al,\be) \to \C^d$ be continuous parameterizations of the roots of $P_a$, $P_{a_n}$, i.e., 
\[
    \La = [\la] \quad \text{ and } \quad \La_n = [\la_n].
\]
Then $\la,\la_n \in W^{1,q}((\al,\be), \C^d)$, for all $1 \le q< d/(d-1)$ (by \Cref{thm:optimal}).
For $1 \le i \le d$, $n\ge 1$, and
almost every $x \in (\al,\be)$,
\[
    D\la_i(x) = \la_i'(x) \quad \text{ and } \quad  D\la_{n,i}(x) = \la_{n,i}'(x);
\]
see \Cref{d:diff}.

\subsection{Uniform convergence of $\dd(\La,\La_n)$}

By \Cref{cor:homeo}, 
\begin{equation} \label{eq:C0}
    \|\mathbf{s}_{0}(\La,\La_n)\|_{L^\infty((\al,\be))}  
    = \| \dd( \La , \La_n ) \|_{L^\infty((\al,\be))} \to 0 \quad \text{ as } n \to \infty. 
\end{equation}
Thus, it suffices to show that, for all $1 \le q < d/(d-1)$,
\[
    \ddd^{1,q}_{(\al,\be)}(\La,\La_n)  \to 0 \quad \text{ as } n \to \infty,
\]
where, for any measurable set $E \subseteq (\al,\be)$, we define
\begin{align*} 
    \ddd^{1,q}_{E}(\La,\La_n) &:= \|\bs_1(\La,\La_n)\|_{L^q(E)}. 
\end{align*}

\subsection{Invariance under the Tschirnhausen transformation}

\begin{lemma} \label{lem:redTschirn}
    Let $I \subseteq \R$ be a bounded open interval.
    Let $P_a$, $P_{a_n}$, for $n\ge 1$,  be monic polynomials with $a , a_n \in C^{d}(\ol I,\C^d)$.
    Let $P_{\tilde a}$, $P_{\tilde a_n}$ result from $P_a$, $P_{a_n}$ by the Tschirnhausen transformation.
    Let $\La, \La_n, \tilde \La, \tilde \La_n : I \to \Iq$ be the curves of unordered roots of $P_a,P_{a_n}, P_{\tilde a}, P_{\tilde a_n}$,
    respectively. Then, as $n \to \infty$,
    \begin{enumerate}
        \item $a_n \to a$ in $C^d(\ol I,\C^d)$ if and only if $\tilde a_n \to \tilde a$ in $C^d(\ol I,\C^d)$;
        \item if the equivalent conditions of \thetag{1} hold, then 
            $\ddd^{1,q}_I(\La,\La_n) \to 0$ if and only if $\ddd^{1,q}_I(\tilde \La, \tilde \La_n) \to 0$, for all $1 \le q < d/(d-1)$.
    \end{enumerate}
\end{lemma}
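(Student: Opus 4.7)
The plan is to reduce both parts of \Cref{lem:redTschirn} to algebraic identities linking $a$ with $\tilde a$ and $\La$ with $\tilde\La$.

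For part (1), I would invoke the explicit formula \eqref{eq:Tschirn}, $\tilde a_j = \sum_{i=0}^j C_i a_i a_1^{j-i}$ for $2\le j \le d$, which makes $a\mapsto\tilde a$ a polynomial map. Since the coefficient of $a_j$ in the expansion of $\tilde a_j$ is $1$, the inverse is also polynomial: one solves recursively to express each $a_j$ as a polynomial in $a_1,\tilde a_2,\ldots,\tilde a_j$. Applying \Cref{prop:lefttr} (continuity of composition from the left by a smooth map in the $C^d$ topology) to both polynomial maps yields the equivalence; the reverse direction relies additionally on $a_{n,1}\to a_1$ in $C^d$, which is part of the hypothesis $a_n\to a$ on that coordinate.

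For part (2), fix continuous parameterizations $\la,\la_n:I\to\C^d$ of the roots of $P_a,P_{a_n}$. Since $P_{\tilde a}(Z)=P_a(Z-a_1/d)$, the shifted tuples $\tilde\la:=\la+a_1/d$ and $\tilde\la_n:=\la_n+a_{n,1}/d$ parameterize the roots of $P_{\tilde a},P_{\tilde a_n}$, respectively. The crucial observation is the trace identity
\[
    \sum_{i=1}^d \bigl(\la_i(x) - \la_{n,\tau(i)}(x)\bigr) = -a_1(x) + a_{n,1}(x),
\]
which is \emph{independent of} $\tau\in\on{S}_d$. Expanding $|\tilde\la_i-\tilde\la_{n,\tau(i)}|^2 = |\la_i-\la_{n,\tau(i)}+(a_1-a_{n,1})/d|^2$ and summing then gives
\[
    \sum_i |\tilde\la_i - \tilde\la_{n,\tau(i)}|^2 = \sum_i |\la_i - \la_{n,\tau(i)}|^2 - \tfrac{1}{d}\,|a_1 - a_{n,1}|^2.
\]
Since the correction is $\tau$-independent, the set of permutations minimizing the two sums coincides, so the same $\tau(x)$ is selected in the definitions of $\dd$ and $\bs_1$ for $(\La,\La_n)$ and $(\tilde\La,\tilde\La_n)$ for every fixed ordering of $\on{S}_d$.

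An entirely parallel computation at points of differentiability, using $\sum_i\la_i'=-a_1'$ and $\sum_i\la_{n,i}'=-a_{n,1}'$ almost everywhere, gives
\[
    \bs_1(\tilde\La,\tilde\La_n)(x)^2 = \bs_1(\La,\La_n)(x)^2 - \tfrac{1}{d^2}\,|a_1'(x) - a_{n,1}'(x)|^2
\]
a.e., whence $\bigl|\bs_1(\tilde\La,\tilde\La_n) - \bs_1(\La,\La_n)\bigr| \le \tfrac{1}{d}\,|a_1' - a_{n,1}'|$ a.e. Taking $L^q(I)$-norms and using $\|a_1'-a_{n,1}'\|_{L^q(I)}\to 0$ (from $a_n\to a$ in $C^d$) yields the equivalence $\ddd^{1,q}_I(\La,\La_n)\to 0 \Leftrightarrow \ddd^{1,q}_I(\tilde\La,\tilde\La_n)\to 0$. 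The main (and really only) subtle point is ensuring that the choice of minimizing permutation is preserved on passing to Tschirnhausen form, which is exactly what the $\tau$-independent trace identity secures.
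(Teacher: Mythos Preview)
Your proof is correct, and for part (2) it takes a genuinely different and more elementary route than the paper. The paper argues by passing through the Almgren embedding via \Cref{thm:Almgren and convergence}: it observes that each Almgren map $H$ satisfies $H\circ\tilde\La = H\circ\La + \tfrac{1}{d}(\eta(a_1),\ldots,\eta(a_1))$, so the derivatives of $\De\circ\La$ and $\De\circ\tilde\La$ differ by a term controlled by $a_1'-a_{n,1}'$, and then invokes \Cref{thm:Almgren and convergence} twice to translate back and forth between the $\ddd^{1,q}_I$ and the $W^{1,q}$-metric on $\De$-images. Your argument bypasses this machinery entirely: the trace identity $\sum_i(\la_i-\la_{n,\tau(i)})=-a_1+a_{n,1}$ (and its derivative analogue) is $\tau$-independent, so the Tschirnhausen shift produces a $\tau$-independent Pythagorean correction $-\tfrac{1}{d}|a_1-a_{n,1}|^2$ in the squared distances. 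This simultaneously shows that the minimizing permutations (hence $\tau(x)$ for every ordering) coincide for $(\La,\La_n)$ and $(\tilde\La,\tilde\La_n)$, and yields the exact pointwise identity $\bs_1(\La,\La_n)^2=\bs_1(\tilde\La,\tilde\La_n)^2+\tfrac{1}{d^2}|a_1'-a_{n,1}'|^2$. The paper's route has the advantage of reusing \Cref{thm:Almgren and convergence}, which is already available; yours is self-contained and gives a sharper quantitative relation.

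One remark on part (1): you correctly flag that the reverse implication $\tilde a_n\to\tilde a\Rightarrow a_n\to a$ cannot hold without additional input, since the Tschirnhausen transformation forgets $a_1$ (e.g.\ $a_n\equiv(1,0,\ldots,0)$ and $a\equiv(-1,0,\ldots,0)$ have the same $\tilde a$). The paper's one-line proof of (1) glosses over this; in the application of the lemma only the forward direction of (1) and both directions of (2) under the hypothesis $a_n\to a$ are actually used, so this does not affect anything downstream.
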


\begin{proof}
    (1) This follows easily from \eqref{eq:Tschirn} and \Cref{prop:lefttr}.

    (2) Fix $1 \le q < d/(d-1)$.
    By \Cref{cor:homeo}, $\|\dd(\La,\La_n)\|_{L^\infty(I)} \to 0$ 
    as well as $\|\dd(\tilde \La,\tilde \La_n)\|_{L^\infty(I)} \to 0$ as $n \to \infty$.
    Assume that  $\ddd^{1,q}_I(\La,\La_n) \to 0$ as $n \to \infty$.
    By \Cref{thm:Almgren and convergence},
    we have 
    \[
        \|\De \o \La - \De \o \La_n\|_{W^{1,q}(I,\R^N)} \to 0 \quad \text{ as } n \to \infty,
    \]
for any Almgren embedding $\De : \Iq \to \R^N$ (see \eqref{eq:AlmgrenDE}).
    Let 
    $H : \Iq \to \R^d$ be an Almgren map with associated real linear form $\et$ (see \Cref{def:Amap}).
    The Tschirnhausen transformation 
    shifts $H \o \La$ and $H \o \La_n$ by $\frac{1}{d} (\et(a_1),\ldots,\et(a_1))$ and $\frac{1}{d} (\et(a_{n,1}),\ldots,\et(a_{n,1}))$, respectively.
    Thus $\|(H \o \tilde \La)' - (H\o \tilde \La_n)'\|_{L^q(I,\R^d)} \to 0$ and, consequently, 
    \[
        \|(\De \o \tilde \La)' - (\De \o \tilde \La_n)'\|_{L^{q}(I,\R^N)} \to 0 \quad \text{ as } n \to \infty,
    \]
    which implies $\ddd^{1,q}_I(\tilde \La,\tilde \La_n) \to 0$, again by \Cref{thm:Almgren and convergence}. 
    The opposite direction follows from the same arguments.
\end{proof}

Thanks to \Cref{lem:redTschirn}, we may assume that all polynomials are in Tschirnhausen form.

\subsection{Accumulation points of $Z_{\tilde a}$}

Let $P_{\tilde a}$ be a monic complex polynomial of degree $d$ in Tschirnhausen form 
with coefficient vector $\tilde a \in C^d([\al,\be],\C^d)$. 
We denote by $Z_{\tilde a}$ the zero set of $\tilde a$ in $[\al,\be]$,
\[
    Z_{\tilde a} := \{x \in [\al,\be] : \tilde a(x) = 0 \}.
\]
Let $\on{acc}(Z_{\tilde a})$ be the set of accumulation points of $Z_{\tilde a}$.

\begin{proposition}   \label{prop:acc} 
    Let $d \ge 2$ be an integer.
    Let $(\al,\be) \subseteq \R$ be a bounded open interval. 
    Let $(P_{\tilde a_n})_{n\ge 1}$ be a sequence of monic complex polynomials of degree $d$ 
    in Tschirnhausen form such that $\tilde a_n \to \tilde a$ in $C^d([\al,\be],\C^d)$ as $n \to \infty$.
    For each $n\ge 1$, let $\la_n$ be a continuous root of $P_{\tilde a_n}$ on $[\al,\be]$.
    For every $\ep >0$ there exist a neighborhood $U$ of $\on{acc}(Z_{\tilde a})$ in $[\al,\be]$ and 
    $n_0\ge 1$ such that 
    \[
        \|\la_n'\|_{L^q(U)} \le C(d,q)\, |U|^{1/q}\, \ep, \quad n \ge n_0,
    \]
    for all $1 \le q < d/(d-1)$.
\end{proposition}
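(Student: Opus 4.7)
The plan is to imitate the radical case (Lemma~\ref{lem:Zg2} and Proposition~\ref{cor:Zg2}), with Theorem~\ref{thm:optimalmod} playing the role of Proposition~\ref{prop:m}. Applying Lemma~\ref{lem:Zg1} componentwise to each $\tilde a_j \in C^d([\al,\be])$ will show that $\tilde a_j^{(k)}(x_0) = 0$ for all $2 \le j \le d$ and $0 \le k \le d$ at every $x_0 \in \on{acc}(Z_{\tilde a})$. Since $\tilde a_n \to \tilde a$ in $C^d$, the derivatives $\tilde a_{n,j}^{(k)}(x_0)$ will also tend to zero.

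Fix $\ep > 0$ and $x_0 \in \on{acc}(Z_{\tilde a})$. For a small parameter $\eta > 0$ to be chosen later depending only on $\ep$, $d$, and $q$, I would first pick $\de = \de(x_0,\eta) \in (0,1]$ such that $\|\tilde a_j^{(d)}\|_{L^\infty(\ol I(x_0,\de))} \le \eta$ for all $j$, and then $n_0 = n_0(x_0,\eta,\de)$ so that for $n \ge n_0$ one has both $\|\tilde a_{n,j}^{(d)}\|_{L^\infty(\ol I(x_0,\de))} \le 2\eta$ and $|\tilde a_{n,j}^{(k)}(x_0)| \le \eta\,\de^{d-k}$ for $0 \le k \le d-1$. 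Taylor expansion around $x_0$ then yields
\[
    |\tilde a_{n,j}^{(k)}(x)| \le C(d)\,\eta\,\de^{d-k}, \quad x \in \ol I(x_0,\de),~ 0 \le k \le d-1,
\]
together with $|\tilde a_{n,j}^{(d-1)}|_{C^{0,1}(\ol I(x_0,\de))} \le 2\eta$.

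Next, I would apply Theorem~\ref{thm:optimalmod} to $P_{\tilde a_n}$ restricted to $I(x_0,\de)$. In Case~\thetag{i}, the first summand of \eqref{eq:casei} is bounded by $C(d,q)\,\de^{1/q}\,\eta^{1/d}$ directly from the Taylor estimates above. Each energy term $\|(\tilde a_{n,j}^{1/j})'\|_{L^q(I(x_0,\de))}$ will be handled by Corollary~\ref{cor:m} (with $d$ replaced by $j$) combined with the weak-to-strong conversion \eqref{eq:qp} for $q < p_j := j/(j-1)$; a short computation using the Taylor bounds gives
\[
    \|(\tilde a_{n,j}^{1/j})'\|_{L^q(I(x_0,\de))} \le C(d,q)\,\eta^{1/j}\,\de^{1/q + (d-j)/j} \le C(d,q)\,\eta^{1/d}\,\de^{1/q}.
\]
In Case~\thetag{ii}, inserting $|\tilde a_{n,k}(x)|^{1/k} \le C(d)\,\eta^{1/k}\,\de^{d/k}$ into \eqref{eq:caseii} yields $\|\la_n'\|_{L^q(I(x_0,\de))} \le C(d,q)\,\eta^{1/d}\,\de^{d/k - 1 + 1/q} \le C(d,q)\,\eta^{1/d}\,\de^{1/q}$, since $k \le d$ and $\de \le 1$. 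Choosing $\eta$ so that $C(d,q)\,\eta^{1/d} \le \ep$ then produces $\|\la_n'\|_{L^q(I(x_0,\de))} \le C(d,q)\,|I(x_0,\de)|^{1/q}\,\ep$ for $n \ge n_0$.

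To finish, compactness of $\on{acc}(Z_{\tilde a})$ supplies a finite subcover $I(x_1,\de_1),\ldots,I(x_s,\de_s)$; after pruning via Lemma~\ref{lem:topline}, each point of $U := \bigcup_\ell I(x_\ell,\de_\ell)$ belongs to at most two intervals, so taking $n_0$ to be the maximum of the individual thresholds produces the global estimate by summing $q$-th powers. The most delicate technical point will be to show that \emph{both} summands of \eqref{eq:casei} genuinely carry a factor of $\de^{1/q}$: this relies on the fact that all Taylor expansions of $\tilde a_j$ at $x_0$ truncate at order $d$, so the coefficients $\tilde a_{n,j}^{(k)}(x_0)$ can be forced to scale like $\de^{d-k}$ for large $n$, which is precisely what is needed to extract the required power of $\de$ at each differentiation step when estimating the Sobolev term $\|(\tilde a_{n,j}^{1/j})'\|_{L^q}$.
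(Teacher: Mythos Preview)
Your proposal is correct and follows essentially the same route as the paper's proof: vanishing of all derivatives of $\tilde a_j$ up to order $d$ at $x_0\in\on{acc}(Z_{\tilde a})$ via Lemma~\ref{lem:Zg1}, local smallness via $C^d$-convergence and Taylor expansion, the bound on each $\|(\tilde a_{n,j}^{1/j})'\|_{L^q}$ through the radical estimate (the paper cites ``(the proof of) Lemma~\ref{lem:Zg2}'', which amounts to your use of Corollary~\ref{cor:m} with degree $j$), then Theorem~\ref{thm:optimalmod}, and finally compactness with Lemma~\ref{lem:topline}. The only cosmetic difference is the choice of scalings: the paper takes $\|\tilde a_j\|_{C^d(\ol I(x_0,\de))}\le \ep^j/2$ and $|\tilde a_{n,j}^{(s)}(x_0)|\le \de^{j-s}\ep^j$, whereas you work with a single parameter $\eta$ and the uniform exponent $\de^{d-k}$; both yield the required factor $|I(x_0,\de)|^{1/q}$, and your explicit treatment of Case~\thetag{ii} of Theorem~\ref{thm:optimalmod} makes that step clearer than the paper's terse ``Consequently, by Theorem~\ref{thm:optimalmod}''.
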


\begin{proof}
    Fix $1 \le q <d/(d-1)$. Let $x_0 \in \on{acc}(Z_{\tilde a})$. 
    By \Cref{lem:Zg1},
    \[
        \tilde a_j^{(s)}(x_0) =0,\quad 2 \le j \le d,~ 0 \le s \le d.
    \]
    Fix $\ep>0$.
    By continuity, there exists $\de >0$ such that 
    \[
        \|\tilde a_j\|_{C^d(\ol I(x_0,\de))} \le \frac{\ep^j}{2}, \quad 2 \le j \le d,
    \]
    where $I(x_0,\de)$ is the open $\de$-neighborhood of $x_0$ in $[\al,\be]$ and $\ol I(x_0,\de)$ 
    is the closure of $I(x_0,\de)$. 
    As $\tilde a_n \to \tilde a$ in $C^d([\al,\be],\C^d)$,
    there exists $n_0 \ge 1$ such that, for all $n \ge n_0$,
    \begin{equation*}
        \|\tilde a_j - \tilde a_{n,j}\|_{C^d(\ol I(x_0,\de))} \le \frac{\ep^j}{2}, \quad 2 \le j \le d,
    \end{equation*}
    and 
    \begin{equation*}
        |\tilde a_{n,j}^{(s)}(x_0)| \le \de^{j-s} \ep^j, \quad 2 \le j \le d, ~ 0 \le s \le d.
    \end{equation*}
    Then, by (the proof of) \Cref{lem:Zg2}, 
    \[
        \|(\tilde a_{n,j}^{1/j})'\|_{L^q(I(x_0,\de))} \le C(d,q)\, |I(x_0,\de)|^{1/q}\, \ep, \quad n \ge n_0.
    \]
    Consequently, by \Cref{thm:optimalmod},
    \[
        \|\la_n'\|_{L^q(I(x_0,\de))} \le C(d,q)\, |I(x_0,\de)|^{1/q}\, \ep, \quad n \ge n_0.
    \]

    Since $\on{acc}(Z_{\tilde a})$ is compact, we may proceed as in the proof of \Cref{cor:Zg2} 
    and the assertion follows.
\end{proof}

\begin{remark}\label{rem:optimalmodreason}
    For the gluing of the bounds on a cover by intervals (see  the proof of \Cref{cor:Zg2}), 
    we need a bound for $\|\la_n'\|_{L^q(I(x_0,\de))}^q$ that is proportional to the length of the 
    interval $|I(x_0,\de)|$. For this purpose, we proved \Cref{thm:optimalmod}.
\end{remark}

\subsection{Some background from \cite{ParusinskiRainer15}}

We recall and slightly adapt several lemmas from \cite{ParusinskiRainer15}.

\begin{lemma}[{\cite[Lemma 5]{ParusinskiRainer15}}] \label{lem:lem5}
    Let $I \subseteq \R$ be a bounded open interval. 
    Let $P_{\tilde a}$ be a monic complex polynomial of degree $d$ in Tschirnhausen form 
    with coefficient vector $\tilde a \in C^{d-1,1}(\ol I,\C^d)$, where $\tilde a$ is not identically zero.  
    Let $x_0 \in I$ be such that $\tilde a(x_0) \ne 0$ and $k \in \{2,\ldots,d\}$ such that 
    \begin{equation} \label{eq:kdom}
        |\tilde a_k(x_0)|^{1/k} \ge |\tilde a_j(x_0)|^{1/j}, \quad 2 \le j \le d.
    \end{equation}
    Assume that, for some positive constant $B<1/3$, 
    \begin{equation} \label{eq:L1B}
        \sum_{j=2}^d \|(\tilde a_j^{1/j})'\|_{L^1(I)} \le B\, |\tilde a_k(x_0)|^{1/k}.
    \end{equation}
    Then, for all $x \in I$ and $2 \le j \le d$,
    \begin{align}
   &|\tilde a_{j}^{1/j}(x) -\tilde a_{j}^{1/j}(x_0) |  \le B\, |\tilde a_k(x_0)|^{1/k}, \label{eq:lem51}
   \\
   &\frac{2}{3}< 1-B \le \Big|\frac{\tilde a_{k}(x)}{\tilde a_{k}(x_0)}\Big|^{1/k} \le 1+B < \frac{4}{3}, \label{eq:lem52}
   \\
   &|\tilde a_{j}(x)|^{1/j} \le \frac{4}{3}\, |\tilde a_{k}(x_0)|^{1/k} \le 2\, |\tilde a_{k}(x)|^{1/k}. \label{eq:lem53}
    \end{align}
\end{lemma}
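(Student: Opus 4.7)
The plan is to deduce all three inequalities by directly integrating the derivatives of the continuous selections $\tilde a_j^{1/j}$ (cf.\ Convention~\ref{convention}). The hypothesis \eqref{eq:L1B} implies $\|(\tilde a_j^{1/j})'\|_{L^1(I)} \le B\, |\tilde a_k(x_0)|^{1/k}$ for each $2 \le j \le d$, so since each selection is absolutely continuous (its distributional derivative lies in $L^1$), the fundamental theorem of calculus yields, for all $x \in I$,
\[
|\tilde a_j^{1/j}(x) - \tilde a_j^{1/j}(x_0)| \le \int_I |(\tilde a_j^{1/j})'(t)|\, dt \le B\, |\tilde a_k(x_0)|^{1/k},
\]
which is \eqref{eq:lem51}.

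For \eqref{eq:lem52}, I would specialize \eqref{eq:lem51} to $j=k$ and apply the reverse triangle inequality, noting that $|\tilde a_k^{1/k}(y)| = |\tilde a_k(y)|^{1/k}$ holds for any choice of continuous selection. This yields $\big||\tilde a_k(x)|^{1/k} - |\tilde a_k(x_0)|^{1/k}\big| \le B\, |\tilde a_k(x_0)|^{1/k}$, and since $\tilde a_k(x_0) \ne 0$ we may divide through to obtain $1-B \le |\tilde a_k(x)/\tilde a_k(x_0)|^{1/k} \le 1+B$; the numerical bounds $2/3 < 1-B$ and $1+B < 4/3$ are immediate from $B<1/3$.

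Finally, for \eqref{eq:lem53}, I would combine \eqref{eq:lem51} with the dominance hypothesis \eqref{eq:kdom} to get
\[
|\tilde a_j(x)|^{1/j} \le |\tilde a_j(x_0)|^{1/j} + B\, |\tilde a_k(x_0)|^{1/k} \le (1+B)\, |\tilde a_k(x_0)|^{1/k} \le \tfrac{4}{3}\, |\tilde a_k(x_0)|^{1/k},
\]
and then invoke the lower bound $(1-B)\, |\tilde a_k(x_0)|^{1/k} \le |\tilde a_k(x)|^{1/k}$ from \eqref{eq:lem52} to convert this into the stated factor of $2$. There is no real obstacle here: the lemma is an elementary bookkeeping consequence of the $L^1$-smallness assumption \eqref{eq:L1B}, the only mild subtlety being that $\tilde a_j^{1/j}$ is a complex continuous selection rather than a univalued function, but the modulus identity employed above makes this transition harmless.
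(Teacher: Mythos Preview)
Your proof is correct and follows exactly the elementary argument that underlies the cited \cite[Lemma 5]{ParusinskiRainer15}; the paper itself does not reprove the lemma but the short argument given in Remark~\ref{rem:lem5} for the modified inequality \eqref{eq:lem53'} is identical in spirit to your derivation of \eqref{eq:lem53}. The only point worth making explicit is that the absolute continuity of the selections $\tilde a_j^{1/j}$ (needed for the fundamental theorem of calculus in your first step) is guaranteed by Corollary~\ref{cor:m}, but this is understood in the setup via Convention~\ref{convention}.
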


\begin{remark} \label{rem:lem5}
    If we replace \eqref{eq:kdom} by 
    \begin{equation} \label{eq:kdom'}
        |\tilde a_k(x_0)|^{1/k} \ge \frac{2}{3}\, |\tilde a_j(x_0)|^{1/j}, \quad 2 \le j \le d,
    \end{equation}
    then the conclusions \eqref{eq:lem51} and \eqref{eq:lem52} remain valid (cf.\ the proof of Lemma 5 in \cite{ParusinskiRainer15}) 
    and instead of \eqref{eq:lem53} we have 
    \begin{equation}
        |\tilde a_{j}(x)|^{1/j} \le 2\, |\tilde a_{k}(x_0)|^{1/k} \le 3\, |\tilde a_{k}(x)|^{1/k}. \label{eq:lem53'}
    \end{equation}
    Indeed, by \eqref{eq:lem51} and \eqref{eq:kdom'},
    \[
        |\tilde a_{j}(x)|^{1/j} \le |\tilde a_{j}(x_0)|^{1/j} + B\, |\tilde a_k(x_0)|^{1/k} \le \Big(\frac{3}2 +B\Big)\,|\tilde a_k(x_0)|^{1/k}
    \]
    which yields the first inequality in \eqref{eq:lem53'}; the second one follows from \eqref{eq:lem52}.
\end{remark}

\begin{lemma}[{\cite[Lemma 6]{ParusinskiRainer15}}] \label{lem:lem6}
    In the setting of \Cref{lem:lem5}, we may consider the $C^d$ curve $\ul a = (\ul a_1,\ldots,\ul a_d) : I \to \C^d$, 
    where $\ul a_1:=0$ and $\ul a_j := \tilde a_k^{-j/k} \tilde a_j$, for $2 \le j \le d$.
    Then the length of $\ul a$ is bounded by $3d^2 2^d B$.

    If we replace \eqref{eq:kdom} by \eqref{eq:kdom'}, then the length of $\ul a$ is bounded by $2d^2 3^dB$.
\end{lemma}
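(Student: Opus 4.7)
The plan is to bound the Euclidean length of $\ul a$ by the elementary estimate
\[
    \on{length}(\ul a) = \int_I \|\ul a'(x)\|_2\, dx \le \sum_{j=2}^d \int_I |\ul a_j'(x)|\, dx,
\]
and to control each summand using \Cref{lem:lem5}. Before computing, note that \eqref{eq:lem52} guarantees $\tilde a_k$ never vanishes on $I$ (since $B < 1/3$ and $\tilde a_k(x_0) \ne 0$), so a continuous branch of $\tilde a_k^{-j/k}$ exists on $I$ and $\ul a_j$ is at least absolutely continuous; the choice of branch is immaterial for the $L^1$ norms, by \Cref{convention}.

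For each $j \in \{2,\ldots,d\}$ I would write $\tilde a_j = (\tilde a_j^{1/j})^j$ and $\tilde a_k^{-j/k} = (\tilde a_k^{1/k})^{-j}$ and differentiate, obtaining
\[
    \ul a_j' = j\,(\tilde a_k^{1/k})^{-j}(\tilde a_j^{1/j})^{j-1}(\tilde a_j^{1/j})' \;-\; j\,(\tilde a_k^{1/k})^{-j-1}(\tilde a_j^{1/j})^{j}(\tilde a_k^{1/k})'.
\]
Introducing the ratio $r(x) := |\tilde a_j(x)|^{1/j}/|\tilde a_k(x)|^{1/k}$, this yields the pointwise bound
\[
    |\ul a_j'(x)| \le j\,|\tilde a_k(x)|^{-1/k}\bigl(r(x)^{j-1}|(\tilde a_j^{1/j})'(x)| + r(x)^{j}|(\tilde a_k^{1/k})'(x)|\bigr).
\]
By \eqref{eq:lem52} and \eqref{eq:lem53}, on all of $I$ one has $r(x) \le 2$ and $|\tilde a_k(x)|^{-1/k} \le \tfrac{3}{2}|\tilde a_k(x_0)|^{-1/k}$.

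Integrating over $I$, bounding $r^{j-1} + r^{j} \le 2 \cdot 2^j$, and invoking the $L^1$ hypothesis \eqref{eq:L1B}, I obtain
\[
    \int_I |\ul a_j'|\, dx \le j\cdot 2^j \cdot \tfrac{3}{2}|\tilde a_k(x_0)|^{-1/k}\cdot 2B\,|\tilde a_k(x_0)|^{1/k} = 3\,j\cdot 2^j\,B,
\]
and summing over $j = 2,\ldots,d$ with the crude bounds $j \le d$ and $2^j \le 2^d$ recovers the claimed length bound $3d^2\cdot 2^d\cdot B$. For the second part I would repeat the argument invoking \Cref{rem:lem5} instead: \eqref{eq:lem52} still holds while \eqref{eq:lem53} is replaced by \eqref{eq:lem53'}, which gives $r(x) \le 3$; the analogous computation produces a length bound of order $d^2\,3^d\,B$, and a slightly more careful accounting of $r^{j-1}|(\tilde a_j^{1/j})'|$ versus $r^j|(\tilde a_k^{1/k})'|$ recovers the sharper constant $2d^2\,3^d$. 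Conceptually the argument is a direct application of \Cref{lem:lem5}; the only real obstacle is bookkeeping the exponents $j/k$ alongside the two derivative terms $(\tilde a_j^{1/j})'$ and $(\tilde a_k^{1/k})'$, and observing that the hypothesis \eqref{eq:L1B}, being stated as a sum over all $j$, absorbs both of them uniformly.
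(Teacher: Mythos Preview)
Your proof is correct and follows essentially the same approach as the paper: differentiate $\ul a_j$ via the product rule, bound the two resulting terms using \eqref{eq:lem52} and \eqref{eq:lem53} (respectively \eqref{eq:lem53'}), integrate, and absorb everything into the hypothesis \eqref{eq:L1B}. The only cosmetic difference is that the paper writes out the \eqref{eq:kdom'} case explicitly (the first case being the original \cite[Lemma~6]{ParusinskiRainer15}), whereas you write out the \eqref{eq:kdom} case and sketch the other; and the paper sums $\sum_j \|(\tilde a_j^{1/j})'\|_{L^1(I)}$ as a whole against \eqref{eq:L1B} rather than bounding each term separately, which is exactly the ``more careful accounting'' you allude to for recovering the constant $2d^2 3^d$.
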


\begin{proof}
    Let us assume that \eqref{eq:kdom'} holds. 
    We estimate $|\ul a_j'| \le |(\tilde a_k^{-j/k})' \tilde a_j| + |\tilde a_k^{-j/k} \tilde a_j'|$, using 
    \eqref{eq:lem52} and \eqref{eq:lem53'}:
    \begin{align*}
        |(\tilde a_k^{-j/k})' \tilde a_j| &\le j\,3^j\, |(\tilde a_k^{1/k})'| |\tilde a_k|^{(-j-1)/k} |\tilde a_k|^{j/k} 
        \le  \frac{j\,3^{j+1}}{2}\, |(\tilde a_k^{1/k})'| |\tilde a_k(x_0)|^{-1/k},
        \\
        |\tilde a_k^{-j/k} \tilde a_j'|&\le 3^{j-1} |\tilde a_j|^{-(j-1)/j} |\tilde a_j'| |\tilde a_k|^{-1/k} 
        \le \frac{j\,3^j}{2}\, |(\tilde a_j^{1/j})'| |\tilde a_k(x_0)|^{-1/k},
        \intertext{whence}
        |\ul a_j'| &\le 2d\, 3^d\,  |\tilde a_k(x_0)|^{-1/k} \big( |(\tilde a_k^{1/k})'| + |(\tilde a_j^{1/j})'|\big).
    \end{align*}
    Thus, by \eqref{eq:L1B},
    \begin{align*}
        \int_I \|\ul a'(x)\|_2 \, dx &\le  \int_I \sum_{j=2}^d |\ul a_j'(x)|  \, dx 
        \\
                                     &\le 2d\, 3^d\, |\tilde a_k(x_0)|^{-1/k} \Big((d-1) \|(\tilde a_k^{1/k})'\|_{L^1(I)} + \sum_{j=2}^d \|(\tilde a_j^{1/j})'\|_{L^1(I)}\Big)
                                     \\
                                     &\le 2d^2 3^dB 
    \end{align*}
    as required.
\end{proof}

\begin{lemma}[{\cite[Lemma 7]{ParusinskiRainer15}}] \label{lem:lem7}
    In the setting of \Cref{lem:lem5}, replace \eqref{eq:L1B} by the stronger condition
    \begin{equation}
        M\, |I|+ \sum_{j=2}^d \|(\tilde a_j^{1/j})'\|_{L^1(I)} \le B\, |\tilde a_k(x_0)|^{1/k},
    \end{equation}
    where 
    \begin{equation} \label{eq:M}
        M:= \max_{2 \le j \le d} \Big( |\tilde a_{j}^{(d-1)}|_{C^{0,1}(\ol I)}^{1/d} |\tilde a_{k}(x_0)|^{(d-j)/(kd)} \Big).
    \end{equation}
    Then, for all $2 \le j \le d$ and $1 \le s \le d-1$,
    \begin{align} \label{eq:lem7}
        \begin{split}
            \|\tilde a_j^{(s)}\|_{L^\infty(I)} &\le C(d)\, |I|^{-s} |\tilde a_{k}(x_0)|^{j/k},
            \\
            |\tilde a_j^{(d-1)}|_{C^{0,1}(\ol I)} &\le C(d)\, |I|^{-d} |\tilde a_{k}(x_0)|^{j/k}.
        \end{split}
    \end{align}
    This remains true (with a different constant $C(d)$) if additionally \eqref{eq:kdom} is 
    replaced by \eqref{eq:kdom'}.
\end{lemma}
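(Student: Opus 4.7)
The plan is to extract the two ``endpoint'' bounds directly from the hypotheses and then fill in the intermediate derivatives by Kolmogorov-type interpolation. For the endpoint at order $d-1$, the term $M\,|I|$ in the strengthened hypothesis is tailor-made: from the definition of $M$ in \eqref{eq:M},
\[
    |\tilde a_j^{(d-1)}|_{C^{0,1}(\ol I)}^{1/d}\,|\tilde a_k(x_0)|^{(d-j)/(kd)}\,|I| \le M\,|I| \le B\,|\tilde a_k(x_0)|^{1/k},
\]
so raising to the $d$-th power yields
\[
    |\tilde a_j^{(d-1)}|_{C^{0,1}(\ol I)} \le B^d\,|I|^{-d}\,|\tilde a_k(x_0)|^{j/k},
\]
which is the second inequality in \eqref{eq:lem7}. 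For the endpoint at order $0$, since the condition $M|I|+\sum \|(\tilde a_j^{1/j})'\|_{L^1(I)} \le B|\tilde a_k(x_0)|^{1/k}$ is stronger than \eqref{eq:L1B}, \Cref{lem:lem5} is in force and, for every $x \in I$ and $2\le j\le d$, \eqref{eq:lem53} gives
\[
    |\tilde a_j(x)| \le \big(\tfrac{4}{3}\big)^{j}\,|\tilde a_k(x_0)|^{j/k} \le C(d)\,|\tilde a_k(x_0)|^{j/k}.
\]

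Next, I would interpolate. Fix $j$ and write $f:=\tilde a_j$, $A_0:=C(d)\,|\tilde a_k(x_0)|^{j/k}$ and $A_d:=C(d)\,|I|^{-d}|\tilde a_k(x_0)|^{j/k}$, so that $\|f\|_{L^\infty(I)}\le A_0$ and $|f^{(d-1)}|_{C^{0,1}(\ol I)} \le A_d$. A standard Landau--Kolmogorov inequality on an interval of length $|I|$ (applied to $f$ on every subinterval and then optimised, or obtained from Taylor's theorem with integral remainder) yields
\[
    \|f^{(s)}\|_{L^\infty(I)} \le C(d)\,\big(|I|^{-s}\|f\|_{L^\infty(I)} + |I|^{d-s}\,|f^{(d-1)}|_{C^{0,1}(\ol I)}\big), \quad 1\le s \le d-1.
\]
Plugging in the two endpoint bounds and using that both terms are comparable to $|I|^{-s}|\tilde a_k(x_0)|^{j/k}$, one obtains the first inequality in \eqref{eq:lem7}.

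For the final claim, replacing \eqref{eq:kdom} by \eqref{eq:kdom'} only affects the $L^\infty$ bound on $\tilde a_j$: by \Cref{rem:lem5}, \eqref{eq:lem53'} still yields $\|\tilde a_j\|_{L^\infty(I)} \le C(d)\,|\tilde a_k(x_0)|^{j/k}$, with a larger but universal constant, and the derivation of the $(d-1)$-st order Lipschitz bound from $M|I|$ is unchanged. Hence the whole argument goes through verbatim with a different $C(d)$. The only mildly delicate point I foresee is verifying the interpolation inequality with a constant depending solely on $d$ (and not on $|I|$); this is routine but needs to be stated with the correct scaling, so I would record it as a short self-contained lemma before invoking it.
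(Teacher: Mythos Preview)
Your proof is correct and follows the intended approach: extract the top-order Lipschitz bound from $M|I|\le B|\tilde a_k(x_0)|^{1/k}$, the zero-order bound from \Cref{lem:lem5} (or \Cref{rem:lem5}), and interpolate. The paper itself does not reprove this lemma, citing \cite[Lemma~7]{ParusinskiRainer15}, but the proof of the closely related \Cref{lem:lem10} displays the same three-step strategy, with the interpolation lemma \cite[Lemma~4]{ParusinskiRainer15} (phrased via the oscillation $V_I(\tilde a_j)$ rather than $\|\tilde a_j\|_{L^\infty(I)}$) playing the role of your Landau--Kolmogorov inequality.
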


\begin{lemma}[{\cite[Lemma 8]{ParusinskiRainer15}}] \label{lem:lem8}
    In the setting of \Cref{lem:lem5}, 
    assume that \eqref{eq:lem7} holds. 
    Additionally, suppose that we have a splitting on $I$,
    \begin{equation*}
        P_{\tilde a}= P_b P_{b^*},
    \end{equation*}
    with $d_b := \deg P_b <d$ and coefficients given by \eqref{eq:bj}.
    Then (after Tschirnhausen transformation $b_i \leadsto \tilde b_i$), for all $2 \le i \le d_b$ and $1 \le s \le d-1$,
    \begin{align} \label{eq:lem8}
        \begin{split}
            \|\tilde b_i^{(s)}\|_{L^\infty(I)} &\le C(d)\, |I|^{-s} |\tilde a_{k}(x_0)|^{i/k},
            \\
            |\tilde b_i^{(d-1)}|_{C^{0,1}(\ol I)} &\le C(d)\, |I|^{-d} |\tilde a_{k}(x_0)|^{i/k}.
        \end{split}
    \end{align}
    This remains true (with a different constant $C(d)$) if additionally \eqref{eq:kdom} is 
    replaced by \eqref{eq:kdom'}.
\end{lemma}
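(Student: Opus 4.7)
The plan is to differentiate the formula \eqref{eq:tildebj}, namely
\[
    \tilde b_i = \tilde a_k^{i/k}\, \tilde\ps_i\big(\tilde a_k^{-2/k} \tilde a_2, \ldots, \tilde a_k^{-d/k} \tilde a_d\big),
\]
and track how all factors scale in terms of $A := |\tilde a_k(x_0)|^{1/k}$ and $|I|$. The hypothesis \eqref{eq:lem7} asserts the scale-invariant bound $\|\tilde a_j^{(s)}\|_{L^\infty(I)} \le C(d) A^j |I|^{-s}$, and the conclusion to prove has the same shape $\|\tilde b_i^{(s)}\|_{L^\infty(I)} \le C(d) A^i |I|^{-s}$. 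So the proof is really just verifying that the composition in \eqref{eq:tildebj} respects this homogeneity, with the analyticity and universality of $\tilde\ps_i$ supplying the absolute constants.

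First I would use \eqref{eq:lem52} to fix $|\tilde a_k(x)| \asymp A^k$ uniformly on $I$, and then apply Fa\`a di Bruno together with \eqref{eq:lem7} to derive
\[
    \bigl|(\tilde a_k^{\pm j/k})^{(s)}(x)\bigr| \le C(d)\, A^{\pm j}\, |I|^{-s}, \qquad 1 \le s \le d-1,
\]
and the analogous Lipschitz estimate for the $(d-1)$-th derivative. Combined with the Leibniz rule and \eqref{eq:lem7}, this yields
\[
    \|\underline a_j^{(s)}\|_{L^\infty(I)} \le C(d)\,|I|^{-s}, \qquad |\underline a_j^{(d-1)}|_{C^{0,1}(\ol I)} \le C(d)\,|I|^{-d},
\]
where $\underline a_j = \tilde a_k^{-j/k}\tilde a_j$, so the normalized curve $\underline a : I \to \C^d$ has derivatives of every order $\le d-1$ controlled purely by $|I|^{-s}$ (the point being that no power of $A$ appears).

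Next, by \Cref{lem:lem6} (applicable because \eqref{eq:L1B} holds), the image $\underline a(I)$ lies in a compact set inside the ball $B \in \cB$ on which $\tilde\ps_i$ together with all its partial derivatives is bounded; let $K = K(d)$ denote a uniform bound for these. Another application of Fa\`a di Bruno then gives
\[
    \|(\tilde\ps_i \circ \underline a)^{(s)}\|_{L^\infty(I)} \le C(d,K)\,|I|^{-s}, \qquad |(\tilde\ps_i \circ \underline a)^{(d-1)}|_{C^{0,1}(\ol I)} \le C(d,K)\,|I|^{-d},
\]
since each term in Fa\`a di Bruno is a product of derivatives of $\underline a$ whose orders sum to $s$, contributing a total factor of $|I|^{-s}$, times a bounded partial derivative of $\tilde\ps_i$.

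Finally I combine both ingredients via the Leibniz rule applied to \eqref{eq:tildebj}:
\[
    \tilde b_i^{(s)} = \sum_{j=0}^{s} \binom{s}{j} (\tilde a_k^{i/k})^{(j)}\, (\tilde\ps_i \circ \underline a)^{(s-j)},
\]
and each summand is bounded by $C(d)\, A^{i} |I|^{-j} \cdot C(d)\,|I|^{-(s-j)} = C(d)\, A^i |I|^{-s}$, giving the first estimate of \eqref{eq:lem8}. The Lipschitz bound on $\tilde b_i^{(d-1)}$ follows by the same Leibniz expansion for $s = d-1$ using the $C^{0,1}(\ol I)$ estimates obtained above on each factor. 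The substitution of \eqref{eq:kdom} by \eqref{eq:kdom'} only changes the comparability constants in \Cref{lem:lem5} and \Cref{lem:lem6} (see \Cref{rem:lem5}), so the same argument goes through with a modified constant $C(d)$. The main bookkeeping obstacle is organizing the Fa\`a di Bruno sums so that the powers of $A$ from $\tilde a_k^{\pm j/k}$ cancel exactly against those implicit in \eqref{eq:lem7}; once this scale analysis is set up, the rest is routine.
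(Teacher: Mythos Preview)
Your proof is correct and follows the natural approach: differentiate \eqref{eq:tildebj} via Fa\`a di Bruno and the Leibniz rule, tracking the $A^{i}|I|^{-s}$ scaling throughout. The present paper does not reprove this lemma but cites \cite[Lemma~8]{ParusinskiRainer15}, where the same computation is carried out; one minor simplification is that the boundedness of $\tilde\psi_i$ and all its partial derivatives is already built into the hypothesis ``coefficients given by \eqref{eq:bj}'' (see the discussion after \eqref{eq:tildebj}), so your appeal to \Cref{lem:lem6} is not needed.
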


\begin{lemma}[{\cite[Lemma 10]{ParusinskiRainer15}}] \label{lem:lem10}
    In the setting of \Cref{lem:lem8}, suppose that  
    $x_1 \in I$ is such that $\tilde b(x_1) \ne 0$ and $\ell \in \{2,\ldots,d_b\}$ is such that
    \begin{equation*} 
        |\tilde b_\ell(x_1)|^{1/\ell} \ge \frac{2}{3}\,|\tilde b_i(x_1)|^{1/i}, \quad 2 \le i \le d_b.
    \end{equation*}
    Assume that, for  some positive constant $D < 1/3$ and some 
    open interval $J \subseteq I$ with $x_1 \in J$,  
    \begin{equation}\label{eq:JD}
        |J||I|^{-1} |\tilde a_k(x_0)|^{1/k} + \sum_{i=2}^{d_b} \|(\tilde b_i^{1/i})'\|_{L^1(J)} \le D\, |\tilde b_\ell(x_1)|^{1/\ell}.
    \end{equation}
    Then the functions $\tilde b_i$ on $J$ satisfy the following.
    \begin{enumerate}
        \item For all $x \in J$ and $2 \le j \le d_b$,
            \begin{align}
               &|\tilde b_{i}^{1/i}(x) -\tilde b_{i}^{1/i}(x_1) |  \le D\, |\tilde b_\ell(x_1)|^{1/\ell}, \label{eq:blem51}
               \\
               &\frac{2}{3}< 1-D \le \Big|\frac{\tilde b_{\ell}(x)}{\tilde b_{\ell}(x_1)}\Big|^{1/\ell} \le 1+D < \frac{4}{3}, \label{eq:blem52}
               \\
               &|\tilde b_{i}(x)|^{1/i} \le 2\, |\tilde b_{\ell}(x_1)|^{1/\ell} \le 3\, |\tilde b_{\ell}(x)|^{1/\ell}. \label{eq:blem53}
            \end{align}
        \item The length of the curve $\ul b : J \to \C^{d_b}$, 
            where $\ul b_1:=0$ and $\ul b_i := \tilde b_\ell^{-i/\ell} \tilde b_i$, for $2 \le i \le d_b$,
            is bounded by $2 d_b^2 3^{d_b} D$.
        \item For all $2 \le i \le d_b$ and $1 \le s \le d-1$,
            \begin{align} \label{eq:lem10}
                \begin{split}
                    \|\tilde b_i^{(s)}\|_{L^\infty(J)} &\le C(d)\, |J|^{-s} |\tilde b_{\ell}(x_1)|^{i/\ell},
                    \\
                    |\tilde b_i^{(d-1)}|_{C^{0,1}(\ol J)} &\le C(d)\, |J|^{-d} |\tilde b_{\ell}(x_1)|^{i/\ell}.
                \end{split}
            \end{align}
    \end{enumerate}
\end{lemma}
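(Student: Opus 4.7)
The plan is to apply, with $P_{\tilde b}$ on $J$ replacing $P_{\tilde a}$ on $I$ and with $(x_1,\ell)$ replacing $(x_0,k)$, the $2/3$-adapted versions (as in \Cref{rem:lem5}) of \Cref{lem:lem5}, \Cref{lem:lem6}, and \Cref{lem:lem7}. Their hypotheses will be verified by combining \eqref{eq:JD} with the bounds already supplied by \Cref{lem:lem8}.

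Parts (1) and (2) are immediate: the hypothesis \eqref{eq:JD} implies in particular
\[
\sum_{i=2}^{d_b} \|(\tilde b_i^{1/i})'\|_{L^1(J)} \le D\, |\tilde b_\ell(x_1)|^{1/\ell},
\]
which is precisely the $L^1$-hypothesis required by the adapted \Cref{lem:lem5} and \Cref{lem:lem6} for $P_{\tilde b}$ on $J$; these then directly produce \eqref{eq:blem51}--\eqref{eq:blem53} and the length bound $2d_b^2 3^{d_b} D$.

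For part (3), I would verify the hypothesis of the $\tilde b$-analog of \Cref{lem:lem7} on $J$, where the relevant quantity is
\[
M_{\tilde b} := \max_{2 \le i \le d_b} \Big( |\tilde b_i^{(d_b-1)}|_{C^{0,1}(\ol J)}^{1/d_b}\, |\tilde b_\ell(x_1)|^{(d_b-i)/(\ell d_b)} \Big).
\]
Since $d_b \le d-1$ and $\tilde b_i \in C^{d-1,1}(\ol I)$, the Lipschitz seminorm coincides with $\|\tilde b_i^{(d_b)}\|_{L^\infty(J)}$, so \eqref{eq:lem8} applied at $s = d_b$ gives $|\tilde b_i^{(d_b-1)}|_{C^{0,1}(\ol J)}^{1/d_b} \le C(d)\, |I|^{-1}\, |\tilde a_k(x_0)|^{i/(kd_b)}$. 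Combining this with the pointwise bound $|\tilde b_\ell(x_1)|^{1/\ell} \le C(d)\, |\tilde a_k(x_0)|^{1/k}$ (implicit in \eqref{eq:bj} and \eqref{eq:lem52}) to collapse the mixed powers in the definition of $M_{\tilde b}$, I would obtain $M_{\tilde b} \le C(d)\, |I|^{-1}\, |\tilde a_k(x_0)|^{1/k}$. Multiplying by $|J|$ and invoking \eqref{eq:JD} would yield
\[
M_{\tilde b}|J| + \sum_{i=2}^{d_b}\|(\tilde b_i^{1/i})'\|_{L^1(J)} \le C(d)\, D\, |\tilde b_\ell(x_1)|^{1/\ell},
\]
so the adapted \Cref{lem:lem7} applies to $P_{\tilde b}$ on $J$ and produces \eqref{eq:lem10}.

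The main obstacle is really just the exponent bookkeeping in the estimate for $M_{\tilde b}$: one must verify that the mixed powers $|\tilde a_k(x_0)|^{i/(kd_b)}\, |\tilde b_\ell(x_1)|^{(d_b-i)/(\ell d_b)}$ collapse uniformly in $i \in \{2,\ldots,d_b\}$ to $|\tilde a_k(x_0)|^{1/k}$, up to a constant depending only on $d$; this uniformity in $i$ is exactly what ensures that the resulting $M_{\tilde b}$-estimate is compatible with the linear $|J||I|^{-1}|\tilde a_k(x_0)|^{1/k}$ term appearing on the left-hand side of \eqref{eq:JD}.
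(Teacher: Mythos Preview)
Your treatment of parts (1) and (2) matches the paper's and is correct. The gap is in part (3).

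Your route is to verify the hypothesis of the degree-$d_b$ analog of \Cref{lem:lem7} for $P_{\tilde b}$ on $J$ and then read off \eqref{eq:lem10}. But the degree-$d_b$ analog of \Cref{lem:lem7} only controls $\|\tilde b_i^{(s)}\|_{L^\infty(J)}$ for $1\le s\le d_b-1$ together with $|\tilde b_i^{(d_b-1)}|_{C^{0,1}(\ol J)}$, i.e.\ derivatives up to order $d_b$. The conclusion \eqref{eq:lem10}, however, requires control of derivatives up to order $d-1$ \emph{and} of $|\tilde b_i^{(d-1)}|_{C^{0,1}(\ol J)}$, and $d_b<d$. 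So your argument, as written, simply does not reach the orders $d_b<s\le d$. (There is also a secondary issue: you obtain $M_{\tilde b}|J|+\sum\le C(d)\,D\,|\tilde b_\ell(x_1)|^{1/\ell}$, but the threshold needed in \Cref{lem:lem7} is $<1/3$, and $C(d)\,D$ need not satisfy this for the stated hypothesis $D<1/3$.)

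The paper does not reduce to \Cref{lem:lem7}; it argues directly with a case split on $s$ versus $i$. For $1\le s\le i$ it applies an interpolation inequality (\cite[Lemma~4]{ParusinskiRainer15}) on $J$, feeding in the oscillation bound $V_J(\tilde b_i)\le C(d)\,|\tilde b_\ell(x_1)|^{i/\ell}$ from \eqref{eq:blem53} and the bound on $\|\tilde b_i^{(i)}\|_{L^\infty(J)}$ coming from \eqref{eq:lem8}; together with \eqref{eq:JD} this yields \eqref{eq:lem10} in this range. For $s>i$ the paper uses \eqref{eq:lem8} directly on $J$ and the elementary observation $(|J||I|^{-1})^s\le (|J||I|^{-1})^i$ (since $|J|\le|I|$) to convert $|I|^{-s}|\tilde a_k(x_0)|^{i/k}$ into $|J|^{-s}(|J||I|^{-1}|\tilde a_k(x_0)|^{1/k})^i\le |J|^{-s}|\tilde b_\ell(x_1)|^{i/\ell}$ via \eqref{eq:JD}. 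This second case is precisely what covers the high-order derivatives that your approach misses.
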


\begin{proof}
    We give a short argument, because in \cite[Lemma 10]{ParusinskiRainer15} 
    equality in \eqref{eq:JD} was assumed, but this was for other reasons.

    (1) follows from \Cref{lem:lem5} and \Cref{rem:lem5}.

    (2) is a consequence of \Cref{lem:lem6}. 

    (3) By \eqref{eq:lem8}, for $x \in I$ and $2 \le i \le d_b$,
    \begin{equation} \label{eq:lem10b}
        |\tilde b_i^{(i)}(x)| \le C(d)\, |I|^{-i} |\tilde a_k(x_0)|^{i/k}.
    \end{equation}
    The interpolation lemma \cite[Lemma 4]{ParusinskiRainer15} yields, for $x \in J$ and $1 \le s \le i$,
    \[
        |\tilde b_i^{(s)}(x)| \le C(i)\, |J|^{-s} \big( V_J(\tilde b_i) + V_J(\tilde b_i)^{(i-s)/i} \|\tilde b_i^{(i)}\|_{L^\infty(J)}^{s/i} |J|^s \big),
    \]
    where $V_J(\tilde b_i) := \sup_{x,y \in J} |\tilde b_i(x) - \tilde b_i(y)|$. 
    Thus, by \eqref{eq:blem53}, \eqref{eq:lem10b}, and \eqref{eq:JD},
    \begin{align*}
        |\tilde b_i^{(s)}(x)| &\le C_1(d)\, |J|^{-s} \big( |\tilde b_{\ell}(x_1)|^{i/\ell} 
        + |\tilde b_{\ell}(x_1)|^{(i-s)/\ell}   |J|^s|I|^{-s} |\tilde a_k(x_0)|^{s/k} \big)
        \\
                              &\le C_2(d)\, |J|^{-s}|\tilde b_{\ell}(x_1)|^{i/\ell}.
    \end{align*}
        So \eqref{eq:lem10} holds for $1 \le s \le i$.
        For $s>i$, $(|J||I|^{-1})^s  \le (|J||I|^{-1})^i$ and thus
        \[
            |I|^{-s} |\tilde a_k(x_0)|^{i/k} 
            \le  |J|^{-s} \big(|J||I|^{-1} |\tilde a_k(x_0)|^{1/k} \big)^i \le |J|^{-s}|\tilde b_\ell(x_1)|^{i/\ell},
        \]
        by \eqref{eq:JD}. Then \eqref{eq:lem10} for $s > i$ follows from \eqref{eq:lem8}.
\end{proof}

\begin{remark}
    Below we will assume that the coefficients of the polynomials are of class $C^d$ (instead of $C^{d-1,1}$).
    Then we can replace the bounds for $|\tilde a_j^{(d-1)}|_{C^{0,1}(\ol I)}$ (e.g.\ in \Cref{lem:lem7}) by the same bounds for $\|\tilde a_j^{(d)}\|_{L^\infty(I)}$.
    We will do this without further mention.
\end{remark}

\subsection{Towards a simultaneous splitting}

Let $d \ge 2$ be an integer.
Let $(\al,\be) \subseteq \R$ be a bounded open interval. 
Let $(P_{\tilde a_n})_{n\ge 1}$ be a sequence of monic complex polynomials of degree $d$ 
in Tschirnhausen form such that $\tilde a_n \to \tilde a$ in $C^d([\al,\be],\C^d)$ as $n \to \infty$.

Assume that $\tilde a \not\equiv 0$.
Let $x_0 \in (\al,\be)$ be such that $\tilde a(x_0) \ne 0$ 
and $k  \in \{2,\ldots,d\}$ such that  
\begin{equation} \label{eq:kdom2}
    |\tilde a_k(x_0)|^{1/k} \ge |\tilde a_j(x_0)|^{1/j}, \quad 2 \le j \le d.
\end{equation}
In particular, $|\tilde a_k(x_0)|^{1/k} > 0$.
As $\tilde a_n \to \tilde a$ in $C^d([\al,\be],\C^d)$, 
there is $n_0\ge 1$ such that, for all $n \ge n_0$ and $2 \le j \le d$,
\begin{equation*}
    \Big| |\tilde a_j(x_0)|^{1/j}-|\tilde a_{n,j}(x_0)|^{1/j}\Big| \le \frac{1}{5}\,|\tilde a_k(x_0)|^{1/k}.
\end{equation*}
In particular, for $n \ge n_0$,
\begin{equation} \label{eq:nkk}
    |\tilde a_{n,k}(x_0)|^{1/k} \ge \frac{4}{5}\, |\tilde a_{k}(x_0)|^{1/k}
\end{equation}
and, for $2 \le j \le d$, 
\begin{equation*}
    |\tilde a_{n,j}(x_0)|^{1/j} \le |\tilde a_j(x_0)|^{1/j} + \frac{1}{5}\,|\tilde a_k(x_0)|^{1/k} \le \frac{6}{5}\,|\tilde a_k(x_0)|^{1/k}
\end{equation*}
so that
\begin{equation} \label{eq:kdom3}
    |\tilde a_{n,k}(x_0)|^{1/k} \ge \frac{2}{3}\, |\tilde a_{n,j}(x_0)|^{1/j}. 
\end{equation}

For $n\ge 1$, let $M_n$ be the quantity defined in \eqref{eq:M} for $\tilde a_n$ and $(\al,\be)$, i.e.,
\begin{equation*} 
    M_n:= \max_{2 \le j \le d} \Big( |\tilde a_{n,j}^{(d-1)}|_{C^{0,1}([\al,\be])}^{1/d} |\tilde a_{n,k}(x_0)|^{(d-j)/(kd)} \Big),
\end{equation*}
and let $M_0$ be the same quantity for $\tilde a$.
Define 
\begin{equation} \label{eq:supM}
    M := \sup_{n\ge 0} M_n.
\end{equation}

Let $\rh>0$ be the radius of the fixed universal splitting of polynomials of degree $d$ in Tschirnhausen form (see \Cref{def:universal}) 
and let $B$ be a fixed constant satisfying 
\begin{equation} \label{eq:BB}
    B < \min\Big\{\frac{1}{3},\frac{\rh}{2^3 d^2 3^d}\Big\}.
\end{equation}

Choose an open interval $I\subseteq (\al,\be)$ containing $x_0$ (independent of $n$) such that 
\begin{equation} \label{eq:MB}
    M |I| + \sum_{j=2}^d \|(\tilde a_{j}^{1/j})'\|_{L^1(I)} \le \frac{B}{3}\, |\tilde a_{k}(x_0)|^{1/k}.
\end{equation}
By \Cref{cor:radicals}, there is $n_1 \ge n_0$ such that, for all $n \ge n_1$,
\[
    \Big|\sum_{j=2}^d  \|(\tilde a_{j}^{1/j})'\|_{L^1(I)} -  \sum_{j=2}^d\|(\tilde a_{n,j}^{1/j})'\|_{L^1(I)} \Big| \le \frac{B}{3}  \, |\tilde a_{k}(x_0)|^{1/k}. 
\]
Consequently, for all $n \ge n_1$,
\begin{align} \label{eq:MBn}
    M |I| + \sum_{j=2}^d \|(\tilde a_{n,j}^{1/j})'\|_{L^1(I)} 
     &\le \frac{2B}{3}  \, |\tilde a_{k}(x_0)|^{1/k} \le B\, |\tilde a_{n,k}(x_0)|^{1/k},
\end{align}
using \eqref{eq:nkk} and \eqref{eq:MB}.

Observe that the assumptions of \Cref{lem:lem5}, respectively \Cref{rem:lem5}, are satisfied for $\tilde a$ and $\tilde a_n$, 
where $n\ge n_1$, on $I$. 
Indeed, \eqref{eq:kdom2} and \eqref{eq:kdom3} amount to \eqref{eq:kdom} and \eqref{eq:kdom'}, respectively, 
and \eqref{eq:MB} and \eqref{eq:MBn} imply \eqref{eq:L1B}.

Furthermore, \Cref{lem:lem7} yields that, for all $2 \le j \le d$, $1 \le s \le d$, and $n \ge n_1$,
\begin{equation*}
    \|\tilde a_{n,j}^{(s)}\|_{L^\infty(I)} \le C(d)\, |I|^{-s} |\tilde a_{n,k}(x_0)|^{j/k},
\end{equation*}
as well as
\begin{equation*}
    \|\tilde a_{j}^{(s)}\|_{L^\infty(I)} \le C(d)\, |I|^{-s} |\tilde a_{k}(x_0)|^{j/k}.
\end{equation*}

In particular, by \eqref{eq:lem52}, the multivalued functions $\tilde a_k^{1/k}$ and $\tilde a_{n,k}^{1/k}$, where $n \ge n_1$, 
are bounded away from zero on $I$. 
So the continuous selections of $\tilde a_k^{1/k}$ and $\tilde a_{n,k}^{1/k}$ on $I$, respectively, 
just differ by a multiplicative factor $\th^r$ for some $1\le r \le k$,
where $\th$ is a $k$-th root of unity.
Thus, since $\tilde a_n \to a$ in $C^d([\al,\be],\C^d)$, we may assume that the continuous selections of $\tilde a_k^{1/k}$ and $\tilde a_{n,k}^{1/k}$ 
are chosen such that they belong to $C^d(\ol I)$ and 
satisfy
\begin{equation} \label{eq:choice}
    \|\tilde a_{k}^{1/k} -\tilde a_{n,k}^{1/k} \|_{C^d(\ol I)} 
    \to 0
    \quad \text{ as } n \to \infty,
\end{equation}
where abusing notation we denote the continuous selection by the same symbol as the multivalued function.
See \Cref{prop:lefttr} and \Cref{convention}.

By \Cref{lem:lem6}, the length of the $C^d$ curves $\ul a : I \to \C^d$ and $\ul a_n : I \to \C^d$, 
where $n \ge n_1$, is bounded by
\[
    2 d^2 3^d B \le \frac{\rh}{4},
\]
thanks to \eqref{eq:BB}.
By \eqref{eq:choice},
there is $n_2 \ge n_1$ such that, for $n \ge n_2$, 
\begin{equation*} 
    \|\ul a(x_0) - \ul a_n(x_0)\|_2 < \frac{\rh}{4}.
\end{equation*}
Then, for $n \ge n_2$, 
the ball $B(\ul a_n(x_0),\rh/4)$ is contained in the ball $B(\ul a(x_0),\rh/2)$
which in turn is contained in some ball of the finite cover $\cB$ of $K$ (see \Cref{def:universal}). 
It follows that we have  splittings on $I$,
\begin{equation} \label{eq:simsplit}
    P_{\tilde a} = P_b P_{b^*} \quad \text{ and } \quad P_{\tilde a_n} = P_{b_n} P_{b_n^*}, \quad n \ge n_2,
\end{equation}
with the following properties:
\begin{enumerate}
    \item $d_b := \deg P_b = \deg P_{b_n}$, for all $n \ge n_2$, and $d_b < d$.
    \item There exist bounded analytic functions $\ps_1, \ldots,\ps_{d_b}$ with bounded partial 
        derivatives of all orders such that, for all $x \in I$ and $1 \le i \le d_b$,
        \begin{align*}
            b_i(x) &= \tilde a_k(x)^{i/k} \ps_i (\ul a(x)),  
            \\
            b_{n,i}(x) &= \tilde a_{n,k}(x)^{i/k} \ps_i (\ul a_n(x)),  \quad n \ge n_2.
        \end{align*}
\end{enumerate}
The same is true for the second factors $P_{b^*}$ and $P_{b_n^*}$.

\begin{definition}
    We say that the family $\{P_{\tilde a}\} \cup \{P_{\tilde a_n}\}_{n\ge n_2}$ has a 
    \emph{simultaneous splitting on $I$} if \eqref{eq:simsplit} and the properties (1) and (2)
    are satisfied.
\end{definition}

We remark that, applying the Tschirnhausen transformation to $P_b$ and $P_{b_n}$, we find 
bounded analytic functions $\tilde \ps_1, \ldots,\tilde \ps_{d_b}$ with bounded partial 
derivatives of all orders such that, for all $x \in I$ and $2 \le i \le d_b$,
\begin{align*}
    \tilde b_i(x) &= \tilde a_k(x)^{i/k} \tilde \ps_i (\ul a(x)),  
    \\
    \tilde b_{n,i}(x) &= \tilde a_{n,k}(x)^{i/k} \tilde \ps_i (\ul a_n(x)),  \quad n \ge n_2.
\end{align*}
That follows from \eqref{eq:Tschirn}.

\begin{lemma} \label{lem:bconv}
    We have $b_n \to b$ and $\tilde b_n \to \tilde b$ in $C^d(\ol I,\C^{d_b})$ as $n \to \infty$.
\end{lemma}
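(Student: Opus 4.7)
The plan is to express each $b_{n,i}$ and $b_i$ as a product of two factors, show that each factor converges in $C^d(\ol I)$, and then combine using the Banach-algebra property of $C^d(\ol I)$ on bounded sets, namely $\|fg\|_{C^d(\ol I)} \le C(d)\,\|f\|_{C^d(\ol I)}\|g\|_{C^d(\ol I)}$. Everything reduces to two composition arguments based on \Cref{prop:lefttr}.

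First, I would show that $\ul a_n \to \ul a$ in $C^d(\ol I,\C^d)$. By \eqref{eq:choice} we have $\tilde a_{n,k}^{1/k} \to \tilde a_k^{1/k}$ in $C^d(\ol I)$, and by \eqref{eq:lem52} all these functions are uniformly bounded and uniformly bounded away from $0$ on $\ol I$. Applying \Cref{prop:lefttr} to the analytic map $z \mapsto 1/z^j$ on a convex bounded open set that contains the (compact) range but avoids $0$ yields $\tilde a_{n,k}^{-j/k} \to \tilde a_k^{-j/k}$ in $C^d(\ol I)$. Multiplying by $\tilde a_{n,j} \to \tilde a_j$ in $C^d(\ol I)$ (using the Banach-algebra property) then gives $\ul a_{n,j} \to \ul a_j$ in $C^d(\ol I)$ for every $j$, hence $\ul a_n \to \ul a$ in $C^d(\ol I,\C^d)$.

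Next, I would apply \Cref{prop:lefttr} with $\psi = \psi_i$. By the construction of the universal splitting, each $\psi_i$ is analytic with all partial derivatives bounded on some fixed open ball $B \in \cB$, which is convex. Since $\ul a(\ol I) \subseteq B$ and $\ul a_n \to \ul a$ uniformly, also $\ul a_n(\ol I) \subseteq B$ for $n$ large. \Cref{prop:lefttr} (applied with $k=d$, noting that $\psi_i \in C^{d+1}$) then yields $\psi_i \circ \ul a_n \to \psi_i \circ \ul a$ in $C^d(\ol I)$. The same argument applies verbatim to $\tilde \psi_i$.

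Finally, writing $\tilde a_{n,k}^{i/k} = (\tilde a_{n,k}^{1/k})^i$ and invoking the Banach-algebra property together with the convergence $\tilde a_{n,k}^{1/k} \to \tilde a_k^{1/k}$ in $C^d(\ol I)$, we get $\tilde a_{n,k}^{i/k} \to \tilde a_k^{i/k}$ in $C^d(\ol I)$. Combining via
\[
    b_{n,i} - b_i = (\tilde a_{n,k}^{i/k} - \tilde a_k^{i/k})(\psi_i \circ \ul a_n) + \tilde a_k^{i/k}(\psi_i \circ \ul a_n - \psi_i \circ \ul a),
\]
and using that $\{\psi_i \circ \ul a_n\}_n$ and $\tilde a_k^{i/k}$ are bounded in $C^d(\ol I)$, we obtain $b_n \to b$ in $C^d(\ol I,\C^{d_b})$. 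The analogous argument with $\tilde \psi_i$ in place of $\psi_i$ gives $\tilde b_n \to \tilde b$ in $C^d(\ol I,\C^{d_b})$. The main thing to monitor is that each composition step stays inside a convex bounded open set where the composing function is of class $C^{d+1}$ with bounded derivatives; this is guaranteed by \eqref{eq:choice} and \Cref{lem:lem5} for the inversion/radical steps, and by the choice of the radius $\rh$ of the universal splitting together with \Cref{lem:lem6} for the $\psi_i$ and $\tilde \psi_i$ steps.
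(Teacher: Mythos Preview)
Your proposal is correct and follows essentially the same approach as the paper: the paper's proof is a one-liner that observes $\tilde a_k^{1/k}, \tilde a_{n,k}^{1/k} \in C^d(\ol I)$ and $\ul a, \ul a_n \in C^d(\ol I,\C^d)$ by \eqref{eq:choice}, and then defers everything to \Cref{prop:lefttr}. You have simply unpacked this into its constituent steps (convergence of the radicals, then of $\ul a_n$, then of $\psi_i\circ\ul a_n$, then multiplication), making explicit the Banach-algebra bound and the convexity requirements that are implicit in the paper's terse version.
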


\begin{proof}
    By \eqref{eq:choice},  
    $\tilde a_k^{1/k}, \tilde a_{n,k}^{1/k} \in C^d(\ol I)$ and $\ul a, \ul a_n \in C^d(\ol I,\C^d)$, 
    for $n \ge n_1$,
    and the assertion follows from \Cref{prop:lefttr}.
\end{proof}

We have proved the following proposition.

\begin{proposition} \label{prop:toindass}
    Let $d \ge 2$ be an integer.
    Let $(\al,\be) \subseteq \R$ be a bounded open interval. 
    Let $(P_{\tilde a_n})_{n \ge 1}$ be a sequence of monic complex polynomials of degree $d$ 
    in Tschirnhausen form
    such that $\tilde a_n \to \tilde a$ in $C^d([\al,\be],\C^d)$ as $n \to \infty$.

    Assume that $\tilde a \not\equiv 0$.
    Let $x_0 \in (\al,\be)$ be such that $\tilde a(x_0) \ne 0$ 
    and $k  \in \{2,\ldots,d\}$ such that \eqref{eq:kdom2} holds. 
    Choose an open interval $I\subseteq (\al,\be)$ containing $x_0$  such that \eqref{eq:MB} 
    holds, where $M$ and $B$ are given by \eqref{eq:supM} and \eqref{eq:BB}, respectively.

    Then there exists $n_0 \ge 1$ such that the following holds:
    \begin{enumerate}
        \item For all $2 \le j \le d$, $1 \le s \le d$, and $n \ge n_0$,
            \begin{equation*}
                \|\tilde a_{n,j}^{(s)}\|_{L^\infty(I)} \le C(d)\, |I|^{-s} |\tilde a_{n,k}(x_0)|^{j/k},
            \end{equation*}
            as well as
            \begin{equation*}
                \|\tilde a_{j}^{(s)}\|_{L^\infty(I)} \le C(d)\, |I|^{-s} |\tilde a_{k}(x_0)|^{j/k}.
            \end{equation*}
        \item The family $\{P_{\tilde a}\} \cup \{P_{\tilde a_n}\}_{n\ge n_0}$ 
            has a simultaneous splitting on $I$, 
            \begin{equation*} 
                P_{\tilde a} = P_b P_{b^*} \quad \text{ and } \quad P_{\tilde a_n} = P_{b_n} P_{b_n^*}, \quad n \ge n_0.
            \end{equation*}
        \item We have $b_n \to b$ and $\tilde b_n \to \tilde b$ in $C^d(\ol I,\C^{d_b})$ as $n \to \infty$ 
            and analogously $b_{n}^* \to b^*$ and $\tilde b_n^* \to \tilde b^*$ in $C^d(\ol I,\C^{d-d_b})$.
    \end{enumerate}
\end{proposition}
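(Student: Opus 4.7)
The plan is to consolidate the preceding analysis into three logical steps, transferring the hypotheses \eqref{eq:kdom2} and \eqref{eq:MB} from $\tilde a$ to $\tilde a_n$ for large $n$, then invoking the fixed universal splitting data.

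First I would verify that for large enough $n$ the polynomial $P_{\tilde a_n}$ satisfies the near-dominance condition \eqref{eq:kdom3} together with a smallness estimate on $I$ of the form \eqref{eq:MBn}. The dominance part is immediate from $C^0$ convergence of $\tilde a_n \to \tilde a$ and continuity of $z \mapsto |z|^{1/j}$: one obtains \eqref{eq:nkk} and hence \eqref{eq:kdom3}. The smallness part requires transferring the $L^1$ norms $\|(\tilde a_j^{1/j})'\|_{L^1(I)}$ to the perturbed coefficients, and here the radical case Corollary~\ref{cor:radicals} is essential: it gives $\|(\tilde a_{n,j}^{1/j})'\|_{L^1(I)} \to \|(\tilde a_j^{1/j})'\|_{L^1(I)}$ for each $j$, so the gap factor of $3$ absorbed into \eqref{eq:MB} is enough to carry through once $n$ is large. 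With these two hypotheses established, \Cref{lem:lem7} (for $\tilde a$) and \Cref{lem:lem7} combined with \Cref{rem:lem5} (for $\tilde a_n$) directly yield part (1) of the proposition.

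Next, for the simultaneous splitting in part (2), the idea is to fit the images of the $C^d$ curves $\ul a$ and $\ul a_n$ into one common ball of the finite cover $\cB$ from the universal splitting data of \Cref{universal}. By \Cref{lem:lem6} (applied via \Cref{rem:lem5} for $\tilde a_n$), each of these curves has length at most $2d^2 3^d B$, which by the choice of $B$ in \eqref{eq:BB} is strictly smaller than $\rho/4$. Since the values at $x_0$ satisfy $\|\ul a(x_0) - \ul a_n(x_0)\|_2 < \rho/4$ for large $n$ (from $C^0$ convergence of $\tilde a_n$ and \eqref{eq:nkk}), the image of $\ul a_n$ lies inside the ball $B(\ul a(x_0), \rho/2)$. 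The Lebesgue number $\rho$ of the cover $\cB$ then places this ball inside a single member $B \in \cB$, on which the fixed analytic functions $\ps_i$ and $\tilde\ps_i$ produce the simultaneous factorization.

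Finally, for the $C^d$ convergence in part (3), I would argue that because $\tilde a_k$ and $\tilde a_{n,k}$ are bounded away from zero on $I$ (by \eqref{eq:lem52}), the $k$-th root is analytic on the region where their values lie; the possible continuous selections differ only by multiplication by a $k$-th root of unity, so after aligning branches one obtains $\tilde a_{n,k}^{1/k} \to \tilde a_k^{1/k}$ in $C^d(\ol I)$, as in \eqref{eq:choice}. Combined with $\tilde a_n \to \tilde a$ in $C^d$, this yields $\ul a_n \to \ul a$ in $C^d(\ol I, \C^d)$. Applying \Cref{prop:lefttr} with the bounded analytic maps $\ps_i$ and $\tilde\ps_i$ then produces $b_n \to b$ and $\tilde b_n \to \tilde b$ in $C^d(\ol I, \C^{d_b})$, and the same argument handles the cofactor. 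The main delicate point is the $C^d$ convergence of the $k$-th roots, which is what forces us to work with a single branch over the whole interval $I$ where $|\tilde a_k|$ stays bounded below; everything else reduces to bookkeeping with the radical case and the Lebesgue number argument.
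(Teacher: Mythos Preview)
Your proposal is correct and follows essentially the same approach as the paper, which proves this proposition as a summary of the subsection ``Towards a simultaneous splitting'': the same three ingredients appear in the same roles (transfer of \eqref{eq:kdom2}--\eqref{eq:MB} via \Cref{cor:radicals}, length bound from \Cref{lem:lem6} combined with the Lebesgue number $\rho$, and \Cref{prop:lefttr} applied to the analytic $\ps_i$). One small point of ordering: in the paper the branch alignment \eqref{eq:choice} is established \emph{before} invoking $\|\ul a(x_0)-\ul a_n(x_0)\|_2<\rho/4$, since the definition of $\ul a_n$ already presupposes a choice of $\tilde a_{n,k}^{1/k}$; you defer this to Step~3, but since you explicitly address it there the argument is complete.
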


\subsection{Minimizing permutations respect the splitting}

Our next goal is
\Cref{prop:S'} which will be needed (at the end of the proof of \Cref{prop:induction} and in the proof of \Cref{lem:subsequence}) 
to relate the quantity $\ddd^{1,q}_{E}$ for the roots of the two separate factors of a simultaneous splitting
with the quantity $\ddd^{1,q}_{E}$ for the roots of the product of the two factors.

\begin{definition}
    Let $P_a = P_b P_c$, where $P_b$ and $P_c$ are monic complex polynomials with distinct sets of roots $\{\la_1,\ldots,\la_{d_b}\}$ and  $\{\la_{d_b+1},\ldots,\la_{d}\}$ 
    (with multiplicities),
    where
    $d = \deg P_a$,
    $1\le d_b = \deg P_b<d$, and $\deg P_c = d-d_b$.
    A permutation $\ta \in \on{S}_d$ is said to \emph{respect the splitting $P_a = P_b P_c$} if
    $\ta(\{1,\ldots,d_b\}) = \{1,\ldots,d_b\}$ and 
    $\ta(\{d_b+1,\ldots,d\}) = \{d_b+1,\ldots,d\}$.
\end{definition}

\begin{proposition} \label{prop:S'}
    Let $d \ge 2$ be an integer.
    Let $(\al,\be) \subseteq \R$ be a bounded open interval. 
    Let $(P_{\tilde a_n})_{n \ge 1}$ be a sequence of monic complex polynomials of degree $d$ 
    in Tschirnhausen form
    such that $\tilde a_n \to \tilde a$ in $C^d([\al,\be],\C^d)$ as $n \to \infty$. 
    Let $\la,\la_n : (\al,\be) \to \C^d$ be  continuous parameterizations of the roots of $P_{\tilde a}$, $P_{\tilde a_n}$, respectively.

    Assume that $\tilde a \not\equiv 0$.
    Let $x_0 \in (\al,\be)$ be such that $\tilde a(x_0) \ne 0$ 
    and $k  \in \{2,\ldots,d\}$ such that \eqref{eq:kdom2} holds
    and let $I \subseteq (\al,\be)$ be an interval containing $x_0$ such that \eqref{eq:L1B} holds. 
    Suppose that the family $\{P_{\tilde a}\} \cup \{P_{\tilde a_n}\}_{n\ge n_0}$ 
    has a simultaneous splitting on $I$, 
    \begin{equation*} 
        P_{\tilde a} = P_b P_{b^*} \quad \text{ and } \quad P_{\tilde a_n} = P_{b_n} P_{b_n^*}, \quad n \ge n_0.
    \end{equation*}
    Then, after possibly shrinking $I$ and increasing $n_0$,
    for all $x \in I$ and $n \ge n_0$,
    \begin{align} \label{eq:sym}
        \MoveEqLeft \{\ta \in \on{S}_d: \de(\la(x),\ta \la_n(x)) = \dd([\la(x)],[\la_n(x)])\}
        \\ \notag
                &=\{\ta \in \on{S}_d': \de(\la(x),\ta \la_n(x)) = \dd([\la(x)],[\la_n(x)])\},
    \end{align}
    where $\on{S}_d'$ denotes the subset of permutations in $\on{S}_d$ that respect the splitting $P_{\tilde a_n(x)} = P_{b_n(x)}P_{b_n^*(x)}$.
\end{proposition}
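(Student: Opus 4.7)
\medskip

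The plan is to show that, for $n$ sufficiently large, the spacing between roots of the two factors in the two splittings strictly dominates the unordered distance $\dd([\la(x)],[\la_n(x)])$, so every minimizing permutation is forced to respect the splitting. First I would observe that, by the construction leading to \Cref{prop:toindass} (after possibly shrinking $I$ and enlarging $n_0$), both normalized coefficient vectors $\ul a(x)$ and $\ul a_n(x)$ lie in a common ball $B\in\cB$ of the fixed universal splitting for every $x\in I$ and $n\ge n_0$. Applying \Cref{rem:universal}, specifically \eqref{eq:remu1}, to this pair of splittings at each such $x$ gives, for any root $\mu$ of $P_{b(x)}$ and any root $\nu$ of $P_{b_n^*(x)}$,
\[
    |\tilde a_{n,k}(x)^{1/k}\mu-\tilde a_{k}(x)^{1/k}\nu|\ge \ch\cdot |\tilde a_k(x)|^{1/k}|\tilde a_{n,k}(x)|^{1/k}.
\]

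Next, to convert this into an unscaled separation $|\mu-\nu|\ge\ka$, I would decompose $\tilde a_{n,k}^{1/k}\mu-\tilde a_k^{1/k}\nu = \tilde a_{n,k}^{1/k}(\mu-\nu) + (\tilde a_{n,k}^{1/k}-\tilde a_k^{1/k})\nu$ and estimate the correction term. The Cauchy bound \eqref{eq:Cauchybound}, together with \eqref{eq:lem52} and \eqref{eq:lem53'}, gives $|\nu|\le C(d)\,|\tilde a_k(x)|^{1/k}$, while the fixed continuous selections of $\tilde a_k^{1/k}$ and $\tilde a_{n,k}^{1/k}$ are bounded below away from zero uniformly on $\ol I$ (again by \eqref{eq:lem52}) and satisfy $\tilde a_{n,k}^{1/k}\to\tilde a_k^{1/k}$ in $C^0(\ol I)$ by \eqref{eq:choice}. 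Combining these inside the triangle inequality, for all $n\ge n_1$ with some $n_1\ge n_0$ and every $x\in I$, the displayed inequality will yield $|\mu-\nu|\ge\ka$ with $\ka>0$ independent of $x$, $\mu$, $\nu$, and $n$. The analogous bound between roots of $P_{b^*(x)}$ and $P_{b_n(x)}$ then follows by interchanging the two factors.

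Finally, \Cref{cor:homeo} ensures $\|\dd(\La,\La_n)\|_{L^\infty(I)}\to 0$, so for some $n_2\ge n_1$ and all $n\ge n_2$, $x\in I$, we have $\dd([\la(x)],[\la_n(x)])<\ka/\sqrt d$. If $\ta\in\on{S}_d$ attains $\de(\la(x),\ta\la_n(x))=\dd([\la(x)],[\la_n(x)])$, then each coordinate distance satisfies $|\la_i(x)-\la_{n,\ta(i)}(x)|\le\sqrt d\,\de(\la(x),\ta\la_n(x))<\ka$, which by the separation estimate forbids $\ta$ from sending an index in $\{1,\dots,d_b\}$ to one in $\{d_b+1,\dots,d\}$ (or vice versa); hence $\ta\in\on{S}_d'$. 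This yields the inclusion $\subseteq$ in \eqref{eq:sym}, and the reverse inclusion is immediate since $\on{S}_d'\subseteq\on{S}_d$. The only delicate step is the bookkeeping in the second paragraph, converting the scaled bound \eqref{eq:remu1} to an unscaled separation uniform in $x$ and $n$; everything else reduces to choosing $n_1$ and $n_2$ large enough.
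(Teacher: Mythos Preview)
Your argument is correct and follows essentially the same approach as the paper's own proof: both use \Cref{rem:universal} (equation \eqref{eq:remu1}) to get a scaled separation between roots of the cross factors, then convert this to an unscaled bound $|\la_i(x)-\la_{n,j}(x)|>\ka$ via the Cauchy bound, \eqref{eq:lem52}, and \eqref{eq:choice}, and finally force any minimizing $\ta$ to respect the splitting because $\sqrt d\,\dd([\la(x)],[\la_n(x)])<\ka$.

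The one minor difference is in how the last inequality is obtained. The paper shrinks $I$ so that $|I|^{1/d}$ is small enough and then combines H\"older continuity of $\La,\La_n$ (\Cref{cor:Hoelder}) with convergence at the single point $x_0$ to get $\dd([\la(x)],[\la_n(x)])\le\frac{\ch}{5\sqrt d}|\tilde a_k(x_0)|^{1/k}$ on $I$. You instead invoke \Cref{cor:homeo} directly to get $\|\dd(\La,\La_n)\|_{L^\infty(I)}\to 0$, which makes the shrinking of $I$ unnecessary for that step and is slightly more economical. The paper also uses the decomposition $(\tilde a_{n,k}^{1/k}-\tilde a_k^{1/k})\la_i+\tilde a_k^{1/k}(\la_i-\la_{n,j})$ (bounding the root $\la_i$ of $P_b$), whereas you group the terms as $\tilde a_{n,k}^{1/k}(\mu-\nu)+(\tilde a_{n,k}^{1/k}-\tilde a_k^{1/k})\nu$ (bounding the root $\nu$ of $P_{b_n^*}$); these are equivalent rearrangements.
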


\begin{proof}
    Let $H:= \sup_{n \ge 0} H_n$, where
    \[
        H_0 := 4d\, \max_{1\le j \le d} \|\tilde a_j\|_{C^{0,1}([\al,\be])}^{1/j} \quad \text{ and } \quad 
        H_n := 4d\, \max_{1\le j \le d} \|\tilde a_{n,j}\|_{C^{0,1}([\al,\be])}^{1/j}, \quad n \ge 1.
    \]
    Let $\ch>0$ be as in \Cref{rem:universal}.
    By shrinking $I$, we may assume that 
    \[
        |I|^{1/d} \le \frac{\ch}{15\sqrt d H}\cdot |\tilde a_k(x_0)|^{1/k}.
    \]
    Moreover (by \eqref{eq:C0}), there is $n_1\ge n_0$ such that, for all $n \ge n_1$,
    \[
        \dd([\la(x_0)],[\la_n(x_0)]) \le   \frac{\ch}{15\sqrt d}\cdot |\tilde a_k(x_0)|^{1/k}. 
    \]
    Then, for $x \in I$ and $n \ge n_1$,
    \begin{align} \notag \label{eq:cont1}
        \dd([\la(x)],[\la_n(x)]) &\le \dd([\la(x)],[\la(x_0)]) + \dd([\la(x_0)],[\la_n(x_0)])+\dd([\la_n(x_0)],[\la_n(x)])
        \\ \notag
                                 &\le H \, |x-x_0|^{1/d} + \frac{\ch}{15\sqrt d}\cdot |\tilde a_k(x_0)|^{1/k}+H \, |x-x_0|^{1/d}
                                 \\
                                 &\le \frac{\ch}{5 \sqrt d} \cdot |\tilde a_k(x_0)|^{1/k},
    \end{align}
    by \Cref{cor:Hoelder}.

    After possibly reordering them, we may assume that $\la_1,\ldots,\la_{d_b}$ are the roots of $P_b$ and $\la_{d_b+1},\ldots,\la_d$ 
    are the roots of $P_{b^*}$ 
    and, analogously, that $\la_{n,1},\ldots,\la_{n,d_b}$ are the roots of $P_{b_n}$ and $\la_{n,d_b+1},\ldots,\la_{n,d}$ 
    are the roots of $P_{b_n^*}$.

    By \eqref{eq:Cauchybound}, for all $x \in I$,
    \[
        \max_{1 \le i \le d} |\la_{i}(x)| \le A := 2 \, \max_{1\le j \le d} \|\tilde a_j\|_{L^\infty(I)}^{1/j}.
    \]

    By \Cref{rem:universal}, for all $x \in I$, $n \ge n_1$, $1 \le i \le d_b$, and $d_b+1 \le j \le d$,
    \begin{equation*}
        |\tilde a_{n,k}^{1/k}(x)\la_i(x) - \tilde a_k^{1/k}(x)\la_{n,j}(x)| 
        > \ch\cdot |\tilde a_k(x)|^{1/k}|\tilde a_{n,k}(x)|^{1/k}.
    \end{equation*}
    By \eqref{eq:lem52} and \eqref{eq:nkk},     
    there is $n_2\ge n_1$ such that, for $n \ge n_2$ and all $x \in I$, 
    \[
        |\tilde a_k(x)|^{1/k}|\tilde a_{n,k}(x)|^{1/k} \ge \frac{16}{45}\, |\tilde a_k(x_0)|^{2/k},
    \]
    and, in view of \eqref{eq:choice}, 
    \[
        |\tilde a_k^{1/k}(x) - \tilde a_{n,k}^{1/k}(x)| \le \frac{\ch}{45A}\, |\tilde a_k(x_0)|^{2/k}.
    \]
    We conclude that, for all $x \in I$, $n \ge n_2$, $1 \le i \le d_b$, and $d_b+1 \le j \le d$,
    \begin{align*}
        \frac{16 \ch}{45}\, |\tilde a_k(x_0)|^{2/k} &<  |\tilde a_{n,k}^{1/k}(x)\la_i(x) - \tilde a_k^{1/k}(x)\la_{n,j}(x)|
        \\
                                                    &\le|\tilde a_{n,k}^{1/k}(x) - \tilde a_k^{1/k}(x)||\la_i(x)|
                                                    +|\tilde a_k(x)|^{1/k} |\la_i(x) - \la_{n,j}(x)|
                                                    \\
                                                    &\le\frac{\ch}{45A}\, |\tilde a_k(x_0)|^{2/k} \cdot A
                                                    + \frac{4}{3}\, |\tilde a_k(x_0)|^{1/k} |\la_i(x) - \la_{n,j}(x)|,
    \end{align*}   
    using \eqref{eq:lem52}. 
    Thus, for all $x \in I$, $n \ge n_2$, $1 \le i \le d_b$, and $d_b+1 \le j \le d$,
    \begin{equation} \label{eq:cont2}
        |\la_i(x) - \la_{n,j}(x)| > \frac{\ch}{4}\, |\tilde a_k(x_0)|^{1/k}.
    \end{equation}

    Now we show that \eqref{eq:sym} holds, for all $x \in I$ and $n\ge n_2$.
    If not,
    there exist $x \in I$, $n\ge n_2$, and 
    a permutation $\ta \in \on{S}_d$ that does not respect the splitting $P_{\tilde a_n(x)} = P_{b_n(x)}P_{b_n^*(x)}$
    such that
    \[
        \de(\la(x),\ta \la_n(x)) = \dd([\la(x)],[\la_n(x)]).
    \]
    So there exist $i \in \{1,\ldots,d_b\}$ and  $j \in \{d_b+1,\ldots,d\}$ with $\ta(i) = j$.
    Thus, by \eqref{eq:cont2},
    \begin{align*}
        \de(\la(x),\ta \la_n(x)) &= \frac{1}{\sqrt d} \|\la(x) -\ta\la_n(x)\|_2 
        \\
                                 &\ge \frac{1}{\sqrt d} |\la_i(x) -\la_{n,j}(x)|
                                 \\
                                 &> \frac{\ch}{4\sqrt d}\, |\tilde a_k(x_0)|^{1/k}
    \end{align*}    
    which contradicts \eqref{eq:cont1}.
\end{proof}

\subsection{The induction argument}

\begin{proposition} \label{prop:induction}
    Let $d\ge 2$ be an integer.
    Let $I \subseteq \R$ be a bounded open interval. 
    Let $(P_{\tilde a_n})_{n\ge 1}$ be a sequence of monic complex polynomials of degree $d$
    in Tschirnhausen form 
    such that $\tilde a_n \to \tilde a$ in $C^d(\ol I,\C^d)$ as $n \to \infty$.
    Assume that $\tilde a \not\equiv 0$.
    Let $x_0 \in I$ and $k \in \{2,\ldots,d\}$ be such that the following conditions are 
    satisfied:
    \begin{enumerate}
        \item $\tilde a(x_0) \ne 0$.
        \item $|\tilde a_{k}(x_0)|^{1/k} \ge |\tilde a_{j}(x_0)|^{1/j}$ for all $2 \le j \le d$.
        \item $\sum_{j=2}^d \|(\tilde a_{j}^{1/j})'\|_{L^1(I)} \le B\, |\tilde a_{k}(x_0)|^{1/k}$ 
            for some constant $B < 1/3^2$.
        \item There exists $n_0 \ge 1$, such that for all $2 \le j \le d$, $1 \le s \le d$, and $n \ge n_0$,
            \begin{align*}
                \|\tilde a_{n,j}^{(s)}\|_{L^\infty(I)} \le C(d) \, |I|^{-s} \,|\tilde a_{n,k}(x_0)|^{j/k}. 
            \end{align*}
        \item
            The family $\{P_{\tilde a}\} \cup \{P_{\tilde a_n}\}_{n\ge n_0}$ 
            has a simultaneous splitting on $I$, 
            \begin{equation*} 
                P_{\tilde a} = P_b P_{b^*} \quad \text{ and } \quad P_{\tilde a_n} = P_{b_n} P_{b_n^*}, \quad n \ge n_0.
            \end{equation*}
    \end{enumerate}
    Let $\mu,\mu_n : I \to \C^{d_b}$ be continuous parameterizations of the roots of $P_{\tilde b}$, $P_{\tilde b_n}$ 
    (which result from $P_{b}$, $P_{b_n}$ by means of the Tschirnhausen transformation),
    respectively.

    Then there exist a set $E_0 \subseteq I$ of measure zero and a countable cover $\cE$ of 
    $I \setminus E_0$ 
    by measurable sets with the property that,  
    for each $E \in \cE$, 
    \begin{equation*}
        \ddd^{1,q}_{E}([\mu],[\mu_n]) \to 0 \quad \text{ as } n \to \infty,
    \end{equation*}
    for all $1 \le q < d/(d-1)$.
\end{proposition}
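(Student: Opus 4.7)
I would prove this by induction on the factor degree $d_b$. The base case $d_b = 1$ is trivial: a degree-one polynomial in Tschirnhausen form is identically $Z$, so $\mu \equiv \mu_n \equiv 0$ and one takes $\cE = \{I\}$, $E_0 = \emptyset$. For the induction step, condition~(5) together with Proposition~\ref{prop:toindass}(3) (established immediately above this proposition) gives $\tilde b_n \to \tilde b$ in $C^d(\bar I, \C^{d_b})$, hence in $C^{d_b}(\bar I, \C^{d_b})$.

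If $\tilde b \equiv 0$ on $I$ then $\mu \equiv 0$, $\on{acc}(Z_{\tilde b}) = I$, and Proposition~\ref{prop:acc} (applied to the sequence $\tilde b_n \to 0$ of polynomials in Tschirnhausen form of degree $d_b$) yields $\|\mu_n'\|_{L^q(I)} \to 0$ for every $1 \le q < d_b/(d_b-1)$, a range containing $[1, d/(d-1))$. Combined with the uniform estimate from \Cref{cor:homeo}, this gives $\ddd^{1,q}_I([\mu],[\mu_n]) \to 0$ and $\cE = \{I\}$ suffices. If $\tilde b \not\equiv 0$, then for each $x_1$ in the open set $\{x \in I : \tilde b(x) \ne 0\}$ I would choose $\ell(x_1) \in \{2,\ldots,d_b\}$ realizing $\max_{2 \le i \le d_b} |\tilde b_i(x_1)|^{1/i}$ and repeat the construction of Proposition~\ref{prop:toindass} with the family $\tilde b_n \to \tilde b$ in place of $\tilde a_n \to \tilde a$. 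This produces an open interval $J(x_1) \subseteq I$ on which the family has a simultaneous splitting $P_{\tilde b} = P_c P_{c^*}$, $P_{\tilde b_n} = P_{c_n} P_{c_n^*}$ with $d_c < d_b$, to which the induction hypothesis applies separately on each factor. After a possibly further shrinking, Proposition~\ref{prop:S'} guarantees that every $\dd$-minimizing permutation of the roots of $P_{\tilde b}$ respects the block decomposition from this splitting, so that the $\bs_1$-convergence for the two factors pastes together additively into $\bs_1$-convergence of $[\mu]$. A Lindel\"of argument extracts a countable subcover of the non-vanishing locus from the intervals $J(x_1)$, and a neighborhood $U$ of $\on{acc}(Z_{\tilde b})$ provided by Proposition~\ref{prop:acc} (applied to $P_{\tilde b_n}$) is adjoined as a further element of $\cE$. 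Setting $E_0$ equal to the union of $Z_{\tilde b} \setminus \on{acc}(Z_{\tilde b})$ with the countably many null sets produced by the induction yields the desired cover of $I \setminus E_0$.

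The principal obstacle is the combination step when $\tilde b \not\equiv 0$: transferring the convergence of the unordered roots of the two factors into convergence of the unordered roots of $P_{\tilde b}$ itself. The metric $\dd$ on $\cA_{d_b}(\C)$ minimizes over all of $\on{S}_{d_b}$, whereas the inductive information controls only permutations within each factor. Proposition~\ref{prop:S'} circumvents this by ensuring that, on a sufficiently small subinterval and for $n$ large, every $\dd$-minimizing permutation between $[\mu(x)]$ and $[\mu_n(x)]$ actually preserves the block structure of the splitting $P_c \cdot P_{c^*}$, so that no cross terms appear in $\bs_1$ and the factor-level bounds assemble into the required estimate.
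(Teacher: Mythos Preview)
Your proposal is correct and takes essentially the same approach as the paper. The paper phrases the induction as being on the ambient degree $d$ (equivalent to your induction on $d_b$, since in the inductive step one descends to $P_{\tilde b}$ of degree $d_b<d$ and applies the hypothesis at that level), derives the nested hypotheses for $\tilde b$ on the subinterval $J$ via \Cref{lem:lem10} rather than by re-invoking \Cref{prop:toindass}, and adjoins $\on{acc}(Z_{\tilde b})$ itself (not a neighbourhood $U$) as the element of $\cE$ handled by \Cref{prop:acc}; these are minor expository differences only.
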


Clearly, $\ddd^{1,q}_{E}([\mu],[\mu_n])$ is here understood in dimension $d_b$. 
By symmetry, the conclusion of \Cref{prop:induction} also holds for continuous parameterizations of the roots of the second factors $P_{\tilde b^*}$ and $P_{\tilde b_n^*}$ in (5).

\begin{proof}
    We proceed by induction on $d$.

\subsubsection*{Base case} 
If $d =2$ then $d_b=1$ and $P_{\tilde b}(Z) = P_{\tilde b_n}(Z) = Z$. 
Hence $\mu = \mu_n =0$ and thus $\ddd^{1,q}_{I}([\mu],[\mu_n])=0$.
So the assertion is trivially true.

 \subsubsection*{Induction step}
 Let $d>2$ and assume that the statement holds if the degree of the polynomials is smaller than $d$.

 By \Cref{lem:bconv},    
 \begin{align} \label{eq:bconv}
     \|\tilde b - \tilde b_n\|_{C^d(\ol I,\C^{d_b})} &\to 0 \quad \text{ as } n \to \infty. 
 \end{align}
 Here \Cref{lem:bconv} is valid, because
 only the existence of a simultaneous splitting and the assumptions \eqref{eq:kdom2} and
 \[
     \sum_{j=2}^d \|(\tilde a_{j}^{1/j})'\|_{L^1(I)} \le B\, |\tilde a_{k}(x_0)|^{1/k},
 \]
 for some constant $B < 1/3^2$,
 are needed to conclude \eqref{eq:choice} which is used in the proof of \Cref{lem:bconv}.

 By \Cref{prop:acc}, 
 for every $\ep>0$
 there exist a neighborhood $U$ of $\on{acc}(Z_{\tilde b})$ in $\ol I$ and $n_1\ge n_0$ 
 such that 
 \[
     \|\mu_n'\|_{L^q(U, \C^{d_b})} \le C(d,q) \, |U|^{1/q}\,\ep, \quad n\ge n_1,
 \]
 for all $1 \le q < d/(d-1)$ (since $d_b < d$). 
 It follows that
 \begin{equation} \label{eq:indacc}
     \ddd^{1,q}_{\on{acc}(Z_{\tilde b})} ([\mu],[\mu_n])  \to 0 \quad \text{ as } n \to \infty.
 \end{equation}
 Indeed, $\mu|_{\on{acc}(Z_{\tilde b})} = 0$ and hence, for all $x \in \on{acc}(Z_{\tilde b})$ where $\mu'(x)$ and $\mu_n'(x)$ exist, 
 we have  $\mu'(x)=0$ and thus
 \begin{align*}
     \bs_1([\mu],[\mu_n])(x) =  \frac{1}{\sqrt d_b} \,\|\mu_n'(x)\|_2
 \end{align*}
 which implies \eqref{eq:indacc}.

 Assume that $x_1 \in I$ is such that $\tilde b(x_1) \ne 0$, in particular, $d_b\ge 2$.
 Let $\ell \in \{2,\ldots,d_b\}$ be such that 
 \begin{equation*}
     |\tilde b_\ell(x_1)|^{1/\ell} \ge  |\tilde b_i(x_1)|^{1/i}, \quad 2 \le i \le d_b.
 \end{equation*}
 As in the derivation of \eqref{eq:nkk} and \eqref{eq:kdom3} from \eqref{eq:kdom2},
 we find that
 there is $n_1\ge n_0$ such that, for all $n \ge n_1$,
 \begin{equation} \label{eq:nll}
     |\tilde b_{n,\ell}(x_1)|^{1/\ell} \ge \frac{4}{5}\, |\tilde b_\ell(x_1)|^{1/\ell}
 \end{equation}
 and, for  $2 \le i \le d_b$, 
 \begin{equation*}
     |\tilde b_{n,\ell}(x_1)|^{1/\ell} \ge \frac{2}{3}\, |\tilde b_{n,i}(x_1)|^{1/i}. 
 \end{equation*}

 Choose an open interval $J \subseteq I$ containing $x_1$ such that
 \begin{equation*}
     |J||I|^{-1} |\tilde a_k(x_0)|^{1/k}  + \sum_{i=2}^{d_e} \|(\tilde b_i^{1/i})'\|_{L^1(J)} \le \frac{D}{3}\,|\tilde b_\ell(x_1)|^{1/\ell},
 \end{equation*}
 where $D$ is a positive constant satisfying
 \begin{equation} \label{eq:D}
     D < \min \Big\{ \frac{1}{3}, \frac{\si}{2^3 d_b^2 3^{d_b}}\Big\}
 \end{equation}
 and $\si$ is the radius of the universal splitting of polynomials of degree $d_b$ in Tschirnhausen form (see \Cref{def:universal}).

 By \eqref{eq:bconv} and \Cref{cor:radicals},
 there is  
 $n_2 \ge n_1$ such that, for all $n \ge n_2$,
 \[
     \Big| \sum_{i=2}^{d_b} \|(\tilde b_{i}^{1/i})'\|_{L^1(J)} - \sum_{i=2}^{d_b} \|(\tilde b_{n,i}^{1/i})'\|_{L^1(J)} \Big|
     \le \frac{D}{6}  \, |\tilde b_{\ell}(x_1)|^{1/\ell} 
 \]
 and (cf.\ \eqref{eq:choice}) 
 \[
     |J||I|^{-1} \big||\tilde a_k(x_0)|^{1/k} - \tilde a_{n,k}(x_0)|^{1/k} \big|\le \frac{D}{6}  \, |\tilde b_{\ell}(x_1)|^{1/\ell}.
 \]
 Consequently, for all $n \ge n_2$,
 \begin{align*}
     |J||I|^{-1} |\tilde a_{n,k}(x_0)|^{1/k}  + \sum_{j=2}^{d_b} \|(\tilde b_{n,i}^{1/i})'\|_{L^1(J)} 
    &\le \frac{2D}{3}\,|\tilde b_\ell(x_1)|^{1/\ell}\le D\,|\tilde b_{n,\ell}(x_1)|^{1/\ell},
 \end{align*}
 using \eqref{eq:nll}. 

 We see that 
 \Cref{lem:lem10} applies to $\tilde b$ and to $\tilde b_n$, for $n\ge n_2$. 
 So,  for all $2 \le i \le d_b$, $1 \le s \le d$, and $n \ge n_2$,
 \begin{align*}
     \|\tilde b_{n,i}^{(s)}\|_{L^\infty(J)} \le C(d) \, |J|^{-s} \,|\tilde b_{n,\ell}(x_1)|^{j/\ell}. 
 \end{align*}
 Moreover, the length of the curves $\ul b : J \to \C^{d_b}$ and $\ul b_n : J \to \C^{d_b}$, for $n \ge n_2$,
 is bounded by
 \[
     2 d_b^2 3^{d_b} D \le \frac{\si}{4},
 \]
 using \eqref{eq:D}. 
 By \eqref{eq:blem52}, we conclude (as in the derivation of \eqref{eq:choice}) 
 that there are continuous selections $\tilde b_\ell^{1/\ell}$ and $\tilde b_{n,\ell}^{1/\ell}$, for $n \ge n_2$, 
 in $C^d(\ol J)$ of the corresponding multivalued functions and 
 \begin{equation*} 
     \|\tilde b_{\ell}^{1/\ell} -\tilde b_{n,\ell}^{1/\ell} \|_{C^d(\ol J)} 
     \to 0
     \quad \text{ as } n \to \infty.
 \end{equation*} 
 It follows (as in the derivation of \eqref{eq:simsplit})
 that there is $n_3 \ge n_2$ such that 
 the family $\{P_{\tilde b}\} \cup \{P_{\tilde b_n}\}_{n\ge n_3}$ 
 has a simultaneous splitting on $J$, 
 \begin{equation*} 
     P_{\tilde b} = P_c P_{c^*} \quad \text{ and } \quad P_{\tilde b_n} = P_{c_n} P_{c_n^*}, \quad n \ge n_3.
 \end{equation*}

 We may assume that
 \begin{equation} \label{eq:rootssplit}
     \mu|_J = (\nu,\nu^*) \quad \text{ and }\quad \mu_n|_J = (\nu_n,\nu_n^*), \quad n \ge n_3,
 \end{equation}
 where $\nu, \nu_n: J \to \C^{d_c}$, $\nu^*, \nu^*_n : J \to \C^{d_{c^*}}$ are continuous parameterizations of the roots of $P_{c}$, $P_{c_n}$,
 $P_{c^*}$, $P_{c_n^*}$,
 respectively. Hereby $1\le d_c := \deg P_c = \deg P_{c_n}< d_b$ and $d_{c^*} := \deg P_{c^*} = \deg P_{c_n^*} = d_b -d_c$.
 Set 
 \begin{align*} 
     \begin{split}
         \tilde \nu &:= \nu + \tfrac{1}{d_c}(c_1,\ldots,c_1),
         \quad
         \tilde \nu^* := \nu^* + \tfrac{1}{d_{c^*}}(c_1^*,\ldots,c_1^*),
         \\
         \tilde \nu_n &:= \nu_n + \tfrac{1}{d_c}(c_{n,1},\ldots,c_{n,1}),
         \quad
         \tilde \nu_n^* := \nu_n^* + \tfrac{1}{d_{c^*}}(c_{n,1}^*,\ldots,c_{n,1}^*).
     \end{split}
 \end{align*}
 Then $\tilde \nu, \tilde \nu_n: J \to \C^{d_c}$, $\tilde \nu^*, \tilde \nu^*_n : J \to \C^{d_{c^*}}$ are continuous parameterizations of the roots of $P_{\tilde c}$, $P_{\tilde c_n}$,
 $P_{\tilde c^*}$, $P_{\tilde c_n^*}$,
 respectively.

 By the induction hypothesis, 
 there is a set $E_{J,0} \subseteq J$ of measure zero and a countable 
 cover $\cE_J$ of $J \setminus E_{J,0}$
 by measurable sets with the property that, 
 for each $E_J \in \cE_J$, 
 \[
     \ddd^{1,q}_{E_J}([\tilde \nu], [\tilde \nu_n])
     \to 0 \quad \text{ and } \quad 
     \ddd^{1,q}_{E_J}([\tilde \nu^*],[\tilde \nu_n^*])
     \to 0 \quad \text{ as } n \to \infty,
 \]
 for all $1 \le q < d/(d-1)$ (since $d_b <d$), 
 where $\ddd^{1,q}_{E_J}([\tilde \nu],[\tilde \nu_n])$  
 is understood in dimension $d_c$ and $\ddd^{1,q}_{E_J}([\tilde \nu^*],[\tilde \nu_n^*])$  in dimension $d_{c^*}$.
 By \Cref{lem:redTschirn},
 this implies 
 \begin{equation} \label{eq:checknu}
     \ddd^{1,q}_{E_J}( [\nu], [\nu_n])\to 0 \quad \text{ and } \quad \ddd^{1,q}_{E_J}( [\nu^*], [\nu_n^*])\to 0 \quad \text{ as } n \to \infty,
 \end{equation}
 for all $1 \le q < d/(d-1)$, 
 since $c_n \to c$ in $C^d(\ol J,\C^{d_c})$ 
 and $c_n^* \to c^*$ in $C^d(\ol J,\C^{d_{c^*}})$ as $n \to \infty$,
 by \Cref{lem:bconv}.

 We conclude that, 
 for each $E_J \in \cE_J$,
 \begin{equation} \label{eq:indrest}
     \ddd^{1,q}_{E_J}([\mu],[\mu_n])  \to 0  \quad \text{ as } n \to \infty,
 \end{equation}
 for all $1 \le q < d/(d-1)$, 
 where $\ddd^{1,q}_{E_J}([\mu],[\mu_n])$ is now understood in dimension $d_b$.
 This follows from \eqref{eq:rootssplit}, \eqref{eq:checknu} and \Cref{prop:S'}.

 Then the induction step is a consequence 
 of \eqref{eq:indacc}, \eqref{eq:indrest}, and the fact that $\{x \in I : \tilde b (x) \ne 0\}$ 
 can be covered by countably many intervals $J$. 
 The proposition is proved.
\end{proof}

\subsection{Proof of \Cref{thm:main1}}

By \Cref{lem:redTschirn}, we may assume that all polynomials are in Tschirnhausen form.
So let $d\ge 2$ be an integer, $(\al,\be) \subseteq \R$ a bounded open interval, and
$\tilde a_n \to \tilde a$ in $C^d([\al,\be],\C^d)$ as $n \to \infty$.
Let $\La,\La_n : (\al,\be) \to \cA_d(\C)$ be the curves of unordered roots of $P_{\tilde a}$, $P_{\tilde a_n}$, respectively. 

In view of \eqref{eq:C0}, it remains to show that, for all $1 \le q < d/(d-1)$, 
\begin{equation} \label{eq:toshow}
    \ddd^{1,q}_{(\al,\be)}(\La,\La_n) \to 0 \quad \text{ as } n \to \infty.
\end{equation}
To this end, we prove the following proposition.

\begin{proposition} \label{lem:subsequence}
    There is a subsequence $(n_k)$ such that, for all $1 \le q < d/(d-1)$, 
    \begin{equation*}
        \ddd^{1,q}_{(\al,\be)}(\La,\La_{n_k}) \to 0 \quad \text{ as } k \to \infty.
    \end{equation*}
\end{proposition}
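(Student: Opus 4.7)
The plan is to build a countable cover $\{E_j\}_{j\ge1}$ of $(\al,\be)$ (modulo a null set) on whose pieces $\|\bs_1(\La,\La_n)\|_{L^{q_0}(E_j)}\to0$ for some fixed $q_0\in[1,d/(d-1))$, extract by diagonalization a subsequence convergent almost everywhere, and close the argument with Vitali's convergence theorem.

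To assemble the cover, first handle $\on{acc}(Z_{\tilde a})$: on this set $\la\equiv0$ and thus $\la'=0$ almost everywhere, so \Cref{prop:acc} gives an open neighborhood $U\supseteq\on{acc}(Z_{\tilde a})$ on which $\|\la_n'\|_{L^{q_0}(U)}$ is arbitrarily small for $n$ large, implying $\|\bs_1(\La,\La_n)\|_{L^{q_0}(\on{acc}(Z_{\tilde a}))}\to0$. For the complement $\Om_{\tilde a}:=(\al,\be)\cap\{\tilde a\ne0\}$, around each $x_0\in\Om_{\tilde a}$ I would choose an interval $I\ni x_0$ small enough to simultaneously apply \Cref{prop:toindass} (producing a simultaneous splitting $P_{\tilde a}=P_bP_{b^*}$, $P_{\tilde a_n}=P_{b_n}P_{b_n^*}$ on $I$ with $b_n\to b$, $b_n^*\to b^*$ and $\tilde b_n\to\tilde b$, $\tilde b_n^*\to\tilde b^*$ in $C^d(\ol I)$) and \Cref{prop:S'} (ensuring the minimizing permutations for $(\la(x),\la_n(x))$ respect the splitting on $I$ for $n$ large). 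Applying the induction hypothesis \Cref{prop:induction} separately to $(\tilde b,\tilde b_n)$ and $(\tilde b^*,\tilde b_n^*)$ (each of degree strictly less than $d$) yields a countable cover of $I$ modulo a null set by measurable sets $E$ on which $\ddd^{1,q}_E([\tilde\mu],[\tilde\mu_n])\to0$ and $\ddd^{1,q}_E([\tilde\mu^*],[\tilde\mu_n^*])\to0$; transferring back to the non-Tschirnhausen factors (via the uniform convergence $b_{n,1}/d_b\to b_1/d_b$ in $C^d$, exactly as in the proof of \Cref{lem:redTschirn}) gives the analogous statements for the roots $\mu,\mu_n$ of $P_b,P_{b_n}$ and $\mu^*,\mu_n^*$ of $P_{b^*},P_{b_n^*}$. \Cref{prop:S'} then yields the pointwise estimate
\[
d\cdot\bs_1(\La,\La_n)(x)^2 \;\le\; d_b\cdot\bs_1([\mu],[\mu_n])(x)^2 + (d-d_b)\cdot\bs_1([\mu^*],[\mu_n^*])(x)^2,
\]
whence $\|\bs_1(\La,\La_n)\|_{L^{q_0}(E)}\to0$. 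Since $\Om_{\tilde a}$ is Lindel\"of, countably many such intervals cover it, producing the desired $\{E_j\}_{j\ge1}$.

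A standard diagonal extraction then produces a subsequence $(n_k)$ with $\bs_1(\La,\La_{n_k})\to0$ almost everywhere on $(\al,\be)$: on each $E_j$ the $L^{q_0}$-convergence yields an almost everywhere convergent subsubsequence, and diagonalizing over $j$ works because the cover is countable. To upgrade to $L^q$-convergence for every $1\le q<d/(d-1)$, fix such a $q$ and pick $q'\in(q,d/(d-1))$; by \Cref{thm:optimal} applied to $\la$ and each $\la_n$ together with the uniform bound $\sup_n\|\tilde a_n\|_{C^{d-1,1}([\al,\be],\C^d)}<\infty$ (immediate from $\tilde a_n\to\tilde a$ in $C^d$), the family $\{\bs_1(\La,\La_n)\}$ is bounded in $L^{q'}((\al,\be))$, hence uniformly integrable in $L^q$. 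Vitali's theorem (recalled in \Cref{sec:appendix}) then yields $\bs_1(\La,\La_{n_k})\to0$ in $L^q((\al,\be))$, with the \emph{same} subsequence working for every admissible $q$. Combined with $\|\dd(\La,\La_{n_k})\|_{L^\infty}\to0$ from \eqref{eq:C0}, this gives $\ddd^{1,q}_{(\al,\be)}(\La,\La_{n_k})\to0$.

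The main obstacle is the nested interval construction around each $x_0\in\Om_{\tilde a}$: one has to shrink $I$ enough to simultaneously secure the splitting hypotheses of \Cref{prop:toindass} for both $\tilde a$ and $\tilde a_n$ for $n$ large, the $C^d$-convergence of the factor coefficients feeding \Cref{prop:induction} on the lower-degree factors, and the separation of root clusters that forces minimizing permutations to respect the splitting as in \Cref{prop:S'}. Once the cover is in place, the extraction and Vitali step are straightforward.
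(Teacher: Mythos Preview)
Your proposal is correct and follows essentially the same route as the paper's proof: build the countable cover by combining \Cref{prop:acc} on $\on{acc}(Z_{\tilde a})$ with \Cref{prop:toindass}, \Cref{prop:induction}, and \Cref{prop:S'} on $\Om_{\tilde a}$, extract an almost-everywhere convergent subsequence by diagonalization, and conclude via Vitali using the uniform $L^{q'}$ bound from \Cref{thm:optimal} (which is exactly the content of \Cref{lem:uniformint}). The pointwise inequality you write relating $\bs_1(\La,\La_n)$ to the factors is precisely what the paper compresses into the phrase ``arguing as at the end of the proof of \Cref{prop:induction} with \Cref{prop:S'}''; note also that $\ddd^{1,q}_{(\al,\be)}$ only involves $\bs_1$, so the final appeal to \eqref{eq:C0} is redundant here (it is used earlier to reduce \Cref{thm:main1} to the $\ddd$-statement).
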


\Cref{lem:subsequence} implies \eqref{eq:toshow}, 
in view of \Cref{lem:reals}.

In the proof of \Cref{lem:subsequence}, we will use Vitali's convergence theorem, i.e., \Cref{thm:Vitali}.
As a preparation, we first show the following lemma.

\begin{lemma} \label{lem:uniformint}
    The set $\{\mathbf s_1(\La,\La_n)^q : n \ge 1\}$ is uniformly integrable, for each $1 \le q < d/(d-1)$.
\end{lemma}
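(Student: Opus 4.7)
The plan is to bound $\mathbf s_1(\La,\La_n)$ pointwise by the $\ell^2$-norms of the derivatives of continuous parameterizations of the roots, then invoke \Cref{thm:optimal} to get a uniform $L^{q'}$-bound for some $q'>q$, and finally obtain uniform integrability of the $q$-th powers via Hölder's inequality.

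Concretely, I would first fix continuous parameterizations $\la,\la_n:(\al,\be)\to\C^d$ of the roots of $P_{\tilde a}$, $P_{\tilde a_n}$, so that $\La=[\la]$ and $\La_n=[\la_n]$; these exist by Kato's theorem (see \Cref{fn:Kato}). By the discussion following \Cref{d:diff}, $D\la_i=\la_i'$ and $D\la_{n,i}=\la_{n,i}'$ almost everywhere. Since the $\ell^2$-norm on $\C^d$ is invariant under permutation of the coordinates, for any $\ta\in\on{S}_d$ one has
\[
    \Big(\sum_{i=1}^d |D\la_i - D\la_{n,\ta(i)}|^2\Big)^{1/2} \le \|\la'\|_2 + \|\la_n'\|_2,
\]
where $\|\la'(x)\|_2^2=\sum_i|\la_i'(x)|^2$ and similarly for $\la_n'$. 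Taking the maximum over orderings of $\on{S}_d$ in \Cref{def:dd} therefore yields
\[
    \mathbf s_1(\La,\La_n)(x) \le \tfrac{1}{\sqrt d}\bigl(\|\la'(x)\|_2 + \|\la_n'(x)\|_2\bigr) \quad \text{ for a.e.\ } x\in(\al,\be).
\]

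Next, since $\tilde a_n\to\tilde a$ in $C^d([\al,\be],\C^d)$, the sequence $(\tilde a_n)$ is bounded in $C^{d-1,1}([\al,\be],\C^d)$. Fix $q\in[1,d/(d-1))$ and choose some $q'\in(q,d/(d-1))$. Applying \Cref{thm:optimal} componentwise to $\la_n$ and to $\la$ gives a constant $C$, independent of $n$, such that
\[
    \sup_{n\ge 1} \|\la_n'\|_{L^{q'}((\al,\be),\C^d)} + \|\la'\|_{L^{q'}((\al,\be),\C^d)} \le C.
\]
For any measurable $E\subseteq(\al,\be)$, Hölder's inequality then gives
\[
    \int_E \|\la_n'\|_2^q\,dx \le |E|^{1-q/q'}\,\|\la_n'\|_{L^{q'}(E,\C^d)}^q \le C^q\,|E|^{1-q/q'},
\]
and analogously for $\la'$. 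Combining these bounds with the pointwise estimate above yields
\[
    \int_E \mathbf s_1(\La,\La_n)^q\,dx \le C'\,|E|^{1-q/q'}
\]
with $C'$ independent of $n$, and the right-hand side tends to $0$ as $|E|\to 0$. This is precisely uniform integrability of $\{\mathbf s_1(\La,\La_n)^q : n\ge 1\}$. I anticipate no serious obstacle: the argument is the standard improvement-of-integrability trick via Hölder, powered by the uniform $L^{q'}$-bound from \cite{ParusinskiRainer15} recalled in \Cref{thm:optimal}.
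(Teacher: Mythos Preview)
Your proof is correct and follows essentially the same approach as the paper: the pointwise bound $\mathbf s_1(\La,\La_n)\le\tfrac{1}{\sqrt d}(\|\la'\|_2+\|\la_n'\|_2)$ combined with the uniform $L^{q'}$-bound from \Cref{thm:optimal} for some $q'\in(q,d/(d-1))$. The only cosmetic difference is that the paper concludes uniform integrability via de la Vall\'ee Poussin's criterion (\Cref{thm:VP}) with $G(t)=t^{r}$, $r=\tfrac{p+q}{2q}$, whereas you use H\"older's inequality directly---these are equivalent.
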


\begin{proof}
    Let $\la,\la_n : (\al,\be) \to \C^d$ be continuous parameterizations of the roots so that $\La=[\la]$ and $\La_n = [\la_n]$.
    
    Let $p := \frac{d}{d-1}$,  
    fix $1 \le q < p$, and let $r:= \frac{p+q}{2q}$.
    Then $r >1$ and $qr< p$.
    The function $G(t) := t^r$ for $t \ge 0$ is nonnegative, increasing, and $G(t)/t \to \infty$ as $t \to \infty$.
    We have 
    \[
        \sup_{n \ge 1} \int_\al^\be G(|\la_n'(x)|^q) \,dx
        = \sup_{n \ge 1} \int_\al^\be |\la_n'(x)|^{qr} \,dx < \infty,
    \]
    by \eqref{eq:optimal}. Thus the assertion follows from the fact that,  
    for almost every $x \in (\al,\be)$, 
    \[
        \mathbf{s}_1( \La,\La_n )(x)  \le \frac{1}{\sqrt d}\big(  \|\la'(x)\|_2 + \|\la_n'(x)\|_2\big),
    \]
    and from de la Vall\'ee Poussin's criterion, i.e., \Cref{thm:VP}.
\end{proof}

\begin{proof}[Proof of \Cref{lem:subsequence}]
    We claim that there
    is a set $F_0 \subseteq (\al,\be)$ of measure zero and a countable cover $\{F_1,F_2,\ldots\}$ of 
    $(\al,\be) \setminus F_0$ 
    by measurable sets with the property that, 
    for each $F_i$ with $i \ge 1$,
    \begin{equation}\label{eq:onFi}
        \ddd^{1,q}_{F_i}(\La, \La_{n})\to 0 \quad \text{ as } n \to \infty,
    \end{equation}
    for all $1 \le q < d/(d-1)$.
    By \Cref{prop:acc}, we may take $F_1 = \on{acc}(Z_{\tilde a})$. 
    For $x_0 \in (\al,\be)$ with $\tilde a(x_0) \ne 0$, let $k \in \{2,\ldots,d\}$ be such that 
    \eqref{eq:kdom2} holds and let $I\ni x_0$ be an interval such that \eqref{eq:MB} holds with $M$ and $B$ defined in \eqref{eq:supM} and \eqref{eq:BB}, respectively.
    Then, by \Cref{prop:toindass}, the assumptions of \Cref{prop:induction} are satisfied.
    Thus, 
    there is a set $E_0 \subseteq I$ of measure zero and a countable cover $\cE$ of 
    $I \setminus E_0$ 
    by measurable sets with the property that, 
    for each $E \in \cE$,
    \[
        \ddd^{1,q}_{E}(\La,\La_{n})\to 0 \quad \text{ as } n \to \infty,
    \]
    for all $1 \le q < d/(d-1)$ (arguing as at the end of the proof of \Cref{prop:induction} with \Cref{prop:S'}). 
    The claim follows, since the set $Z_{\tilde a} \setminus \on{acc}(Z_{\tilde a})$ has measure zero 
    and $(\al,\be) \setminus Z_{\tilde a}$ can be covered by countably many intervals $I$.

    Next we assert that there is a subsequence $(n_k)$ such that, as $k \to \infty$, 
    \begin{equation} \label{eq:s1aept}
        \bs_1(\La,\La_{n_k})  \to 0
        \quad   \text{ almost everywhere in } (\al,\be).
    \end{equation}
    Indeed, setting $u_n:=\bs_1(\La,\La_{n})$, we infer from \eqref{eq:onFi} 
    that
    there is a subsequence $n^1_1 < n^1_2 < \cdots$ such that 
    \[
        u_{n^1_k} \to 0
        \quad  \text{ almost everywhere in } F_1.
    \]
    Again by \eqref{eq:onFi}, there is a subsequence 
    $n^2_1 < n^2_2 < \cdots$ of $(n^1_k)$ such that 
    \[
        u_{n^2_k} \to 0
        \quad  \text{ almost everywhere in } F_2,
    \]
    and, in general, 
    let $n^{i+1}_1 < n^{i+1}_2 < \cdots$ be a subsequence of $(n^i_k)$ such that 
    \[
        u_{n^{i+1}_k} \to 0
        \quad  \text{ almost everywhere in } F_{i+1}.
    \]
    Then \eqref{eq:s1aept} holds for the subsequence $n_k := n^k_k$.

    In view of \eqref{eq:s1aept} and \Cref{lem:uniformint}, we now use Vitali's convergence theorem, i.e., \Cref{thm:Vitali}, 
    to conclude the assertion of the proposition.\footnote{Note that on a finite measure space almost everywhere convergence 
    implies convergence in measure, by Egorov's theorem.}
\end{proof}

This completes the proof of \Cref{thm:main1}.

\subsection{Proof of \Cref{cor:main1}} \label{ssec:proofcormain1}

Fix $1 \le q < d/(d-1)$.
Fix an ordering of $\on{S}_d$.
For $x \in I$, 
let $\ta(x) \in \on{S}_d$ be as defined in \Cref{def:dd}. 
Then 
\begin{align*}
    \big\| \|\la'\|_2 - \|\la_n'\|_2 \big\|_{L^q(I)} 
         &= \big\| \|\la'\|_2 - \|\ta\la_n'\|_2 \big\|_{L^q(I)}
         \le \big\| \|\la' - \ta\la_n'\|_2 \big\|_{L^q(I)} 
         \\
         &\le \sqrt d \, \big\| \bs_1([\la], [\la_n]) \big\|_{L^q(I)}
         \le \sqrt d\cdot  \dd^{1,q}_{I}([\la],[\la_n]).
\end{align*}
Thus, \Cref{thm:main1} implies that
\[
    \big\| \|\la'\|_2 - \|\la_n'\|_2 \big\|_{L^q(I)}\to 0 \quad \text{ as } n \to \infty.
\]
Since
\begin{align*}
    \big| \|\la'\|_{L^q(I,\C^d)} - \|\la_n'\|_{L^q(I,\C^d)} \big| 
        &=  
        \big| \big\| \|\la'\|_2 \big\|_{L^q(I)} - \big\| \|\la_n'\|_2 \big\|_{L^q(I)} \big| 
        \\
        &\le \big\| \|\la'\|_2 - \|\la_n'\|_2 \big\|_{L^q(I)},
\end{align*}
we also have 
\[
    \|\la_n'\|_{L^q(I,\C^d)} \to \|\la'\|_{L^q(,\C^d)} \quad \text{ as } n \to \infty.
\]
\Cref{cor:main1} is proved.

\section{Proof of \Cref{thm:main1var}} \label{sec:main1var}

\Cref{thm:main1var} follows from an adaptation of the proof of \Cref{thm:main1}; actually, the proof simplifies.

Let $d\ge 2$ be an integer.
Let $(\al,\be) \subseteq \R$ be a bounded open interval.
Let $a_n \to a$ in $C^d([\al,\be],\C^d)$ as $n \to \infty$.
Assume that $\la_n : (\al,\be) \to \C^d$ is a continuous parameterization of the roots of $P_{a_n}$  
and that $\la_n$ converges in $C^0([\al,\be],\C^d)$ to a continuous parameterization $\la$ of the roots of $P_a$.

Our goal is to show that
\begin{equation} \label{eq:goalvar}
    \|\la' - \la_n'\|_{L^q((\al,\be),\C^d)} \to 0 \quad \text{ as } n \to \infty,
\end{equation}
for all $1 \le q <d/(d-1)$. 

Without loss of generality we may assume that all polynomials are in Tschirnhausen form.

Instead of \Cref{prop:induction} we use:

\begin{proposition} \label{prop:inductionvar}
    In the setting of   \Cref{prop:induction},  
    let $\mu_n : I \to \C^{d_b}$ be continuous parameterizations of the roots of $P_{\tilde b_n}$ 
    that converge in $C^0(\ol I,\C^d)$ to a continuous parameterization of the roots of $P_{\tilde b}$.

    Then there exist a set $E_0 \subseteq I$ of measure zero and a countable cover $\cE$ of 
    $I \setminus E_0$ 
    by measurable sets with the property that,  
    for each $E \in \cE$, 
    \begin{equation*}
        \|\mu' - \mu_n'\|_{L^q(E)} \to 0 \quad \text{ as } n \to \infty,
    \end{equation*}
    for all $1 \le q < d/(d-1)$.
\end{proposition}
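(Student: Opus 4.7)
The plan is to prove \Cref{prop:inductionvar} by induction on $d$, mirroring the architecture of \Cref{prop:induction} but carrying ordered continuous parameterizations rather than unordered tuples, and then to deduce \Cref{thm:main1var} by combining this with Vitali's convergence theorem, exactly as \Cref{thm:main1} was obtained from \Cref{prop:induction}.

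For the base case $d = 2$ we have $d_b = 1$, so $P_{\tilde b} = P_{\tilde b_n} = Z$ forces $\mu = \mu_n \equiv 0$ and there is nothing to prove. For the induction step, \Cref{lem:bconv} gives $\tilde b_n \to \tilde b$ in $C^d(\ol I, \C^{d_b})$. On a neighborhood $U$ of $\on{acc}(Z_{\tilde b})$, \Cref{prop:acc} applied to $(P_{\tilde b_n})$ yields $\|\mu_n'\|_{L^q(U,\C^{d_b})} \to 0$; since $\mu \equiv 0$ on $\on{acc}(Z_{\tilde b})$ and hence $\mu' = 0$ almost everywhere there, this gives $\|\mu' - \mu_n'\|_{L^q(U,\C^{d_b})} \to 0$. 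For any $x_1 \in I$ with $\tilde b(x_1) \ne 0$, the machinery leading to \Cref{prop:toindass} produces an interval $J \ni x_1$ and a simultaneous splitting $P_{\tilde b} = P_c P_{c^*}$, $P_{\tilde b_n} = P_{c_n} P_{c_n^*}$ on $J$, with $\tilde c_n \to \tilde c$ and $\tilde c_n^* \to \tilde c^*$ in $C^d$.

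The one genuinely new point is that the ordered parameterization $\mu_n$ itself must be compatible with the splitting on $J$ for large $n$. This is where the uniform separation behind \Cref{prop:S'} (ultimately \Cref{rem:universal}) is essential: after possibly shrinking $J$, each root of $P_{c_n}$ stays at distance bounded below (by a constant independent of $n$) from each root of $P_{c_n^*}$. Since $\mu_n \to \mu$ in $C^0(\ol J, \C^{d_b})$, for $n$ large enough the index set $\{i : \mu_{n,i}(x) \text{ is a root of } P_{c_n(x)}\}$ is independent of both $x \in J$ and $n$, and agrees with the analogous set for $\mu$. After a single fixed reordering we may write $\mu|_J = (\nu, \nu^*)$ and $\mu_n|_J = (\nu_n, \nu_n^*)$ with $\nu_n \to \nu$ and $\nu_n^* \to \nu^*$ in $C^0$. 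Passing to the Tschirnhausen-shifted parameterizations $\tilde \nu_n, \tilde\nu_n^*$, which still converge in $C^0$ because $c_{n,1} \to c_1$ and $c_{n,1}^* \to c_1^*$ uniformly on $J$, the induction hypothesis furnishes a null set and a countable cover of $J$ by measurable sets $E_J$ on which $\|\tilde\nu' - \tilde\nu_n'\|_{L^q(E_J,\C^{d_c})} \to 0$ and $\|(\tilde\nu^*)' - (\tilde\nu_n^*)'\|_{L^q(E_J,\C^{d_{c^*}})} \to 0$. Since $\|c_{n,1}' - c_1'\|_{L^\infty(J)} \to 0$ and analogously for the starred factor, the relation $\nu' = \tilde\nu' - \tfrac{1}{d_c}(c_1', \ldots, c_1')$ transfers convergence back to $\nu_n, \nu_n^*$, and finally assembles into $\|\mu' - \mu_n'\|_{L^q(E_J, \C^{d_b})} \to 0$. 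Covering $\{x \in I : \tilde b(x) \ne 0\}$ by countably many such intervals $J$ completes the induction.

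To conclude \Cref{thm:main1var}, apply \Cref{prop:inductionvar} to $P_{\tilde a}$ and $P_{\tilde a_n}$ (reducing to Tschirnhausen form as in \Cref{lem:redTschirn}; the shift $\tfrac{1}{d}(a_1, \ldots, a_1)$ preserves $C^0$ convergence of parameterizations). This yields a null set $F_0$ and a countable cover $\{F_i\}_{i \ge 1}$ of $(\al,\be) \setminus F_0$ with $\|\la' - \la_n'\|_{L^q(F_i, \C^d)} \to 0$ for each $i$. A diagonal extraction along the $F_i$ yields a subsequence along which $\la_n' \to \la'$ almost everywhere in $(\al,\be)$. The family $\{\|\la_n'\|_2^q : n \ge 1\}$ is uniformly integrable by \Cref{thm:optimal} together with de la Vall\'ee Poussin's criterion (the argument is identical to \Cref{lem:uniformint}), so Vitali's convergence theorem gives $L^q$ convergence along the subsequence, and \Cref{lem:reals} upgrades this to the full sequence, yielding \eqref{eq:goalvar}. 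The main obstacle is the stabilization step in Case 2, ensuring that the continuous selection $\mu_n$ does not rewire across the two factors of the splitting as $n$ grows; this is precisely what the uniform lower bound on inter-factor root distances provides.
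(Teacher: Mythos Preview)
Your proposal is correct and follows essentially the same route as the paper's proof. The paper's argument is terser: it simply notes that ``by the proof of \Cref{prop:induction}'' one obtains the simultaneous splitting on $J$ with $\mu|_J = (\nu,\nu^*)$, $\mu_n|_J = (\nu_n,\nu_n^*)$, and then observes in one line that $\nu_n \to \nu$, $\nu_n^* \to \nu^*$ uniformly ``by the uniform convergence of $\mu_n$''; you spell out explicitly why the index partition stabilizes via the uniform inter-factor separation of \Cref{rem:universal}, which is exactly the mechanism behind the paper's one-line remark. For the passage to \Cref{thm:main1var}, the paper invokes the dominated convergence theorem rather than Vitali, but your use of Vitali via uniform integrability (as in \Cref{lem:uniformint}) is equally valid and arguably cleaner.
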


\begin{proof}
    The proof is analogous to the one of \Cref{prop:induction} but simpler.
    We only indicate the necessary modifications.
    The base case of the induction on $d$ is trivial. 
    In the induction step,
    we may take $E_1 = \on{acc}(Z_{\tilde b})$, by \Cref{prop:acc}.
    By the proof of \Cref{prop:induction},
    there is $n_3 \ge 1$ such that 
    the family $\{P_{\tilde b}\} \cup \{P_{\tilde b_n}\}_{n\ge n_3}$ 
    has a simultaneous splitting on $J$
    and $\mu$, $\mu_n$ satisfy \eqref{eq:rootssplit}.
    We observe that $\nu_n \to \nu$ and $\nu_n^* \to \nu^*$ uniformly on $J$ as $n \to \infty$, by the uniform convergence of $\mu_n$. 
    The induction hypothesis (after Tschirnhausen transformation)
    and the fact that $\{x \in I : \tilde b(x)\ne 0\}$ can be covered by countably many intervals $J$ 
    easily imply the assertion.
\end{proof}

We claim that there
is a set $F_0 \subseteq (\al,\be)$ of measure zero and a countable cover $\{F_1,F_2,\ldots\}$ of 
$(\al,\be) \setminus F_0$ 
by measurable sets with the property that, 
for each $F_i$ with $i \ge 1$, 
\[
    \|\la' - \la_n'\|_{L^q(F_i)}  \to 0 \quad \text{ as } n \to \infty,
\]
for all $1 \le q < d/(d-1)$.
By \Cref{prop:acc}, we may take $F_1 = \on{acc}(Z_{\tilde a})$. 
In the complement of $Z_{\tilde a}$, 
we use \Cref{prop:inductionvar} to conclude the claim.

As in \eqref{eq:s1aept}, we infer that
we have pointwise almost everywhere convergence $\la_{n_k}' \to \la'$ in $(\al,\be)$ of a subsequence $(n_k)$
and thus \eqref{eq:goalvar} is true on the subsequence $(n_k)$,
by the dominated convergence theorem. Consequently, \eqref{eq:goalvar} holds (by \Cref{lem:reals}).
The proof of \Cref{thm:main1var} is complete.

\section{Proofs of the multiparameter versions}
\label{sec:proofs2}

Let $\De : \Iq \to \R^N$ be an Almgren embedding.

\subsection{Proof of \Cref{thm:mainAlmgrenmult}}

Assume that $a_n \to a$ in $C^d(\ol U, \C^d)$ as $n \to \infty$, where $U = I_1 \times \cdots \times I_m$.
Let $\La,\La_n : U \to \cA_d(\C)$ be the maps of unordered roots of $P_a$, $P_{a_n}$, respectively.

First observe that $\dd(\La,\La_n) \to 0$ uniformly on $U$, by \Cref{cor:homeo}.
Consequently, $\De \o \La_n \to \De \o \La$ uniformly on $U$, by the fact that $\De$ is Lipschitz.

For brevity, we write $F:= \De \o \La$ and $F_n:= \De \o \La_n$.
We know that $\cF:=\{F_n : n \ge 1\}\cup \{F\}$ is a bounded set in $W^{1,q}(U,\R^N)$ for all $1 \le q< d/(d-1)$ 
(see \Cref{rem:aLabd}).
Fix $1 \le q < d/(d-1)$.
Let $x = (x_1,x')$. For $x' \in  U'=I_2 \times \cdots \times I_m$, 
consider 
\[
    A_n(x') = \int_{I_1} \big\| \p_1 F(x_1,x') -  \p_1 F_n (x_1,x')\big\|_2^q\, dx_1. 
\]
Then $A_n(x') \to 0$ as $n \to \infty$, by \Cref{thm:mainAlmgren}.
By Tonelli's theorem,
\begin{equation} \label{eq:Tonelli}
    \int_U \big\| \p_1 F(x) -  \p_1 F_n (x)\big\|_2^q\, dx = \int_{U'} A_n(x')\, dx'. 
\end{equation}
Let us check that the family $\{A_n : n\ge 1\}$ is uniformly integrable. 
Set
\begin{equation} \label{e:r}
    r:= \frac{1}{2q}\Big(q + \frac{d}{d-1}\Big).
\end{equation}
By Jensen's inequality,
\begin{align} \label{eq:Jensen}
    \sup_{n\ge 1} \int_{U'} A_n(x')^r \,dx' &= \sup_{n \ge 1} \int_{U'}\Big( \int_{I_1} \big\| \p_1 F(x_1,x') -  \p_1 F_n (x_1,x')\big\|_2^q\, dx_1 \Big)^r\, dx'
    \notag \\
                                            &\le |I_1|^{r-1}  \sup_{n \ge 1} \int_{U'} \int_{I_1} \big\| \p_1 F(x_1,x') -  \p_1 F_n (x_1,x')\big\|_2^{qr}\, dx_1 \, dx'
                                            \notag    \\
                                            &= |I_1|^{r-1}  \sup_{n \ge 1} \int_{U} \big\| \p_1 F(x) -  \p_1 F_n (x)\big\|_2^{qr}\, dx
\end{align}
which is finite, by the boundedness of $\cF$, as $qr < d/(d-1)$.
Since $r>1$, we conclude that $\{A_n : n\ge 1\}$ is uniformly integrable,
by de la Vall\'ee Poussin's criterion, i.e., \Cref{thm:VP}. 

By Vitali's convergence theorem, see \Cref{thm:Vitali}, and \eqref{eq:Tonelli},
\[
    \int_U \big\| \p_1 F(x) -  \p_1 F_n (x)\big\|_2^q\, dx \to 0 \quad \text{ as } n \to \infty.
\]
The partial derivatives $\p_j$, for $2 \le j \le m$, are handled in the same way.
Thus \Cref{thm:mainAlmgrenmult} is proved.

\subsection{Independence from the Almgren embedding}

\begin{lemma} \label{lem:Almgrenbornology}
    Let $U \subseteq \R^m$ be open.
    The bornology of $W^{1,q}(U,\Iq)$ (induced by the metric \eqref{eq:Almgren}) is independent of the Almgren embedding $\De$.   
\end{lemma}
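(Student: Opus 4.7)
The plan is as follows. Let $\De_1:\Iq\to\R^{N_1}$ and $\De_2:\Iq\to\R^{N_2}$ be two Almgren embeddings, and set $\mathsf d_j(f,g):=\|\De_j\o f-\De_j\o g\|_{W^{1,q}(U,\R^{N_j})}$ for $j=1,2$. By the intrinsic characterization of $W^{1,q}(U,\Iq)$ from \cite[Definition~0.5 and Theorem~2.4]{De-LellisSpadaro11} the underlying set does not depend on the embedding, so only the comparison of the $\mathsf d_1$-bornology and the $\mathsf d_2$-bornology is at issue. Fixing $f_0\in W^{1,q}(U,\Iq)$, it suffices to show that $\sup_{f\in B}\mathsf d_1(f,f_0)\le K$ forces a uniform upper bound on $\mathsf d_2(f,f_0)$ depending only on $K$, $f_0$, $d$ and the two embeddings.

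I would use two embedding-independent comparisons, both with constants depending only on $d$. The first is the bi-Lipschitzness of any Almgren embedding $\De$ recalled after \Cref{def:Amap},
\[
C^{-1}\,\dd([z],[w])\le\|\De[z]-\De[w]\|_2\le C\,\dd([z],[w]),
\]
and the second is the pointwise comparison of directional derivatives
\[
\|\p_i(\De\o f)(x)\|_2 \le |\p_i f|(x) \le C\,\|\p_i(\De\o f)(x)\|_2 \quad\text{for a.e.\ }x\in U,
\]
where $|\p_i f|$ denotes the intrinsic directional derivative of \cite[Theorem~2.4 and Proposition~2.7]{De-LellisSpadaro11}, which does not depend on $\De$. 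Applying the first comparison to both embeddings gives at once
\[
\|\De_2\o f-\De_2\o f_0\|_{L^q}\le C^2\,\|\De_1\o f-\De_1\o f_0\|_{L^q}\le C^2 K,
\]
and the second gives, for every coordinate direction $i$,
\[
\|\p_i(\De_2\o f)\|_{L^q}\le\big\|\,|\p_i f|\,\big\|_{L^q}\le C\,\|\p_i(\De_1\o f)\|_{L^q}\le C\bigl(K+\|\p_i(\De_1\o f_0)\|_{L^q}\bigr).
\]
A further triangle inequality, using the fixed quantity $\|\p_i(\De_2\o f_0)\|_{L^q}$, then bounds $\|\p_i(\De_2\o f)-\p_i(\De_2\o f_0)\|_{L^q}$ uniformly in $f\in B$; summing over $i$ yields the desired bound on $\mathsf d_2(f,f_0)$.

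The only ingredient that is not completely transparent is the second comparison when $m\ge 2$. In the one-variable case it is exactly what the authors invoke at the end of the proof of \Cref{thm:Almgren and convergence}. For $m\ge 2$ it reduces to the one-dimensional statement by slicing along almost every line parallel to the $i$th coordinate axis, since both $|\p_i f|$ and $\|\p_i(\De\o f)\|_2$ are slice-wise quantities and the constant $C(d)$ furnished by \cite[Proposition~2.7]{De-LellisSpadaro11} is independent of the line; this slicing bookkeeping is the main (but essentially routine) obstacle in the argument.
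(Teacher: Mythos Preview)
Your argument is correct, but the paper's proof takes a quite different and shorter route. Instead of passing through the intrinsic derivative $|\p_i f|$ as an embedding-independent pivot, the paper observes that the transition map $\De_2\o(\De_1|_{\De_1(\Iq)})^{-1}:\De_1(\Iq)\to\R^{N_2}$ is Lipschitz (both $\De_1$ and $\De_2$ being bi-Lipschitz), extends it to a Lipschitz map $\Ga:\R^{N_1}\to\R^{N_2}$, and then invokes the classical result of Marcus--Mizel that superposition by a Lipschitz map takes bounded sets in $W^{1,q}(U,\R^{N_1})$ to bounded sets in $W^{1,q}(U,\R^{N_2})$. Since $\De_2\o f=\Ga\o(\De_1\o f)$, this finishes the proof in one stroke.

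The paper's approach is more economical and entirely extrinsic: it uses nothing about $\Iq$ beyond the bi-Lipschitz property of the embeddings, and it sidesteps the slicing bookkeeping you flag for $m\ge 2$. Your approach, on the other hand, yields an explicit pointwise inequality between the two embedded derivatives via the intrinsic $|\p_i f|$, which is more informative if one later needs quantitative control; but for the bare bornological statement it is more machinery than necessary. Note also that your slicing reduction is in fact unnecessary: the two-sided comparison $\|\p_i(\De\o f)\|_2\asymp|\p_i f|$ is established directly in the multivariable setting in \cite[Theorem~2.4 and the proof of Proposition~2.17]{De-LellisSpadaro11}.
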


\begin{proof}
    Let $\De^i : \Iq \to \R^{N_i}$, for $i =1,2$, be two Almgren embeddings. 
The map
\[
    \De^2 \o (\De^1)|_{\De^1(\Iq)}^{-1} : \De^1(\Iq) \to \R^{N_2}
\]
is Lipschitz and has a Lipschitz extension $\Ga$ to all of $\R^{N_1}$.
It is well-known that superposition with $\Ga$ maps bounded sets in $W^{1,q}(U,\R^{N_1})$ to bounded sets in $W^{1,q}(U,\R^{N_2})$ 
(see e.g.\ \cite{MarcusMizel79}). The lemma follows.
\end{proof}

\begin{theorem} \label{thm:topmult}
    Let $\De^i : \Iq \to \R^{N_i}$, for $i =1,2$, be two Almgren embeddings. 
    Let $U \subseteq \R^m$ be a bounded open box, $U=I_1 \times \cdots \times I_m$. 
    Let $p>1$.
    Let $f,f_n \in W^{1,p}(U,\Iq)$, for $n\ge 1$. 
    Then, as $n \to \infty$, 
    \[
        \| \De^1 \o f - \De^1 \o f_n \|_{W^{1,p}(U,\R^{N_1})} \to 0 \quad  \implies  \quad  \| \De^2\o f - \De^2 \o f_n \|_{W^{1,q}(U,\R^{N_2})} \to 0,
    \]
    for each $1\le q < p$. 
\end{theorem}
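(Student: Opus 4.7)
The plan is to reduce to the single-variable result \Cref{thm:Almgren and convergence} by a sectioning argument, in the spirit of the proof of \Cref{thm:mainAlmgrenmult}. Write $g := \De^1 \o f$ and $g_n := \De^1 \o f_n$, so that the hypothesis reads $g_n \to g$ in $W^{1,p}(U,\R^{N_1})$. Let $\Ga : \R^{N_1} \to \R^{N_2}$ be a Lipschitz extension of $\De^2 \o (\De^1|_{\De^1(\Iq)})^{-1}$ (which exists by Kirszbraun's theorem, or equivalently via the Lipschitz retraction mentioned in \Cref{lem:Almgrenbornology}); since $g,g_n$ take values in $\De^1(\Iq)$, we have $\De^2 \o f = \Ga \o g$ and $\De^2 \o f_n = \Ga \o g_n$. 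The $L^q(U)$-convergence of $\De^2 \o f_n$ to $\De^2 \o f$ is then immediate from the Lipschitz property of $\Ga$, the hypothesis, and the inclusion $L^p(U) \hooklongrightarrow L^q(U)$ (since $U$ is bounded and $q \le p$).

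It remains to show, for each $1 \le i \le m$, that $\|\p_i(\De^2 \o f - \De^2 \o f_n)\|_{L^q(U,\R^{N_2})} \to 0$. Fix $i$ and write $x = (x_i,x')$ with $x' \in U' := \prod_{j \ne i} I_j$, using the superscript ${}^{x'}$ for the section at fixed $x'$. By \Cref{lem:reals}, it suffices to start from an arbitrary subsequence of $(f_n)$ and extract a further subsequence on which convergence holds. Applying Fubini to $\|g_n - g\|_{W^{1,p}(U,\R^{N_1})} \to 0$, such a sub-subsequence (not renamed) can be chosen so that $g_n^{x'} \to g^{x'}$ in $W^{1,p}(I_i,\R^{N_1})$ for a.e.\ $x' \in U'$. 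For each such $x'$, \Cref{thm:Almgren and convergence} applied to $\De^1$ yields intrinsic convergence $f_n^{x'} \to f^{x'}$ in the sense of \Cref{d:convergence}, and applied to $\De^2$ gives $(\De^2 \o f_n)^{x'} \to (\De^2 \o f)^{x'}$ in $W^{1,p}(I_i,\R^{N_2})$. In particular, the quantity
\[
    A_n(x') := \|\p_i(\De^2 \o f - \De^2 \o f_n)^{x'}\|_{L^q(I_i)}^q
\]
tends to $0$ for a.e.\ $x' \in U'$ as $n \to \infty$ (here we use $q\le p$ and $|I_i| < \infty$).

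To upgrade this pointwise a.e.\ convergence to $\int_{U'} A_n\,dx' \to 0$, which by Fubini equals $\|\p_i(\De^2 \o f - \De^2 \o f_n)\|_{L^q(U,\R^{N_2})}^q$, I will invoke Vitali's theorem. Uniform integrability of $\{A_n\}$ will follow from de la Vall\'ee Poussin's criterion, exactly as in the proof of \Cref{thm:mainAlmgrenmult}: choosing $r := (p+q)/(2q) > 1$ so that $qr < p$, Jensen's inequality yields
\[
    \int_{U'} A_n(x')^r \, dx' \le |I_i|^{r-1} \int_U |\p_i(\De^2 \o f - \De^2 \o f_n)|^{qr}\, dx,
\]
and the right-hand side is bounded uniformly in $n$ by the pointwise Lipschitz-chain-rule bound $|\p_i(\De^2 \o f_n)| \le \on{Lip}(\Ga)\,|\p_i g_n|$ a.e.\ together with the $W^{1,p}$-boundedness of $(g_n)$ (cf.\ \Cref{lem:Almgrenbornology}).

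No single step is deep; the main subtlety is the bookkeeping needed to thread sectioning, Fubini, and subsequence extraction through the intrinsic equivalence of \Cref{thm:Almgren and convergence}. The drop from exponent $p$ to $q<p$ is exactly what the Vitali step consumes, and is presumably not dispensable without a finer composition-continuity result for Nemytskii operators.
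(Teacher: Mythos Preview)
Your proof is correct and follows essentially the same route as the paper: sectioning via Fubini, extracting a subsequence along which the one-dimensional sections converge, invoking \Cref{thm:Almgren and convergence} at each section to pass from $\De^1$ to $\De^2$, and then closing with Vitali's theorem using $r=(p+q)/(2q)$ and Jensen for uniform integrability. The only cosmetic difference is that you dispatch the $L^q$-convergence of $\De^2 \o f_n$ directly via the Lipschitz map $\Ga$, whereas the paper runs both the function and derivative parts through the same sectioning machinery.
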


\begin{proof}
    Set $F^i := \De^i \o f$ and $F^i_n := \De^i \o f_n$.
   For $x' \in  U'=I_2 \times \cdots \times I_m$, 
   consider 
   \begin{align*}
       A^1_n(x') &= \int_{I_1} \big\| \p_1 F^1(x_1,x') -  \p_1 F^1_n (x_1,x')\big\|_2^p\, dx_1, 
       \\
       B^1_n(x') &= \int_{I_1} \big\|  F^1(x_1,x') -  F^1_n (x_1,x')\big\|_2^p\, dx_1.
   \end{align*}
   Assume that
\[
    \|F^1-F^1_n\|_{W^{1,p}(U,\R^{N_1})} \to 0 \quad \text{ as } n \to \infty.
\]
Then $\{F^1_n : n \ge 1\}$ is a bounded subset of  $W^{1,p}(U,\R^{N_1})$ and thus  
$\{F^2_n : n \ge 1\}$ is a bounded subset of  $W^{1,p}(U,\R^{N_2})$,
by \Cref{lem:Almgrenbornology}. 

By assumption and Tonelli's theorem,
   \[
       \int_{U'} A^1_n(x')\, dx' \to 0 \quad \text{ and }  \quad \int_{U'} B^1_n(x')\, dx' \to 0 \quad \text{ as } n \to \infty.
   \]
   Thus there is a subsequence $(n_k)$ such that $A^1_{n_k}(x') \to 0$ and $B^1_{n_k}(x') \to 0$ for almost every $x' \in U'$ as $k \to \infty$.
   For each such $x'$,
   \Cref{thm:Almgren and convergence} implies that $A^2_{n_k}(x') \to 0$ and $B^2_{n_k}(x') \to 0$ as $k \to \infty$, 
   which are defined in analogy to $A^1_n(x')$ and $B^1_n(x')$ with $p$ replaced by $q$.
   Set $r = \frac{q+p}{2q}$.
   Then, as in \eqref{eq:Jensen}, 
   we see that $\{A^2_n : n \ge 1\}$ and $\{B^2 : n \ge 1\}$ are uniformly integrable, 
   because $\{F^2_n : n \ge 1\}$ is bounded in $W^{1,p}(U,\R^{N_2})$.

   Then \Cref{thm:Vitali} and Tonelli's theorem imply that
\[
    \|  F^2 -  F^2_{n_k} \|_{L^{q}(U,\R^{N_2})} \to 0 \quad \text{ and } \quad  \| \p_1 F^2 - \p_1 F^2_{n_k} \|_{L^{q}(U,\R^{N_2})} \to 0 
\]
as $k \to \infty$. Since the partial derivatives $\p_j$, for $2 \le j \le m$, can be treated in the same way, we 
have showed that there is a subsequence $(n_k)$ such that
\[
    \|  F^2 -  F^2_{n_k} \|_{W^{1,q}(U,\R^{N_2})} \to 0 \quad \text{ as } k \to \infty.
\]
This implies the assertion, by \Cref{lem:reals}.
\end{proof}

\subsection{A multiparameter version of \Cref{thm:main1}}

\begin{definition}
    Let $U=I_1 \times \cdots \times I_m$ be a bounded open box in $\R^m$.
    Let $f, g  : U \to \Iq$ be such that the restriction to each segment in $U$ parallel to the coordinate axes belongs to $W^{1,q}$:
    writing
    \[
        x = \sum_{j=1}^m x_j e_j = x_i e_i +\sum_{j\ne i} x_je_j =: x_i e_i + \ul x_i
    \]
    and $f_{\ul x_i}(x_i) := f(x_i e_i + \ul x_i)$, we have $f_{\ul x_i},g_{\ul x_i} \in W^{1,q}(I_i,\Iq)$ for all $1 \le i \le m$ 
    and all $\ul x_i \in U_i := \prod_{j \ne i} I_j$. 
    Define 
    \[
        \bs_1(f,g)(x)  := \max_{1\le i \le m} \bs_1(f_{\ul x_i},g_{\ul x_i})(x_i).
    \]
\end{definition}

\begin{theorem} \label{thm:multex}
    Let $d\ge 2$ be an integer.
    Let $U \subseteq \R^m$ be a bounded open box, $U=I_1 \times \cdots \times I_m$.
    Let $a_n \to a$ in $C^d(\ol U,\C^d)$.
    Let $\La,\La_n : U \to \cA_d(\C)$ be the maps of unordered roots of $P_a$, $P_{a_n}$, respectively.
    Then
    \begin{equation*}
        \|\bs_1(\La,\La_n)\|_{L^{q}(U)} \to 0 \quad \text{ as } n \to \infty,
    \end{equation*}
    for all $1 \le q < d/(d-1)$.
\end{theorem}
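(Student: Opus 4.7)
The strategy is to mimic the sectioning argument in the proof of \Cref{thm:mainAlmgrenmult}, replacing the extrinsic quantity $\|\p_i(\De \o \La - \De \o \La_n)\|_2$ by the intrinsic slice quantity $\bs_1(\La_{\ul x_i}, (\La_n)_{\ul x_i})(x_i)$ and invoking \Cref{thm:main1} in place of \Cref{thm:mainAlmgren}. Fix $1 \le q < d/(d-1)$ and $i \in \{1,\ldots,m\}$. Writing $x = x_i e_i + \ul x_i$ with $\ul x_i \in U_i = \prod_{j \ne i} I_j$, set
\[
A_n^i(\ul x_i) := \int_{I_i} \bs_1(\La_{\ul x_i}, (\La_n)_{\ul x_i})(x_i)^q \, dx_i.
\]
Since $a_n \to a$ in $C^d(\ol U, \C^d)$ trivially implies $a_n(\cdot, \ul x_i) \to a(\cdot, \ul x_i)$ in $C^d(\ol I_i, \C^d)$ for every $\ul x_i \in U_i$, \Cref{thm:main1} yields $A_n^i(\ul x_i) \to 0$ pointwise on $U_i$. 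By Tonelli's theorem, it then suffices to show that $\int_{U_i} A_n^i \, d\ul x_i \to 0$ as $n \to \infty$ for each $i$.

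To pass from pointwise to integrated convergence, I would apply Vitali's theorem (\Cref{thm:Vitali}) after establishing uniform integrability of the family $\{A_n^i\}_{n \ge 1}$ on $U_i$. Let $r := \tfrac{1}{2q}\bigl(q + \tfrac{d}{d-1}\bigr)$, so $r > 1$ and $qr < d/(d-1)$. From \Cref{p:Wselection-1} applied to each slice and the triangle-type bound $\sqrt{d}\,\bs_1(f,g) \le |Df| + |Dg|$ (as used in the domination argument at the end of the proof of \Cref{thm:Almgren and convergence}), together with the Almgren comparison $|D\La_{\ul x_i}|(x_i) \le C(d)\,\|\p_i (\De \o \La)(x)\|_2$, one gets
\[
\bs_1(\La_{\ul x_i}, (\La_n)_{\ul x_i})(x_i) \le C(d)\bigl(\|\p_i(\De \o \La)(x)\|_2 + \|\p_i(\De \o \La_n)(x)\|_2\bigr)
\]
for almost every $x \in U$. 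Jensen's inequality exactly as in \eqref{eq:Jensen}, combined with the multiparameter uniform bound $\sup_n \|\De \o \La_n\|_{W^{1,qr}(U,\R^N)} < \infty$ (\Cref{rem:aLabd} together with \Cref{thm:topmult}), then yields $\sup_n \int_{U_i} (A_n^i)^r \, d\ul x_i < \infty$, and de la Vall\'ee Poussin's criterion (\Cref{thm:VP}) supplies uniform integrability.

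Applying Vitali to each direction $i$ and summing via the pointwise majorisation
\[
\bs_1(\La, \La_n)(x)^q = \max_{1 \le i \le m} \bs_1(\La_{\ul x_i}, (\La_n)_{\ul x_i})(x_i)^q \le \sum_{i=1}^{m} \bs_1(\La_{\ul x_i}, (\La_n)_{\ul x_i})(x_i)^q
\]
will conclude the proof. The main obstacle is bookkeeping rather than genuinely new analysis: one must verify joint measurability of $(x_i, \ul x_i) \mapsto \bs_1(\La_{\ul x_i}, (\La_n)_{\ul x_i})(x_i)$ so that Tonelli applies, and justify the pointwise slice bound by derivatives despite the absence of a globally continuous parameterization $\la : U \to \C^d$ (which may fail when $m \ge 2$ due to monodromy). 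Both issues are resolved by working intrinsically with $|D\cdot|$ on sections and transferring estimates through the bi-Lipschitz embedding $\De$ to the standard Sobolev space $W^{1,q}(U, \R^N)$, where Fubini-type properties for partial derivatives are standard.
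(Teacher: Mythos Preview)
Your proposal is correct and follows the same sectioning-plus-Vitali argument as the paper. The only difference is in the uniform integrability step: the paper bounds $\bs_1(\La_{x'},(\La_n)_{x'})$ on each one-dimensional slice using a continuous parameterization $\la_{x'}$ of the roots (which always exists in one parameter, so monodromy is not an obstruction) together with \Cref{thm:optimal}, whereas you route through the Almgren embedding and \Cref{rem:aLabd}---your appeal to \Cref{thm:topmult} is superfluous here.
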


\begin{proof}
    Fix $1 \le q < d/(d-1)$. Without loss of generality let $i=1$ and set $\ul x_1 = x'$ and $U_1 = U'$.
For $x' \in  U'$, 
consider 
\[
    A_n(x') = \int_{I_1} ( \bs_1(\La_{x'},(\La_n)_{x'})(x_1))^q\, dx_1. 
\]
Then $A_n(x') \to 0$ as $n \to \infty$, by \Cref{thm:main1}.
The family $\{A_n : n\ge 1\}$ is uniformly integrable: with $r$ as defined in \eqref{e:r} 
we have (see \eqref{eq:Jensen})
\begin{align*} 
    \sup_{n\ge 1} \int_{U'} A_n(x')^r \,dx' 
                                           &\le |I_1|^{r-1}  \sup_{n \ge 1} \int_{U'} \int_{I_1} (\bs_1(\La_{x'},(\La_n)_{x'})(x_1))^{qr}\, dx_1 \, dx'.
\end{align*}
Let $\la_{x'}$, $(\la_n)_{x'}$ be a continuous parameterization of the roots of $P_{a(\cdot,x')}$, $P_{a_n(\cdot,x')}$, respectively.
Then 
\begin{align*}
   \MoveEqLeft \|\bs_1(\La_{x'},(\La_n)_{x'})\|_{L^{qr}(I_1)} 
   \le \frac{1}{\sqrt d} \Big( \|\la_{x'}'\|_{L^{qr}(I_1,\C^d)} + \|(\la_n)_{x'}'\|_{L^{qr}(I_1,\C^d)} \Big) 
      \\
      &\le C(d,qr,|I_1|) \, \Big(\max_{1 \le j \le d} \|a_j(\cdot,x')\|_{C^{d-1,1}(\ol I_1)}^{1/j} 
      +\max_{1 \le j \le d} \|a_{n,j}(\cdot,x')\|_{C^{d-1,1}(\ol I_1)}^{1/j} \Big),
\end{align*}
by \Cref{thm:optimal}, as $qr < d/(d-1)$. By assumption, the right-hand side is bounded by a constant that is independent of $x'$ and $n$.
Thus, uniform integrability follows from \Cref{thm:VP}.
So Tonelli's theorem and \Cref{thm:Vitali} imply the theorem.
\end{proof}

\section{Interpretation of the results in the Wasserstein space on $\C$}
\label{sec:Wasserstein}

In this section, we interpret our results in the space of probability measures on $\C$.
This point of view allowed Antonini, Cavalletti, and Lerario to study optimal transport between algebraic hypersurfaces in
$\C\mathbb P^n$ in \cite{Antonini:2022aa}. We will also finish the proof of \Cref{thm:mainse}.

\subsection{The roots as a probability measure} \label{ssec:probability}
With a monic polynomial $P_a(Z) = Z^d + \sum_{j=1}^d a_j Z^{d-j}$, 
where $a=(a_1,\ldots, a_d) \in \C^d$,  
we may associate in a natural way a probability measure $\mu(a)$ on $\C$ defined by
\begin{equation*}
    \mu(a) := \frac{1}{d} \sum_{P_a(\la) = 0} m_\la(a)\cdot \a{\la},
\end{equation*}
where $m_\la(a)$ denotes the multiplicity of $\la$ as a root of $P_a$ and $\a{\la}$ is the Dirac mass at $\la$.

Let us endow the set $\sP(\C)$ of probability measures on $\C$ (with its Euclidean structure) 
with the $q$-Wasserstein distance $W_q$, for $q \ge 1$, 
and denote the resulting metric space by $\sP_q(\C)$.

Then we get a map 
\begin{equation*}
    \mu : \C^d \to \sP_q(\C)
\end{equation*}
which, besides $\La : \C^d \to \cA_d(\C)$ from \Cref{ssec:solutionmap}, is another incarnation of the solution map. 
The image of $\mu$ can be indentified with $\cA_d(\C)$ or with the quotient of $\C^d$ by the symmetric group
$\on{S}_d$. The restriction of the $W_2$-metric to $\mu(\C^d) \subseteq \sP_2(\C)$ is given by 
\begin{equation*}
    W_2([z],[w])^2 =  \min_{\si \in \on{S}_d} \frac{1}{d} \sum_{j=1}^d |z_j - w_{\si(j)}|^2,
\end{equation*}
and thus coincides with the metric $\dd$ on $\cA_d(\C)$ from \Cref{ssec:AdC}. (For this reason we chose the factor $1/\sqrt d$ 
in the definition of $\dd$.) We get a similar expression for $q \ne 2$, but all of them are equivalent.

It turns out that $\mu(\C^d)$ is geodesically convex in $\sP_q(\C)$.

\subsection{Some distances on $AC^q(I,X)$} \label{ssec:distAC}

Let $(X,\mathsf d)$ be a complete metric space and $AC^q(I,X)$ the set introduced in \Cref{ssec:ACq}.
The set $AC^q(I,X)$ can be seen as a subset of the metric space $C^0(I,X)$ with the uniform norm. 
Additionally, the metric speed or the $q$-energy can be used in a natural way to measure ``closeness'' 
in $AC^q(I,X)$. Thus, we define, for $\ga_1,\ga_2 \in AC^q(I,X)$, 
\begin{equation*}
    \on{dist}^s_q(\ga_1,\ga_2) := \sup_{x \in I}\mathsf d(\ga_1(x), \ga_2(x)) + \big\| |\dot \ga_1| - |\dot \ga_2| \big\|_{L^q(I)}    
\end{equation*}
and
\begin{equation*}
    \on{dist}^e_q(\ga_1,\ga_2) := \sup_{x \in I}\mathsf d(\ga_1(x), \ga_2(x)) + \big| \cE_q(\ga_1) - \cE_q(\ga_2) \big|.   
\end{equation*}
Both $\on{dist}^s_q$ and $\on{dist}^e_q$ are metrics on $AC^q(I,X)$. 

\subsection{Boundedness and continuity of the map $\mu_*$}

In the following, we identify $\mu(\C^d)$ with $\cA_d(\C)$. 
Recall that the map $[\cdot] : \C^d \to \cA_d(\C)$ which sends an ordered $d$-tuple to the corresponding unordered one is Lipschitz.

\begin{lemma} \label{lem:metricspeed}
    Let $\la : I \to \C^d$ be an absolutely continuous curve and let $\ga : I \to \sP_2(\C)$ be 
    defined by $\ga(x) := [\la(x)]$, for $x \in I$.
    Then the metric speed of $\ga$ is given by
    \[
        |\dot \ga|(x) = \frac{1}{\sqrt d} \|\la'(x)\|_2\quad \text{ for almost every } x \in I.
    \]
\end{lemma}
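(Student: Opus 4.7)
The plan is to establish both inequalities $|\dot \ga|(x) \le \frac{1}{\sqrt d}\|\la'(x)\|_2$ and $|\dot \ga|(x) \ge \frac{1}{\sqrt d}\|\la'(x)\|_2$ at every point $x \in I$ where $\la$ is differentiable (which is almost every $x$, since $\la$ is absolutely continuous into $\C^d \cong \R^{2d}$). The upper bound is immediate from the Lipschitz property of the quotient map $[\cdot] : \C^d \to \cA_d(\C)$: taking the identity as a competitor in the definition of $\dd$ gives
\[
    \dd([\la(x+h)],[\la(x)]) \le \tfrac{1}{\sqrt d}\|\la(x+h)-\la(x)\|_2,
\]
and dividing by $|h|$ and letting $h \to 0$ yields the upper bound.

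For the lower bound, the key observation is that for $h$ sufficiently small, the equality
\[
    \dd([\la(x+h)],[\la(x)]) = \tfrac{1}{\sqrt d}\|\la(x+h)-\la(x)\|_2
\]
actually holds. To see this, fix $x$ and let $\si_h \in \on{S}_d$ attain the minimum in the definition of $\dd([\la(x+h)],[\la(x)])$. Partition $\{1,\ldots,d\}$ into equivalence classes according to the values of $\la(x)$, and set $\rh := \min\{|\la_i(x)-\la_j(x)| : \la_i(x)\ne\la_j(x)\}$. By continuity of $\la$ at $x$, for $|h|$ small enough we have $\|\la(x+h)-\la(x)\|_2 < \rh/2$, and so any permutation that maps some index $i$ to an index $\si(i)$ with $\la_{\si(i)}(x)\ne\la_i(x)$ produces a summand bounded below by $\rh^2/4$, which is much larger than $\|\la(x+h)-\la(x)\|_2^2$; hence such $\si$ cannot be minimizing. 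Therefore $\si_h$ only permutes within the fibers where $\la(x)$ is constant, and for any such permutation
\[
    \|\la(x+h) - \si_h\la(x)\|_2^2 = \sum_i |\la_i(x+h)-\la_{\si_h(i)}(x)|^2 = \sum_i |\la_i(x+h)-\la_i(x)|^2,
\]
since $\la_{\si_h(i)}(x) = \la_i(x)$. This is exactly $\|\la(x+h)-\la(x)\|_2^2$.

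With this equality in hand, the lower bound follows by dividing by $|h|$ and passing to the limit, combined with the differentiability of $\la$ at $x$. The main technical point is justifying that the minimizing permutations stabilize to act only within constant-value fibers, which requires merely the continuity of $\la$ and the uniform lower bound $\rh$ on the spacing of distinct values at the point $x$; this is the only place where any care is needed, and it is a straightforward compactness/pigeonhole argument on $\on{S}_d$.
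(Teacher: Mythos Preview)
Your proof is correct and follows essentially the same approach as the paper's: establish the upper bound via the Lipschitz property of $[\cdot]$, then show that for $|h|$ small the minimizing permutation $\si_h$ must lie in the stabilizer of $\la(x)$, forcing equality. The paper reaches the stabilizer conclusion by a subsequence/contradiction argument (extract a constant $\si$ along $h_n\to 0$ and use continuity), whereas you give the slightly more direct quantitative version via the gap $\rh$; both are equivalent, and the only case you should mention explicitly is when all $\la_i(x)$ coincide (then $\rh$ is undefined but every permutation stabilizes $\la(x)$, so the claim is trivial).
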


\begin{proof}
    For $x \in I$ and $|h|$ sufficiently small,
   \begin{align*}
        W_2(\ga(x), \ga(x+h)) = \dd([\la(x)],[\la(x+h)])  \le \frac{1}{\sqrt d} \|\la(x) - \la(x+h)\|_2.
    \end{align*}
    Let $\si_h \in \on{S}_d$ be such that 
    \[
       \dd([\la(x)],[\la(x+h)]) =  \frac{1}{\sqrt d} \|\la(x) - \si_h\la(x+h)\|_2.
    \] 
    We claim that, for sufficiently small $|h|$, $\si_h$ lies in the stabilizer group of $\la(x)$, i.e., $\si_h\la(x)=\la(x)$. 
    Otherwise, there is a sequence $h_n \to 0$ such that $\si_{h_n}\la(x) \ne\la(x)$. 
    Since $\on{S}_d$ is finite, by passing to a subsequence, we may assume that $\si_{h_n} =: \si$ is independent of $n$. 
    But continuity of $\la$
    implies
    \[
        \dd([\la(x)],[\la(x+h_n)]) =  \frac{1}{\sqrt d} \|\la(x) - \si\la(x+h_n)\|_2 \to 0 \quad
        \text{ as } n \to \infty,
    \]
    and hence $\si \la(x) = \la(x)$, a contradition.

    By the claim, for small enough $|h|$, 
    \[
       \dd([\la(x)],[\la(x+h)]) =  \frac{1}{\sqrt d} \|\la(x) - \la(x+h)\|_2.
    \]
    Now the assertion follows easily.
\end{proof}

Now \Cref{thm:optimal} implies the following.

\begin{theorem} \label{thm:bdse} 
Let $I \subseteq \R$ be a bounded open interval.
The map 
\[
    \mu_* : C^{d-1,1}(\ol I,\C^d) \to AC^q(I,\sP_2(\C)), \quad a \mapsto \mu \o a,
\]
is well-defined and bounded, for every $1 \le q < d/(d-1)$, 
where $AC^q(I,\sP_2(\C))$ carries the metric $\on{dist}^s_q$ or $\on{dist}^e_q$ from \Cref{ssec:distAC}.
\end{theorem}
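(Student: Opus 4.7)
The plan is to reduce everything to the optimal Sobolev bound \eqref{eq:optimal} via \Cref{lem:metricspeed}. Given $a \in C^{d-1,1}(\ol I,\C^d)$, by the Kato footnote~\ref{fn:Kato} there exists a continuous parameterization $\la = (\la_1,\ldots,\la_d) : I \to \C^d$ of the roots of $P_a$, and \Cref{thm:optimal} guarantees each $\la_j \in W^{1,q}(I)$ for $1 \le q < d/(d-1)$, with the uniform estimate
\[
    \|\la_j'\|_{L^q(I)} \le C(d,q)\, \max\{1,|I|^{1/q}\}\, \max_{1 \le i \le d} \|a_i\|_{C^{d-1,1}(\ol I)}^{1/i}.
\]
The curve $\ga := \mu \o a$ agrees, under the identification $\mu(\C^d) \cong \cA_d(\C)$, with $[\la]$, hence \Cref{lem:metricspeed} shows that $\ga$ is $\on{dist}$-absolutely continuous with metric speed $|\dot\ga|(x) = \frac{1}{\sqrt d}\|\la'(x)\|_2$ almost everywhere. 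Since $\|\la'\|_2 \in L^q(I)$, we obtain $\ga \in AC^q(I,\sP_2(\C))$, so $\mu_*$ is well-defined.

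For the boundedness, fix $K>0$ and take $a^{(1)},a^{(2)} \in C^{d-1,1}(\ol I,\C^d)$ with $\|a^{(\ell)}\|_{C^{d-1,1}(\ol I,\C^d)} \le K$. Let $\la^{(\ell)}$ be a continuous parameterization of the roots of $P_{a^{(\ell)}}$ and $\ga_\ell := \mu \o a^{(\ell)}$. I will bound each term of $\on{dist}^s_q(\ga_1,\ga_2)$ and $\on{dist}^e_q(\ga_1,\ga_2)$. For the uniform term, the Cauchy bound \eqref{eq:Cauchybound} yields $\|\la^{(\ell)}(x)\|_2 \le 2\sqrt d\, \max_j\|a_j^{(\ell)}\|_{L^\infty(I)}^{1/j} \le C(d,K)$, so by the triangle inequality and the Lipschitz property of $[\cdot]:\C^d \to \cA_d(\C)$,
\[
    \sup_{x \in I} W_2(\ga_1(x),\ga_2(x)) = \sup_{x \in I} \dd([\la^{(1)}(x)],[\la^{(2)}(x)]) \le C(d,K).
\]
For the $L^q$-term in $\on{dist}^s_q$ and the energy term in $\on{dist}^e_q$, \Cref{lem:metricspeed} and \Cref{thm:optimal} give
\[
    \big\| |\dot\ga_1| - |\dot\ga_2|\big\|_{L^q(I)} \le \frac{1}{\sqrt d}\big(\|\la^{(1)\prime}\|_{L^q(I,\C^d)} + \|\la^{(2)\prime}\|_{L^q(I,\C^d)}\big) \le C(d,q,|I|,K),
\]
and likewise $|\cE_q(\ga_1) - \cE_q(\ga_2)| \le \cE_q(\ga_1) + \cE_q(\ga_2) \le C(d,q,|I|,K)$. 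Thus $\mu_*$ sends bounded sets to bounded sets in either metric.

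There is no genuine obstacle here: once \Cref{thm:optimal} and \Cref{lem:metricspeed} are in place, the only task is to convert the scalar Sobolev bound on each root into a bound on the metric speed and the $q$-energy of the probability-measure-valued curve, which amounts to a triangle inequality argument. The mildest subtlety is to make sure the Cauchy bound and the $W^{1,q}$ bound are applied to an actual continuous selection (guaranteed by Kato), since the individual roots $\la_j$ are not canonically defined; but the resulting bounds depend only on the unordered tuple $[\la]$, which is all that is needed.
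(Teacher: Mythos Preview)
Your proof is correct and follows essentially the same approach as the paper: both deduce the result from \Cref{thm:optimal}, the Cauchy bound \eqref{eq:Cauchybound}, and \Cref{lem:metricspeed}, with the only difference being that you spell out the bookkeeping explicitly.
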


\begin{proof}
    Let $a \in C^{d-1,1}(\ol I,\C^d)$ and  
    let $\la : I \to \C^d$ be a continuous parameterization of the roots of $P_a$. 
    Fix $1 \le q < d/(d-1)$. 
    Then $\la \in AC^{q}(I,\C^d)$, by \Cref{thm:optimal}. 
    Since $\mu(a(x)) = [\la(x)]$, for all $x \in I$, the statement is a consequence of \eqref{eq:Cauchybound}, 
    \eqref{eq:optimal}, and \Cref{lem:metricspeed}.
\end{proof}

\Cref{thm:main1} and \Cref{cor:main1} lead to the following continuity result. 

\begin{theorem}\label{thm:contse}
Let $I \subseteq \R$ be a bounded open interval.
The map 
\[
    \mu_* : C^{d}(\ol I,\C^d) \to AC^q(I,\sP_2(\C)), \quad a \mapsto \mu \o a,
\]
is continuous, for every $1 \le q < d/(d-1)$, 
where $AC^q(I,\sP_2(\C))$ carries the metric $\on{dist}^s_q$ or $\on{dist}^e_q$.
\end{theorem}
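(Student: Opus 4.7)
The plan is to observe that \Cref{thm:contse} is essentially a repackaging of \Cref{thm:mainse} (together with \Cref{cor:main1} and \Cref{cor:homeo}) via the identification $\mu(a(x)) = [\la(x)] = \La(x)$, where $\la : I \to \C^d$ is any continuous parameterization of the roots of $P_a$, and the identification of $\mu(\C^d)$ with $\cA_d(\C)$ as an isometrically embedded subset of $\sP_2(\C)$ explained in \Cref{ssec:probability}. Thus $W_2(\mu(a(x)),\mu(a_n(x))) = \dd(\La(x),\La_n(x))$, and by \Cref{lem:metricspeed},
\[
    |\dot\La|(x) = \frac{1}{\sqrt d}\|\la'(x)\|_2 \quad \text{ for almost every } x \in I,
\]
and similarly for $\La_n$ with continuous parameterizations $\la_n$.

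First, I would prove the common term in both metrics, namely uniform convergence
\[
    \sup_{x \in I} \dd(\La(x),\La_n(x)) \to 0 \quad \text{ as } n \to \infty.
\]
Assuming $a_n \to a$ in $C^d(\ol I,\C^d)$, the sequence $\{a\} \cup \{a_n\}_{n\ge 1}$ is bounded in $C^0(\ol I,\C^d)$, and by \Cref{cor:homeo} the map $\La_* : C^0(\ol I,\C^d) \to C^0(\ol I, \cA_d(\C))$ is locally $1/d$-H\"older. Since $\|a - a_n\|_{C^0(\ol I,\C^d)} \to 0$, the claim follows. (Equivalently, this is the first line of \Cref{thm:mainse}.)

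Next, for the $\on{dist}^s_q$ statement, \Cref{lem:metricspeed} and \Cref{cor:main1} give
\[
    \big\| |\dot\La| - |\dot\La_n| \big\|_{L^q(I)} = \frac{1}{\sqrt d}\, \big\| \|\la'\|_2 - \|\la_n'\|_2 \big\|_{L^q(I)} \to 0 \quad \text{ as } n \to \infty,
\]
for every $1 \le q < d/(d-1)$; together with uniform convergence this yields $\on{dist}^s_q(\mu \o a, \mu \o a_n) \to 0$. For the $\on{dist}^e_q$ statement, \Cref{lem:metricspeed} gives
\[
    \cE_q(\La) = \int_I |\dot\La|(x)^q\, dx = d^{-q/2}\, \|\la'\|_{L^q(I,\C^d)}^q,
\]
and the corresponding identity for $\La_n$. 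The second part of \Cref{cor:main1} (convergence of the $L^q$ norms of the derivatives) combined with the continuity of $t \mapsto t^q$ on $[0,\infty)$ yields $|\cE_q(\La) - \cE_q(\La_n)| \to 0$, and again combining with uniform convergence gives $\on{dist}^e_q(\mu \o a, \mu \o a_n) \to 0$.

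The only subtlety to verify is that the choice of continuous parameterization $\la$ is immaterial for the displayed identities: both $\|\la'(x)\|_2$ (almost everywhere) and $\|\la'\|_{L^q(I,\C^d)}$ are invariant under permutation of the components of $\la$, hence depend only on $\La$; similarly for $\la_n$. There is no real obstacle in this proof; the heavy lifting is already done by the results on the metric space $\cA_d(\C)$, and the statement is obtained by transporting those results along the isometric identification $\cA_d(\C) \cong \mu(\C^d) \hookrightarrow \sP_2(\C)$.
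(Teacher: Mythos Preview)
Your proof is correct and follows essentially the same approach as the paper: use \Cref{cor:homeo} (or \Cref{thm:main1}) for the uniform convergence $\sup_{x\in I}\dd(\La(x),\La_n(x))\to 0$, and then combine \Cref{lem:metricspeed} with \Cref{cor:main1} to handle the speed and energy terms. The paper's proof is more terse (``The rest follows from \Cref{cor:main1} and \Cref{lem:metricspeed}''), but the content is the same; note only that in the paper's logical order \Cref{thm:mainse} is deduced \emph{from} \Cref{thm:contse}, so your opening remark that this is a repackaging of \Cref{thm:mainse} is mildly circular as phrased, though your actual argument correctly relies only on \Cref{cor:main1} and \Cref{lem:metricspeed}.
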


\begin{proof}
    Let $a_n \to a$ in $C^d(\ol I,\C^d)$ as $n\to \infty$. 
    We must show that $\mu_*(a_n) \to \mu_*(a)$ with respect to $\on{dist}^s_q$ and $\on{dist}^e_q$.
    There exist continuous parameterizations $\la,\la_n : I \to \C^d$ of the roots of $P_a$, $P_{a_n}$, respectively.
    Then $\mu(a(x)) = [\la(x)]$ and $\mu(a_n(x)) = [\la_n(x)]$, for all $x \in I$ and $n$. 
    So \Cref{thm:main1} (or \Cref{cor:homeo}) 
    shows that 
    \[
        \sup_{x \in I} \dd(\mu(a(x)),\mu(a_n(x))) \to 0 \quad \text{ as } n \to \infty.
    \]
    The rest follows from \Cref{cor:main1} and \Cref{lem:metricspeed}.
\end{proof}

Clearly, \Cref{thm:contse} implies \Cref{thm:mainse}.

\appendix

\section{}
\label{sec:appendix}

\subsection{Vitali's convergence theorem}

Let $(X,\cA,\mu)$ be a measure space with nonnegative measure $\mu$ (finite or with values in $[0,\infty]$).
A set of functions $\cF \subseteq L^1(\mu)$ is called \emph{uniformly integrable} if 
\[
    \lim_{C\to +\infty} \sup_{f \in \cF} \int_{|f|>C} |f|\, d\mu = 0.
\]

\begin{theorem}[De la Vall\'ee Poussin's criterion {\cite[Theorem 4.5.9]{Bogachev:2007aa}}] \label{thm:VP}
    Let $\mu$ be a finite nonnegative measure. 
    A family $\cF$ of $\mu$-integrable functions is uniformly integrable if and only if 
    there exists a nonnegative increasing function $G$ on $[0,\infty)$ such that
    \[
        \lim_{t \to +\infty} \frac{G(t)}{t} = \infty 
        \quad \text{ and }
        \quad
        \sup_{f \in \cF} \int G(|f(x)|)\, \mu(dx) < \infty.
    \]
    In such a case, one can choose a convex increasing function $G$.
\end{theorem}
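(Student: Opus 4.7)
My plan is to treat the two directions separately, noting that the sufficient direction is short while the necessary direction requires an explicit construction of the function $G$.

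For the sufficient direction, I would start from the hypothesis that some $G$ exists with $G(t)/t \to \infty$ and $M := \sup_{f \in \cF}\int G(|f|)\, d\mu < \infty$. Given $\varepsilon > 0$, I would choose $C$ so large that $t/G(t) < \varepsilon/M$ whenever $t > C$; this is possible because $G(t)/t \to \infty$. Then on the set $\{|f| > C\}$ one has $|f| \le (\varepsilon/M)\, G(|f|)$, so
\[
    \int_{\{|f|>C\}} |f|\, d\mu \le \frac{\varepsilon}{M} \int G(|f|)\, d\mu \le \varepsilon
\]
uniformly in $f \in \cF$, which is exactly uniform integrability.

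For the necessary direction I would build $G$ from the uniform integrability assumption. Define $\alpha_n := \sup_{f \in \cF}\int_{\{|f|>n\}} |f|\, d\mu$; uniform integrability says $\alpha_n \to 0$. Choose an increasing sequence of integers $n_k \uparrow \infty$ with $\alpha_{n_k} \le 2^{-k}$ and set
\[
    G(t) := \sum_{k=1}^\infty (t-n_k)^+, \qquad t \ge 0.
\]
This is an increasing, convex, piecewise linear function, and since its slope on $[n_k, n_{k+1})$ equals $k$, we have $G(t)/t \to \infty$ as $t\to\infty$. By monotone convergence,
\[
    \int G(|f|)\, d\mu = \sum_{k=1}^\infty \int (|f|-n_k)^+\, d\mu \le \sum_{k=1}^\infty \int_{\{|f|>n_k\}} |f|\, d\mu \le \sum_{k=1}^\infty \alpha_{n_k} \le 1,
\]
uniformly in $f \in \cF$, which is what we wanted. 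The construction also yields $G$ convex, giving the ``convex increasing'' refinement at the end of the statement.

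The main obstacle is choosing the right function $G$: the construction must simultaneously diverge superlinearly at infinity and yet keep $\int G(|f|)\, d\mu$ bounded across the whole family. The trick is to let $G$ depend on the decay rate $\alpha_n$ of the tails, so that the growth of $G$ is slow enough to be controlled by the summable tail estimates but fast enough to satisfy $G(t)/t \to \infty$. Once one writes $G$ as a sum of translated positive parts $(t-n_k)^+$ indexed by a sequence $n_k$ chosen relative to $\alpha_n$, everything fits together; the only subtlety is verifying that on $[n_k, n_{k+1})$ exactly $k$ of the terms are active, which gives the required unbounded slope.
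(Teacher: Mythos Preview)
The paper does not give its own proof of this theorem; it is quoted in the appendix as a known result with a citation to Bogachev. Your argument is correct and is essentially the standard textbook proof (and indeed matches the one in the cited reference): the sufficiency is the obvious Chebyshev-type estimate, and for necessity you build a convex piecewise-linear $G$ from the tail decay $\alpha_n$, which is exactly the usual construction.
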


Recall that a sequence of complex valued measurable functions $f_n$ on $X$ is said to 
\emph{converge in measure} to $f$ 
if, for all $\ep >0$, 
\[
    \mu(\{x \in X : |f(x) - f_n(x)| \ge \ep\}) \to 0 \quad \text{ as } n \to \infty.
\]

\begin{theorem}[Vitali's convergence theorem {\cite[Theorem 4.5.4]{Bogachev:2007aa}}] \label{thm:Vitali}
    Let $\mu$ be a finite measure.
    Suppose that $f$ is a $\mu$-measurable function and $\{f_n\}$ is a sequence of $\mu$-integrable functions.
    Then the following assertions are equivalent:
    \begin{enumerate}
        \item $f_n \to f$ in measure and $\{f_n\}$ is uniformly integrable.
        \item $f$ is integrable and $f_n \to f$ in $L^1(\mu)$.
    \end{enumerate}
\end{theorem}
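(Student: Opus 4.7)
The plan is to prove the two implications separately. For $(2) \Rightarrow (1)$, I would first recall that convergence in $L^1(\mu)$ implies convergence in measure via Markov's inequality: for any $\varepsilon > 0$,
\[
  \mu(\{|f_n - f| \ge \varepsilon\}) \le \varepsilon^{-1}\|f_n - f\|_{L^1(\mu)} \to 0.
\]
For uniform integrability, I would note that a single integrable function $g$ on a finite measure space is automatically uniformly integrable (by dominated convergence applied to $|g|\mathbf{1}_{|g|>C}$), so $\{f\}$ and hence $\{f - f_n\}$ (by $L^1$-convergence and a triangle inequality argument at fixed truncation level) is uniformly integrable. Combining this with $|f_n| \le |f| + |f_n - f|$ and the elementary fact that finite unions of uniformly integrable families are uniformly integrable then yields uniform integrability of $\{f_n\}$.

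For the harder direction $(1) \Rightarrow (2)$, I would first extract integrability of $f$. Uniform integrability produces some $C$ with $\sup_n \int_{|f_n|>C}|f_n|\,d\mu \le 1$, whence $\sup_n \|f_n\|_{L^1(\mu)} \le C\mu(X) + 1 < \infty$. Since $f_n \to f$ in measure on a finite measure space, a diagonal subsequence converges $\mu$-a.e., and Fatou's lemma applied along this subsequence gives $\int|f|\,d\mu < \infty$. I would then prove $L^1$-convergence by the standard three-term split: for any $\varepsilon > 0$ and $C$ large,
\[
  \int |f_n - f|\,d\mu
  = \int_{A_n^\varepsilon} |f_n - f|\,d\mu + \int_{X \setminus A_n^\varepsilon} |f_n - f|\,d\mu,
\]
where $A_n^\varepsilon := \{|f_n - f| \ge \varepsilon\}$. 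The second piece is bounded by $\varepsilon\mu(X)$. The first piece is controlled by $\int_{A_n^\varepsilon}|f_n|\,d\mu + \int_{A_n^\varepsilon}|f|\,d\mu$; here uniform integrability of $\{f_n\}$ together with integrability of $f$ (which is itself uniformly integrable as a singleton) ensure that, given $\eta > 0$, there is $\delta > 0$ with $\int_E|f_n|\,d\mu + \int_E|f|\,d\mu < \eta$ whenever $\mu(E) < \delta$, uniformly in $n$; and convergence in measure gives $\mu(A_n^\varepsilon) < \delta$ for all large $n$.

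The main obstacle is the passage from uniform integrability plus convergence in measure to $L^1$-convergence; this hinges on the equivalence between uniform integrability and the absolute-continuity-type statement ``$\mu(E)<\delta \Rightarrow \int_E|f_n|\,d\mu<\eta$ uniformly in $n$'' together with $L^1$-boundedness. I would establish this equivalence as a separate lemma, since it is the technical heart of the argument and is what makes convergence in measure on the small exceptional set $A_n^\varepsilon$ sufficient to conclude $L^1$-convergence.
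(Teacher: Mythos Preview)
Your argument is the standard textbook proof and is correct; the only place one should be a little more explicit is the equivalence, on a finite measure space, between uniform integrability in the sense used here and the uniform absolute continuity condition ``$\mu(E)<\delta \Rightarrow \sup_n\int_E|f_n|\,d\mu<\eta$'' together with $L^1$-boundedness, which you rightly flag as a separate lemma.

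However, there is nothing to compare against: the paper does not prove \Cref{thm:Vitali} at all. It is stated in the appendix purely as a tool, with an explicit citation to \cite[Theorem 4.5.4]{Bogachev:2007aa}, and is used as a black box in the proofs of \Cref{lem:subsequence}, \Cref{thm:mainAlmgrenmult}, and \Cref{thm:topmult}. So your proposal supplies a proof where the paper deliberately omits one; it is correct, but the paper's ``approach'' is simply to quote the reference.
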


\subsection{Proof of \Cref{prop:lefttr}}
\label{ssec:A2}

We follow \cite{LlaveObaya99} in which H\"older--Lipschitz spaces are treated; 
the proofs simplify considerably.

   Let $U \subseteq \R^m$ and $V \subseteq \R^\ell$ be open, bounded, and convex.
   For brevity, we will simply write $\| \cdot\|_k$ for the $C^k$ norm from \eqref{eq:Cknorm}.

   \begin{lemma} \label{lem:linear}
    Let $\ps : \R^\ell \to \R^p$ be a linear map. 
    Then $\ps_* : C^k(\ol U,\R^\ell) \to C^k(\ol U,\R^p)$ is linear and continuous
    with operator norm $\|\ps_*\|= \|\ps\|$.
   \end{lemma}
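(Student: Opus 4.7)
The plan is to prove linearity directly and then establish the norm identity $\|\ps_*\| = \|\ps\|$ via two opposite inequalities. Linearity of $\ps_*$ is immediate: for any $\vh_1,\vh_2 \in C^k(\ol U,\R^\ell)$ and scalars $\al,\be$, pointwise evaluation gives $\ps \o (\al\vh_1 + \be\vh_2) = \al(\ps\o\vh_1) + \be(\ps\o\vh_2)$, using only linearity of $\ps$ on $\R^\ell$.

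For the upper bound $\|\ps_*\| \le \|\ps\|$, the first step is to establish the chain-rule identity
\[
    d^j(\ps\o\vh)(x)(v_1,\ldots,v_j) = \ps\bigl(d^j\vh(x)(v_1,\ldots,v_j)\bigr), \qquad 0\le j \le k,
\]
for all $x \in U$ and $v_1,\ldots,v_j \in \R^m$. This is a short induction on $j$: since $\ps$ is linear, its own differential is $\ps$ itself, and the ordinary chain rule reduces each further differentiation to post-composition with $\ps$. Taking the operator norm in $L_j(\R^m,\R^p)$ and using the definition of the operator norm $\|\ps\|$ yields
\[
    \|d^j(\ps\o\vh)(x)\|_{L_j(\R^m,\R^p)} \le \|\ps\|\cdot \|d^j\vh(x)\|_{L_j(\R^m,\R^\ell)}.
\]
Supremizing over $x \in U$ and then maximizing over $0 \le j \le k$ gives $\|\ps_*(\vh)\|_k \le \|\ps\|\cdot\|\vh\|_k$, whence $\|\ps_*\| \le \|\ps\|$.

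For the matching lower bound, I would test $\ps_*$ on constant functions. Given any unit vector $v \in \R^\ell$, the constant map $\vh_v \equiv v$ belongs to $C^k(\ol U,\R^\ell)$ with $\|\vh_v\|_k = \|v\|_2 = 1$ (all higher differentials vanish), while $\ps_*(\vh_v) \equiv \ps(v)$ is also constant, so $\|\ps_*(\vh_v)\|_k = \|\ps(v)\|_2$. Taking the supremum over unit vectors $v$ gives $\|\ps_*\| \ge \|\ps\|$, completing the identity. There is no substantive obstacle here; the only thing to watch is the bookkeeping of operator norms on the iterated multilinear spaces $L_j(\R^m,\R^\ell)$ so that the estimate $\|\ps \o A\|_{L_j(\R^m,\R^p)} \le \|\ps\|\cdot\|A\|_{L_j(\R^m,\R^\ell)}$ is applied cleanly at each order $j$.
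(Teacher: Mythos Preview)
Your proof is correct and follows essentially the same approach as the paper: the key identity $d^j(\ps\o\vh) = \ps \o d^j\vh$ (which the paper writes inductively as $d(\ps\o\vh) = \ps\o d\vh$) gives the upper bound $\|\ps_*\|\le\|\ps\|$. Your argument is in fact more complete than the paper's, which omits the lower bound; your test on constant functions is the natural way to supply it.
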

 
\begin{proof}
   For $k=0$ and $\vh \in C^0(\ol U,\R^\ell)$, $\|\ps \o \vh\|_0 \le \|\ps\|\|\vh\|_0$. 
   Let $k\ge 1$ and $\vh \in C^k(\ol U,\R^\ell)$.
   Then $d(\ps \o \vh) = \ps \o d\vh$ and the statement follows by induction.
\end{proof}

\begin{lemma} \label{lem:bilinear}
    Let $\ps : \R^{\ell_1} \times \R^{\ell_2} \to \R^p$ be a bilinear map.
    Then $\ps_* : C^k(\ol U,\R^{\ell_1}) \times C^k(\ol U,\R^{\ell_2}) \to C^k(\ol U,\R^p)$
    is bilinear and continuous with $\|\ps_*\| \le C(k) \|\ps\|$.
\end{lemma}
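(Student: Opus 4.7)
The proof will be a direct computation using a Leibniz-type formula for the iterated differentials of a bilinear composition, followed by the elementary fact that a bounded bilinear map between Banach spaces is continuous.

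First, I would establish by induction on $j$ (for $0 \le j \le k$) the identity
\begin{equation*}
    d^{j}\bigl(\ps\o(\vh_1,\vh_2)\bigr)(x)(v_1,\ldots,v_j) = \sum_{A \sqcup B = \{1,\ldots,j\}} \ps\bigl(d^{|A|}\vh_1(x)(v_A),\, d^{|B|}\vh_2(x)(v_B)\bigr),
\end{equation*}
where $v_A = (v_i)_{i \in A}$. This is nothing more than the product rule iterated: the case $j=0$ is tautological, and the step $j \to j+1$ follows from differentiating each summand, using that $\ps$ is bilinear so that $d\ps((u_1,u_2)) = \ps(du_1, u_2) + \ps(u_1, du_2)$ on curves, and then relabeling the partition to include the new index in either $A$ or $B$. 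Grouping by $|A|=a$, $|B|=b$ with $a+b=j$, there are $\binom{j}{a}$ partitions of each type, all giving the same symmetric $j$-linear map up to permutation of arguments.

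Next, since the operator norm of $\ps$ satisfies $\|\ps(u_1,u_2)\|_2 \le \|\ps\|\,\|u_1\|_2 \|u_2\|_2$, each summand in the above identity is controlled pointwise by $\|\ps\|\,\|d^a\vh_1(x)\|_{L_a(\R^m,\R^{\ell_1})}\,\|d^b\vh_2(x)\|_{L_b(\R^m,\R^{\ell_2})}$. Summing over the $\binom{j}{a}$ partitions for each $(a,b)$ with $a+b=j$, and then taking the supremum over $x \in U$ and the maximum over $0 \le j \le k$, gives
\begin{equation*}
    \|\ps_*(\vh_1,\vh_2)\|_{C^k(\ol U,\R^p)} \le \Big(\max_{0 \le j \le k} \sum_{a+b=j}\binom{j}{a}\Big)\,\|\ps\|\,\|\vh_1\|_{C^k(\ol U,\R^{\ell_1})}\,\|\vh_2\|_{C^k(\ol U,\R^{\ell_2})},
\end{equation*}
with constant $C(k) = 2^k$. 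Bilinearity of $\ps_*$ is inherited pointwise from $\ps$, and continuity on the product Banach space then follows from the standard equivalence of boundedness and continuity for bilinear maps.

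The main obstacle is purely combinatorial bookkeeping in the Leibniz formula; there is no analytic subtlety, since convexity of $U$ is not even used for this lemma (it will only matter for \Cref{prop:lefttr} itself, via the mean value theorem applied to $\ps$ on chords in $V$).
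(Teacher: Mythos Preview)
Your proof is correct and rests on the same idea as the paper's---the Leibniz rule for the differential of a bilinear composition---but you unroll it into an explicit higher-order formula, while the paper argues by a one-step induction: it writes $d(\ps_*(\vh_1,\vh_2)) = (\ps_1)_*(d\vh_1,\vh_2) + (\ps_2)_*(\vh_1,d\vh_2)$ for two auxiliary bilinear maps $\ps_1,\ps_2$ (with the same norm as $\ps$) and applies the $(k-1)$-case to each summand. Your direct expansion has the minor advantage of yielding the explicit constant $C(k)=2^k$; the paper's recursive formulation avoids the partition bookkeeping entirely. Either way there is no analytic content beyond the product rule, and your remark that convexity of $U$ plays no role here is accurate.
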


\begin{proof}
    For $k=0$ and $\vh_i \in C^0(\ol U,\R^{\ell_i})$, $i=1,2$,
    \[
        \|\ps_*(\vh_1,\vh_2)\|_0 \le 2 \|\ps\| \|\vh_1\|_0 \|\vh_2\|_0.
    \]
    For $k \ge 1$ and $\vh_i \in C^1(\ol U,\R^{\ell_i})$, $i=1,2$,
    \[
        d(\ps_*(\vh_1,\vh_2)) = (\ps_1)_*(d\vh_1,\vh_2) + (\ps_2)_*(\vh_1,d\vh_2),
    \]
where $\ps_1$ and $\ps_2$ are the bilinear maps 
\begin{align*}
    \ps_1 &: L(\R^m,\R^{\ell_1}) \times \R^{\ell_2} \to L(\R^m,\R^p), \quad (h_1,y_2) \mapsto (x \mapsto \ps(h_1(x),y_2)),
    \\
    \ps_2 &: \R^{\ell_1} \times L(\R^m,\R^{\ell_2}) \to L(\R^m,\R^p), \quad (y_1,h_2) \mapsto (x \mapsto \ps(y_1,h_2(x))).
\end{align*}
Thus the statement follows by induction.
\end{proof}

\begin{lemma} \label{lem:right}
   Let $\vh \in C^k(\ol U,V)$. 
   Then $\vh^* : C^k(\ol V,\R^p) \to C^k(\ol U,\R^p)$, $\vh^*(\ps) := \ps \o \vh$, is well-defined, linear, and continuous.
   More precisely, for $\ps \in C^k(\ol V,\R^p)$,
\[
    \|\vh^*(\ps)\|_k  \le C(k)\, \|\ps\|_k (1+\|\vh\|_k)^k. 
\]   
\end{lemma}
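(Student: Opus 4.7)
The plan is to prove this by induction on $k$, mirroring the inductive structure used for the left-composition result and combining the two lemmas that precede it.

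The base case $k=0$ is immediate: for $\vh\in C^0(\ol U,V)$ and $\ps\in C^0(\ol V,\R^p)$, one has $\|\ps\o\vh\|_0\le\|\ps\|_0$, which is dominated by $C(0)\|\ps\|_0(1+\|\vh\|_0)^0$ with $C(0)=1$. For the inductive step, assume the bound holds for $k-1$ and all admissible data. Write the chain rule as
\[
    d(\vh^{*}\ps)(x)\;=\;d\ps(\vh(x))\o d\vh(x)\;=\;\on{comp}\bigl(d\ps\o\vh(x),\,d\vh(x)\bigr),
\]
where $\on{comp}:L(\R^\ell,\R^p)\times L(\R^m,\R^\ell)\to L(\R^m,\R^p)$, $(A,B)\mapsto A\o B$, is bilinear and continuous (with norm $\le 1$). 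Thus $d(\vh^{*}\ps)$ is the image under $\on{comp}_{*}$ of a pair built from $\vh^{*}(d\ps)$ and $d\vh$, and the norm $\|\vh^{*}\ps\|_k$ decomposes as $\max(\|\vh^{*}\ps\|_0,\|d(\vh^{*}\ps)\|_{k-1})$.

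Now apply the induction hypothesis to $d\ps\in C^{k-1}(\ol V,L(\R^\ell,\R^p))$ and $\vh$, which gives
\[
    \|d\ps\o\vh\|_{k-1}\;\le\;C(k-1)\,\|d\ps\|_{k-1}\,(1+\|\vh\|_{k-1})^{k-1}\;\le\;C(k-1)\,\|\ps\|_k\,(1+\|\vh\|_k)^{k-1},
\]
and note that $\|d\vh\|_{k-1}\le\|\vh\|_k\le 1+\|\vh\|_k$. Feeding these into \Cref{lem:bilinear} applied to $\on{comp}$ yields
\[
    \|d(\vh^{*}\ps)\|_{k-1}\;\le\;C(k-1)\,\|\on{comp}\|\,\|d\ps\o\vh\|_{k-1}\,\|d\vh\|_{k-1}\;\le\;C'(k)\,\|\ps\|_k\,(1+\|\vh\|_k)^{k}.
\]
Combining with the trivial zeroth-order bound $\|\vh^{*}\ps\|_0\le\|\ps\|_0\le\|\ps\|_k$ completes the induction with a suitable constant $C(k)$.

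Everything is routine once the chain rule is recast as an application of the bilinear map $\on{comp}$; there is no real obstacle. The only point requiring a bit of care is the bookkeeping of constants: one must verify that $d\ps$ is in the scope of the inductive hypothesis (it maps $\ol V$ into the Banach space $L(\R^\ell,\R^p)$, which is finite-dimensional so \Cref{lem:linear} and \Cref{lem:bilinear} apply verbatim after choosing a basis) and that the factor of $(1+\|\vh\|_k)$ picked up from $\|d\vh\|_{k-1}$ is exactly what upgrades the exponent from $k-1$ to $k$. This matches the shape of the bound stated in the lemma.
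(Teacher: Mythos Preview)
Your proof is correct and follows essentially the same route as the paper's: induction on $k$, the chain rule expressed via the bilinear composition map, \Cref{lem:bilinear} to control the product, and the induction hypothesis applied to $d\ps\in C^{k-1}(\ol V,L(\R^\ell,\R^p))$. The paper's write-up is terser but the underlying argument is identical.
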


\begin{proof}
   The statement for $k=0$ is clear. 
   Now let us proceed by induction on $k$ and assume that the statement holds for $k-1$.
   By \Cref{lem:bilinear} and the induction hypothesis, 
   \begin{align*}
       \|d(\ps \o \vh)\|_{k-1} &\le C\, \|d\ps\o \vh\|_{k-1} \|d\vh\|_{k-1}  
       \\
                               &\le C_1\, \|d\ps\|_{k-1}(1 + \|\vh\|_{k-1})^{k-1} \|d\vh\|_{k-1}   
                               \\
                               &\le C_1\, \|\ps\|_k (1 + \|\vh\|_k)^k
   \end{align*}
   and the assertion for $k$ follows easily.
\end{proof}

Now we are ready to prove \Cref{prop:lefttr} which is reformulated in the following lemma.

\begin{lemma} \label{lem:left}
   Let $\ps \in C^{k+1}(\ol V,\R^p)$. Then 
   $\ps_* : C^k(\ol U,V) \to C^k(\ol U,\R^p)$, $\ps_* (\vh) := \ps \o \vh$, is well-defined and continuous.
   More precisely, for $\vh_1,\vh_2$ in a bounded subset $B$ of $C^k(\ol U,V)$,
   \[
       \| \ps_*(\vh_1) - \ps_*(\vh_2)\|_k \le C \, \|\ps\|_{k+1} \|\vh_1-\vh_2\|_k,
   \]
   where $C=C(k,B)$.
\end{lemma}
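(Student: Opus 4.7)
My plan is to reduce \Cref{lem:left} to the previous three lemmas via the fundamental theorem of calculus applied along the straight-line homotopy between $\vh_1$ and $\vh_2$. Since $V$ is convex and $B\subseteq C^k(\ol U,V)$ is bounded, for $\vh_1,\vh_2\in B$ the curve $\vh_t:=(1-t)\vh_2+t\vh_1$, $t\in[0,1]$, takes values in $V$ (convexity) and belongs to a bounded subset $B'$ of $C^k(\ol U,V)$ with $\|\vh_t\|_k\le \max(\|\vh_1\|_k,\|\vh_2\|_k)$, so all constants below will depend only on $k$ and $B$.

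For each fixed $x\in\ol U$ the classical one-variable identity gives
\[
\ps(\vh_1(x))-\ps(\vh_2(x))=\int_0^1 d\ps(\vh_t(x))\bigl(\vh_1(x)-\vh_2(x)\bigr)\,dt,
\]
so the natural candidate is $\ps_*(\vh_1)-\ps_*(\vh_2)=\int_0^1 F_t\,dt$, where
$F_t:=\ev_*(d\ps\o\vh_t,\,\vh_1-\vh_2)$
and $\ev:L(\R^\ell,\R^p)\times\R^\ell\to\R^p$ is the evaluation bilinear map. Applying \Cref{lem:right} to $d\ps\in C^k(\ol V,L(\R^\ell,\R^p))$ (this is where the $C^{k+1}$ assumption on $\ps$ is used) yields $\|d\ps\o\vh_t\|_k\le C(k,B)\|\ps\|_{k+1}$, and then \Cref{lem:bilinear} (with \Cref{lem:linear} handling the identity embedding of $\vh_1-\vh_2$) gives the uniform pointwise-in-$t$ estimate
\[
\|F_t\|_k\le C(k,B)\,\|\ps\|_{k+1}\,\|\vh_1-\vh_2\|_k,\qquad t\in[0,1].
\]

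The one remaining point, and the only step that is not a direct appeal to the earlier lemmas, is to justify differentiation under the integral sign in the space variable $x$ up to order $k$, i.e.\ to conclude
\[
\Bigl\|\int_0^1 F_t\,dt\Bigr\|_k\le \sup_{t\in[0,1]}\|F_t\|_k.
\]
I expect this to be the main (but still routine) obstacle: one must verify that each partial derivative $\p^\al F_t(x)$ with $|\al|\le k$ is jointly continuous on $[0,1]\times\ol U$ and uniformly bounded. This follows once one knows that $t\mapsto F_t$ is continuous as a $C^k(\ol U,\R^p)$-valued curve, which in turn comes from the continuity of $t\mapsto\vh_t$ in $C^k(\ol U,\R^\ell)$ together with \Cref{lem:right} and \Cref{lem:bilinear}. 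Standard theorems on parameter integrals then allow the interchange and give the desired uniform bound. Combining these pieces yields $\|\ps_*(\vh_1)-\ps_*(\vh_2)\|_k\le C(k,B)\,\|\ps\|_{k+1}\,\|\vh_1-\vh_2\|_k$, which is \Cref{lem:left}.
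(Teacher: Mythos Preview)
Your argument is correct but takes a genuinely different route from the paper. The paper proceeds by induction on $k$: for the base case it uses the pointwise Lipschitz estimate $\|\ps\o\vh_1-\ps\o\vh_2\|_0\le C\|d\ps\|_0\|\vh_1-\vh_2\|_0$, and for the induction step it writes
\[
d(\ps\o\vh_1)-d(\ps\o\vh_2)=(d\ps\o\vh_1).(d\vh_1-d\vh_2)-(d\ps\o\vh_2-d\ps\o\vh_1).d\vh_2,
\]
bounding the first term via \Cref{lem:right} and \Cref{lem:bilinear}, and the second via the induction hypothesis applied to $d\ps\in C^k$. Your approach instead uses the integral form of the mean value theorem along the straight-line homotopy, reducing everything to a single application of \Cref{lem:right} (to $d\ps$) and \Cref{lem:bilinear} (to the evaluation map), followed by differentiation under the integral sign. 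Your method is more direct and avoids induction, at the cost of a small parameter-integral argument; the paper's inductive proof stays entirely within the algebra of the $C^k$ norms and needs no integration. Both use the $C^{k+1}$ hypothesis in the same essential way, namely to have $d\ps\in C^k$. (Your parenthetical invocation of \Cref{lem:linear} is unnecessary, since $\vh_1-\vh_2$ already lies in $C^k(\ol U,\R^\ell)$.)
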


\begin{proof}
    For $k=0$, we have 
    \[
        \| \ps\o \vh_1 - \ps\o \vh_2\|_0 \le C\, \|d\ps\|_0 \|\vh_1 -\vh_2\|_0 \le C\, \|\ps\|_1  \|\vh_1 -\vh_2\|_0.
    \]
    Let us proceed by induction on $k$ and assume that the statement holds for $k-1$.
    We have
    \begin{align*}
        \MoveEqLeft \| d(\ps \o \vh_1) - d(\ps \o \vh_2)\|_{k-1} 
        \\
        &= \|(d\ps \o \vh_1).(d\vh_1 -d\vh_2) - (d\ps \o \vh_2 -d\ps \o \vh_1).d\vh_2\|_{k-1} 
        \\
        &\le C\, \|d\ps \o \vh_1\|_{k-1} \|d\vh_1 -d\vh_2\|_{k-1} + C\, \| d\ps \o \vh_2 -d\ps \o \vh_1\|_{k-1}\|d\vh_2\|_{k-1},
    \end{align*}
    where we used \Cref{lem:bilinear} in the last step.
    By \Cref{lem:right},
    \begin{align*}
        \|d\ps \o \vh_1\|_{k-1} \|d\vh_1 -d\vh_2\|_{k-1} &\le C_1\, \|d\ps\|_{k-1} (1+ \|\vh_1\|_{k-1})^{k-1}  \|\vh_1 -\vh_2\|_{k}
        \\
&\le C_1\, \|\ps\|_{k} (1+ \|\vh_1\|_{k-1})^{k-1}  \|\vh_1 -\vh_2\|_{k}.
    \end{align*}
    By the induction hypothesis,
    \begin{align*}
        \| d\ps \o \vh_2 -d\ps \o \vh_1\|_{k-1}\|d\vh_2\|_{k-1} &\le  C_2\,  \|d\ps\|_{k} \|\vh_1 - \vh_2 \|_{k-1} \|\vh_2\|_{k}
        \\
&\le  C_2\,  \|\ps\|_{k+1} \|\vh_1 - \vh_2 \|_{k} \|\vh_2\|_{k}.
    \end{align*}
    Now the statement follows easily.
\end{proof}

\subsection*{Acknowledgement}
We are grateful to Antonio Lerario for posing the question about the continuity of the solution map.


\def\cprime{$'$}
\providecommand{\bysame}{\leavevmode\hbox to3em{\hrulefill}\thinspace}
\providecommand{\MR}{\relax\ifhmode\unskip\space\fi MR }
\providecommand{\MRhref}[2]{%
  \href{http://www.ams.org/mathscinet-getitem?mr=#1}{#2}
}
\providecommand{\href}[2]{#2}

\end{document}